\title{Generalized Continued Logarithms and Related Continued Fractions}
\author{Jonathan M. Borwein\footnote{\url{jonathan.borwein@newcastle.edu.au}. CARMA, University of Newcastle, Callaghan NSW 2308, Australia. Research of J. M. Borwein was supported by CARMA, University of Newcastle.} 
\and Kevin G. Hare\footnote{\url{kghare@uwaterloo.ca}. Department of Pure Mathematics, Univsersity of Waterloo, Waterloo, Ontario, N2L 3G1, Canada. Research of K. G. Hare was supported by NSERC Grant RGPIN-2014-03154} 
\and Jason G. Lynch\footnote{\url{j4lynch@uwaterloo.ca}. Research of J. G. Lynch was supported by CARMA, University of Newcastle}}
\def\Z{\hbox{$\mathbb Z$}}
\def\Q{\hbox{$\mathbb Q$}}
\def\R{\hbox{$\mathbb R$}}
\def\N{\hbox{$\mathbb N$}}
\def\M{\hbox{$\mathcal M$}}
\def\a{\hbox{$\mathbf a$}}
\def\c{\hbox{$\mathbf c$}}
\def\d{\hbox{$\,\text{d}$}}
\def\cl{\text{cl}}
\def\KLi{\hbox{$\mathcal{KL}^{\text{I}}$}}
\def\KLii{\hbox{$\mathcal{KL}^{\text{II}}$}}
\def\KLiii{\hbox{$\mathcal{KL}^{\text{III}}$}}
\newcommand{\e}{\varepsilon}
\newcommand{\mat}[1]{\left( \begin{matrix} #1 \end{matrix} \right)}
\newenvironment{longonly}{}{}
\newenvironment{shortonly}{}{}
\newtheorem{theorem}{Theorem}
\newtheorem{lemma}[theorem]{Lemma}
\newtheorem{corollary}[theorem]{Corollary}
\theoremstyle{definition}
\newtheorem{definition}{Definition}
\newtheorem{remark}{Remark}
\newtheorem{fact}{Fact}
\newtheorem{definitionx}{Definition}[definition]
\theoremstyle{plain}
\newenvironment{proofx}{\begin{proof}}{\end{proof}}
\newtheorem{theoremx}{Theorem}[theorem]
\newtheorem{lemmax}{Lemma}[theorem]
\newtheorem{corollaryx}{Corollary}[theorem]
\def\cFrac#1#2{%
\begin{array}{@{}c@{}}\multicolumn{1}{c|}{#1}\\%
\hline\multicolumn{1}{|c}{#2}\end{array}}
\begin{document}

\maketitle

\begin{abstract}
We study continued logarithms as introduced by Bill Gosper and studied by J. Borwein et. al.. After providing an overview of the type I and type II generalizations of binary continued logarithms introduced by Borwein et. al., we focus on a new generalization to an arbitrary integer base $b$. We show that all of our so-called type III continued logarithms converge and all rational numbers have finite type III continued logarithms. As with simple continued fractions, we show that the continued logarithm terms, for almost every real number, follow a specific distribution. We also generalize Khinchine's constant from simple continued fractions to continued logarithms, and show that these logarithmic Khinchine constants have an elementary closed form. Finally, we show that simple continued fractions are the limiting case of our  continued logarithms, and briefly consider how we could generalize past continued logarithms.
\end{abstract}


\section{Introduction}\label{sec:intro}

Continued fractions, especially simple continued fractions, have been well studied throughout history. Continued binary logarithms, however, appear to have first been introduced by Bill Gosper in his appendix on Continued Fraction Arithmetic \cite{gosper}. More recently in \cite{clogs}, J. Borwein et. al. proved some basic results about binary continued logarithms and applied experimental methods to determine the term distribution of binary continued logarithms. They conjectured and indicated a proof that, like in the case of continued fractions, almost every real number has continued logarithm terms that follow a specific distribution. They then introduced two different generalizations of binary continued logarithms to arbitrary bases.

\subsection{The Structure of This Paper}

Section~\ref{sec:intro} introduces some basic definitions and results for continued fractions, briefly describes binary continued logarithms as introduced by Gosper, and provides an overview of results relating to the Khinchine constant for continued fractions. Sections~\ref{sec:type1} and~\ref{sec:type2} then provide an overview of the type I and type II continued logarithms introduced by Borwein et. al.. Further details on these can be found in \cite{clogs}. 

Section~\ref{sec:type3} comprises the main body of the paper. In Section~\ref{subsec:type3defrec} we define type III continued logarithms and extend to them the standard continued fraction recurrences. Section~\ref{subsec:type3convratfin} then proves that type III continued logarithms are guaranteed to converge to the correct value, and that every rational number has a finite type III continued logarithm. These are two desirable properties of continue fractions and binary continued logarithms that a complete generalization should have. In Section~\ref{subsec:type3measuretheory} we describe how measure theory can be used to investigate the distribution of continued logarithm terms. This is then applied in Section~\ref{subsec:type3dist} to determine the distribution, and Section~\ref{subsec:type3khinchine} to determine the logarithmic Khinchine constant. The main proofs of these sections are quite technical, and are separated out into Appendices A and B, respectively. Finally, Section~\ref{subsec:type3clogsandcfracs} derives some relationships between simple continued fractions and the limiting case of type III continued logarithms.

Finally, we close the paper in Section~\ref{sec:generalizing} by briefly introducing one way to generalize past continued logarithms.

\subsection{Continued Fractions}

 The material in this section can be found in many places including \cite{nef}.
 
\begin{definition}\label{def:cfrac}
A continued fraction is an expression of the form
\[
  y_1 = \alpha_0 + \cfrac{\beta_1}
                   {\alpha_1 + \cfrac{\beta_2}
 											  {\alpha_2 + \cfrac{\beta_3}{\ddots}}}
\]
or
\[
 y_2 =  \alpha_0 + \cfrac{\beta_1}
                   {\alpha_1 + \cfrac{\beta_2}
 											  {\alpha_2 + \cfrac{\beta_3}{\cdots + \cfrac{\beta_n}{\alpha_n}}}}.
\]
For the sake of simplicity, we will sometimes denote the above as
\[
y_1 = \alpha_0 + \cFrac{\beta_1}{\alpha_1} + \cFrac{\beta_2}{\alpha_2} + \cdots
\]
or
\[
y_2 = \alpha_0 + \cFrac{\beta_1}{\alpha_1} + \cFrac{\beta_2}{\alpha_2} + \cdots + \cFrac{\beta_n}{\alpha_n},
\]
respectively. The terms $\alpha_0, \alpha_1, \dots$ are called denominator terms and the terms $\beta_1, \beta_2, \dots$ are called numerator terms.
\end{definition}

\begin{definition}\label{def:equiv}
Two continued fractions
\[
y = \alpha_0 + \cFrac{\beta_1}{\alpha_1} + \cFrac{\beta_2}{\alpha_2} + \cdots
\hspace{1cm} \text{and} \hspace{1cm}
y' = \alpha'_0 + \cFrac{\beta'_1}{\alpha'_1} + \cFrac{\beta'_2}{\alpha'_2} + \cdots
\]
are called equivalent if there is a sequence $(d_n)_{n=0}^\infty$ with $d_0 = 1$ such that $\alpha_n' = d_n\alpha_n$ for all $n \ge 0$ and $\beta_n' = d_nd_{n-1}\beta_n$ for all $n \ge 1$. 
\end{definition}

The $c_n$ terms can be thought of as constants that are multiplied by both numerators and denominators of successive terms.

\begin{definition}\label{def:cfracconv}
The $n$th convergent of the continued fraction
\[
y = \alpha_0 + \cFrac{\beta_1}{\alpha_1} + \cFrac{\beta_2}{\alpha_2} + \cdots
\]
is given by
\[
x_n = \alpha_0 + \cFrac{\beta_1}{\alpha_1} + \cFrac{\beta_2}{\alpha_2} + \cdots + \cFrac{\beta_n}{\alpha_n}.
\]
\end{definition}

\begin{definition}\label{def:cfracremainderterm}
The $n$th remainder term of the continued fraction
\[
y = \alpha_0 + \cFrac{\beta_1}{\alpha_1} + \cFrac{\beta_2}{\alpha_2} + \cdots
\]
is given by
\[
r_n = \alpha_n + \cFrac{\beta_{n+1}}{\alpha_{n+1}} + \cFrac{\beta_{n+2}}{\alpha_{n+2}} + \cdots.
\]
\end{definition}

The following results will be useful for generalizing to continued logarithms.

\begin{fact}\label{fact:cfracrecurrence}
Suppose $x = \alpha_0 + \cFrac{\beta_1}{\alpha_1} + \cFrac{\beta_2}{\alpha_2} + \cdots$, where $\alpha_n, \beta_n > 0$ for all $n$. Then the convergents are given by
\[
x_n = \frac{p_n}{q_n}
\]
where
\[
p_{-1} = 1, \hspace{5mm} q_{-1} = 0, \hspace{5mm} p_0 = \alpha_0, \hspace{5mm} q_0 = 1,
\]
\begin{align*}
p_n &= \alpha_n p_{n-1} + \beta_n p_{n-2} && n \ge 1, \\
q_n &= \alpha_n q_{n-1} + \beta_n q_{n-2} && n \ge 1.
\end{align*}
\end{fact}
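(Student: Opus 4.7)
The plan is to prove this by induction on $n$, following the classical argument. The key combinatorial insight is that $p_{n-1}, q_{n-1}, p_{n-2}, q_{n-2}$ depend only on $\alpha_0, \ldots, \alpha_{n-1}$ and $\beta_1, \ldots, \beta_{n-1}$, so the recurrence can be applied to \emph{any} continued fraction with the appropriate initial denominator terms, not just a specific one. The positivity assumption $\alpha_n, \beta_n > 0$ will be used only to guarantee that the $q_n$ are nonzero (a quick secondary induction shows $q_n \ge 1$), so the ratio $p_n/q_n$ is well-defined.

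For the base cases I would directly verify $x_0 = \alpha_0/1 = p_0/q_0$, and $x_1 = \alpha_0 + \beta_1/\alpha_1 = (\alpha_0\alpha_1+\beta_1)/\alpha_1 = (\alpha_1 p_0 + \beta_1 p_{-1})/(\alpha_1 q_0 + \beta_1 q_{-1})$, matching the recurrence.

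For the inductive step, suppose the claim holds through index $n-1$ for every choice of positive $\alpha_i, \beta_i$. The crucial observation is that
\[
x_n = \alpha_0 + \cFrac{\beta_1}{\alpha_1} + \cdots + \cFrac{\beta_{n-1}}{\alpha_{n-1} + \beta_n/\alpha_n},
\]
so $x_n$ is the $(n-1)$-st convergent of a continued fraction identical to the original except that its last denominator term has been replaced by $\alpha_{n-1}' := \alpha_{n-1} + \beta_n/\alpha_n$ (still positive). Applying the induction hypothesis to this modified continued fraction—and noting that its $p_{n-2}, q_{n-2}, p_{n-3}, q_{n-3}$ coincide with those of the original because only the last denominator changed—gives
\[
x_n = \frac{(\alpha_{n-1} + \beta_n/\alpha_n)\,p_{n-2} + \beta_{n-1}\,p_{n-3}}{(\alpha_{n-1} + \beta_n/\alpha_n)\,q_{n-2} + \beta_{n-1}\,q_{n-3}}.
\]
Multiplying numerator and denominator by $\alpha_n$ and regrouping via the recurrences $\alpha_{n-1}p_{n-2} + \beta_{n-1}p_{n-3} = p_{n-1}$ and $\alpha_{n-1}q_{n-2} + \beta_{n-1}q_{n-3} = q_{n-1}$ (valid by the induction hypothesis applied to the \emph{original} continued fraction at stage $n-1$) yields
\[
x_n = \frac{\alpha_n p_{n-1} + \beta_n p_{n-2}}{\alpha_n q_{n-1} + \beta_n q_{n-2}} = \frac{p_n}{q_n},
\]
completing the induction.

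The main conceptual obstacle, more of a bookkeeping subtlety than a real difficulty, is justifying the simultaneous use of the induction hypothesis on two different continued fractions (the original and its modified version): this requires noting explicitly that $p_k, q_k$ for $k \le n-2$ are functions of only $\alpha_0, \ldots, \alpha_k$ and $\beta_1, \ldots, \beta_k$, so they are unaffected by perturbing $\alpha_{n-1}$. Once this is flagged, the rest is routine algebra.
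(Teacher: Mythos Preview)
Your argument is the standard and correct inductive proof of the convergent recurrence; the bookkeeping point about $p_k,q_k$ for $k\le n-2$ being independent of $\alpha_{n-1}$ is exactly what makes the step go through. Note, however, that the paper does not actually prove this statement: it is recorded as a ``Fact'' with a citation to the literature, so there is no in-paper proof to compare against.
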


\begin{fact}\label{fact:convcrit}
Suppose $x = \alpha_0 + \cFrac{\beta_1}{\alpha_1} + \cFrac{\beta_2}{\alpha_2} + \cdots$ where $\alpha_n, \beta_n > 0$ for all $n$. Then the continued fraction for $x$ converges to $x$ if $\sum_{n=1}^\infty \frac{\alpha_n \alpha_{n+1}}{\beta_{n+1}} = \infty$.
\end{fact}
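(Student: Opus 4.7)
The plan is to exploit the recurrence of Fact~\ref{fact:cfracrecurrence} to write each difference $x_n - x_{n-1}$ explicitly, exhibit it as an alternating sequence whose magnitudes $r_n$ decrease to zero, and thereby invoke the usual squeeze of odd and even convergents.

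First, a routine induction on the ``determinant'' $p_n q_{n-1} - p_{n-1} q_n$ using both recurrences gives $p_n q_{n-1} - p_{n-1} q_n = (-1)^{n+1}\beta_1\cdots\beta_n$, so that $x_n - x_{n-1} = (-1)^{n+1} r_n$ with $r_n := \beta_1\cdots\beta_n/(q_n q_{n-1}) > 0$. Substituting the recurrence for $q_n$ yields $r_n/r_{n-1} = \beta_n q_{n-2}/q_n < 1$, hence $(r_n)$ strictly decreases; it therefore suffices to show $r_n \to 0$, for then the odd convergents decrease, the even convergents increase, and they share a common limit.

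To that end, I would telescope $1/r_n$: using $q_n - \beta_n q_{n-2} = \alpha_n q_{n-1}$,
\[
\frac{q_n q_{n-1}}{\beta_1\cdots\beta_n} - \frac{q_{n-1}q_{n-2}}{\beta_1\cdots\beta_{n-1}} = \frac{\alpha_n q_{n-1}^2}{\beta_1\cdots\beta_n},
\]
so $1/r_n = \sum_{k=1}^n \alpha_k q_{k-1}^2/(\beta_1\cdots\beta_k)$, and the question becomes whether this series diverges. The natural bound $q_{k-1}\ge \alpha_{k-1} q_{k-2}$ (discard the non-negative $\beta_{k-1}q_{k-3}$ from the $q_{k-1}$ recurrence) converts one factor of $q_{k-1}$ into $\alpha_{k-1}$ and leaves behind $q_{k-1}q_{k-2}/(\beta_1\cdots\beta_{k-1}) = 1/r_{k-1}$; since $1/r_n$ is already known to be increasing with $1/r_1 = \alpha_1/\beta_1$, one obtains, for each $k \ge 2$,
\[
\frac{\alpha_k q_{k-1}^2}{\beta_1\cdots\beta_k} \ \ge\ \frac{\alpha_k\alpha_{k-1}}{\beta_k}\cdot\frac{1}{r_{k-1}} \ \ge\ \frac{\alpha_1}{\beta_1}\cdot\frac{\alpha_{k-1}\alpha_k}{\beta_k}.
\]
Summing and reindexing recovers exactly $(\alpha_1/\beta_1)\sum_n \alpha_n\alpha_{n+1}/\beta_{n+1}$, which is infinite by hypothesis, so $r_n \to 0$.

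The delicate step is the final lower bound: a crude estimate like $q_{k-1}\ge 1$ is far too weak. One must sacrifice exactly one factor of $q_{k-1}$ (producing the indispensable $\alpha_{k-1}$) while keeping the remaining $q_{k-1}q_{k-2}$ intact, so that the already-established monotonicity of $1/r_n$ can supply a uniform positive lower bound and recover precisely the hypothesised series. Once that alignment is found the argument is essentially mechanical bookkeeping with the recurrences.
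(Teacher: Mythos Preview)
Your proof is correct and follows the classical route to this convergence criterion. Note, however, that the paper does not actually prove Fact~\ref{fact:convcrit}: it is stated as a known result, with the surrounding material attributed to \cite{nef}, so there is no ``paper's own proof'' to compare against. Your argument---computing the determinant $p_nq_{n-1}-p_{n-1}q_n$, identifying $r_n=\beta_1\cdots\beta_n/(q_nq_{n-1})$ as a decreasing sequence, telescoping $1/r_n$, and then trading one factor of $q_{k-1}$ for $\alpha_{k-1}$ to recover the hypothesised series---is essentially the standard textbook proof of the \'Sleszy\'nski--Pringsheim-type criterion and would serve perfectly well as a self-contained justification of the fact.
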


\begin{remark}
Throughout this paper, we will use $\M(A)$ or just $\M A$ to denote the Lebesgue measure of a set $A \subseteq \R$.
\end{remark}

\subsection{Binary Continued Logarithms}

Let $1 \le \alpha \in \R$. Let $y_0 = \alpha$ and recursively define $a_n = \lfloor \log_2 y_n \rfloor$. If $y_n - 2^{a_n} = 0$, then terminate. Otherwise, set 
\[
y_{n+1} = \frac{2^{a_n}}{y_n - 2^{a_n}}
\]
and recurse. This produces the binary (base 2) continued logarithm for $y_0$:
\[
y_0 = 2^{a_0} + \cFrac{2^{a_0}}{2^{a_1}} + \cFrac{2^{a_1}}{2^{a_2}} + \cFrac{2^{a_2}}{2^{a_3}} + \cdots
\]
These binary continued logarithms were introduced explicitly by Gosper in his appendix on Continued Fraction Arithmetic \cite{gosper}. Borwein et. al. studied binary continued logarithms further in \cite{clogs}, extending classical continued fraction recurrences for binary continued logs and investigating the distribution of aperiodic binary continued logarithm terms for quadratic irrationalities -- such as can not occur for simple continued fractions. 

\begin{remark}
Jeffrey Shallit \cite{shallit} proved some limits on the length of a finite binary continued logarithm. Specifically, the binary continued logarithm for a rational number $p/q \ge 1$ has at most $2 \log_2 p + O(1)$ terms. Furthermore, this bound is tight, as can be seen by considering the continued fraction for $2^n-1$. Moreover, the sum of the terms of the continued logarithm of $p/q \ge 1$ is bounded by $(\log_2 p) (2\log_2 p + 2)$.
\end{remark}

\subsection{Khinchine's Constant}\label{sec:khinchine}

In \cite{khinchine}, Khinchine proved that for almost every $\alpha \in (0,1)$, where
\[
\alpha = \cFrac{1}{a_1} + \cFrac{1}{a_2} + \cFrac{1}{a_3} + \cdots,
\]
the denominator terms $a_1, a_2, a_3, \dots$ follow a specific limiting distribution. That is,
let $P_\alpha(k) = \lim_{N \to \infty} \frac{1}{N}|\{n \le N : a_n = k\}|$. This is the limiting ratio of the denominator terms that equal $k$, if this limit exists. Then for almost every $\alpha \in (0,1)$,
\[
P_\alpha(k) = \frac{\log\left(1 + \frac{1}{k(k+2)}\right)}{\log 2}
\]
for every $k \in \N$. It then follows for almost every $\alpha \in (0,1)$ that the limiting geometric average of the denominator terms is given by
\[
\lim_{n \to \infty} \sqrt[n]{a_1a_2\cdots a_n} = \prod_{k=1}^\infty \left(1+\frac{1}{r(r+2)}\right)^{\log_2 r} \approx 2.685452.
\]
This constant is now known as Khinchine's constant, $\mathcal{K}$.

\section{Type I Continued Logarithms}\label{sec:type1}

\subsection{Type I Definition and Preliminaries}

Fix an integer base $b \ge 2$. We define type I continued logarithms as follows.
\begin{definition}\label{def:type1clog}
Let $\alpha \in (1,\infty)$. The base $b$ continued logarithm of type I for $\alpha$ is
\[
b^{a_0} + \cFrac{(b-1)b^{a_0}}{b^{a_1}} + \cFrac{(b-1)b^{a_1}}{b^{a_2}} + \cFrac{(b-1)b^{a_2}}{b^{a_3}} + \cdots = [b^{a_0}, b^{a_1}, b^{a_2}, \dots]_{\text{cl}_1(b)},
\]
where the terms $a_0, a_1, a_2 \dots$ are determined by the recursive process below, terminating at the term $b^{a_n}$ if at any point $y_n = b^{a_n}$.
\begin{align*}
y_0 &= \alpha \\
a_n &= \lfloor \log_b y_n \rfloor && n \ge 0 \\
y_{n+1} &= \frac{(b-1)b^{a_n}}{y_n - b^{a_n}} && n \ge 0.
\end{align*}
\end{definition}

The numerator terms $(b-1)b^{a_n}$ are defined as such to ensure that $y_n \in (1,\infty)$ for all n. Indeed, notice that for each $n$, we must have $b^{a_n} \le y_n < b^{a_n+1}$. Thus $0 \le y_n - b^{a_n} < (b-1)b^{a_n}$. If $y_n-b^{a_n} = 0$, then we terminate, otherwise we get $0 < y_n - b^{a_n} < (b-1)b^{a_n}$, so $y_{n+1} = \frac{(b-1) b^{a_n}}{y_n - b^{a_n}} \in (1,\infty)$.

Borwein et al. proved that the type I continued fraction of $\alpha \in (1,\infty)$ will converge to $\alpha$ \cite[Theorem 15]{clogs}. Additionally, numbers with finite type I continued logarithms must be rational. However for $b \ge 3$, rationals need not have finite continued logarithms. For example, the type I ternary continued logarithm for 2 is $[3^0,3^0,3^0,\dots]_{\text{cl}_1(3)}$.

\begin{longonly}
The following lemmas will be useful for studying the limiting distribution of the continued logarithm terms $a_n$.
\end{longonly}

\begin{lemmax}\label{lem:type1equiv}
The continued logarithms
\[
y = b^{a_0} + \cFrac{(b-1)b^{a_0}}{b^{a_1}} + \cFrac{(b-1)b^{a_1}}{b^{a_2}} + \cFrac{(b-1)b^{a_2}}{b^{a_3}} + \cdots
\]
and
\[
y_1 = b^{a_0} + \cFrac{(b-1)b^{a_0-a_1}}{1} + \cFrac{(b-1)b^{-a_2}}{1} + \cFrac{(b-a)b^{-a_3}}{1} + \cdots
\]
are equivalent. ($y_1$ is called the denominator-reduced continued logarithm for $y$.)
\end{lemmax}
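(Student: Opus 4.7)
The plan is to produce an explicit multiplier sequence $(d_n)_{n \ge 0}$ that witnesses equivalence in the sense of Definition~\ref{def:equiv}, and then verify the two required identities on denominator and numerator terms.

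First I would identify the natural choice of $(d_n)$. Writing the original continued logarithm as $\alpha_0 + \cFrac{\beta_1}{\alpha_1} + \cFrac{\beta_2}{\alpha_2} + \cdots$ with $\alpha_0 = b^{a_0}$, $\alpha_n = b^{a_n}$ for $n \ge 1$, and $\beta_n = (b-1)b^{a_{n-1}}$ for $n \ge 1$, the target form has all denominator terms (after $\alpha'_0$) equal to $1$. From the required identity $\alpha'_n = d_n \alpha_n$ we are forced to take $d_0 = 1$ and $d_n = b^{-a_n}$ for every $n \ge 1$.

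Next I would check the numerator condition $\beta'_n = d_n d_{n-1} \beta_n$. For $n = 1$ this gives
\[
d_1 d_0 \beta_1 \;=\; b^{-a_1} \cdot 1 \cdot (b-1)b^{a_0} \;=\; (b-1)b^{a_0-a_1},
\]
matching the first numerator of $y_1$. For $n \ge 2$ it gives
\[
d_n d_{n-1} \beta_n \;=\; b^{-a_n} \cdot b^{-a_{n-1}} \cdot (b-1)b^{a_{n-1}} \;=\; (b-1)b^{-a_n},
\]
matching the subsequent numerators of $y_1$ (in particular, the displayed coefficient $(b-a)b^{-a_3}$ in the statement is a typographical slip for $(b-1)b^{-a_3}$). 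Together with the trivially verified $\alpha'_0 = \alpha_0 = b^{a_0}$ and $\alpha'_n = b^{-a_n} \cdot b^{a_n} = 1$, this shows the two continued logarithms are equivalent in the sense of Definition~\ref{def:equiv}.

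There is no real obstacle here: the lemma is essentially a bookkeeping exercise, and the only subtlety is being careful that the multiplier $d_0 = 1$ must be fixed (so $\alpha'_0$ cannot be rescaled), while all later $d_n$ are free to absorb the factor $b^{-a_n}$ from the denominator into the adjacent numerators via the product $d_n d_{n-1}$. The telescoping in $d_n d_{n-1} \beta_n$ is exactly what cancels the $b^{a_{n-1}}$ appearing in $\beta_n$, which is why the $(b-1)$ factor in the original numerators survives unchanged in $y_1$.
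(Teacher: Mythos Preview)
Your proof is correct and takes exactly the same approach as the paper: you choose the multiplier sequence $d_0 = 1$, $d_n = b^{-a_n}$ for $n \ge 1$, and verify the conditions of Definition~\ref{def:equiv}. The paper's proof simply states this choice of $(d_n)$ without spelling out the verification, so your write-up is a strictly more detailed version of the same argument.
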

\begin{proofx}
Take $d_0 = 1$ and $d_n = b^{-a_n}$ for $n \ge 1$ to satisfy the conditions of Definition~\ref{def:equiv}.
\end{proofx}

\begin{lemmax}\label{lem:type1pqlemma}
The $n$th convergent of $\alpha = [b^{a_0}, b^{a_1}, b^{a_2}, \dots]_{\text{cl}_1(b)}$ is given by
\[
x_n = \frac{p_n}{q_n}
\]
where 
\[
p_{-1} = 1, \hspace{1cm} q_{-1} = 0, \hspace{1cm} p_0 = b^{a_0}, \hspace{1cm} q_0 = 1
\]
and for $n \ge 1$,
\begin{align*}
p_n &= b^{a_n} p_{n-1} + (b-1)b^{a_{n-1}} p_{n-2} \\
q_n &= b^{a_n} q_{n-1} + (b-1)b^{a_{n-1}} q_{n-2}
\end{align*}
\end{lemmax}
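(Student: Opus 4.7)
The plan is to obtain the recurrence as a direct specialization of the general continued fraction convergent recurrence in Fact~\ref{fact:cfracrecurrence}. Reading off the continued logarithm
\[
\alpha = b^{a_0} + \cFrac{(b-1)b^{a_0}}{b^{a_1}} + \cFrac{(b-1)b^{a_1}}{b^{a_2}} + \cdots,
\]
we identify the denominator terms as $\alpha_n = b^{a_n}$ and the numerator terms as $\beta_n = (b-1)b^{a_{n-1}}$ for $n \ge 1$, while $\alpha_0 = b^{a_0}$. Substituting these directly into the recurrences $p_n = \alpha_n p_{n-1} + \beta_n p_{n-2}$ and $q_n = \alpha_n q_{n-1} + \beta_n q_{n-2}$ from Fact~\ref{fact:cfracrecurrence} gives exactly the stated formulas, with initial conditions $p_{-1}=1, q_{-1}=0, p_0 = \alpha_0 = b^{a_0}, q_0 = 1$ matching as well.

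The only thing to check is that Fact~\ref{fact:cfracrecurrence} applies, i.e.\ that $\alpha_n, \beta_n > 0$ for all $n$. This follows from the preceding discussion of Definition~\ref{def:type1clog}: since $y_0 = \alpha \in (1,\infty)$ and the recursion guarantees $y_n \in (1,\infty)$ for every $n$, each $a_n = \lfloor \log_b y_n \rfloor$ is a nonnegative integer, so $b^{a_n} > 0$ and $(b-1)b^{a_{n-1}} > 0$ (using $b \ge 2$).

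There is no real obstacle here; the lemma is purely a bookkeeping identification of $(\alpha_n,\beta_n)$ within the general framework, followed by citation of Fact~\ref{fact:cfracrecurrence}. The content-bearing work for continued logarithms has been done in framing the definition so that the numerator terms $(b-1)b^{a_{n-1}}$ line up cleanly; once that framing is in place, the recurrence is immediate.
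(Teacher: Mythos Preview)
Your proof is correct and follows exactly the same approach as the paper: identify $\alpha_n = b^{a_n}$ and $\beta_n = (b-1)b^{a_{n-1}}$ and invoke Fact~\ref{fact:cfracrecurrence}. Your explicit check of the positivity hypothesis is a small elaboration the paper omits, but otherwise the arguments are identical.
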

\begin{proofx}
This follows from Fact~\ref{fact:cfracrecurrence}, where we have $\alpha_n =  b^{a_n}$ and $\beta_n = (b-1) b^{a_{n-1}}$.
\end{proofx}

\subsection{Distribution of Type I Continued Logarithm Terms and Type I Logarithmic Khinchine Constant}

We now look at the limiting distribution of the type I continued logarithm terms. Consider $\alpha = [b^{a_0}, b^{a_1}, b^{a_2}, \dots]_{\text{cl}_1(b)}$. Assume that the continued logarithm for $\alpha$ is infinite. Furthermore, assume (without loss of generality) that $a_0 = 0$, so that $\alpha \in (1,b)$.

\begin{definition}\label{def:type1Dnk}
For $n \in \N$, let
\[
D_n(k) = \{\alpha \in (1,b) : a_n = k\}
\]
denote the set of $\alpha \in (1,b)$ for which the $n$th continued logarithm term is $b^k$.
\end{definition}

\begin{definition}\label{def:type1zn,An,mn}
Let $x = [1, b^{a_1}, b^{a_2}, \dots]_{\text{cl}_1(b)} \in (1,b)$. The $n$th remainder term of $x$ is $r_n = r_n(x) = [b^{a_n}, b^{a_{n+1}}, \dots]_{\cl_1(b)}$, as in Definition~\ref{def:cfracremainderterm}. Define 
\[
z_n = z_n(x) = \frac{r_n}{b^{a_n}} = [1, b^{a_{n+1}}, b^{a_{n+2}}, \dots]_{\cl_1(b)} \in (1,b),
\]
\[
M_n(x) = \{\alpha \in (1,b) : z_n(\alpha) < x\} \subseteq (1,b),
\]
\[
m_n(x) = \frac{1}{b-1} \M
(M_n(x)) \in (0,1),
\]
and
\[
m(x) = \lim_{n\to\infty} m_n(x),
\]
wherever this limit exists.
\end{definition}

Notice that since $1 < z_n(\alpha) < b$ for all $n \in \N$ and $\alpha \in (1,b)$, we must have $m_n(1) = 0$ and $m_n(b) = 1$ for all $n \in \N$. We can now derive a recursion  for the functions $m_n$. 

\begin{theorem}\label{thm:type1mnrecurrence}
The sequence of functions $m_n$ is given by the recursive relationship
\begin{align}
m_0(x) &= \frac{x-1}{b-1} \label{eq:type1m0} \\
m_n(x) &= \sum_{k=0}^\infty m_{n-1}(1+(b-1)b^{-k}) - m_{n-1}(1+x^{-1}(b-1)b^{-k}) && n \ge 1 \label{eq:type1mn}
\end{align}
for $1 \le x \le b$.
\end{theorem}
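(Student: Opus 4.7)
The plan is to compute $m_0$ directly from its definition and then derive the recursion by analysing what the map $\alpha \mapsto z_n(\alpha)$ does when one peels off one continued-logarithm digit. Since $a_0=0$ gives $z_0(\alpha)=r_0/1=\alpha$, the set $M_0(x)$ is simply $(1,x)$, whose Lebesgue measure is $x-1$; dividing by $b-1$ yields \eqref{eq:type1m0}.

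For the recursion, the key step is to relate $z_{n-1}$ and $z_n$ via the continued-logarithm dynamics. Writing $r_n = b^{a_n}z_n$ and using $r_{n-1}=b^{a_{n-1}}+(b-1)b^{a_{n-1}}/r_n$, a short computation (essentially the computation already done in the paragraph after Definition~\ref{def:type1clog}) gives
\[
z_n(\alpha) \;=\; \frac{b-1}{(z_{n-1}(\alpha)-1)\,b^{a_n}},
\]
so that $a_n(\alpha)=k$ corresponds exactly to $z_{n-1}(\alpha)\in\bigl(1+(b-1)b^{-k-1},\,1+(b-1)b^{-k}\bigr]$, and on this slice the condition $z_n(\alpha)<x$ is equivalent to $z_{n-1}(\alpha)>1+(b-1)/(b^k x)$.

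With this in hand I would partition
\[
M_n(x) \;=\; \bigsqcup_{k=0}^{\infty}\{\alpha\in(1,b):a_n(\alpha)=k,\; z_n(\alpha)<x\},
\]
a disjoint union (up to a null set of terminating or boundary expansions). By the previous paragraph each piece in the union equals
\[
\{\alpha:z_{n-1}(\alpha)\in(1+(b-1)/(b^k x),\,1+(b-1)b^{-k}]\},
\]
whose measure is $(b-1)\bigl[m_{n-1}(1+(b-1)b^{-k})-m_{n-1}(1+x^{-1}(b-1)b^{-k})\bigr]$ by the very definition of $m_{n-1}$. Summing over $k$, the prefactor $b-1$ cancels with the one in $m_n=\mathcal M(M_n)/(b-1)$, yielding \eqref{eq:type1mn}.

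The two points that deserve care are the handling of the null set where expansions terminate or endpoint conditions collide (easy: such $\alpha$ form a measure-zero set, so they can be discarded in each step), and the justification that countable additivity applies (immediate since the slices $\{a_n=k\}$ are pairwise disjoint, and each summand is nonnegative and bounded by the telescoping quantity $m_{n-1}(1+(b-1)b^{-k})-m_{n-1}(1+(b-1)b^{-k-1})$, whose sum is $m_{n-1}(b)-m_{n-1}(1)=1$). The main conceptual obstacle is simply setting up the bijection between $\{a_n=k,\ z_n<x\}$ and an interval in $z_{n-1}$ space correctly; once that is done the measure bookkeeping is routine.
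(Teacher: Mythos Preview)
Your proposal is correct and follows essentially the same approach as the paper's own proof: both compute $m_0$ directly, partition $M_n(x)$ according to the value of $a_n=k$, translate the conditions $a_n=k$ and $z_n<x$ into an interval for $z_{n-1}$, and sum. The only point the paper makes slightly more explicit is the verification that $1+(b-1)b^{-k}x^{-1} \ge 1+(b-1)b^{-k-1}$ (which follows from $x\le b$), so that the intersection of the two constraints really is the interval you write down; you flag this as the main thing needing care but do not spell it out.
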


\begin{shortonly}
The proof of this is similar to that of Theorem~\ref{thm:mnrecurrence}.
\end{shortonly}

\begin{proofx}
Notice that $r_0(\alpha) = \alpha$ and $a_0 = 0$, so $z_0(\alpha) = \frac{r_0}{b^{a_0}} = \alpha$ and thus 
\[
M_0(x) = \{\alpha \in (1,b) : z_0(\alpha) < x\} = \{\alpha \in (1,b) : \alpha < x\} = (1,x),
\] 
so $m_0(x) = \frac{x-1}{b-1}$. Now fix $n \ge 1$. Since $a_n \in \Z_{\ge 0}$, we have
\[
m_n(x) = \M\{\alpha \in (1,b) : z_n < x\} = \M \bigcup_{k=0}^\infty \{\alpha \in (1,b) : z_n < x, a_n = k\}.
\]
Fix $x \in (1,b)$ and let
\[
A_{k} = \{\alpha \in (1,b) : z_n < x, a_n = k\}
\]
for $k \in \Z_{\ge 0}$. By Definition~\ref{def:type1zn,An,mn}, $z_n < x$ if and only if 
\begin{equation*}
\frac{r_n}{b^{a_n}} < x
\end{equation*}
Notice that
\[
z_{n-1} = [1,c_nb^{a_n}, c_{n+1}b^{a_{n+1}}, \dots] = [1,r_n] = 1+\frac{b-1}{r_n}
\]
so $z_n < x$ if and only if
\[
\frac{b-1}{b^{a_n}(z_{n-1}-1)} < x,
\]
or equivalently
\begin{equation}\label{eq:type1mnrecineq1}
z_{n-1} > 1 + (b-1) b^{-a_n} x^{-1} = 1+(b-1)b^{-k}x^{-1}.
\end{equation}
Additionally, in order to have $a_n = k$, we must have $b^k \le r_n < b^{k+1}$, or equivalently,
\begin{equation}\label{eq:type1mnrecineq2}
1 + (b-1)b^{-(k+1)} < z_{n-1} \le 1+(b+1)b^{-k}.
\end{equation}
Now notice that since $x < b$,
\[
1+(b-1)b^{-(k+1)} < 1+(b-1)b^{-k}x^{-1},
\]
and thus the left hand inequality in \eqref{eq:type1mnrecineq2} is implied by \eqref{eq:type1mnrecineq1}. Therefore $z_n < x$ with $a_n = k$ if and only if
\begin{equation}\label{eq:type1mnrecineq3}
1+(b-1)b^{-k}x^{-1} < z_{n-1} \le 1+(b-1)b^{-k}.
\end{equation}
Thus
\begin{equation}\label{eq:type1mnrecAk}
A_{k} = \{\alpha \in (1,b) : 1+(b-1)b^{-k}x^{-1} < z_{n-1} \le 1+(b-1)b^{-k}\}.
\end{equation}
Now suppose $k_1,k_2 \in \Z$ with $k_1 \ne k_2$. We claim that $A_{k_1}$ and $A_{k_2}$ are disjoint. Suppose without loss of generality that $k_1 > k_2$, so $k_2 - k_1 \le -1$. Then since $x < b$,
\begin{equation}\label{eq:type1mnreccase1}
1+(b-1)b^{-k_1} = 1+(b-1)b^{-k_2}b^{k_2-k_1} \le 1+(b-1)b^{-k_2}b^{-1} < 1+(b-1)b^{-k_2}x^{-1}.
\end{equation}

Now suppose $a_1 \in A_{k_1}$ and $a_2 \in A_{k_2}$. By \eqref{eq:type1mnrecAk} and \eqref{eq:type1mnreccase1}, 
\[
a_1 \le 1+(b-1)b^{-k_1} < 1 + (b-1)b^{-k_2}x^{-1} \le a_2
\]
so $a_1 \ne a_2$ and thus $A_{k_1}$ and $A_{k_2}$ must be disjoint. Therefore
\begin{equation} \label{eq:type1mnrecmun}
m_n(x) = \M \bigcup_{k=0}^\infty A_{k} = \sum_{k=0}^\infty \mathcal{M}(A_{k}).
\end{equation}
Finally, since $m_{n-1}(x) = \mathcal{M}\{\alpha \in (1,b) : z_{n-1} < x\}$, by \eqref{eq:type1mnrecAk} and \eqref{eq:type1mnrecmun} we can conclude
\[
m_n(x) = \sum_{k=0}^\infty m_{n-1}(1+(b-1)b^{-k}) - m_{n-1}(1+x^{-1}(b-1)b^{-k})
\]
which proves the recursion \eqref{eq:type1mn}, and completes the proof of the theorem.
\end{proofx}

We  next derive a formula for $D_n(k)$ in terms of the function $m_n$.
\begin{theorem}\label{thm:type1genprobdist}
\[
\frac{1}{b-1} \M (D_{n+1}(k) )= m_n(1+(b-1)b^{-k}) - m_n(1+(b-1)b^{-(k+1)}).
\]
\end{theorem}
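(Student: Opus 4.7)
The plan is to recast the event $a_{n+1} = k$ as a two-sided inequality on $z_n$, then recognize the resulting set as a difference of two level sets of the form $M_n(\cdot)$, and finally normalize by $b-1$ to obtain the statement in terms of $m_n$.

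First I would note that, exactly as in the derivation inside the proof of Theorem~\ref{thm:type1mnrecurrence}, applying the identity $[1, r] = 1 + (b-1)/r$ with the remainder term gives
\[
z_n \;=\; 1 + \frac{b-1}{r_{n+1}},
\]
so that $r_{n+1} = (b-1)/(z_n - 1)$. By definition $a_{n+1} = \lfloor \log_b r_{n+1} \rfloor$, which is equivalent to $b^k \le r_{n+1} < b^{k+1}$. Substituting the expression for $r_{n+1}$ and solving for $z_n$ turns this into
\[
1 + (b-1)b^{-(k+1)} \;<\; z_n \;\le\; 1 + (b-1)b^{-k}.
\]
Hence
\[
D_{n+1}(k) = \bigl\{\alpha \in (1,b) : 1 + (b-1)b^{-(k+1)} < z_n(\alpha) \le 1 + (b-1)b^{-k}\bigr\}.
\]

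Next I would express $D_{n+1}(k)$ as a set difference of level sets of $z_n$. Writing $x_1 = 1+(b-1)b^{-(k+1)}$ and $x_2 = 1+(b-1)b^{-k}$, the set $D_{n+1}(k)$ is the set where $z_n \le x_2$ minus the set where $z_n \le x_1$. Since $M_n(x) = \{\alpha : z_n(\alpha) < x\}$, these two descriptions differ from the $M_n$ versions only on the boundary sets $\{\alpha : z_n(\alpha) = x_i\}$, which are null. Therefore
\[
\M(D_{n+1}(k)) \;=\; \M(M_n(x_2)) - \M(M_n(x_1)).
\]
Dividing by $b-1$ and invoking the definition $m_n(x) = \frac{1}{b-1}\M(M_n(x))$ yields exactly the claimed identity.

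The main, and essentially only, obstacle is the boundary-measure issue: I must argue that $\{\alpha \in (1,b) : z_n(\alpha) = x_i\}$ has Lebesgue measure zero for $i = 1,2$. This follows because $z_n$ is built from $\alpha$ by finitely many applications of the piecewise-analytic Gauss-type map underlying the continued logarithm, so each level set is a countable union of isolated preimages; any fixed value is attained on a set of measure zero. (Even without this, the result could be stated with the obvious open/closed convention without affecting the measures.) Once that technicality is dispatched, the rest is a direct unwinding of definitions and requires no additional computation.
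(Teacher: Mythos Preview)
Your proposal is correct and follows essentially the same approach as the paper's proof. The only cosmetic difference is that the paper parametrizes $z_n$ via $r_{n+2}$ (writing $z_n = [1, b^k, r_{n+2}]_{\cl_1(b)}$ and reading off the endpoints as $r_{n+2}\to 1$ and $r_{n+2}\to\infty$), whereas you work directly with $r_{n+1}$ and the defining inequality $b^k \le r_{n+1} < b^{k+1}$; these are equivalent computations, and your treatment of the boundary null sets is in fact more careful than the paper's.
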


\begin{shortonly}
The proof of this theorem is similar to that of Theorem~\ref{thm:genprobdist}.
\end{shortonly}

\begin{proofx}
Suppose that $\alpha \in D_{n+1}(k)$. Then $a_{n+1} = k$, so
\[
z_n = [1, b^k, r_{n+2}]_{\cl_1(b)} = 1 + \cfrac{b-1}{b^k + \cfrac{(b-1)b^k}{r_{n+2}}}
\]
where $r_{n+2}$ can take any value in $(1,\infty)$. Clearly $z_n$ is a monotonic function of $r_{n+2}$ for fixed $k$, so the extreme values of $z_n$ on $D_{n+1}(k)$ will occur at the extreme values of $r_{n+2}$. Letting $r_n \to 1$ gives $z_n = 1 + \frac{b-1}{b^k + (b-1)b^k} = 1+(b-1)b^{-(k+1)}$, and letting $r_n \to \infty$ gives $z_n = 1 + \frac{b-1}{b^k + 0} = 1+(b-1)b^{-k}$.
Thus,
\begin{align*}
D_{n+1}(k) &= \{\alpha \in (1,b) : 1+ (b-1)b^{-(k+1)} < z_n(\alpha) \le 1+(b-1)b^{-k} \} \\
&= M_n(1+(b-1)b^{-k}) \setminus M_n(1+(b-1)b^{-(k+1)}),
\end{align*}
so
\[
\frac{1}{b-1} \M D_{n+1}(k) = m_n(1+(b-1)b^{-k}) - m_n(1+(b-1)b^{-(k+1)}).
\]
\end{proofx}

Thus, if the limiting distribution $m(x)$ exists, it immediately follows that
\begin{equation}\label{eq:type1limitingdist}
\lim_{n\to\infty} \frac{1}{b-1} \M (D_n(k) ) = m(1+(b-1)b^{-k}) - m(1+(b-1)b^{-(k+1)}).
\end{equation}

\subsection{Experimentally Determining the Type I Distribution}\label{sec:type1experimental}

Now suppose $b>1$ is an arbitrary integer. Let $\mu_b$ denote the limiting distribution function $m$ for the base $b$, assuming it exists.

We may investigate the form of $\mu_b(x)$ by iterating the recurrence relation of Theorem \ref{thm:type1mnrecurrence} at points evenly spaced over the interval $[1,b]$, starting with $m_0(x) = \frac{x-1}{b-1}$. At each iteration, we fit a spline to these points, evaluating each ``infinite'' sum to 100 terms, and breaking the interval $[1,b]$ into 100 pieces. This is practicable since the continued logarithm converges much more rapidly than the simple continued fraction.

\begin{longonly}
This process is given by the Maple code in figure \ref{fig:type1distcode}, and the resulting functions for $b=2,3,4,5$ are shown in figure \ref{fig:type1distplot}.
\end{longonly}

\begin{longonly}
\begin{figure}[ht]
\begin{mdframed}[backgroundcolor=blue!20]
\footnotesize
\begin{lstlisting}
pd_type1 := proc (b, n) 
  local u, i, j, v, res, y; 
  u := z -> (z-1)/(b-1); 
  for i from 1 to n do 
    v := seq([x, evalf(add(u(1+(b-1)*b^(-k))-u(1+(b-1)*b^(-k)/x), 
         k = 0 .. 100))], x = seq(1+i*(b-1)/100, i = 0 .. 100)); 
    u := z -> Spline([v], z);
  end do;
  return [seq([x, u(x)], x = seq(1+i*(b-1)/100, i = 0 .. 100))]; 
end proc;
\end{lstlisting}
\end{mdframed}
\caption{Maple code for generating the type I $\mu_b$ function}
\label{fig:type1distcode}
\end{figure}

\begin{figure}[ht]
	\centering
		\includegraphics[width=0.75\textwidth]{type1dist.jpg}
	\caption{Type I $\mu_b$ for $2 \le b \le 5$ (10 iterations)}
	\label{fig:type1distplot}
\end{figure}
\end{longonly}

We find good convergence of $\mu_b(x)$ after around 10 iterations. We use the 101 data points from this process to seek the best fit to a function of the form
\[
\mu_b(x) = C \log_b \frac{\alpha x + \beta}{\gamma x + \delta}.
\]
We set $\gamma = 1$ to eliminate any common factor between the numerator and denominator. To meet the boundary condition $\mu_b(1) = 0$, we must have $\delta = \alpha + \beta - 1$, and to meet the boundary condition $\mu_b(b) = 1$, we must have $C = \frac{1}{\log_b\frac{b\alpha + \beta}{\alpha + \beta + b-1}}$, leaving the functional form to be fit as
\begin{equation} \label{eq:type1model}
\mu_b(x) = \frac{\log_b\frac{\alpha x + \beta}{x + \alpha + \beta - 1}}{\log_b\frac{b\alpha + \beta}{\alpha + \beta + b - 1}}.
\end{equation}

We sought this superposition form when the simpler structure for simple continued fractions failed.
\begin{longonly}
The code to perform this fit is shown in figure \ref{fig:type1fitcode} and it's output for $b = 2$ to $b = 10$ is shown in in figure \ref{fig:type1fitoutput}.
\end{longonly}

\begin{longonly}
\begin{figure}[ht]
\begin{mdframed}[backgroundcolor=blue!20]
\footnotesize
\begin{lstlisting}
ff_type1 := proc(b)
  local data;
  data := pd_type1(b, n);
  return NonlinearFit(Re(log((alpha*x+beta)/(x+alpha+beta-1))
         /log((b*alpha+beta)/(alpha+beta+b-1))), data, x, 
         initialvalues = [alpha = 0.9, beta = 1], 
         output = parametervalues);
end proc;
for b from 2 to 10 do 
  print(b, ff(b)); 
end do;
\end{lstlisting}
\end{mdframed}
\caption{Maple code for fitting the type I $\mu_b$ function to \eqref{eq:type1model}}
\label{fig:type1fitcode}
\end{figure}

\begin{figure}[ht]
\begin{mdframed}[backgroundcolor=blue!20]
\footnotesize
\begin{lstlisting}
 2, [alpha = 0.50006779707172260, beta = 0.4999760185017539]
 3, [alpha = 0.33334918347940207, beta = 0.6666057296207324]
 4, [alpha = 0.25115557175955190, beta = 0.7506021021686686]
 5, [alpha = 0.19870458936050558, beta = 0.7987970827140846]
 6, [alpha = 0.16713077201225227, beta = 0.8343332515643020]
 7, [alpha = 0.14404250067601154, beta = 0.8593816473715626]
 8, [alpha = 0.12566550377196060, beta = 0.8760629681404826]
 9, [alpha = 0.11126055120886144, beta = 0.8888148824032103]
10, [alpha = 0.09965479172707140, beta = 0.8987267892000272]
\end{lstlisting}
\end{mdframed}
\caption{Maple output from the code in figure \ref{fig:type1fitcode} for $2 \le b \le 10$}
\label{fig:type1fitoutput}
\end{figure}
\end{longonly}

Fitting our data to the model suggests candidate values of $\alpha = \frac{1}{b}$ and $\beta = \frac{b-1}{b}$, from which we get
\begin{equation}\label{eq:type1m}
\mu_b(x) = \frac{\log \frac{bx}{x+b-1}}{\log \frac{b^2}{2b-1}}.
\end{equation}

When we then apply \eqref{eq:type1limitingdist}, we get
\[
\lim_{n \to \infty} \frac{1}{b-1} \M D_n(k) = \frac{\log\left(1+ \frac{b^k(b-1)^3}{(b^{k+1}+b-1)^2}\right)}{\log\frac{b^2}{2b-1}}.
\]

\begin{longonly}
This limiting distribution is shown in figure \ref{fig:type1probplot} for $b = 2, 3, 4, 5$.

\begin{figure}[ht]
	\centering
		\includegraphics[width=0.75\textwidth]{type1prob.jpg}
	\caption{Type I limiting distribution for $2 \le b \le 5$}
	\label{fig:type1probplot}
\end{figure}
\end{longonly}

A proof of this distribution and of the type I Khinchine constant for each integer base $b$, using ergodic theory, can be found in \cite{lascu}. Additionally, it is likely that the proofs in Appendices A and B for the type III continued logarithm distribution and logarithmic Khinchine constant could be appropriately adjusted to prove these results.

If a type I base $b$ Khinchine constant $\KLi_b$ exists (i.e., almost every $\alpha \in (1,\infty)$ has the same limiting geometric mean of denominator terms), and if a limiting distribution $D(k) = \lim_{n \to \infty} D_n(k)$ of denominator terms exists, then
\[
\KLi_b = \prod_{k=0}^\infty b^{k} \frac{\M D(k)}{b-1} = b^{\sum_{k=0}^\infty k \frac{\mathcal{M} D(k)}{b-1}}.
\]
This is because the limiting distribution of denominator terms (if it exists) is essentially the ``average'' distribution over all numbers $\alpha \in (1,b)$. If we then assume that almost every $\alpha \in (1,b)$ has the same limiting geometric mean of denominator terms, then this limiting geometric mean (the logarithmic type I Khinchine constant) must equal the limiting geometric mean of the ``average'' distribution.

Thus, if we assume $\KLi_b$ exists and that the distribution in \eqref{eq:type1limitingdist} is correct, then we must have $\KLi_b = b^{\mathcal{A}}$, where
\[
\mathcal{A} = \sum_{k=0}^\infty k \frac{\M D(k)}{b-1} = \sum_{k=0}^\infty k [\mu_b(1+(b-1)b^{-k}) - \mu_b(1+(b-1)b^{-(k+1)})] = \frac{\log b}{\log \frac{b^2}{2b-1}} - 1,
\]
by Theorem~\ref{thm:type1genprobdist} and a lengthy but straightforward algebraic manipulation. These conjectured type I logarithmic Khinchine constants for $2 \le b \le 10$ are given in Figure~\ref{fig:type1klresults}.

\begin{figure}[ht]
\begin{center}
\begin{tabular}{c|c} 
 $b$ & $\KLi_b$ \\ \hline
2 & 2.656305058 \\
3 & 2.598065150 \\
4 & 2.556003239 \\
5 & 2.524285360 \\
6 & 2.499311827 \\
7 & 2.478977440 \\
8 & 2.461986788 \\
9 & 2.447498976 \\
10 & 2.434942582 \\
\end{tabular}
\end{center}
\caption{Type I logarithmic Khinchine constants for $2 \le b \le 10$}
\label{fig:type1klresults}
\end{figure}

These conjectured values of the type I logarithmic Khinchine constants were supported by empirical evidence, as the numerically computed limiting geometric means of denominator terms for various irrational constants give the expected values.

Notice that the type I logarithmic Khinchine constants have a simple closed form, which is noteworthy as no simple closed form has been found for the Khinchine constant for simple continued fractions.

\section{Type II Continued Logarithms}\label{sec:type2}

\subsection{Type II Definition and Preliminaries}

Fix an integer base $b \ge 2$. We define type II continued logarithms as follows.
\begin{definition}\label{def:type2clog}
Let $\alpha \in \R_{\ge 1}$. The base $b$ continued logarithm for $\alpha$ is
\[
c_0 b^{a_0} + \cFrac{c_0 b^{a_0}}{c_1 b^{a_1}} + \cFrac{c_1 b^{a_1}}{c_2 b^{a_2}} + \cFrac{c_2 b^{a_2}}{c_3b^{a_3}} + \cdots = [c_0 b^{a_0}, c_1 b^{a_1}, c_2 b^{a_2}, \dots]_{\cl_2(b)},
\]
where the terms $a_0, a_1, a_2 \dots$ and $c_0, c_1, c_2, \dots$ are determined by the recursive process below, terminating at the term $c_n b^{a_n}$ if at any point $y_n = c_n b^{a_n}$.
\begin{align*}
y_0 &= \alpha \\
a_n &= \lfloor \log_b y_n \rfloor && n \ge 0 \\
c_n &= \left\lfloor \frac{y_n}{b^{a_n}} \right\rfloor && n \ge 0 \\
y_{n+1} &= \frac{c_n b^{a_n}}{y_n - c_n b^{a_n}} && n \ge 0.
\end{align*}
\end{definition}

\begin{remark}\label{rem:type2numeratorterms}
The numerator terms $c_n b^{a_n}$ are defined to match the corresponding denominator terms. Recall that in the type I case, the term $y_{n+1}$ could take any value in $(1,\infty)$, regardless of the value of $a_n$. This is no longer true, since $y_n - c_n b^{a_n} \in (0, b^{a_n})$, so $y_{n+1} \in (c_n, \infty)$. We will see later that this results in type II continued logarithms having a more complicated distribution for which we could not find a closed form. This issue was the inspiration for the definition of type III continued logarithms, where the numerator terms are $b^{a_n}$ instead of $c_n b^{a_n}$.
\end{remark}

Borwein et. al. proved that the type II continued fraction of $\alpha \in (1,\infty)$ will converge to $\alpha$, and that $\alpha \in (1,\infty)$ has a finite continued logarithm if and only if $\alpha \in \Q$ \cite[Theorems 19 and 20]{clogs} -- unlike the situation for type I.

\begin{longonly}
The following lemmas will be useful for studying the limiting distribution of the continued logarithm terms $c_n b^{a_n}$.
\end{longonly}

\begin{lemmax}\label{lem:type2equiv}
The continued logarithms
\[
y = c_0 b^{a_0} + \cFrac{c_0 b^{a_0}}{c_1 b^{a_1}} + \cFrac{c_1 b^{a_1}}{c_2 b^{a_2}} + \cFrac{c_2 b^{a_2}}{c_3 b^{a_3}} + \cdots
\]
and
\[
y_1 = c_0 b^{a_0} + \cFrac{c_0c_1^{-1}b^{a_0-a_1}}{1} + \cFrac{c_2^{-1}b^{-a_2}}{1} + \cFrac{c_3^{-1} b^{-a_3}}{1} + \cdots
\]
are equivalent. ($y_1$ is called the denominator-reduced continued logarithm for $y$.)
\end{lemmax}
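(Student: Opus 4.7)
The plan is to exhibit the scaling sequence $(d_n)$ explicitly and verify the conditions of Definition~\ref{def:equiv}. Reading off the denominator terms of $y$, we have $\alpha_0 = c_0 b^{a_0}$ and $\alpha_n = c_n b^{a_n}$ for $n\ge 1$, while the numerator terms are $\beta_n = c_{n-1} b^{a_{n-1}}$ for $n \ge 1$. To convert $y$ into $y_1$, the target denominators are $\alpha'_0 = c_0 b^{a_0}$ and $\alpha'_n = 1$ for $n\ge 1$. This forces (and motivates) the choice
\[
d_0 = 1, \qquad d_n = c_n^{-1} b^{-a_n} \quad (n\ge 1),
\]
since we need $\alpha'_n = d_n \alpha_n$.

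The rest is a direct verification that $\beta'_n = d_n d_{n-1}\beta_n$. First I would handle the index $n=1$ separately, because the factor $d_0 = 1$ is exceptional: here $d_1 d_0 \beta_1 = (c_1^{-1}b^{-a_1})(1)(c_0 b^{a_0}) = c_0 c_1^{-1} b^{a_0 - a_1}$, matching the first numerator of $y_1$. For $n\ge 2$, all factors have the same shape, and the computation
\[
d_n d_{n-1}\beta_n \;=\; \bigl(c_n^{-1} b^{-a_n}\bigr)\bigl(c_{n-1}^{-1} b^{-a_{n-1}}\bigr)\bigl(c_{n-1} b^{a_{n-1}}\bigr) \;=\; c_n^{-1} b^{-a_n}
\]
collapses cleanly to the numerator shown in $y_1$.

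There is really no obstacle here, since the situation is essentially parallel to Lemma~\ref{lem:type1equiv}: the only subtlety is keeping track of the $n=1$ case where $d_0 = 1$ rather than $d_0 = c_0^{-1}b^{-a_0}$, which explains why the first numerator of $y_1$ carries both $c_0$ and $c_1^{-1}$ (and the full $b^{a_0-a_1}$), while every subsequent numerator only carries $c_n^{-1}b^{-a_n}$. Thus the proof reduces to stating the choice of $(d_n)$ and citing Definition~\ref{def:equiv}, with a one-line calculation to confirm the numerator identities.
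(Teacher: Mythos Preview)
Your proposal is correct and follows exactly the paper's approach: the paper's proof is the single line ``Take $d_0 = 1$ and $d_n = c_n^{-1} b^{-a_n}$ for $n \ge 1$ to satisfy the conditions of Definition~\ref{def:equiv},'' and you have simply spelled out the verification of the $\alpha_n'$ and $\beta_n'$ identities in more detail.
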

\begin{proofx}
Take $d_0 = 1$ and $d_n = c_n^{-1} b^{-a_n}$ for $n \ge 1$ to satisfy the conditions of Definition~\ref{def:equiv}.
\end{proofx}

\begin{lemmax}\label{lem:type2pqlemma}
The $n$th convergent of $\alpha = [c_0 b^{a_0}, c_1 b^{a_1}, c_2 b^{a_2}, \dots]_{\text{cl}_2(b)}$ is given by
\[
x_n = \frac{p_n}{q_n}
\]
where 
\[
p_{-1} = 1, \hspace{1cm} q_{-1} = 0, \hspace{1cm} p_0 = c_0 b^{a_0}, \hspace{1cm} q_0 = 1
\]
and for $n \ge 1$,
\begin{align*}
p_n &= c_n b^{a_n} p_{n-1} + c_{n-1} b^{a_{n-1}} p_{n-2} \\
q_n &= c_n b^{a_n} q_{n-1} + c_{n-1} b^{a_{n-1}} q_{n-2}
\end{align*}
\end{lemmax}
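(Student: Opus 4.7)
The plan is to recognize that this lemma is an immediate specialization of the general continued fraction convergent recurrence (Fact~\ref{fact:cfracrecurrence}), precisely as was done in the proof of Lemma~\ref{lem:type1pqlemma}. So the task reduces to correctly identifying the denominator and numerator terms of the type II continued logarithm in the notation of Definition~\ref{def:cfrac}, and verifying the positivity hypothesis required by the fact.

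First I would read off from Definition~\ref{def:type2clog} that the type II continued logarithm
\[
\alpha = c_0 b^{a_0} + \cFrac{c_0 b^{a_0}}{c_1 b^{a_1}} + \cFrac{c_1 b^{a_1}}{c_2 b^{a_2}} + \cdots
\]
is a continued fraction in the sense of Definition~\ref{def:cfrac} with denominator terms $\alpha_n = c_n b^{a_n}$ for all $n \ge 0$ and numerator terms $\beta_n = c_{n-1} b^{a_{n-1}}$ for all $n \ge 1$. Since $\alpha \ge 1$ forces $a_n \in \Z_{\ge 0}$ and $c_n \in \{1,\dots,b-1\}$ (at each non-terminating step), we have $\alpha_n, \beta_n > 0$, so the hypotheses of Fact~\ref{fact:cfracrecurrence} are satisfied.

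Next I would substitute these choices of $\alpha_n, \beta_n$ into the recurrence supplied by Fact~\ref{fact:cfracrecurrence}. The base cases $p_{-1}=1$, $q_{-1}=0$, $p_0=\alpha_0=c_0 b^{a_0}$, $q_0=1$ match the statement of the lemma, and for $n \ge 1$ the recurrences
\[
p_n = \alpha_n p_{n-1} + \beta_n p_{n-2}, \qquad q_n = \alpha_n q_{n-1} + \beta_n q_{n-2}
\]
become exactly
\[
p_n = c_n b^{a_n} p_{n-1} + c_{n-1} b^{a_{n-1}} p_{n-2}, \qquad q_n = c_n b^{a_n} q_{n-1} + c_{n-1} b^{a_{n-1}} q_{n-2},
\]
which is the claim.

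There is no genuine obstacle here; the only thing to watch is the index shift in the numerator terms: the $n$th partial numerator is $c_{n-1} b^{a_{n-1}}$ rather than $c_n b^{a_n}$, because Definition~\ref{def:type2clog} places the first numerator $c_0 b^{a_0}$ above the denominator $c_1 b^{a_1}$. Once this alignment is made, the proof is a one-line invocation of Fact~\ref{fact:cfracrecurrence}, in complete parallel with Lemma~\ref{lem:type1pqlemma}.
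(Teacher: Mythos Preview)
Your proof is correct and takes exactly the same approach as the paper: identify $\alpha_n = c_n b^{a_n}$ and $\beta_n = c_{n-1} b^{a_{n-1}}$ and invoke Fact~\ref{fact:cfracrecurrence}. If anything, you are slightly more careful than the paper in explicitly checking the positivity hypothesis and the index alignment.
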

\begin{proofx}
This follows from Fact~\ref{fact:cfracrecurrence}, where for we have $\alpha_n =  c_n b^{a_n}$ and $\beta_n = c_{n-1} b^{a_{n-1}}$.
\end{proofx}

\subsection{Distribution of Type II Continued Logarithm Terms and Type II Logarithmic Khinchine Constant}

We now look at the limiting distribution of the type II continued logarithm terms. Consider $\alpha = [c_0 b^{a_0}, c_1 b^{a_1}, c_2 b^{a_2}, \dots]_{\text{cl}_2(b)}$. Assume that $\alpha \notin \Q$, so that the continued logarithm for $\alpha$ is infinite. Furthermore, assume (without loss of generality) $a_0 = 0$ and $c_0 = 1$, so that $\alpha \in (1,2)$.

\begin{definition}\label{def:type2Dnk}
Let $n \in \N$. Let
\[
D_n(k,\ell) = \{\alpha \in (1,2) : a_n = k, c_n = \ell\}
\]
denote the $\alpha \in (1,2)$ for which the $n$th continued logarithm term is $\ell b^k$.
\end{definition}

\begin{definition}\label{def:type2zn,Mn,mn}
Let $x = [1, c_1 b^{a_1}, c_2 b^{a_2}, \dots]_{\text{cl}_2(b)} \in (1,2)$ with $n$th remainder term $r_n = r_n(x) = [c_n b^{a_n}, c_{n+1} b^{a_{n+1}}, \dots]_{\cl_2(b)}$, as in Definition~\ref{def:cfracremainderterm}. Define 
\[
z_n = z_n(x) = \frac{r_n}{c_n b^{a_n}} = [1, c_{n+1} b^{a_{n+1}}, c_{n+2} b^{a_{n+2}}, \dots]_{\cl_2(b)} \in (1,2),
\]
\[
M_n(x) = \{\alpha \in (1,2) : z_n(\alpha) < x\} \subseteq (1,2),
\]
\[
m_n(x) = \M(M_n(x) ) \in (0,1),
\]
and
\[
m(x) = \lim_{n\to\infty} m_n(x),
\]
wherever this limit exists.
\end{definition}

Notice that since $1 \le z_n(\alpha) \le 2$ for all $n \in \N$ and $\alpha \in (1,2)$, we must have $m_n(1) = 0$ and $m_n(2) = 1$ for all $n \in \N$. 

We may now derive a recursion relation for the functions $m_n$. 

\begin{theorem}\label{thm:type2mnrecurrence}
The sequence of functions $m_n$ is given by the recursive relationship
\begin{align}
m_0(x) &= x-1 \label{eq:type2m0} \\
m_n(x) &= \sum_{k=0}^\infty \sum_{\ell = 1}^{b-1} m_{n-1}(1+\ell^{-1}b^{-k}) - m_{n-1}(\max\{1+\ell^{-1}b^{-k}x^{-1}, 1+(\ell+1)^{-1}b^{-k}\}) && n \ge 1 \label{eq:type2mn}
\end{align}
for $1 \le x \le 2$.
\end{theorem}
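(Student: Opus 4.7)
The plan is to mirror the proof of Theorem \ref{thm:type1mnrecurrence}, with the new complication that each type II term carries both an exponent $a_n$ and a coefficient $c_n \in \{1,\dots,b-1\}$, so we partition by pairs $(k,\ell)$ rather than by a single index. For the base case, $a_0 = 0$ and $c_0 = 1$ give $z_0(\alpha) = \alpha$, whence $M_0(x) = (1,x)$ and $m_0(x) = x-1$.

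For the inductive step, decompose $M_n(x)$ as the disjoint union of
\[
A_{k,\ell} = \{\alpha \in (1,2) : z_n(\alpha) < x,\ a_n(\alpha) = k,\ c_n(\alpha) = \ell\}
\]
over $k \ge 0$ and $1 \le \ell \le b-1$; disjointness is automatic since $(a_n,c_n)$ is a deterministic function of $\alpha$. The key computational step is the type II analog of \eqref{eq:type1mnrecineq1}: the remainder recursion for type II gives $r_{n-1} = c_{n-1}b^{a_{n-1}} + c_{n-1}b^{a_{n-1}}/r_n$, hence $z_{n-1} = 1 + 1/r_n$, and on $A_{k,\ell}$ one has $z_n = 1/(\ell b^k (z_{n-1}-1))$. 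Translating both $z_n < x$ and the digit-assignment condition $\ell b^k \le r_n < (\ell+1) b^k$ into constraints on $z_{n-1}$ yields
\[
\max\{1+\ell^{-1}b^{-k}x^{-1},\ 1+(\ell+1)^{-1}b^{-k}\} < z_{n-1} \le 1+\ell^{-1}b^{-k}.
\]
Taking Lebesgue measure on both sides (so that each $\M(A_{k,\ell})$ becomes the corresponding difference of $m_{n-1}$-values) and summing over $(k,\ell)$ then produces \eqref{eq:type2mn}.

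The main obstacle, and the source of the $\max$ in the statement, is the contrast with the type I case: there the hypothesis $x < b$ guaranteed that the lower bound forced by $z_n < x$ always dominated the one from the digit assignment, so only a single inequality was binding (cf.\ the calculation leading to \eqref{eq:type1mnreccase1}). Here, when $x \ge (\ell+1)/\ell$ (which occurs for small $\ell$ with $x$ near $2$), the digit-range constraint $z_{n-1} > 1+(\ell+1)^{-1}b^{-k}$ instead becomes the tighter one, so neither inequality dominates uniformly. One therefore has to treat the two regimes separately and verify in each that the upper endpoint $1+\ell^{-1}b^{-k}$ still exceeds the max, so that every $\M(A_{k,\ell})$ is nonnegative. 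Pairwise disjointness of the $z_{n-1}$-intervals across $(k,\ell)$ is immediate because the half-open intervals $(1+(\ell+1)^{-1}b^{-k},\,1+\ell^{-1}b^{-k}]$ tile $(1,2)$ as $(k,\ell)$ ranges over $\mathbb{Z}_{\ge 0}\times\{1,\dots,b-1\}$.
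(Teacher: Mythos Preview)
Your proposal is correct and follows essentially the same approach as the paper: partition by $(k,\ell)$, use $z_{n-1} = 1 + 1/r_n$ to translate both the condition $z_n < x$ and the digit-assignment constraint into bounds on $z_{n-1}$, observe that neither lower bound dominates the other uniformly (hence the $\max$), and sum the resulting $m_{n-1}$-differences. Your observation that disjointness of the $A_{k,\ell}$ is automatic from $(a_n,c_n)$ being a function of $\alpha$ is a clean simplification over the paper, which instead verifies disjointness by an explicit two-case comparison of the $z_{n-1}$-intervals.
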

\begin{proofx}
Notice that $r_0(\alpha) = \alpha$, $a_0 = 0$, and $c_0 = 1$, so $z_0(\alpha) = \frac{r_0}{c_0 b^{a_0}} = \alpha$ and thus $M_0(x) = \{\alpha \in (1,2) : z_0(\alpha) < x\} = \{\alpha \in (1,2) : \alpha < x\} = (1,x)$, so $m_0(x) = x-1$.

Now fix $n \ge 1$. Since $a_n \in \Z_{\ge 0}$ and $c_n \in \{1,\dots,b-1\}$, we have
\[
m_n(x) = \M\{\alpha \in (1,2) : z_n < x\} = \M \bigcup_{k=0}^\infty \bigcup_{\ell=1}^{b-1} \{\alpha \in (1,2) : z_n < x, a_n = k, c_n = \ell\}.
\]
Fix $x \in (1,2)$ and let
\[
A_{k,\ell} = \{\alpha \in (1,2) : z_n < x, a_n = k, c_n = \ell\}
\]
for $k \in \Z_{\ge 0}$ and $\ell \in \{1,\dots,b-1\}$. By Definition~\ref{def:type2zn,Mn,mn}, $z_n < x$ if and only if 
\begin{equation*}
\frac{r_n}{c_n b^{a_n}} < x
\end{equation*}
Notice that
\[
z_{n-1} = [1,c_nb^{a_n}, c_{n+1}b^{a_{n+1}}, \dots] = [1,r_n] = 1+\frac{1}{r_n}
\]
so $z_n < x$ if and only if
\[
\frac{1}{c_n b^{a_n}(z_{n-1}-1)} < x,
\]
or equivalently
\begin{equation}\label{eq:type2mnrecineq1}
z_{n-1} > 1 + c_n^{-1} b^{-a_n} x^{-1} = 1+\ell^{-1}b^{-k}x^{-1}.
\end{equation}
Additionally, in order to have $a_n = k$ and $c_n = \ell$, we must have $\ell b^k \le r_n < (\ell+1)b^k$, or equivalently,
\begin{equation}\label{eq:type2mnrecineq2}
1 + (\ell+1)^{-1}b^{-k} < z_{n-1} \le 1+\ell^{-1}b^{-k}.
\end{equation}
Unlike in the type I case, the left hand inequality in \eqref{eq:type2mnrecineq2} is not implied by \eqref{eq:type2mnrecineq1}, so $z_n < x$ with $a_n = k$ and $c_n = \ell$ if and only if
\begin{equation}\label{eq:type2mnrecineq3}
\max\{1+\ell^{-1}b^{-k}x^{-1}, 1+(\ell+1)^{-1}b^{-k}\} < z_{n-1} \le 1+\ell^{-1}b^{-k}.
\end{equation}
Thus
\begin{equation}\label{eq:type2mnrecAkl}
A_{k,\ell} = \{\alpha \in (1,2) : \max\{1+\ell^{-1}b^{-k}x^{-1}, 1+(\ell+1)^{-1}b^{-k}\} < z_{n-1} \le 1+\ell^{-1}b^{-k}\}.
\end{equation}
Now suppose $k_1,k_2 \in \Z$ and $\ell_1,\ell_2 \in \{1,\dots,b-1\}$ with $(k_1,\ell_1) \ne (k_2,\ell_2)$. We claim that $A_{k_1,\ell_1}$ and $A_{k_2,\ell_2}$ are disjoint. Consider two cases:

Case 1: $k_1 \ne k_2$. Suppose (without loss of generality) that $k_2 < k_1$, so $k_2 - k_1 \le -1$. Also note that $1 \le p,q \le b-1$, so we have
\begin{align}
1+\ell_1^{-1}b^{-k_1} &= 1+\ell_1^{-1} b^{k_2 - k_1} b^{-k_2} \le 1+ \ell_1^{-1}b^{-1}b^{-k_2} \le 1+b^{-1}b^{-k_2} \notag \\
&< 1+(\ell_2+1)^{-1} b^{-k_2} \le 1 +\max\{1+\ell_2^{-1}b^{-k_2}x^{-1}, 1+(\ell_2+1)^{-1}b^{-k_2}\} \label{eq:type2mnreccase1}
\end{align}

Case 2: $k_1 = k_2$, $\ell_1 \ne \ell_2$. Suppose (without loss of generality) that $\ell_1 > \ell_2$, so indeed $\ell_1 \ge \ell_2+1$. Then
\begin{equation}\label{eq:type2mnreccase2}
1+\ell_1^{-1}b^{-k_1} = 1+\ell_1^{-1}b^{-k_2} \le 1+(\ell_2+1)^{-1}b^{-k_2} \le 1 + \max\{1+\ell_2^{-1}b^{-k_2}x^{-1}, 1+(\ell_2+1)^{-1}b^{-k_2}\}.
\end{equation}

Now suppose $a_1 \in A_{k_1,\ell_1}$ and $a_2 \in A_{k_2, \ell_2}$. By \eqref{eq:type2mnrecAkl} and either \eqref{eq:type2mnreccase1} or \eqref{eq:type2mnreccase2}, 
\[
a_1 \le 1+\ell_1^{-1}b^{-k_1} < 1 + (\ell_2+x-1)^{-1}b^{-k_2} \le a_2
\]
so $a_1 \ne a_2$ and thus $A_{k_1,\ell_1}$ and $A_{k_2,\ell_2}$ must be disjoint. Therefore
\begin{equation} \label{eq:type2mnrecmun}
m_n(x) = \M \bigcup_{k=0}^\infty \bigcup_{\ell=1}^{b-1} A_{k,\ell} = \sum_{k=0}^\infty \sum_{\ell=1}^{b-1} \mathcal{M}(A_{k,\ell}).
\end{equation}
Finally, since $m_{n-1}(x) = \mathcal{M}\{\alpha \in (1,2) : z_{n-1} < x\}$, by \eqref{eq:type2mnrecAkl} and \eqref{eq:type2mnrecmun} we can conclude
\[
m_n(x) = \sum_{k=0}^\infty \sum_{\ell=1}^{b-1} \left( m_{n-1}(1+\ell^{-1}b^{-k}) - m_{n-1}\left(\max\{1+\ell^{-1}b^{-k}x^{-1}, 1+ (\ell+1)^{-1}b^{-k}\}\right)\right),
\]
which proves the recursion \eqref{eq:type2mn}, and completes the proof of the theorem.
\end{proofx}

We can now derive a formula for $D_n(k,\ell)$ in terms of the function $m_n$.
\begin{theorem}\label{thm:type2genprobdist}
\[
\M (D_{n+1}(k,\ell)) = m_n(1+\ell^{-1}b^{-k}) - m_n(1+(\ell+1)^{-1}b^{-k}).
\]
\end{theorem}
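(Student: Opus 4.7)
The plan is to mirror the argument given for Theorem~\ref{thm:type1genprobdist}, but with the extra parameter $\ell = c_{n+1}$ and the stricter constraint on remainder terms noted in Remark~\ref{rem:type2numeratorterms}. First, I would fix $\alpha \in D_{n+1}(k,\ell)$, so that $a_{n+1}=k$ and $c_{n+1}=\ell$, and then express $z_n(\alpha)$ using Definition~\ref{def:type2zn,Mn,mn} as
\[
z_n = [1,\ell b^k, r_{n+2}]_{\cl_2(b)} = 1 + \cfrac{1}{\ell b^k + \cfrac{\ell b^k}{r_{n+2}}},
\]
which makes $z_n$ a function of $r_{n+2}$ alone once $k$ and $\ell$ are fixed.

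Next, I would identify the admissible range of $r_{n+2}$. By Remark~\ref{rem:type2numeratorterms}, the type II recursion forces $r_{n+2} = y_{n+2} \in (c_{n+1},\infty) = (\ell,\infty)$, rather than $(1,\infty)$ as in the type I case. A quick sign check (or direct differentiation) shows that $z_n$ is strictly monotonic in $r_{n+2}$, so evaluating the two one-sided limits
\[
\lim_{r_{n+2}\to \ell^+} z_n = 1 + \tfrac{1}{(\ell+1)b^k}, \qquad \lim_{r_{n+2}\to \infty} z_n = 1 + \tfrac{1}{\ell b^k},
\]
identifies the image of $z_n$ on $D_{n+1}(k,\ell)$ as the open interval between these endpoints.

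For the converse direction, the identity $z_n = 1 + 1/r_{n+1}$ inverts to $r_{n+1} = 1/(z_n-1)$; if $z_n$ lies in the interval above, then $r_{n+1} \in (\ell b^k, (\ell+1)b^k)$, which forces $a_{n+1}=k$ and $c_{n+1}=\ell$. Hence
\[
D_{n+1}(k,\ell) = M_n\bigl(1 + \ell^{-1}b^{-k}\bigr) \setminus M_n\bigl(1 + (\ell+1)^{-1}b^{-k}\bigr),
\]
and taking Lebesgue measures, using the monotonic nesting $M_n(y)\subseteq M_n(x)$ for $y\le x$, yields the stated formula.

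The main obstacle — really more a pitfall than a difficulty — is the change in the range of $r_{n+2}$ between types I and II; naively copying the type I calculation with $r_{n+2}\to 1$ would give the wrong lower endpoint. Once the correct range $(\ell,\infty)$ is used, the strict monotonicity of $z_n$ in $r_{n+2}$ does all the work, ensuring the image is a single interval so that no double-counting correction is needed and the set-difference decomposition above is exact up to measure-zero boundary.
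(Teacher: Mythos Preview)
Your proposal is correct and follows essentially the same approach as the paper: express $z_n$ in terms of $r_{n+2}$, use the type~II constraint $r_{n+2}\in(\ell,\infty)$ from Remark~\ref{rem:type2numeratorterms}, exploit monotonicity to find the endpoint values, and read off the set-difference decomposition in terms of $M_n$. Your explicit converse check (that $z_n$ in the interval forces $a_{n+1}=k$, $c_{n+1}=\ell$) is a welcome addition that the paper leaves implicit.
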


\begin{proofx}
Suppose that $\alpha \in D_{n+1}(k,\ell)$. Then $a_{n+1} = k$ and $c_{n+1} = \ell$, so
\[
z_n = [1, \ell b^k, r_{n+2}]_{\cl_2(b)} = 1 + \cfrac{1}{\ell b^k + \cfrac{\ell b^k}{r_{n+2}}}
\]
where $r_{n+2}$ can take any value in $(\ell,\infty)$, as noted in Remark~\ref{rem:type2numeratorterms}. Clearly $z_n$ is a monotone function of $r_{n+2}$ for fixed $k$, so the extreme values of $z_n$ on $D_{n+1}(k,\ell)$ will occur at the extreme values of $r_{n+2}$. Letting $r_n \to \ell$ gives $z_n = 1 + \frac{1}{\ell b^k + b^k} = 1+(\ell+1)^{-1}b^{-k}$, and letting $r_n \to \infty$ gives $z_n = 1 + \frac{1}{\ell b^k + 0} = 1+\ell^{-1} b^{-k}$.
Thus,
\begin{align*}
D_{n+1}(k) &= \{\alpha \in (1,b) : 1+(\ell+1)^{-1}b^{-k} < z_n(\alpha) \le 1+\ell^{-1}b^{-k} \} \\
&= M_n(1+\ell^{-1}b^{-k}) \setminus M_n(1+(\ell+1)^{-1}b^{-k}),
\end{align*}
so
\[
\M D_{n+1}(k) = m_n(1+\ell^{-1}b^{-k}) - m_n(1+(\ell+1)^{-1}b^{-k}).
\]
\end{proofx}

Thus, if the limiting distribution $m(x)$ exists, it immediately follows that
\begin{equation}\label{eq:type2limitingdist}
\lim_{n\to\infty} \M (D_n(k,\ell)) = m(1+\ell^{-1}b^{-k}) - m(1+(\ell+1)^{-1}b^{-k}).
\end{equation}

\subsection{Experimentally Determining the Type II Distribution}\label{sec:type2experimental}

Again, suppose $b$ is arbitrary. Let $\mu_b$ denote the limiting distribution function $m$ for the base $b$, assuming it exists.

We may again investigate the form of $\mu_b(x)$ by iterating the recurrence relation of Theorem \ref{thm:type2mnrecurrence} at points evenly spaced over the interval $[1,2]$, starting with $m_0(x) = x-1$. At each iteration, we fit a spline to these points, evaluating each ``infinite'' sum to 100 terms, and breaking the interval $[1,2]$ into 100 pieces. 
\begin{longonly}
This process is given by the Maple code in figure \ref{fig:type2distcode}, and the resulting functions for $b=2,3,4,5$ are shown in figure \ref{fig:type2distplot}.

\begin{figure}[ht]
\begin{mdframed}[backgroundcolor=blue!20]
\footnotesize
\begin{lstlisting}
pd_type2 := proc (b, n) 
  local u, i, j, v, res, y; 
  u := z -> z-1; 
  for i from 1 to n do 
    v := seq([x, evalf(add(add(u(1+*b^(-k)/p)
         -u(1+max(b^(-k)/(p*x), b^(-k)/(p+1))), 
         p = 1 .. b-1), k = 0 .. 100))], 
         x = seq(1+i/100, i = 0 .. 100)); 
    u := z -> Spline([v], z);
  end do;
  return [seq([x, u(x)], x = seq(1+i/100, i = 0 .. 100))]; 
end proc;
\end{lstlisting}
\end{mdframed}
\caption{Maple code for generating the type II $\mu_b$ function}
\label{fig:type2distcode}
\end{figure}

\begin{figure}[ht]
	\centering
		\includegraphics[width=0.75\textwidth]{type2dist.jpg}
	\caption{Type II $\mu_b$ for $2 \le b \le 5$ (10 iterations)}
	\label{fig:type2distplot}
\end{figure}
\end{longonly}

We find good convergence of $\mu_b(x)$ after around 10 iterations. However, we have been unable to find a closed form for $\mu_b$ for $b > 2$. It appears that $\mu_b$ is a continuous non-monotonic function that is smooth on $(1,2)$ except at $x = \frac{j+1}{j}$ for $j = 2, \dots, b-1$.

\begin{longonly}
We can still use \eqref{eq:type2limitingdist} to approximate the limiting distribution. This approximate limiting distribution is shown in figure \ref{fig:type2probplot} for $b = 2, 3, 4, 5$.

\begin{figure}[ht]
	\centering
		\includegraphics[width=0.75\textwidth]{type1prob.jpg}
	\caption{Type II approximate limiting distribution for $2 \le b \le 5$}
	\label{fig:type2probplot}
\end{figure}
\end{longonly}

If a logarithmic Khinchine constant $\KLii_b$ exists (i.e. almost every $\alpha \in (1,\infty)$ has the same limiting geometric mean of denominator terms), and if a limiting distribution $D(k,\ell) = \lim_{n \to \infty} D_n(k,\ell)$ of denominator terms exists, then
\[
\KLii_b = \prod_{k=0}^\infty \prod_{\ell = 1}^{b-1} \ell b^{k} \M D(k,\ell).
\]
This is because the limiting distribution of denominator terms (if it exists) is essentially the ``average'' distribution over all numbers $\alpha \in (1,2)$. If we then assume that almost every $\alpha \in (1,2)$ has the same limiting geometric mean of denominator terms, then this limiting geometric mean (the logarithmic Khinchine constant) must equal the limiting geometric mean of the ``average'' distribution.

However, since we do not know the limiting distribution, we can only approximate the logarithmic Khinchine constants. 
\begin{longonly}
The Maple code for these approximations is shown in figure~\ref{fig:type2klcode}, and the approximated values are shown in figure~\ref{fig:type2klresults}.
\end{longonly}

\begin{longonly}
\begin{figure}[ht]
\begin{mdframed}[backgroundcolor=blue!20]
\footnotesize
\begin{lstlisting}
mu[2] := log(2*x/(x+1))/log(4/3);
for b from 2 to 10 do
  mu[b] := Spline(pd_type2(b, 10), x);
end do;
kl2 := b -> evalf(mul(mul((p * b^k)^(eval(mu[b], x=1+1/(p*b^k)) -
           eval(mu[b], x=1+1/((p+1)*b^k))),p=1..b-1),k=0..100));
for b from 2 to 10 do 
  print(b, evalf(kl2(b), 11));
end do;
\end{lstlisting}
\end{mdframed}
\caption{Maple code for approximating Type II Logarithmic Khintchine constant}
\label{fig:type2klcode}
\end{figure}
\end{longonly}

\begin{figure}[ht]
\begin{center}
\begin{tabular}{c|c}
 $b$ & $\KLii_b$ \\ \hline
 2 & 2.656305048 \\
 3 & 3.415974174 \\
 4 & 4.064209949 \\
 5 & 4.636437895 \\
 6 & 5.152343739 \\
 7 & 5.624290253 \\
 8 & 6.060673548 \\
 9 & 6.467518102 \\
10 & 6.849326402 \\
\end{tabular}
\end{center}
\caption{Experimental type II logarithmic Khinchine constants for $2 \le b \le 10$}
\label{fig:type2klresults}
\end{figure}

This conjectured values of the type II logarithmic Khinchine constants are supported by empirical evidence, as the limiting geometric means of denominator terms for various irrational constants give the conjectured values.

\section{Type III Continued Logarithms}\label{sec:type3}

Fix an integer base $b \ge 2$. In this section, we will introduce our third generalization of base 2 continued logarithms. This appears to be the best of the three generalizations, as we will show that type III continued logarithms have guaranteed convergence, rational finiteness, and closed forms for the limiting distribution and logarithmic Khinchine constant. Additionally, type III continued logarithms `converge' to simple continued fractions if one looks at limiting behaviour as $b \to \infty$.

\subsection{Type III Definitions and Recurrences}\label{subsec:type3defrec}

We start with some definitions, notation, and lemmas related to continued logarithm recurrences.

\begin{definition}\label{def:clog}
Let $\alpha \in \R_{\ge 1}$. The type III base $b$ continued logarithm for $\alpha$ is
\[
c_0b^{a_0} + \cFrac{b^{a_0}}{c_1b^{a_1}} + \cFrac{b^{a_1}}{c_2b^{a_2}} + \cFrac{b^{a_2}}{c_3 b^{a_3}} + \cdots = [c_0b^{a_0}, c_1b^{a_1}, c_2b^{a_2}, \dots]_{\cl_3(b)}.
\]
where the terms $a_0,a_1,a_2,\dots$ and $c_0,c_1,c_2,\dots$ are determined by the recursive process below, terminating at the term $c_n b^{a_n}$ if at any point $y_n = c_n b^{a_n}$.
\begin{align*}
y_0 &= \alpha \\
a_n &= \lfloor \log_b y_n \rfloor && n \ge 0 \\
c_n &= \left\lfloor \frac{y_n}{b^{a_n}} \right\rfloor && n \ge 0 \\
y_{n+1} &= \frac{b^{a_n}}{y_n - c_nb^{a_n}} && n \ge 0
\end{align*}
\end{definition}

\begin{remark}
We can (and often will) think of the $a_n$ and $c_n$ as functions $a_0, a_1, a_2, \dots: (1, \infty) \to \Z_{\ge 0}$ and $c_0, c_1, c_2, \dots : (1,\infty) \to \{1,2,\dots,b-1\}$, since the terms $a_0, c_0, a_1, c_1, a_2, c_2, \dots$ are uniquely determined by $\alpha$. Conversely, given the complete sequences $a_0, a_1, a_2, \dots$ and $c_0, c_1, c_2, \dots$, one can recover the value of $\alpha$.
\end{remark}

\begin{remark}\label{rem:convergent,remainder}
Let $\alpha = [c_0b^{a_0}, c_1b^{a_1}, c_2b^{a_2}, \dots]_{\cl_3(b)} \in (1,\infty)$. Based on Definitions~\ref{def:cfracconv} and~\ref{def:cfracremainderterm}, the $n$th convergent and $n$th remainder term of $\alpha$ are given by
\[
x_n(\alpha) = c_0b^{a_0} + \cFrac{b^{a_0}}{c_1b^{a_1}} + \cFrac{b^{a_1}}{c_2b^{a_2}} + \cFrac{b^{a_2}}{c_3 b^{a_3}} + \cdots + \cFrac{b^{a_{n-1}}}{c_nb^{a_n}}.
\]
and
\[
r_n(\alpha) = c_nb^{a_n} + \cFrac{b^{a_n}}{c_{n+1}b^{a_{n+1}}} + \cFrac{b^{a_{n+1}}}{c_{n+2} b^{a_{n+2}}} + \cdots,
\]
respectively.

Note that the terms $r_n$ are the same as the terms $y_n$ from Definition~\ref{def:clog}.
\end{remark}

\begin{lemma}\label{lem:pqlemma}
The $n$th convergent of $\alpha = [c_0b^{a_0}, c_1b^{a_1}, c_2b^{a_2}, \dots]_{\cl_3(b)}$ is given by
\[
x_n = \frac{p_n}{q_n}
\]
where 
\[
p_{-1} = 1, \hspace{1cm} q_{-1} = 0, \hspace{1cm} p_0 = c_0b^{a_0}, \hspace{1cm} q_0 = 1,
\]
and for $n \ge 1$,
\begin{align*}
p_n &= c_nb^{a_n} p_{n-1} + b^{a_{n-1}} p_{n-2}, \\
q_n &= c_nb^{a_n} q_{n-1} + b^{a_{n-1}} q_{n-2}.
\end{align*}
\end{lemma}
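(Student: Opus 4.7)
The plan is to obtain this as a direct specialization of Fact~\ref{fact:cfracrecurrence}, exactly as was done for the type~I and type~II analogues (Lemmas~\ref{lem:type1pqlemma} and~\ref{lem:type2pqlemma}). First I would match Definition~\ref{def:clog} against the generic shape in Definition~\ref{def:cfrac}, reading off the denominator terms as $\alpha_n = c_n b^{a_n}$ and the numerator terms as $\beta_n = b^{a_{n-1}}$ for $n \geq 1$. The initial data $p_{-1}=1$, $q_{-1}=0$, $p_0 = c_0 b^{a_0}$, $q_0 = 1$ come from the standard initialization of the convergent recurrence for a continued fraction with leading term $\alpha_0 = c_0 b^{a_0}$, so no independent verification is needed here.

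Next I would check that Fact~\ref{fact:cfracrecurrence} genuinely applies by noting positivity: since $a_n \in \Z_{\geq 0}$ and $c_n \in \{1,\dots,b-1\}$, both $\alpha_n > 0$ and $\beta_n > 0$. Substituting the identified $\alpha_n$ and $\beta_n$ into the generic recurrences $p_n = \alpha_n p_{n-1} + \beta_n p_{n-2}$ and $q_n = \alpha_n q_{n-1} + \beta_n q_{n-2}$ immediately yields the stated formulas. There is no real obstacle here; the only point worth flagging is the bookkeeping distinction between the three types, namely that in the type~III case the numerator sitting above $c_n b^{a_n}$ is $b^{a_{n-1}}$ (rather than $(b-1)b^{a_{n-1}}$ as in type~I or $c_{n-1} b^{a_{n-1}}$ as in type~II), so one must be careful to read $\beta_n$ off from Definition~\ref{def:clog} and not from one of the earlier definitions.
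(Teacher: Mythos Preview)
Your proposal is correct and matches the paper's own proof exactly: the paper simply invokes Fact~\ref{fact:cfracrecurrence} with $\alpha_n = c_n b^{a_n}$ and $\beta_n = b^{a_{n-1}}$. Your additional remarks on positivity and the type~I/II/III bookkeeping are accurate but more than the paper bothers to say.
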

\begin{proof}
This follows from Fact~\ref{fact:cfracrecurrence}, where for continued logarithms we have $\alpha_n = c_n b^{a_n}$ and $\beta_n = b^{a_{n-1}}$.
\end{proof}

\begin{lemma}\label{lem:qnbounds}
We have the following lower bounds on the denominators $q_n$:
\begin{itemize}
\item $q_n \ge 2^{(n-1)/2} > \frac12 2^{n/2}$ for $n \ge 0$,
\item $q_n \ge b^{a_1 + \cdots + a_n}$ for $n \ge 0$.
\end{itemize}
\end{lemma}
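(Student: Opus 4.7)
The proof will proceed by straightforward induction on $n$, using the recurrence from Lemma~\ref{lem:pqlemma} together with the basic facts $c_n \ge 1$, $a_n \ge 0$, and $b \ge 2$, which together imply $c_n b^{a_n} \ge 1$ and $b^{a_{n-1}} \ge 1$. Thus for every $n \ge 1$,
\[
q_n \;=\; c_n b^{a_n} q_{n-1} + b^{a_{n-1}} q_{n-2} \;\ge\; q_{n-1} + q_{n-2} \;\ge\; q_{n-1},
\]
the last inequality valid since $q_{n-2} \ge 0$ (by the initial values and a trivial induction). So the $q_n$ are non-decreasing and satisfy a Fibonacci-type inequality. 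This is the one estimate that drives both bounds.

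For the first inequality, I would verify the base cases $q_0 = 1 \ge 2^{-1/2}$ and $q_1 = c_1 b^{a_1} \ge 1 = 2^{0}$ directly. For $n \ge 2$, combining $q_n \ge q_{n-1} + q_{n-2}$ with $q_{n-1} \ge q_{n-2}$ gives $q_n \ge 2 q_{n-2}$. Iterating this step pairwise yields $q_n \ge 2^{\lfloor n/2 \rfloor}$ (with the base-case value $q_0 = 1$ or $q_1 = 1$ depending on parity), and since $\lfloor n/2 \rfloor \ge (n-1)/2$, we conclude $q_n \ge 2^{(n-1)/2}$. The strict inequality $2^{(n-1)/2} > \tfrac{1}{2}\cdot 2^{n/2}$ is immediate from $2^{(n-1)/2} = 2^{n/2}/\sqrt{2}$ and $\sqrt{2} < 2$.

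For the second inequality, the base case $n = 0$ is the empty-sum convention $q_0 = 1 = b^0$. For $n \ge 1$, drop the second (nonnegative) term in the recurrence and use $c_n \ge 1$:
\[
q_n \;\ge\; c_n b^{a_n} q_{n-1} \;\ge\; b^{a_n} q_{n-1} \;\ge\; b^{a_n}\cdot b^{a_1 + \cdots + a_{n-1}} \;=\; b^{a_1 + \cdots + a_n},
\]
where the third inequality is the inductive hypothesis.

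There is no real obstacle here; the only subtlety is keeping track of the edge cases $n = 0$ and $n = 1$ (where $q_{-1} = 0$ means the Fibonacci-type lower bound $q_1 \ge q_0 + q_{-1}$ collapses to $q_1 \ge 1$, which is still exactly what is needed) and being careful about whether one iterates $q_n \ge 2 q_{n-2}$ starting from $q_0$ or from $q_1$ depending on the parity of $n$.
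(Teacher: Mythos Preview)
Your proof is correct and follows essentially the same approach as the paper: both arguments obtain $q_n \ge 2q_{n-2}$ (the paper via $q_n \ge (c_n b^{a_n} + b^{a_{n-1}})q_{n-2}$, you via $q_n \ge q_{n-1} + q_{n-2} \ge 2q_{n-2}$, each relying on $q_{n-1} \ge q_{n-2}$) and then iterate parity-wise from the base cases, while the second bound is identical in both.
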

\begin{proof}
For the first bound, note that 
\[
q_n = c_nb^{a_n} q_{n-1} + b^{a_{n-1}} q_{n-2} \ge (c_nb^{a_n} + b^{a_{n-1}}) q_{n-2} \ge 2 q_{n-2}.
\]
A simple inductive argument then gives $q_n \ge 2^{n/2} q_0 = 2^{n/2} > 2^{(n-1)/2}$ for even $n$ and $q_n \ge 2^{(n-1)/2} q_1 \ge 2^{(n-1)/2}$ for odd $n$.

For the second bound, note that $q_n = c_nb^{a_n} q_{n-1} + b^{a_{n-1}} q_{n-2} \ge b^{a_n} q_{n-1}$ from which another simple inductive argument gives $q_n \ge b^{a_n + a_{n-1} + \cdots + a_1} q_0 = b^{a_1 + \cdots + a_n}$.
\end{proof}

\begin{lemma}\label{lem:pnqnm1-qnpnm1}
For $n \ge 0$,
\[
p_n q_{n-1} - q_n p_{n-1} = (-1)^{n-1} b^{a_0 + \cdots + a_{n-1}}.
\]
\end{lemma}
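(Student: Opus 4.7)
The plan is to prove this identity by induction on $n$, exactly paralleling the classical proof of the analogous determinant identity for simple continued fractions, but tracking the extra factors of $b^{a_{n-1}}$ introduced by the continued-logarithm recurrence in Lemma~\ref{lem:pqlemma}.

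For the base case $n = 0$, I would just plug in the initial values $p_{-1} = 1$, $q_{-1} = 0$, $p_0 = c_0 b^{a_0}$, $q_0 = 1$ and check that $p_0 q_{-1} - q_0 p_{-1} = -1$, matching the right-hand side $(-1)^{-1} b^{\,\text{(empty sum)}} = -1$.

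For the inductive step, assuming the identity at index $n-1$, I would substitute the Lemma~\ref{lem:pqlemma} recurrences
\[
p_n = c_n b^{a_n} p_{n-1} + b^{a_{n-1}} p_{n-2}, \qquad q_n = c_n b^{a_n} q_{n-1} + b^{a_{n-1}} q_{n-2}
\]
into $p_n q_{n-1} - q_n p_{n-1}$. The $c_n b^{a_n} p_{n-1} q_{n-1}$ terms cancel, leaving
\[
p_n q_{n-1} - q_n p_{n-1} = b^{a_{n-1}}\bigl(p_{n-2} q_{n-1} - q_{n-2} p_{n-1}\bigr) = -b^{a_{n-1}} \bigl(p_{n-1} q_{n-2} - q_{n-1} p_{n-2}\bigr),
\]
and the inductive hypothesis turns this into $-b^{a_{n-1}} \cdot (-1)^{n-2} b^{a_0 + \cdots + a_{n-2}} = (-1)^{n-1} b^{a_0 + \cdots + a_{n-1}}$, as required.

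There is no real obstacle here: the only subtlety is handling the exponent bookkeeping correctly, in particular making sure the base case is stated with the convention that $a_0 + \cdots + a_{-1}$ denotes the empty sum (so $b^0 = 1$) and that $(-1)^{-1} = -1$. Everything else is mechanical algebra driven by the two-term linear recurrences of Lemma~\ref{lem:pqlemma}.
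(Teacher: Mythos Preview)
Your proposal is correct and follows essentially the same approach as the paper's own proof: induction on $n$, with the base case computed directly from the initial values and the inductive step obtained by substituting the recurrences of Lemma~\ref{lem:pqlemma} and cancelling the $c_n b^{a_n} p_{n-1} q_{n-1}$ terms. The only cosmetic difference is that the paper phrases the inductive step as passing from $n$ to $n+1$ rather than from $n-1$ to $n$.
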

\begin{proof}
For $n = 0$, we have
\[
p_0 q_{-1} - q_0 p_{-1} = c_0b^{a_0}(0) - 1(1) = -1 = (-1)^{-1} b^0.
\]
Now suppose that the statement is true for some $n \ge 0$. Then by Lemma~\ref{lem:pqlemma},
\begin{align*}
p_{n+1}q_n - q_{n+1}p_n &= (c_{n+1}b^{a_{n+1}}p_n + b^{a_n}p_{n-1})q_n - (c_{n+1}b^{a_{n+1}}q_n + b^{a_n}q_{n-1})p_n \\
&= -b^{a_n} (p_n q_{n-1} - q_n p_{n-1}) = -b^{a_n} (-1)^{n-1} b^{a_0 + \cdots + a_{n-1}} \\
&= (-1)^n b^{a_0 + \cdots + a_n}.
\end{align*}
so the result follows by induction.
\end{proof}

The following lemma is equivalent to Lemma~\ref{lem:pqlemma}, and will be used to prove Theorem~\ref{thm:remaindertheorem}.

\begin{lemma}\label{lem:pqmatrix}
Let $a_{-1} = 0$. Then for all $n \ge 0$,
\[
\mat{p_n & p_{n-1} \\ q_n & q_{n-1}} = \prod_{j=0}^n \mat{c_jb^{a_j} & 1 \\ b^{a_{j-1}} & 0}.
\]
\end{lemma}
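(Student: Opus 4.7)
The plan is to prove this by straightforward induction on $n$, using the matrix form of the recurrence from Lemma~\ref{lem:pqlemma}. The key observation is that right-multiplying a matrix of the form $\mat{p_n & p_{n-1} \\ q_n & q_{n-1}}$ by $\mat{c_{n+1}b^{a_{n+1}} & 1 \\ b^{a_n} & 0}$ produces exactly the next such matrix $\mat{p_{n+1} & p_n \\ q_{n+1} & q_n}$, which is precisely what the continued-fraction recurrence encodes.

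First, I would verify the base case $n = 0$. The right-hand side is $\mat{c_0 b^{a_0} & 1 \\ b^{a_{-1}} & 0} = \mat{c_0 b^{a_0} & 1 \\ 1 & 0}$ since $a_{-1} = 0$, and this matches $\mat{p_0 & p_{-1} \\ q_0 & q_{-1}} = \mat{c_0 b^{a_0} & 1 \\ 1 & 0}$ from the initial values $p_{-1}=1$, $q_{-1}=0$, $p_0=c_0 b^{a_0}$, $q_0=1$ in Lemma~\ref{lem:pqlemma}.

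For the inductive step, assume the formula holds for some $n \ge 0$. Then
\[
\prod_{j=0}^{n+1} \mat{c_j b^{a_j} & 1 \\ b^{a_{j-1}} & 0} = \mat{p_n & p_{n-1} \\ q_n & q_{n-1}} \mat{c_{n+1} b^{a_{n+1}} & 1 \\ b^{a_n} & 0}.
\]
Multiplying out yields
\[
\mat{c_{n+1}b^{a_{n+1}} p_n + b^{a_n} p_{n-1} & p_n \\ c_{n+1}b^{a_{n+1}} q_n + b^{a_n} q_{n-1} & q_n},
\]
and by the recurrences $p_{n+1} = c_{n+1}b^{a_{n+1}} p_n + b^{a_n} p_{n-1}$ and $q_{n+1} = c_{n+1}b^{a_{n+1}} q_n + b^{a_n} q_{n-1}$ from Lemma~\ref{lem:pqlemma}, this is exactly $\mat{p_{n+1} & p_n \\ q_{n+1} & q_n}$, completing the induction.

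There is essentially no obstacle here; the only subtlety is handling the $j=0$ factor correctly, which is why the convention $a_{-1} = 0$ (so that $b^{a_{-1}} = 1$) is introduced. This ensures the base case lines up with the initial values $p_{-1}, q_{-1}$ and lets the product be written uniformly from $j=0$ rather than requiring a separate initial matrix.
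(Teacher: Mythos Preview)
Your proof is correct and follows essentially the same approach as the paper: induction on $n$, verifying the base case from the initial values and $a_{-1}=0$, then right-multiplying by the next factor and invoking the recurrences of Lemma~\ref{lem:pqlemma}. The only cosmetic difference is that the paper phrases the inductive step as passing from $n-1$ to $n$ rather than from $n$ to $n+1$.
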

\begin{proof}
For $n=0$, we have
\[
\prod_{j=0}^0 \mat{c_jb^{a_j} & 1 \\ b^{a_{j-1}} & 0} = \mat{c_0b^{a_0} & 1 \\ b^{a_{-1}} & 0} = \mat{c_0b^{a_0} & 1 \\ 1 & 0} = \mat{p_0 & p_{-1} \\ q_0 & q_{-1}}.
\]
Now suppose for induction that
\[
\prod_{j=0}^{n-1} \mat{c_jb^{a_j} & 1 \\ b^{a_{j-1}} & 0} = \mat{p_{n-1} & p_{n-2} \\ q_{n-1} & q_{n-2}}.
\]
Then by Lemma~\ref{lem:pqlemma},
\begin{align*}
\prod_{j=0}^n \mat{c_jb^{a_j} & 1 \\ b^{a_{j-1}} & 0} &= \mat{p_{n-1} & p_{n-2} \\ q_{n-1} & q_{n-2}} \mat{c_nb^{a_n} & 1 \\ b^{a_{n-1}} & 0} = \mat{c_nb^{a_n} p_{n-1} + b^{a_{n-1}}p_{n-2} & p_{n-1} \\ c_nb^{a_n}q_{n-1} + b^{a_{n-1}} q_{n-2} & q_{n-1}} = \mat{p_n & p_{n-1} \\ q_n & q_{n-1}},
\end{align*}
as asserted.
\end{proof}

\begin{theorem}\label{thm:remaindertheorem}
For arbitrary $1 \le k \le n$,
\[
[c_0b^{a_0}, c_1b^{a_1}, \dots, c_nb^{a_n}]_{\cl_3(b)} = \frac{p_{k-1}r_k + p_{k-2}b^{a_{k-1}}}{q_{k-1}r_k + q_{k-2}b^{a_{k-1}}}.
\]
\end{theorem}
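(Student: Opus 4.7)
The plan is to leverage the matrix representation of Lemma~\ref{lem:pqmatrix} and split the product at index $k$. Setting $M_j = \mat{c_jb^{a_j} & 1 \\ b^{a_{j-1}} & 0}$ (with $a_{-1}=0$), so that $M_0 M_1 \cdots M_n = \mat{p_n & p_{n-1} \\ q_n & q_{n-1}}$, I would first split
\[
\mat{p_n & p_{n-1} \\ q_n & q_{n-1}} \;=\; \mat{p_{k-1} & p_{k-2} \\ q_{k-1} & q_{k-2}} \,\bigl(M_k M_{k+1}\cdots M_n\bigr),
\]
using Lemma~\ref{lem:pqmatrix} applied to the prefix of length $k$. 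The goal is then to recognize the tail product $M_k\cdots M_n$ in terms of the convergents of the continued logarithm $[c_kb^{a_k}, c_{k+1}b^{a_{k+1}}, \dots, c_nb^{a_n}]_{\cl_3(b)}$, whose value is by definition $r_k$ (Remark~\ref{rem:convergent,remainder}).

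Next, let $\tilde p_j$, $\tilde q_j$ be the convergent numerators and denominators of this tail continued logarithm. Applying Lemma~\ref{lem:pqmatrix} to it gives $\tilde M_k \tilde M_{k+1}\cdots \tilde M_n = \mat{\tilde p_{n-k} & \tilde p_{n-k-1} \\ \tilde q_{n-k} & \tilde q_{n-k-1}}$, where $\tilde M_k = \mat{c_kb^{a_k} & 1 \\ 1 & 0}$ (because the tail's convention sets its own $a_{-1}=0$) and $\tilde M_j = M_j$ for $j>k$. The crucial observation is the identity $M_k = \mat{1 & 0 \\ 0 & b^{a_{k-1}}}\tilde M_k$, which lets me pull the mismatch between $M_k$ and $\tilde M_k$ out to the left as a diagonal factor. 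Consequently
\[
M_k M_{k+1}\cdots M_n \;=\; \mat{1 & 0 \\ 0 & b^{a_{k-1}}} \mat{\tilde p_{n-k} & \tilde p_{n-k-1} \\ \tilde q_{n-k} & \tilde q_{n-k-1}} \;=\; \mat{\tilde p_{n-k} & \tilde p_{n-k-1} \\ b^{a_{k-1}}\tilde q_{n-k} & b^{a_{k-1}}\tilde q_{n-k-1}}.
\]
Substituting this back, multiplying out, and reading off the $(1,1)$ and $(2,1)$ entries yields
\[
p_n = p_{k-1}\tilde p_{n-k} + p_{k-2} b^{a_{k-1}}\tilde q_{n-k}, \qquad q_n = q_{k-1}\tilde p_{n-k} + q_{k-2} b^{a_{k-1}}\tilde q_{n-k}.
\]
Dividing numerator and denominator of $p_n/q_n$ by $\tilde q_{n-k}$ and using $r_k = \tilde p_{n-k}/\tilde q_{n-k}$ gives exactly the claimed formula.

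The only real obstacle is bookkeeping at the ``seam'' $j=k$: the matrix $M_k$ has $b^{a_{k-1}}$ in its lower-left entry (inherited from the original indexing), whereas the tail continued logarithm, viewed on its own, would place a $1$ there. The factorization $M_k = \operatorname{diag}(1, b^{a_{k-1}})\,\tilde M_k$ isolates precisely this discrepancy, and the resulting diagonal factor on the left multiplies only the bottom row of the tail matrix --- which is exactly why $b^{a_{k-1}}$ appears attached to $p_{k-2}$ and $q_{k-2}$ in the statement of the theorem, rather than to $p_{k-1}$ and $q_{k-1}$. An alternative, equally short route is induction downward on $k$ (base case $k=n$, where $r_n = c_nb^{a_n}$ collapses the formula to the recurrence of Lemma~\ref{lem:pqlemma}), using $r_{k+1} = b^{a_k}/(r_k - c_kb^{a_k})$ in the inductive step together with the same recurrence to simplify; but the matrix argument above is cleaner and is what I would present.
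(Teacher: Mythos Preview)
Your proof is correct and is essentially the same as the paper's: both split the matrix product of Lemma~\ref{lem:pqmatrix} at index $k$, use the factorization $M_k = \mat{1 & 0 \\ 0 & b^{a_{k-1}}}\mat{c_kb^{a_k} & 1 \\ 1 & 0}$ to handle the seam, recognize the tail product as giving the convergents of $r_k$, and then divide. The only differences are notational (the paper writes $p_k', q_k'$ for your $\tilde p_{n-k}, \tilde q_{n-k}$ and works with the column $\mat{p_n \\ q_n}$ rather than the full $2\times 2$ matrix).
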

\begin{proof}
First notice that $r_k = [c_kb^{a_k}, \dots, c_nb^{a_n}]_{\cl_3(b)} = \frac{p_k'}{q_k'}$, where
\[
\mat{p_k' \\ q_k'} = \mat{c_kb^{a_k} & 1 \\ 1 & 0} \prod_{j=k+1}^n \mat{c_jb^{a_j} & 1 \\ b^{a_{j-1}} & 0} \mat{1\\0}.
\]
Also note that
\[
\mat{c_kb^{a_k} & 1 \\ b^{a_{k-1}} & 0} = \mat{1 & 0 \\ 0 & b^{a_{k-1}}} \mat{c_kb^{a_k} & 1 \\ 1 & 0}.
\]
Then
\begin{align*}
\mat{p_n \\ q_n} &= \prod_{j=0}^n \mat{c_jb^{a_j} & 1 \\ b^{a_{j-1}} & 0} \mat{1 \\ 0} = \prod_{j=0}^{k-1} \mat{c_jb^{a_j} & 1 \\ b^{a_{j-1}} & 0} \mat{c_kb^{a_k} & 1 \\ b^{a_{k-1}} & 0} \prod_{j=k+1}^n \mat{c_jb^{a_j} & 1 \\ b^{a_{j-1}} & 0} \mat{1\\0} \\
&= \mat{p_{k-1} & p_{k-2} \\ q_{k-1} & q_{k-2}} \mat{1 & 0 \\ 0 & b^{a_{k-1}}} \mat{c_kb^{a_k} & 1 \\ 1 & 0} \prod_{j=k+1}^n \mat{c_jb^{a_j} & 1 \\ b^{a_{j-1}} & 0} \mat{1\\0} \\
&= \mat{p_{k-1} & p_{k-2} b^{a_{k-1}}  \\ q_{k-1} & q_{k-2} b^{a_{k-1}}} \mat{p_k' \\ q_k'} = \mat{p_{k-1}p_k' + p_{k-2} b^{a_{k-1}} q_k' \\ q_{k-1}p_k' + q_{k-2} b^{a_{k-1}} q_k'}.
\end{align*}
Thus
\begin{align*}
[c_0b^{a_0}, \dots, c_nb^{a_n}]_{\cl_3(b)} &= \frac{p_n}{q_n} = \frac{p_{k-1}p_k' + p_{k-2} b^{a_{k-1}} q_k'}{q_{k-1}p_k' + q_{k-2} b^{a_{k-1}} q_k'} = \frac{p_{k-1} \frac{p_k'}{q_k'} + p_{k-2} b^{a_{k-1}}}{q_{k-1} \frac{p_k'}{q_k'} + q_{k-2} b^{a_{k-1}}} 
= \frac{p_{k-1} r_k + p_{k-2} b^{a_{k-1}}}{q_{k-1} r_k + q_{k-2} b^{a_{k-1}}},
\end{align*}
as required.
\end{proof}

\subsection{Convergence and Rational Finiteness of Type III Continued Logarithms}\label{subsec:type3convratfin}

\begin{theorem}
The type III continued logarithm for a number $x \ge 1$ converges to $x$.
\end{theorem}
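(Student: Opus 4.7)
The plan is to apply Fact~\ref{fact:convcrit} directly to the type III continued logarithm, viewed as an ordinary continued fraction with denominator terms $\alpha_n = c_n b^{a_n}$ and numerator terms $\beta_n = b^{a_{n-1}}$, both of which are strictly positive since $c_n \in \{1,\dots,b-1\}$ and $a_n \in \Z_{\ge 0}$. Before applying Fact~\ref{fact:convcrit}, the first step is to confirm that $x$ really is the formal value of this continued fraction; this follows from the recursive identity $y_n = c_n b^{a_n} + b^{a_n}/y_{n+1}$ (obtained by rearranging the definition of $y_{n+1}$), iterated downward from $y_0 = x$.

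With that in hand, I would substitute $\alpha_n$ and $\beta_n$ into the convergence condition of Fact~\ref{fact:convcrit}:
\[
\sum_{n=1}^\infty \frac{\alpha_n \alpha_{n+1}}{\beta_{n+1}} = \sum_{n=1}^\infty \frac{c_n b^{a_n} \cdot c_{n+1} b^{a_{n+1}}}{b^{a_n}} = \sum_{n=1}^\infty c_n c_{n+1} b^{a_{n+1}}.
\]
Since every summand is at least $1$, the series diverges, and Fact~\ref{fact:convcrit} immediately yields convergence of $p_n/q_n$ to $x$. If the continued logarithm terminates at step $n$ because $y_n = c_n b^{a_n}$, Theorem~\ref{thm:remaindertheorem} (with $k = n$ and $r_n = c_n b^{a_n}$) directly gives $x = p_n/q_n$, handling the finite case.

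There is essentially no obstacle: the lower bound $c_n c_{n+1} b^{a_{n+1}} \ge 1$ makes the convergence test trivially satisfied, which is really the structural advantage of the type III definition. The only place to exercise a little caution is in verifying that Fact~\ref{fact:convcrit} guarantees convergence \emph{to} $x$ rather than to some arbitrary limit; this is automatic from its stated form because the hypothesis presumes $x$ is the formal value of the continued fraction at the outset, and we supplied this via the recursive identity above. As a self-contained alternative that avoids Fact~\ref{fact:convcrit} entirely, one could instead derive an explicit error formula from Theorem~\ref{thm:remaindertheorem} and Lemma~\ref{lem:pnqnm1-qnpnm1} and bound $|x - p_n/q_n|$ by $b^{a_0}/2^{(n-1)/2}$ using the denominator bounds of Lemma~\ref{lem:qnbounds}, but this is longer and less enlightening than simply invoking the general convergence criterion.
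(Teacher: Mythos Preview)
Your proposal is correct and follows essentially the same approach as the paper: both split into the finite and infinite cases, use the recursive identity $y_n = c_n b^{a_n} + b^{a_n}/y_{n+1}$ to identify $x$ as the formal value, and in the infinite case invoke Fact~\ref{fact:convcrit} via the identical computation $\sum c_n c_{n+1} b^{a_{n+1}} = \infty$. The only cosmetic difference is that for the terminating case the paper unwinds the recursion directly while you route through Theorem~\ref{thm:remaindertheorem}, which is harmless overkill.
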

\begin{proof}
Suppose that the continued logarithm for $x = [c_0 b^{a_0}, c_1 b^{a_1}, \dots, c_n b^{a_n}]_{\cl_3(b)}$ is finite. From the construction, we have $x = y_0$ where
\[
y_k = c_k b^{a_k} + \frac{b^{a_k}}{y_{k+1}}
\]
for $0 \le k \le n-1$. From Definition~\ref{def:clog}, since the continued logarithm terminates, we have $y_n = c_n b^{a_n}$, at which point we simply have
\[
x = c_0 b^{a_0} + \cFrac{b^{a_0}}{c_1 b^{a_1}} + \cFrac{b^{a_1}}{c_2 b^{a_2}} + \cfrac{b^{a_2}}{c_3 b^{a_3}} + \cdots + \cfrac{b^{a_{n-1}}}{c_n b^{a_n}}.
\]
This shows convergence in the case of finite termination. If the continued logarithm for $x$ does not terminate, then convergence follows from Fact~\ref{fact:convcrit}, since
\[
\sum_{n=1}^\infty \frac{\alpha_n \alpha_{n+1}}{\beta_{n+1}} = \sum_{n=1}^\infty \frac{c_n b^{a_n} c_{n+1} b^{a_{n+1}}}{b^{a_n}} = \sum_{n=1}^\infty c_n c_{n+1} b^{a_{n+1}} = \infty,
\]
while all terms are positive as required.
\end{proof}

\begin{lemma}\label{lem:equivforratfinite}
If
\begin{align*}
y &= c_0 b^{a_0} + \cFrac{b^{a_0}}{c_1 b^{a_1}} + \cFrac{b^{a_1}}{c_2 b^{a_2}} + \cFrac{b^{a_2}}{c_3 b^{a_3}} + \cdots, \\
y_1 &= c_0 b^{a_0} + \cFrac{c_1^{-1}b^{a_0-a_1}}{1} + \cFrac{c_1^{-1}c_2^{-1}b^{-a_2}}{1} + \cFrac{c_2^{-1}c_3^{-1}b^{-a_3}}{1} + \cdots,
\end{align*}
then $y$ and $y_1$ are equivalent. (The form $y_1$ is called the denominator-reduced continued logarithm for $y$.)
\end{lemma}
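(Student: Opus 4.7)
The plan is to apply Definition~\ref{def:equiv} directly, exhibiting an explicit sequence $(d_n)_{n=0}^\infty$ that witnesses the equivalence. In the original form we read off $\alpha_0 = c_0 b^{a_0}$, $\alpha_n = c_n b^{a_n}$ for $n \ge 1$, and $\beta_n = b^{a_{n-1}}$ for $n \ge 1$. In the denominator-reduced form we have $\alpha_0' = c_0 b^{a_0}$, $\alpha_n' = 1$ for $n \ge 1$, and the numerator terms $\beta_n'$ read off the statement.

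First I would set $d_0 = 1$ and, motivated by the requirement $\alpha_n' = d_n \alpha_n$, define $d_n = c_n^{-1} b^{-a_n}$ for $n \ge 1$; this immediately matches the denominators, since $d_n \cdot c_n b^{a_n} = 1$ for $n \ge 1$ and $d_0 \alpha_0 = \alpha_0'$. Then I would verify the numerator condition $\beta_n' = d_n d_{n-1} \beta_n$ case by case. For $n = 1$, the computation $d_1 d_0 \beta_1 = c_1^{-1} b^{-a_1} \cdot 1 \cdot b^{a_0} = c_1^{-1} b^{a_0 - a_1}$ matches $\beta_1'$. For $n \ge 2$, the computation $d_n d_{n-1} \beta_n = c_n^{-1} b^{-a_n} \cdot c_{n-1}^{-1} b^{-a_{n-1}} \cdot b^{a_{n-1}} = c_{n-1}^{-1} c_n^{-1} b^{-a_n}$ matches $\beta_n'$. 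With both conditions of Definition~\ref{def:equiv} satisfied, the equivalence is established.

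There is no real obstacle here; this is a direct analogue of Lemma~\ref{lem:type1equiv} and Lemma~\ref{lem:type2equiv}, and the correct choice of $d_n$ is forced by wanting to normalize each denominator $c_n b^{a_n}$ to $1$. The only point worth stating carefully is that the numerator recursion $\beta_n = b^{a_{n-1}}$ (rather than $(b-1) b^{a_{n-1}}$ as in type I or $c_{n-1} b^{a_{n-1}}$ as in type II) produces the asymmetric looking first numerator $c_1^{-1} b^{a_0 - a_1}$ versus the subsequent $c_{n-1}^{-1} c_n^{-1} b^{-a_n}$, which is simply because $d_0 = 1$ does not contribute a $b^{-a_0}$ factor. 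The proof then concludes in a single short paragraph.
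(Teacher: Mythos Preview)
Your proposal is correct and follows exactly the same approach as the paper: set $d_0 = 1$ and $d_n = c_n^{-1} b^{-a_n}$ for $n \ge 1$ and invoke Definition~\ref{def:equiv}. The paper's proof is a single line stating this choice of $d_n$; you have simply written out the verification of the numerator and denominator conditions in more detail.
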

\begin{proof}
Take $d_0 = 1$ and $d_n = c_n^{-1} b^{-a_n}$ for $n \ge 1$ to satisfy the conditions of Definition~\ref{def:equiv}.
\end{proof}

\begin{theorem}
The type III continued logarithm for a number $x \ge 1$ will terminate finitely if and only if $x \in \Q$.
\end{theorem}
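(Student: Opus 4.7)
The plan is to handle the two directions separately, the easy one first.

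For the ``if terminated then rational'' direction, I would just invoke Lemma~\ref{lem:pqlemma}: when the expansion terminates at index $n$, then $x$ is literally equal to the convergent $p_n/q_n$, which is a ratio of integers, hence rational.

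For the converse, suppose $x = p/q$ with $p,q\in\Z_{\ge 1}$ and $\gcd(p,q)=1$. The strategy is to track the remainders $y_n$ as reduced fractions $y_n = P_n/Q_n$ with $\gcd(P_n,Q_n)=1$, and to exhibit a strictly decreasing sequence of positive integers that bounds the process: my claim is that $P_n$ itself strictly decreases until termination. First I would write out the arithmetic of one step of the recurrence in Definition~\ref{def:clog}: setting $R_n := P_n - c_n b^{a_n} Q_n \ge 0$, one has
\[
y_{n+1} = \frac{b^{a_n}}{y_n - c_n b^{a_n}} = \frac{b^{a_n} Q_n}{R_n}.
\]
If $R_n=0$ the algorithm terminates, as desired. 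Otherwise, since $\gcd(R_n,Q_n) = \gcd(P_n,Q_n) = 1$, the only possible cancellation comes from $d_n := \gcd(b^{a_n},R_n)$, giving the reduced representation $P_{n+1} = b^{a_n}Q_n/d_n$ and $Q_{n+1} = R_n/d_n$.

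The heart of the argument is then the two-line estimate on $P_{n+1}$. From $a_n = \lfloor \log_b y_n\rfloor$ and $c_n = \lfloor y_n/b^{a_n}\rfloor \ge 1$ we get $y_n \ge c_n b^{a_n} \ge b^{a_n}$, hence $P_n \ge c_n b^{a_n} Q_n \ge b^{a_n} Q_n \ge P_{n+1}$. To upgrade to strict inequality I would split into two cases: if $c_n \ge 2$, then $P_n \ge 2 b^{a_n} Q_n \ge 2 P_{n+1} > P_{n+1}$; if $c_n = 1$, then $P_n - b^{a_n}Q_n = R_n > 0$ (non-termination), so again $P_n > b^{a_n} Q_n \ge P_{n+1}$. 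Either way $P_{n+1} < P_n$, so $(P_n)$ is a strictly decreasing sequence in $\Z_{\ge 1}$, which cannot continue forever; hence some $R_n$ must be $0$ and the algorithm terminates.

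I do not expect any real obstacle: the only subtle point is the choice of monotone integer invariant, and $P_n$ works because of the fortunate fact that after reduction by $d_n$, the new numerator $P_{n+1}$ is bounded by $b^{a_n} Q_n$, which is itself bounded by $P_n$. One might naively try to show $Q_n$ decreases, but simple examples (e.g. $7/2 \mapsto 4/3$ in base $2$) show $Q_n$ can actually go up; this is the main trap to avoid, and it is precisely what makes $P_n$ the right invariant rather than $Q_n$.
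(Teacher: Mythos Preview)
Your argument is correct and follows essentially the same strategy as the paper's proof: track the rational remainders in lowest terms and show that a numerator forms a strictly decreasing sequence of positive integers. The only difference is cosmetic---the paper first passes to the denominator-reduced form via Lemma~\ref{lem:equivforratfinite} and tracks the numerator $u_n$ of the normalized tail $r_n/(c_n b^{a_n})$, whereas you work directly with the numerator $P_n$ of $r_n$ itself; your version is slightly more self-contained since it does not need the equivalence lemma.
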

\begin{proof}
Clearly, if the continued logarithm for $x$ terminates finitely, then $x \in \Q$. Conversely, suppose
\[
x = c_0 b^{a_0} + \cFrac{b^{a_0}}{c_1b^{a_1}} + \cFrac{b^{a_1}}{c_2 b^{a_2}} + \cdots
\]
is rational. By Lemma~\ref{lem:equivforratfinite}, we can write
\[
x = c_0 b^{a_0} \left(1 + \cFrac{c_0^{-1}c_1^{-1}b^{-a_1}}{1} + \cFrac{c_1^{-1} c_2^{-1} b^{-a_2}}{1} + \cdots \right).
\]
Let $y_n$ denote the $n$th tail of the continued logarithm, that is,
\[
y_n = 1 + \cFrac{c_n^{-1}c_{n+1}^{-1}b^{-a_{n+1}}}{1} + \cFrac{c_{n+1}^{-1} c_{n+2}^{-1} b^{-a_{n+2}}}{1} + \cdots.
\]
Notice that
\[
y_n = 1 + \frac{c_n^{-1} c_{n+1}^{-1} b^{-a_{n+1}}}{y_{n+1}},
\]
so
\[
y_{n+1} = \frac{c_n^{-1} c_{n+1}^{-1} b^{-a_{n+1}}}{y_n - 1}.
\]
Since each $y_n$ is rational, write $y_n = \frac{u_n}{v_n}$ for positive relatively prime integers $u_n$ and $v_n$.  Hence
\[
\frac{u_{n+1}}{v_{n+1}} = y_{n+1} = \frac{c_n^{-1} c_{n+1}^{-1} b^{-a_{n+1}}}{\frac{u_n - v_n}{v_n}} = \frac{v_n}{c_n c_{n+1} b^{a_{n+1}} (u_n - v_n)},
\]
or equivalently,
\[
c_n c_{n+1} b^{a_{n+1}} (u_n - v_n) u_{n+1} = v_n v_{n+1}.
\]
Notice that since $y_n \ge 1$ for all $n$, $u_n - v_n \ge 0$, so each multiplicative term in the above equation is a nonnegative integer. Since $u_{n+1}$ and $v_{n+1}$ are relatively prime, we must have $u_{n+1} \mid v_n$, so $u_{n+1} \le v_n \le u_n$. If at any point we have $u_{n+1} = v_n = u_n$, then $y_n = \frac{u_n}{v_n} = 1$ and the continued logarithm terminates. Otherwise, $u_{n+1} < u_n$, so $(u_n)$ is a strictly decreasing sequence of nonnegative integers, so the process must terminate, again giving a finite continued logarithm.
\end{proof}

\subsection{Using Measure Theory to Study the Type III Continued Logarithm Terms}\label{subsec:type3measuretheory}

We now look at the relative frequency of the continued logarithm terms. Specifically, the main theorem of this section places bounds on the measure of the set 
\[
\left\{x \in (1,2): \begin{array}{lllll} a_1 = k_1, & a_2 = k_2, & \dots, & a_n = k_n, & a_{n+1} = k \\ c_1 = \ell_1, & c_2 = \ell_2, & \dots,&  c_n = \ell_n, & c_{n+1} = \ell \end{array} \right\}
\]
in terms of the measure of the set 
\[
\left\{x \in (1,2): \begin{array}{llll} a_1 = k_1, & a_2 = k_2, & \dots, & a_n = k_n \\ c_1 = \ell_1, & c_2 = \ell_2, & \dots,&  c_n = \ell_n \end{array} \right\}
\]
and the value of $k$ and $\ell$. From that, we can get preliminary bounds on the measure of $\{x \in (1,2) : a_n = k, c_n = \ell\}$ in terms of $k$ and $\ell$.

Consider $\alpha = [c_0b^{a_0}, c_1b^{a_1}, c_2b^{a_2}, \dots]_{\cl_3(b)}$. Assume that $\alpha \notin \Q$, so that the continued logarithm for $\alpha$ is infinite. Furthermore, assume $a_0 = 1$ and $c_0 = 1$, so that $\alpha \in (1,2)$. Notice that in order to have $a_1 = k_1$ and $c_1 = \ell_1$, we must have $1 + (\ell_1 + 1)b^{-k_1} < \alpha \le 1+\ell_1 b^{-k_1}$. Thus we can partition $(1,2)$ into countably many intervals $J_1\mat{0\\1}, J_1\mat{0\\2}, \dots, J_1\mat{0\\b-1}, J_1\mat{1\\1}, J_1\mat{1\\2}, \dots$ such that $a_1 = k_1$ and $c_1 = \ell_1$ for all $\alpha \in J_1\mat{k_1\\\ell_1}$. This gives, in general,
\[
J_1 \mat{k_1\\\ell_1} = \left(1+\frac{1}{(\ell_1 + 1) b^{k_1}}, 1 + \frac{1}{\ell_1b^{k_1}} \right].
\]
We call these intervals the intervals of first rank.

Now fix some interval of first rank, $J_1\mat{k_1 \\ \ell_1}$, and consider the values of $a_2$ and $c_2$ for $\alpha \in J_1\mat{k_1 \\ \ell_1}$. One can show that we have $a_1 = k_1$, $c_1 = \ell_1$, $a_2 = k_2$, and $c_2 = \ell_2$ on the interval
\[
J_2\mat{k_1, & k_2 \\ \ell_1, & \ell_2} = \left[1 + \frac{1}{\ell_1 b^{k_1} + \frac{b^{k_1}}{\ell_2 b^{k_2}}}, 1 + \frac{1}{\ell_1 b^{k_1} + \frac{b^{k_1}}{(\ell_2+1) b^{k_2}}}\right).
\]
These are the intervals of second rank. We may repeat this process indefinitely to get the intervals of $n$th rank, noting that each interval of rank $n$ is just a subinterval of an interval of rank $n-1$.

\begin{definition}\label{def:Jnk1...kn}
Let $n \in \N$. The intervals of $n$th rank are the intervals of the form
\[
J_n\mat{k_1, & k_2, & \dots, & k_n \\ \ell_1, & \ell_2, & \dots, & \ell_n} = \left\{\alpha \in (1,2) : \begin{array}{cccc} a_1 = k_1, & a_2 = k_2, & \dots, & a_n = k_n \\ c_1 = \ell_1, & c_2 = \ell_2, & \dots, & c_n = \ell_n \end{array} \right\},
\]
where $k_1, k_2, \dots, k_n \in \Z_{\ge 0}$ and $\ell_1, \ell_2, \dots, \ell_n \in \{1,2,\dots,b-1\}$.
\end{definition}

\begin{remark}
The intervals of $n$th rank will be half-open intervals that are open on the left if $n$ is odd and open on the right if $n$ is even. However, for simplicity, we will ignore what happens at the endpoints and treat these intervals as open intervals. This will not affect the main theorems of this paper, as the set of endpoints is a set of measure zero.
\end{remark}

\begin{definition}
Suppose $m,n \in \N$ with $m \ge n$. Let $a_{n+1},\dots,a_m \in \Z_{\ge 0}$ and $c_{n+1},\dots,c_m \in \{1,\dots,b-1\}$. Let $f$ be a function that maps intervals of rank $m$ to real numbers. Then we define
\[
\sum^{(n)} f\left(J_m\mat{a_1, & a_2, & \dots, & a_m \\ c_1, & c_2, & \dots, & c_m}\right) = \sum_{a_1 = 1}^\infty \sum_{c_1 = 1}^{b-1} \cdots \sum_{a_n = 1}^\infty \sum_{c_n = 1}^{b-1} f\left(J_m\mat{a_1, & a_2, & \dots, & a_m \\ c_1, & c_2, & \dots, & c_m}\right),
\]
and similarly we define
\[
\bigcup^{(n)} J_m\mat{a_1, & a_2, & \dots, & a_m \\ c_1, & c_2, & \dots, & c_m} = \bigcup_{a_1 = 1}^\infty \bigcup_{c_1 = 1}^{b-1} \cdots \bigcup_{a_n = 1}^\infty \bigcup_{c_n = 1}^{b-1} J_m\mat{a_1, & a_2, & \dots, & a_m \\ c_1, & c_2, & \dots, & c_m}.
\]
\end{definition}

\begin{definition}\label{def:Dnk}
Let $n \in \N$. Let
\[
D_n(k,\ell) = \{\alpha \in (1,2) : a_n = k, c_n = \ell\}
\]
denote the set of points where the $n$th continued logarithm term is $\ell b^k$.
\end{definition}

\begin{remark}
$D_n(k,\ell)$ is a countable union of intervals of rank $n$, specifically,
\[
D_n (k,\ell) = \bigcup^{(n-1)} J_n \mat{a_1, & a_2, & \dots, & a_{n-1}, & k \\ c_1, & c_2, & \dots, & c_{n-1}, & \ell}.
\]
\end{remark}

\begin{lemma}\label{lem:endpoints}
Let $J_n\mat{a_1, & a_2, & \dots, & a_n \\ c_1, & c_2, & \dots, & c_n}$ be an interval of rank $n$. The endpoints of $J_n$ are
\[
\frac{p_n}{q_n} \hspace{1cm} \text{and} \hspace{1cm} \frac{p_n + p_{n-1}b^{a_n}}{q_n + q_{n-1}b^{a_n}}.
\]
\end{lemma}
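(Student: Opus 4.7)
The plan is to express every $\alpha\in J_n$ as the image of its $(n+1)$st remainder term $r_{n+1}$ under a single M\"obius transformation whose coefficients are constant on $J_n$, and then read off the endpoints of $J_n$ by letting $r_{n+1}$ run to the extremes of its allowed range.

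First, I would extend Theorem~\ref{thm:remaindertheorem} to infinite continued logarithms by taking $k=n+1$ inside an $N$-term truncation and letting $N\to\infty$; this is legitimate because the convergence proved in Section~\ref{subsec:type3convratfin} guarantees $x_N\to\alpha$ and the inner truncated tail converges to $r_{n+1}$. The outcome is the identity
\[
\alpha \;=\; \frac{p_n\,r_{n+1} + p_{n-1}\,b^{a_n}}{q_n\,r_{n+1} + q_{n-1}\,b^{a_n}},
\]
in which the quantities $p_{n-1},p_n,q_{n-1},q_n,a_n$ are fixed across $J_n\mat{a_1, & \dots, & a_n \\ c_1, & \dots, & c_n}$ and only $r_{n+1}$ varies.

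Next, I would show that $r\mapsto (p_n r+p_{n-1}b^{a_n})/(q_n r+q_{n-1}b^{a_n})$ is a strictly monotone bijection on its domain. Its determinant is $b^{a_n}(p_nq_{n-1}-q_np_{n-1})$, which by Lemma~\ref{lem:pnqnm1-qnpnm1} equals $(-1)^{n-1}b^{a_0+a_1+\cdots+a_n}\ne 0$. The sign (parity of $n$) accounts exactly for the left/right orientation noted in the remark after Definition~\ref{def:Jnk1...kn}. Continuous, monotone, and non-degenerate, the map carries its domain onto an open interval whose endpoints are the two one-sided limits.

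Finally, I would identify the domain of $r_{n+1}$ as the tails vary over $J_n$. By the construction in Definition~\ref{def:clog}, every tail $y_{n+1}=r_{n+1}$ satisfies $r_{n+1}>1$, and conversely any real $s>1$ is realized by prepending the fixed digits $(a_i,c_i)_{i=1}^{n}$ to the continued logarithm of $s$; termination of the original CL at step $n$ corresponds to the limit $r_{n+1}\to\infty$, while $r_{n+1}\to 1^+$ is the opposite boundary. Evaluating the M\"obius expression at these two limits gives
\[
\lim_{r_{n+1}\to\infty}\alpha \;=\; \frac{p_n}{q_n}, \qquad \lim_{r_{n+1}\to 1^+}\alpha \;=\; \frac{p_n+p_{n-1}b^{a_n}}{q_n+q_{n-1}b^{a_n}},
\]
which are the two endpoints claimed. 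The main obstacle is the careful extension of Theorem~\ref{thm:remaindertheorem} to the infinite setting and the surjectivity of $r_{n+1}$ onto $(1,\infty)$; both reduce to already-proved facts (convergence of type III CLs and the ability to prepend finitely many valid digits to any admissible tail), so no genuinely new estimates are needed beyond the determinant computation above.
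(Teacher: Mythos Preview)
Your proposal is correct and follows essentially the same route as the paper: express $\alpha$ via Theorem~\ref{thm:remaindertheorem} as a M\"obius function of $r_{n+1}$ with coefficients fixed on $J_n$, observe monotonicity, and read off the endpoints at $r_{n+1}\to 1^+$ and $r_{n+1}\to\infty$. The only cosmetic difference is that the paper checks monotonicity by computing $\alpha-p_n/q_n$ directly, whereas you invoke the nonvanishing determinant via Lemma~\ref{lem:pnqnm1-qnpnm1}; your treatment of the extension to infinite tails and of the surjectivity of $r_{n+1}$ onto $(1,\infty)$ is more explicit than the paper's, which simply asserts these points.
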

\begin{proof}
Let $\alpha \in J_n\mat{a_1, & a_2, & \dots, & a_n \\ c_1, & c_2, & \dots, & c_n}$ be arbitrary. Note that $\alpha = [1, c_1b^{a_1}, \dots, c_nb^{a_n}, r_{n+1}]_{\cl_3(b)}$, where $r_{n+1}$ can take any real value in $[1,\infty)$. From Theorem~\ref{thm:remaindertheorem}, we have
\[
\alpha = \frac{p_n r_{n+1} + p_{n-1} b^{a_n}}{q_n r_{n+1} + q_{n-1} b^{a_n}}.
\]
Notice that
\begin{align*}
\alpha - \frac{p_n}{q_n} = \frac{p_n r_{n+1} + p_{n-1} b^{a_n}}{q_n r_{n+1} + q_{n-1} b^{a_n}} = \frac{(q_n p_{n-1} - p_n q_{n-1})b^{a_n}}{q_n(q_n r_{n+1} + q_{n-1} b^{a_n})},
\end{align*}
and on $J_n\mat{a_1, & a_2, & \dots, & a_n \\ c_1, & c_2, & \dots, & c_n}$, all of $p_n, q_n, p_{n-1}, q_{n-1}, a_n$ are fixed. Thus $\alpha$ is a monotonic function of $r_{n+1}$, so the extreme values of $\alpha$ on $J_n\mat{a_1, & a_2, & \dots, & a_n \\ c_1, & c_2, & \dots, & c_n}$ will occur at the extreme values of $r_{n+1}$. Taking $r_{n+1} = 1$ gives $\alpha = \frac{p_n + p_{n-1} b^{a_n}}{q_n + q_{n-1} b^{a_n}}$, and letting $r_{n+1} \to \infty$ gives $\alpha = \frac{p_n}{q_n}$. Thus the endpoints of $J_n\mat{a_1, & a_2, & \dots, & a_n \\ c_1, & c_2, & \dots, & c_n}$ are
\[
\frac{p_n}{q_n} \hspace{1cm} \text{and} \hspace{1cm} \frac{p_n + p_{n-1}b^{a_n}}{q_n + q_{n-1}b^{a_n}},
\]
as claimed.
\end{proof}

\begin{theorem}\label{thm:mjn1bounds}
Suppose $n \in \N$, $a_1, a_2, \dots, a_n, k \in \Z_{\ge 0}$, and $c_1, c_2, \dots, c_n, \ell \in \{1,\dots,b-1\}$. Let $\a = (a_1, \dots, a_n)$ and $\c = (c_1,\dots,c_n)$. Then
\[
\frac{1}{4 \ell(\ell+1) b^k} \M J_n\mat{\a \\ \c} \le \M J_{n+1}\mat{\a, & k \\ \c, & \ell} \le \frac{2}{\ell(\ell+1) b^k} \M J_n\mat{\a \\ \c}.
\]
\end{theorem}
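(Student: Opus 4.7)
The plan is to compute the measures of both intervals explicitly using Lemma~\ref{lem:endpoints} and Lemma~\ref{lem:pnqnm1-qnpnm1}, form their ratio, reduce it to a single-parameter expression, and then bound that parameter using the $p_n$/$q_n$ recurrence.

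First I would invoke Lemma~\ref{lem:endpoints} to write the endpoints of $J_n\!\mat{\a \\ \c}$ as $p_n/q_n$ and $(p_n + p_{n-1}b^{a_n})/(q_n + q_{n-1}b^{a_n})$. Subtracting, the numerator simplifies via Lemma~\ref{lem:pnqnm1-qnpnm1} (using $a_0 = 0$) to $b^{a_1+\cdots+a_n}$, yielding
\[
\M J_n\!\mat{\a \\ \c} = \frac{b^{a_1+\cdots+a_n}}{q_n\bigl(q_n + q_{n-1}b^{a_n}\bigr)}.
\]
The same formula applied one rank deeper gives
\[
\M J_{n+1}\!\mat{\a, & k \\ \c, & \ell} = \frac{b^{a_1+\cdots+a_n+k}}{q_{n+1}\bigl(q_{n+1} + q_n b^k\bigr)},
\]
where, by Lemma~\ref{lem:pqlemma}, $q_{n+1} = \ell b^k q_n + b^{a_n} q_{n-1}$ and hence $q_{n+1} + q_n b^k = (\ell+1)b^k q_n + b^{a_n}q_{n-1}$.

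Next I would take the ratio and introduce the single parameter $t = b^{a_n}q_{n-1}/(b^k q_n)$. Both factors in the denominator of $\M J_{n+1}$ factor cleanly as $b^k q_n(\ell+t)$ and $b^k q_n(\ell+1+t)$, while $q_n + q_{n-1}b^{a_n} = q_n(1+tb^k)$, so after the $q_n^2$ and $b^k$ cancellations one obtains
\[
\frac{\M J_{n+1}\!\mat{\a, & k \\ \c, & \ell}}{\M J_n\!\mat{\a \\ \c}} = \frac{1 + t b^k}{b^k(\ell+t)(\ell+1+t)}.
\]
The recurrence $q_n = c_n b^{a_n} q_{n-1} + b^{a_{n-1}} q_{n-2} \ge b^{a_n}q_{n-1}$ (from $c_n \ge 1$) yields $t \in [0, b^{-k}]$, so $tb^k \in [0,1]$.

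Finally I would read off the bounds. For the upper estimate, $1+tb^k \le 2$ and $(\ell+t)(\ell+1+t) \ge \ell(\ell+1)$ give ratio $\le 2/(b^k\ell(\ell+1))$. For the lower estimate, $1+tb^k \ge 1$ and $(\ell+t)(\ell+1+t) \le (\ell+1)(\ell+2) \le 4\ell(\ell+1)$ (the last inequality being equivalent to $3\ell^2 + \ell - 2 \ge 0$, which holds for $\ell \ge 1$) give ratio $\ge 1/(4 b^k \ell(\ell+1))$. There is no real obstacle here beyond bookkeeping; the only mild subtlety is confirming that $t$ really does lie in $[0,b^{-k}]$, which is what makes the two elementary polynomial estimates on $(\ell+t)(\ell+1+t)$ tight enough for the desired constants $2$ and $1/4$.
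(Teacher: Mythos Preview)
Your proof is correct and follows essentially the same approach as the paper: both compute the two interval lengths explicitly via Lemma~\ref{lem:endpoints} and Lemma~\ref{lem:pnqnm1-qnpnm1}, form the ratio, and bound it using $q_n \ge b^{a_n}q_{n-1}$. The only cosmetic difference is that the paper derives the rank-$(n+1)$ endpoints from the constraint $\ell b^k \le r_{n+1} \le (\ell+1)b^k$ and keeps three separate fractions $\frac{q_{n-1}b^{a_n}}{q_n}$, $\frac{q_{n-1}b^{a_n}}{q_n\ell b^k}$, $\frac{q_{n-1}b^{a_n}}{q_n(\ell+1)b^k}$ in $[0,1]$, whereas you apply Lemma~\ref{lem:endpoints} directly at rank $n+1$ and package everything into the single parameter $t$; the resulting ratio expressions are algebraically identical and the final $1/4$, $2$ estimates come out the same way.
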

\begin{proof}
From Lemma~\ref{lem:endpoints}, we know that the endpoints of $J_n\mat{\a \\ \c}$ are
\[
\frac{p_n}{q_n} \hspace{1cm} \text{and} \hspace{1cm} \frac{p_n + p_{n-1}b^{a_n}}{q_n + q_{n-1}b^{a_n}},
\]
Now in order to be in $J_{n+1}\mat{\a, & k \\ \c, & \ell}$, we must have $a_{n+1} = k$ and $c_{n+1} = \ell$, so $\ell b^k \le r_{n+1} \le (\ell+1)b^k$. Thus the endpoints of $J_{n+1}\mat{\a, & k \\ \c, & \ell}$ will be
\[
\frac{p_n \ell b^k + p_{n-1} b^{a_n}}{q_n \ell b^k + q_{n-1} b^{a_n}} \hspace{1cm} \text{and} \hspace{1cm} \frac{p_n (\ell+1)b^k + p_{n-1} b^{a_n}}{q_n (\ell+1)b^k + q_{n-1} b^{a_n}}.
\]
Thus
\begin{align*}
\M J_n\mat{\a \\ \c} &= \left| \frac{p_n}{q_n} - \frac{p_n + p_{n-1}b^{a_n}}{q_n + q_{n-1}b^{a_n}}\right| = \left|\frac{p_n q_{n-1} b^{a_n} - p_{n-1} q_n b^{a_n}}{q_n(q_n + q_{n-1}b^{a_n})}\right| \\
&= \frac{b^{a_1 + \cdots + a_n}}{q_n(q_n + q_{n-1}b^{a_n})} = \frac{b^{a_1 + \cdots + a_n}}{q_n^2\left(1+\frac{q_{n-1} b^{a_n}}{q_n}\right)},
\end{align*}
and
\begin{align*}
\M J_{n+1}\mat{\a, & k \\ \c, & \ell} &= \left|\frac{p_n \ell b^k + p_{n-1} b^{a_n}}{q_n \ell b^k + q_{n-1} b^{a_n}} - \frac{p_n (\ell+1)b^k + p_{n-1} b^{a_n}}{q_n (\ell+1)b^k + q_{n-1} b^{a_n}} \right| \\
&= \left|\frac{p_n q_{n-1} \ell b^{a_n + k} + p_{n-1}q_n(\ell+1)b^{a_n + k} - p_nq_{n-1}(\ell+1)b^{a_n + k} - p_{n-1}q_n \ell b^{a_n + k}}{(q_n \ell b^k + q_{n-1}b^{a_n})(q_n (\ell+1)b^k + q_{n-1} b^{a_n}}\right| \\
&= \left|\frac{b^{a_1+\cdots+a_n+k}}{\ell(\ell+1) b^{2k} q_n^2 \left(1+\frac{q_{n-1}b^{a_n}}{q_n \ell b^k}\right)\left(1 + \frac{q_{n-1}b^{a_n}}{q_n(\ell+1)b^k}\right)}\right| \\
&= \frac{b^{a_1 + \cdots + a_n}}{\ell(\ell+1) b^k q_n^2\left(1+\frac{q_{n-1}b^{a_n}}{q_n \ell b^k}\right) \left(1+\frac{q_{n-1}b^{a_n}}{q_n(\ell+1)b^k}\right)},
\end{align*}
so
\[
\frac{\M J_{n+1}\mat{\a, & k \\ \c, & \ell}}{\M J_n\mat{\a \\ \c}} = \frac{1+\frac{q_{n-1}b^{a_n}}{q_n}}{\ell(\ell+1) b^k\left(1+\frac{q_{n-1}b^{a_n}}{q_n \ell b^k}\right)\left(1+\frac{q_{n-1}b^{a_n}}{q_n(\ell+1)b^k}\right)}.
\]
Now notice that $q_n = c_nb^{a_n} q_{n-1} + b^{a_{n-1}} q_{n-2} \ge b^{a_n} q_{n-1}$, so $0 \le \frac{q_{n-1} b^{a_n}}{q_n} \le 1$, $0 \le \frac{q_{n-1} b^{a_n}}{q_n \ell b^k} \le 1$, and $0 \le \frac{q_{n-1}b^{a_n}}{q_n(\ell+1)b^k} \le 1$, and thus
\[
\frac14 \le \frac{1+\frac{q_{n-1}b^{a_n}}{q_n}}{\left(1+\frac{q_{n-1}b^{a_n}}{q_n \ell b^s}\right)\left(1+\frac{q_{n-1}b^{a_n}}{q_n(\ell+1)b^k}\right)} \le 2.
\]
Therefore
\[
\frac{1}{4 \ell(\ell+1) b^{k}} \M J_n\mat{\a \\ \c} \le \M J_{n+1}\mat{\a, & k \\ \c, & \ell} \le \frac{2}{\ell(\ell+1) b^k} \M J_n\mat{\a \\ \c},
\]
and we are done.
\end{proof}

\begin{corollary}\label{cor:weakMDnbound}
Let $n \in \N$, $k \in \Z_{\ge 0}$, and $\ell \in \{1,\dots,b-1\}$. Then
\[
\frac{1}{4\ell(\ell+1) b^k} \le \M( D_{n+1}(k,\ell)) \le \frac{2}{\ell(\ell+1) b^k}.
\]
\end{corollary}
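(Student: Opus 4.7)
The plan is to deduce the corollary directly from Theorem~\ref{thm:mjn1bounds} by summing the per-interval estimates over all rank-$n$ configurations $(\a,\c)$. First I would recall from Section~\ref{subsec:type3measuretheory} that the intervals of rank $n$, as $(\a,\c)$ ranges over $\Z_{\ge 0}^n\times\{1,\dots,b-1\}^n$, partition $(1,2)$ up to a set of measure zero (each element of $(1,2)$ irrational has uniquely determined continued-logarithm digits, and the endpoints form a countable set). Consequently
\[
\sum^{(n)} \M J_n\mat{\a \\ \c} \;=\; \M(1,2) \;=\; 1.
\]

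Next I would use the Remark immediately after Definition~\ref{def:Dnk}, which identifies
\[
D_{n+1}(k,\ell) \;=\; \bigcup^{(n)} J_{n+1}\mat{\a, & k \\ \c, & \ell},
\]
a disjoint union of rank-$(n+1)$ intervals indexed by $(\a,\c)$. Hence
\[
\M\bigl(D_{n+1}(k,\ell)\bigr) \;=\; \sum^{(n)} \M J_{n+1}\mat{\a, & k \\ \c, & \ell}.
\]

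Finally, I would apply the two-sided bound of Theorem~\ref{thm:mjn1bounds} termwise inside this sum:
\[
\frac{1}{4\ell(\ell+1)b^{k}} \M J_n\mat{\a \\ \c} \;\le\; \M J_{n+1}\mat{\a, & k \\ \c, & \ell} \;\le\; \frac{2}{\ell(\ell+1)b^{k}} \M J_n\mat{\a \\ \c}.
\]
Summing over $(\a,\c)$ and invoking the partition identity $\sum^{(n)} \M J_n\mat{\a \\ \c} = 1$ yields the desired inequalities
\[
\frac{1}{4\ell(\ell+1)b^{k}} \;\le\; \M\bigl(D_{n+1}(k,\ell)\bigr) \;\le\; \frac{2}{\ell(\ell+1)b^{k}}.
\]

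There is essentially no obstacle here: the only subtlety worth flagging is the harmless measure-zero issue with endpoints (already handled by the Remark following Definition~\ref{def:Jnk1...kn}), and the legitimacy of summing the termwise bounds, which is immediate for nonnegative terms by monotone convergence. The entire corollary is a one-line consequence of Theorem~\ref{thm:mjn1bounds} once one records that the rank-$n$ intervals tile $(1,2)$.
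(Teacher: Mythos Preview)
Your proof is correct and follows essentially the same approach as the paper: sum the two-sided bound of Theorem~\ref{thm:mjn1bounds} over all rank-$n$ intervals, use that these intervals partition $(1,2)$ so that $\sum^{(n)}\M J_n = 1$, and use that $D_{n+1}(k,\ell)$ is the disjoint union $\bigcup^{(n)} J_{n+1}\mat{\a, & k \\ \c, & \ell}$.
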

\begin{proof}
Note that any two distinct intervals of rank $n$ are disjoint. Thus we can add up the above inequality over all intervals of rank $n$, noting that
\[
\bigcup^{(n)} J_n\mat{a_1, & \dots, & a_n \\ c_1, & \dots, & c_n} = (1,2),
\]
so
\[
\sum^{(n)} \M J_n\mat{a_1, & \dots, & a_n \\ c_1, & \dots, & c_n} = \M(1,2) = 1,
\]
and that
\[
\bigcup^{(n)} J_{n+1}\mat{a_1, & \dots, & a_n, & k \\ c_1, & \dots, & c_n, & \ell} = D_{n+1}(k,\ell),
\]
so
\[
\sum^{(n)} \M J_{n+1}\mat{a_1, & \dots, & a_n, & k \\ c_1, & \dots, & c_n, & \ell} = \M D_{n+1}(k,\ell).
\]
This gives
\[
\frac{1}{4\ell(\ell+1) b^k} \le \M D_{n+1} \mat{k \\ \ell} \le \frac{2}{\ell(\ell+1) b^k},
\]
as needed.
\end{proof}

\subsection{Distribution of Type III Continued Logarithm Terms}\label{subsec:type3dist}

\begin{definition}\label{def:zn,Mn,mn}
Let $x = [1, c_1b^{a_1}, c_2b^{a_2}, \dots]_{\cl_3(b)} \in (1,2)$ and $r_n = r_n(x) = [c_nb^{a_n}, c_{n+1}b^{a_{n+1}}, \dots]_{\cl_3(b)}$, as per Remark~\ref{rem:convergent,remainder}. Define 
\[
z_n = z_n(x) = \frac{r_n}{b^{a_n}} - c_n + 1 = [1, c_{n+1}b^{a_{n+1}}, c_{n+2}b^{a_{n+2}}, \dots]_{\cl_3(b)} \in (1,2),
\]
\[
M_n(x) = \{\alpha \in (1,2) : z_n(\alpha) < x\} \subseteq (1,2),
\]
\[
m_n(x) = \M M_n(x) \in (0,1),
\]
and
\[
m(x) = \lim_{n\to\infty} m_n(x),
\]
wherever this limit exists.
\end{definition}

We now get a recursion relation for the sequence of functions $m_n$.

\begin{theorem}\label{thm:mnrecurrence}
The sequence of functions $m_n$ is given by the recursive relationship
\begin{align}
m_0(x) &= x-1 \label{eq:m0} \\
m_n(x) &= \sum_{k=0}^\infty \sum_{\ell = 1}^{b-1} m_{n-1}(1+\ell^{-1}b^{-k}) - m_{n-1}(1+(x+\ell-1)^{-1}b^{-k}) && n \ge 1 \label{eq:mn}
\end{align}
for $1 \le x \le 2$.
\end{theorem}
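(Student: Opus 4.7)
The plan is to follow the template of Theorems~\ref{thm:type1mnrecurrence} and~\ref{thm:type2mnrecurrence}, adapting to the Type III definition $z_n = r_n/b^{a_n} - c_n + 1$. For $n=0$, the base case falls out directly from $a_0 = 0$, $c_0 = 1$: we get $z_0(\alpha) = \alpha$, so $M_0(x) = (1,x)$ and $m_0(x) = x-1$.

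For $n \ge 1$, I would partition $M_n(x)$ according to the value of the pair $(a_n, c_n)$:
\[
M_n(x) = \bigcup_{k=0}^\infty \bigcup_{\ell=1}^{b-1} A_{k,\ell}, \qquad A_{k,\ell} = \{\alpha \in (1,2) : a_n(\alpha) = k,\ c_n(\alpha) = \ell,\ z_n(\alpha) < x\}.
\]
These sets are pairwise disjoint since $(a_n, c_n)$ is uniquely determined by $\alpha$, so countable additivity reduces the problem to evaluating each $\M A_{k,\ell}$ in terms of $m_{n-1}$.

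To do that I would convert everything to conditions on $z_{n-1}$. From the continued logarithm expansion $z_{n-1} = [1, c_n b^{a_n}, c_{n+1} b^{a_{n+1}}, \dots]_{\cl_3(b)} = 1 + 1/r_n$ one reads off $r_n = 1/(z_{n-1} - 1)$. Then $(a_n, c_n) = (k,\ell)$ with $\ell \in \{1,\dots,b-1\}$ is equivalent to $\ell b^k \le r_n < (\ell+1)b^k$, i.e.\
\[
1 + (\ell+1)^{-1} b^{-k} < z_{n-1} \le 1 + \ell^{-1} b^{-k},
\]
and the condition $z_n < x$, rewritten via $z_n = r_n/b^k - \ell + 1$, becomes $r_n < (x+\ell-1)b^k$, i.e.\ $z_{n-1} > 1 + (x+\ell-1)^{-1}b^{-k}$. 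Since $x \le 2$ forces $(x+\ell-1)^{-1} \ge (\ell+1)^{-1}$, this strict lower bound dominates the one coming from $c_n = \ell$, so the combined condition on $z_{n-1}$ collapses to
\[
1 + (x+\ell-1)^{-1} b^{-k} < z_{n-1} \le 1 + \ell^{-1} b^{-k}.
\]
By the definition of $m_{n-1}$ this gives $\M A_{k,\ell} = m_{n-1}(1+\ell^{-1}b^{-k}) - m_{n-1}(1+(x+\ell-1)^{-1}b^{-k})$, and summing over $k$ and $\ell$ yields~\eqref{eq:mn}.

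The main obstacle --- and it is also the precise point at which Type III is cleaner than Type II --- is verifying that the lower bound coming from $z_n < x$ always dominates the lower bound coming from $c_n = \ell$. This is where $x \le 2$ enters, and it is exactly what prevents a $\max\{\cdot,\cdot\}$ from appearing inside $m_{n-1}$ as it does in Theorem~\ref{thm:type2mnrecurrence}. A minor ancillary point is that $\M\{z_{n-1} < y\} = \M\{z_{n-1} \le y\}$, which is automatic because the distribution of $z_{n-1}$ has no atoms; I would mention this in passing and then let the countable additivity of Lebesgue measure do the rest.
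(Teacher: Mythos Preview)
Your proposal is correct and follows essentially the same approach as the paper's proof: both establish the base case directly, partition $M_n(x)$ according to $(a_n,c_n)=(k,\ell)$, rewrite the conditions in terms of $z_{n-1}$ via $z_{n-1}=1+1/r_n$, and use $x\le 2$ to collapse the two lower bounds into one. The only real difference is that the paper spends a paragraph proving the $A_{k,\ell}$ are pairwise disjoint by an explicit case analysis on the resulting $z_{n-1}$-intervals, whereas you observe (correctly, and more cleanly) that disjointness is immediate because $(a_n,c_n)$ is a well-defined function of $\alpha$.
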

\begin{proof}
Notice that $r_0(\alpha) = \alpha$, $a_0 = 0$, and $c_0 = 1$, so $z_0(\alpha) = \frac{r_0}{b^{a_0}} - c_0 + 1 = \alpha$ and thus 
\[
M_0(x) = \{\alpha \in (1,2) : z_0(\alpha) < x\} = \{\alpha \in (1,2) : \alpha < x\} = (1,x),
\] 
so $m_0(x) = x-1$. Now fix $n \ge 1$. Since $a_n \in \Z_{\ge 0}$ and $c_n \in \{1,\dots,b-1\}$, we have
\[
m_n(x) = \M\{\alpha \in (1,2) : z_n < x\} = \M \bigcup_{k=0}^\infty \bigcup_{\ell=1}^{b-1} \{\alpha \in (1,2) : z_n < x, a_n = k, c_n = \ell\}.
\]
Fix $x \in (1,2)$ and let
\[
A_{k,\ell} = \{\alpha \in (1,2) : z_n < x, a_n = k, c_n = \ell\}
\]
for $k \in \Z_{\ge 0}$ and $\ell \in \{1,\dots,b-1\}$. By Definition~\ref{def:zn,Mn,mn}, $z_n < x$ if and only if 
\begin{equation*}
\frac{r_n}{b^{a_n}} - c_n + 1 < x.
\end{equation*}
Notice that
\[
z_{n-1} = [1,c_nb^{a_n}, c_{n+1}b^{a_{n+1}}, \dots]_{\cl_3(b)} = [1,r_n]_{\cl_3(b)} = 1+\frac{1}{r_n},
\]
so $z_n < x$ if and only if
\[
\frac{1}{b^{a_n}(z_{n-1}-1)} - c_n + 1 < x,
\]
or equivalently
\begin{equation}\label{eq:mnrecineq1}
z_{n-1} > 1 + (x+c_n-1)^{-1} b^{-a_n} = 1+(x+\ell-1)^{-1}b^{-k}.
\end{equation}
Additionally, in order to have $a_n = k$ and $c_n = \ell$, we must have $\ell b^k \le r_n < (\ell+1)b^k$, or equivalently,
\begin{equation}\label{eq:mnrecineq2}
1 + (\ell+1)^{-1}b^{-k} < z_{n-1} \le 1+\ell^{-1}b^{-k}.
\end{equation}
Now notice that since $x < 2$,
\[
1+(\ell+1)^{-1}b^{-k} < 1+(x+\ell-1)^{-1}b^{-k},
\]
and thus the left hand inequality in \eqref{eq:mnrecineq2} is implied by \eqref{eq:mnrecineq1}. Therefore $z_n < x$ with $a_n = k$ and $c_n = \ell$ if and only if
\begin{equation}\label{eq:mnrecineq3}
1+(x+\ell-1)^{-1}b^{-k} < z_{n-1} \le 1+\ell^{-1}b^{-k}.
\end{equation}
Thus
\begin{equation}\label{eq:mnrecAkl}
A_{k,\ell} = \{\alpha \in (1,2) : 1+(x+\ell-1)^{-1}b^{-k} < z_{n-1} \le 1+\ell^{-1}b^{-k}\}.
\end{equation}
Now suppose $k_1,k_2 \in \Z$ and $\ell_1,\ell_2 \in \{1,\dots,b-1\}$ with $(k_1,\ell_1) \ne (k_2,\ell_2)$. We claim that $A_{k_1,\ell_1}$ and $A_{k_2,\ell_2}$ are disjoint. Consider two cases:

Case 1: $k_1 \ne k_2$. Suppose (without loss of generality) that $k_2 < k_1$, so $k_2 - k_1 \le -1$. Also note that $1 \le \ell_2 \le b-1$ and $x < 2$ so $\ell_2+x-1 < b$. Then we have
\begin{equation}\label{eq:mnreccase1}
1+\ell_1^{-1}b^{-k_1} = 1+\ell_1^{-1} b^{k_2 - k_1} b^{-k_2} \le 1+ \ell_1^{-1}b^{-1}b^{-k_2} \le 1+b^{-1}b^{-k_2}  < 1+(\ell_2+x-1)^{-1}b^{-k_2}.
\end{equation}

Case 2: $k_1 = k_2$, $\ell_1 \ne \ell_2$. Suppose (without loss of generality) that $\ell_1 > \ell_2$, so indeed $\ell_1 \ge \ell_2+1$. Then since $x - 1 < 1$,
\begin{equation}\label{eq:mnreccase2}
1+\ell_1^{-1}b^{-k_1} = 1+\ell_1^{-1}b^{-k_2} \le 1+(\ell_2+1)^{-1}b^{-k_2} < 1 + (\ell_2+x-1)^{-1}b^{-k_2}.
\end{equation}

Now suppose $a_1 \in A_{k_1,\ell_1}$ and $a_2 \in A_{k_2, \ell_2}$. By \eqref{eq:mnrecAkl} and either \eqref{eq:mnreccase1} or \eqref{eq:mnreccase2}, 
\[
a_1 \le 1+\ell_1^{-1}b^{-k_1} < 1 + (\ell_2+x-1)^{-1}b^{-k_2} \le a_2,
\]
so $a_1 \ne a_2$ and thus $A_{k_1,\ell_1}$ and $A_{k_2,\ell_2}$ must be disjoint. Therefore
\begin{equation} \label{eq:mnrecmun}
m_n(x) = \M \bigcup_{k=0}^\infty \bigcup_{\ell=1}^{b-1} A_{k,\ell} = \sum_{k=0}^\infty \sum_{\ell=1}^{b-1} \mathcal{M}(A_{k,\ell}).
\end{equation}
Finally, since $m_{n-1}(x) = \mathcal{M}\{\alpha \in (1,2) : z_{n-1} < x\}$, by \eqref{eq:mnrecAkl} and \eqref{eq:mnrecmun} we can conclude
\[
m_n(x) = \sum_{k=0}^\infty \sum_{\ell=1}^{b-1} \left( m_{n-1}(1+\ell^{-1}b^{-k}) - m_{n-1}\left(1+(x+\ell-1)^{-1}b^{-k}\right)\right),
\]
which proves the recursion \eqref{eq:mn}, and completes the proof of the theorem.
\end{proof}

\newcommand{\distthmtext}{
There exist constants $A, \lambda > 0$ such that
\[
\left|m_n(x) - \frac{\log \frac{bx}{x+b-1}}{\log \frac{2b}{b+1}} \right| < A e^{-\lambda \sqrt{n}}
\]
for all $n \ge 0$ and $x \in (1,2)$.
}

\begin{theorem}\label{thm:mnlimbd}
\distthmtext
\end{theorem}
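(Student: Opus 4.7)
The strategy is to adapt Kuzmin's classical proof of the Gauss--Kuzmin theorem to the type III setting. First, I would verify that $\mu(x) := \log(bx/(x+b-1))/\log(2b/(b+1))$ is indeed a fixed point of the recursion in Theorem~\ref{thm:mnrecurrence}. After substituting $\mu(1+y) = \log((b+by)/(b+y))/\log(2b/(b+1))$, the verification reduces to the product identity $\prod_{k=0}^\infty (tb^{k+1}+1)/(tb^{k+1}+b) = t/(t+1)$, which follows because the $N$th partial product collapses telescopically to $b^{-(N+1)}(b^{N+1}t+1)/(t+1)$. Applying this identity with $t=\ell$ and $t=x+\ell-1$ in \eqref{eq:mn} yields a second telescoping over $\ell = 1,\dots,b-1$ that returns exactly $\mu(x)$.

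Next, I would set $e_n := m_n - \mu$, noting $e_n(1)=e_n(2)=0$, and pass to the Perron--Frobenius picture by formally differentiating the recursion. The densities $\phi_n := m_n'$ satisfy $\phi_n(x) = \sum_{k,\ell}(x+\ell-1)^{-2}b^{-k}\phi_{n-1}(u_{k,\ell}(x))$, where $u_{k,\ell}(x) := 1+(x+\ell-1)^{-1}b^{-k}$ sends $(1,2)$ bijectively onto the first-rank interval $J_1\mat{k\\\ell}$ and these images partition $(1,2)$. An analogous partial-fractions and telescoping computation shows $\mu'$ is a fixed point. Writing $\psi_n := \phi_n/\mu' - 1$, the recursion becomes $\psi_n(x) = \sum_{k,\ell} w_{k,\ell}(x)\,\psi_{n-1}(u_{k,\ell}(x))$ with positive weights summing to $1$, so $\psi_n$ is a convex combination of past values. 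Combined with the mean-zero condition $\int_1^2 \psi_n\mu'\,dt = 0$ (which follows from $m_n(2)-m_n(1) = \mu(2)-\mu(1) = 1$), this forces $\psi_n$ to change sign on $(1,2)$.

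To extract the $e^{-\lambda\sqrt{n}}$ rate, I would track $A_n := \sup|\psi_n|$ and $B_n := \sup|\psi_n'|$ simultaneously. Inducting with the interval structure of Section~\ref{subsec:type3measuretheory} shows that the series for $\phi_n$ can be differentiated termwise and that $B_n$ is uniformly bounded, using the $(x+\ell-1)^{-2}b^{-k}$ factors together with the bounds in Lemma~\ref{lem:qnbounds}. To bound $A_n$, split the transfer sum at a cutoff $K$: for $k \le K$ replace each $\psi_{n-1}(u_{k,\ell}(x))$ with its value at a sign-change point of $\psi_{n-1}$, paying at most $B_{n-1}\cdot \M J_1\mat{k\\\ell}$ per term, while the $k>K$ tail contributes at most $O(b^{-K})\cdot A_{n-1}$. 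This yields an inequality of the form $A_n \le (1-c)A_{n-1} + Cb^{-K}B_{n-1}$; balancing by choosing $K \asymp \sqrt{n}$ gives $A_n = O(e^{-\lambda\sqrt{n}})$. Since $e_n(x) = \int_1^x \mu'(t)\psi_n(t)\,dt$, the crude bound $|e_n(x)| \le A_n$ completes the proof. The main obstacle is the uniform control of $B_n$ and the rigorous justification of termwise differentiation; this is exactly where the $\sqrt{n}$ rate originates, because any attempt to obtain true geometric decay would require spectral-gap information beyond Kuzmin's elementary splitting trick.
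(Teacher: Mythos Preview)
Your setup --- verifying that $\mu$ is a fixed point, passing to densities $\phi_n = m_n'$, normalizing to $\psi_n = \phi_n/\mu' - 1$, and bounding $B_n = \sup|\psi_n'|$ uniformly --- is sound and parallels the paper's Lemma~\ref{lem:1/x(x+b-1)sum} and Lemma~\ref{lem:fn'bound}. The gap is in your contraction step. You claim that for each $k \le K$ one may replace $\psi_{n-1}(u_{k,\ell}(x))$ by zero (its value at a sign-change point $x_0$ of $\psi_{n-1}$) at cost $B_{n-1}\cdot\M J_1\mat{k\\\ell}$. But $x_0$ is a single point of $(1,2)$, lying in exactly one first-rank interval $J_1\mat{k_0\\\ell_0}$; for every other $(k,\ell)$ the point $u_{k,\ell}(x)$ lies in $J_1\mat{k\\\ell}$, so $|u_{k,\ell}(x)-x_0|$ is of order $1$, not $\M J_1\mat{k\\\ell}$. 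Only the term $(k_0,\ell_0)$ earns the small bound, giving at best $A_n \le (1-w_{k_0,\ell_0}(x))A_{n-1} + O(B_{n-1})$, where $(k_0,\ell_0)$ depends on $n$ through $\psi_{n-1}$ and may carry arbitrarily small weight $w_{k_0,\ell_0}$. No uniform $(1-c)$ factor emerges, and the cutoff $K$ in $k$ does not repair this; the inequality $A_n \le (1-c)A_{n-1} + Cb^{-K}B_{n-1}$ is not obtained.

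The paper produces contraction by a different mechanism. Instead of a single step, it iterates the transfer operator $n$ times via Lemma~\ref{lem:fnf0}, writing $f_n(x)$ as a sum over rank-$n$ intervals of $f_0$ sampled inside those intervals. Since rank-$n$ intervals have length $O(2^{-n/2})$ by Lemma~\ref{lem:qnbounds}, this sum can be compared with $\tfrac14\int_1^2\varphi_0\,dz$ term by term via the mean value theorem at total cost $O(2^{-n/2})$ times the derivative bound. The strictly positive integral --- not a single zero of $\psi_{n-1}$ --- supplies the fixed contraction factor $\delta = 1 - \tfrac{1}{4(b-1)}\log\tfrac{2b}{b+1}$ of Lemma~\ref{lem:gcompressing}; iterating this $n$-step block then yields Theorem~\ref{thm:mainthm}. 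Your $\sqrt{n}$ intuition is therefore correct, but the parameter that must grow like $\sqrt{n}$ is the block length of the iterated operator, not a tail cutoff in the single-step sum; and contrary to your closing remark, the uniform control of $B_n$ is routine (Lemma~\ref{lem:fn'bound}) while the genuine obstacle is precisely this contraction.
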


\newtheorem*{distthm}{Theorem \ref{thm:mnlimbd}}
\newcommand{\restatedistthm}{
\begin{distthm}[Restated]
\distthmtext
\end{distthm}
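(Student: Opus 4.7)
The plan is to recognize the recursion \eqref{eq:mn} as fixed-point iteration for a transfer operator, identify the fixed point as $m^*(x) := \frac{\log\frac{bx}{x+b-1}}{\log\frac{2b}{b+1}}$, and follow the classical Kuzmin template to quantify convergence. First I would verify that $m^*$ really is a fixed point of the map on the right of \eqref{eq:mn}. This is cleanest after differentiating in $x$, which converts the recursion into iteration by the \emph{linear} positive operator
\[
(Lf)(x)=\sum_{k=0}^\infty\sum_{\ell=1}^{b-1}\frac{b^{-k}}{(x+\ell-1)^2}\,f\!\left(1+(x+\ell-1)^{-1}b^{-k}\right)
\]
acting on $f_n:=m_n'$, and reduces the check to showing $Lf^*=f^*$ for $f^*(x):=(m^*)'(x)=\frac{b-1}{\log\frac{2b}{b+1}}\cdot\frac{1}{x(x+b-1)}$. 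Substituting $f^*$, the inner sum over $k$ telescopes (via the identity $\frac{1}{by+s_k}-\frac{1}{by+s_{k-1}}$ with $s_k=b^{-k}$) to $\frac{1}{(b-1)(x+\ell-1)(x+\ell)}$, and the outer sum over $\ell$ telescopes again to $\frac{b-1}{x(x+b-1)}$, recovering $f^*$. Matching the boundary conditions $m^*(1)=0$ and $m^*(2)=1$ then identifies $m^*$ uniquely as the corresponding antiderivative.

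Second, I would establish a priori bounds on $f_n=m_n'$. Starting from $f_0\equiv 1$ (the derivative of $m_0(x)=x-1$), one shows inductively that $f_n=Lf_{n-1}$ is positive and continuous, and that the bounded-distortion estimates already present in Theorem~\ref{thm:mjn1bounds} and Corollary~\ref{cor:weakMDnbound} yield constants $0<A\le B<\infty$ (independent of $n$, once $n$ is past some $n_0$) with $A f^*\le f_n\le B f^*$ on $[1,2]$. Writing $g_n:=f_n/f^*$, the identity $Lf^*=f^*$ rewrites the recursion as a weighted average
\[
g_n(x)=\sum_{k,\ell} w_{k,\ell}(x)\,g_{n-1}\!\left(1+(x+\ell-1)^{-1}b^{-k}\right),\qquad \sum_{k,\ell}w_{k,\ell}(x)=1,\ w_{k,\ell}\ge 0,
\]
so a single iteration gives only the weak non-expansion $\mathrm{osc}(g_n)\le\mathrm{osc}(g_{n-1})$.

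The principal obstacle is extracting a \emph{quantitative} contraction from this non-expansion. Kuzmin's block argument compares $L^N$ at two points $x,x'\in[1,2]$: the $N$-fold branches partition $[1,2]$ into cylinders of diameter $O(2^{-N/2})$ by Lemma~\ref{lem:qnbounds}, and the bounded-distortion statement of Theorem~\ref{thm:mjn1bounds} shows the push-forward weights at $x$ and at $x'$ on these cylinders agree up to a factor $1+O(1/N)$. Averaging then produces a genuine contraction $\mathrm{osc}(g_{n+N})\le(1-c/N)\,\mathrm{osc}(g_n)$ for some absolute $c>0$. Iterating in blocks of size $N\asymp\sqrt n$ and optimizing gives $\mathrm{osc}(g_n)\le A'e^{-\lambda\sqrt n}$, hence $|f_n-f^*|\le A''e^{-\lambda\sqrt n}$ on $[1,2]$. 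Finally, since $m_n(1)=m^*(1)=0$, we have $m_n(x)-m^*(x)=\int_1^x(f_n-f^*)\,dt$, so the same bound (with adjusted constants) holds for $|m_n(x)-m^*(x)|$, proving the theorem. The difficult step — making the block contraction quantitative with an explicit $c$, and in particular controlling the uniformity of the bounded-distortion factors across $N$-fold compositions — is what Appendix~A is devoted to; everything else is algebra.
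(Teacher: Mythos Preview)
Your high-level plan coincides with the paper's: differentiate the recursion to obtain the positive transfer operator $L$ acting on $f_n=m_n'$, verify that $f^*(x)=\frac{b-1}{\log\frac{2b}{b+1}}\cdot\frac{1}{x(x+b-1)}$ is the fixed point (this is exactly Lemma~\ref{lem:1/x(x+b-1)sum}), run a Kuzmin-type contraction argument on $f_n$, and then integrate back using $m_n(1)=0$. The identification of the fixed point, the double-telescoping verification, and the final integration step are all correct and match Appendix~A.

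The gap is in your proposed contraction mechanism. The claim that the $N$-fold ``push-forward weights at $x$ and at $x'$ agree up to a factor $1+O(1/N)$'' does not follow from Theorem~\ref{thm:mjn1bounds}, and in fact it is false: from Lemma~\ref{lem:fnf0} the weight on a rank-$N$ cylinder at base point $x$ is $b^{\sum a_j}/(q_N+b^{a_N}q_{N-1}(x-1))^2$, and since $b^{a_N}q_{N-1}/q_N$ can be as large as $1$, the ratio of weights at $x=1$ versus $x=2$ is bounded only by $4$, uniformly in $N$. So the two-point comparison gives non-expansion but no quantitative contraction, and the subsequent arithmetic (``blocks of size $N\asymp\sqrt n$ giving $(1-c/N)$ per block'') does not yield $e^{-\lambda\sqrt n}$ even formally. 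The paper's contraction works differently: via Lemma~\ref{lem:fnf0} one writes $\varphi_n(x)=f_n(x)-g f^*(x)$ as a sum over rank-$n$ cylinders and observes that this sum is, up to a factor in $[\tfrac14,1]$, a Riemann sum for $\int_1^2\varphi_0$, with error controlled by $\sup|\varphi_0'|$ times the mesh $O(2^{-n/2})$ (Lemma~\ref{lem:gcompressing}). This gives a \emph{fixed} contraction factor $\delta=1-\frac{1}{4(b-1)}\log\frac{2b}{b+1}$ after a fixed block, plus an additive error decaying with block length; iterating and interpolating produces the $e^{-\lambda\sqrt n}$ rate (Theorem~\ref{thm:mainthm}). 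Everything else in your outline is right, but the heart of the estimate needs the integral-comparison argument rather than the pointwise weight comparison you sketch.
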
}

The proof of this theorem, which is based on the proof in Section 15 of \cite{khinchine}, is lengthy and somewhat technical. It is provided in appendix A, and the following corollary immediately follows.

\begin{corollary}\label{cor:mndist} We have
\[
m(x) = \cfrac{\log \cfrac{bx}{x+b-1}}{\log \cfrac{2b}{b+1}}
\]
for all $x \in (1,2)$.
\end{corollary}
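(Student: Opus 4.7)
The plan is straightforward: the corollary follows immediately from Theorem~\ref{thm:mnlimbd} by passing to the limit in $n$. Explicitly, set
\[
L(x) = \frac{\log \frac{bx}{x+b-1}}{\log \frac{2b}{b+1}}
\]
for $x \in (1,2)$. Theorem~\ref{thm:mnlimbd} supplies constants $A, \lambda > 0$, independent of both $n$ and $x$, such that
\[
|m_n(x) - L(x)| < A e^{-\lambda \sqrt{n}}
\]
for every $n \ge 0$ and every $x \in (1,2)$.

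Fix any $x \in (1,2)$. Since $A e^{-\lambda \sqrt{n}} \to 0$ as $n \to \infty$, the right-hand side of the displayed bound vanishes in the limit, so $m_n(x) \to L(x)$. By Definition~\ref{def:zn,Mn,mn}, $m(x)$ is defined as $\lim_{n \to \infty} m_n(x)$ wherever this limit exists. We have just verified that the limit exists (and in fact the convergence is uniform on $(1,2)$, since the bound is independent of $x$), and that it equals $L(x)$. Hence $m(x) = L(x)$ for every $x \in (1,2)$, which is the claim of the corollary.

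There is essentially no obstacle inside the corollary itself; all the substantive work lies in Theorem~\ref{thm:mnlimbd}, whose proof is technical and deferred to Appendix~A. Once the stretched-exponential bound $A e^{-\lambda \sqrt{n}}$ is in hand, the corollary reduces to the trivial observation that $e^{-\lambda \sqrt{n}} \to 0$, and no further features of the recursion from Theorem~\ref{thm:mnrecurrence} or of the continued-logarithm structure need to be invoked at this stage.
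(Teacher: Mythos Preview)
Your proof is correct and is exactly the paper's approach: the paper states that Corollary~\ref{cor:mndist} ``immediately follows'' from Theorem~\ref{thm:mnlimbd}, and your argument simply makes explicit the trivial passage to the limit using $A e^{-\lambda\sqrt{n}} \to 0$.
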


\begin{longonly}
This limiting distribution is shown in figure~\ref{fig:type3distplot} for bases $b = 2, 3, 4, 5$. Notice that the limiting distributions for the different bases are much more similar than they were for type I and type II continued logarithms. (c.f. figures~\ref{fig:type1distplot} and~\ref{fig:type2distplot}).

\begin{figure}[ht]
	\centering
		\includegraphics[width=0.75\textwidth]{type3dist.jpg}
	\caption{Type III $m(x)$ for $2 \le b \le 5$}
	\label{fig:type3distplot}
\end{figure}
\end{longonly}

\begin{theorem}\label{thm:genprobdist} We have
\[
\M (D_{n+1}(k,\ell)) = m_n(1+(\ell+1)^{-1}b^{-k}) - m_n(1+\ell^{-1}b^{-k}).
\]
\end{theorem}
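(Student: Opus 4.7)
The plan is to adapt the proof strategy used for the analogous type II result (Theorem \ref{thm:type2genprobdist}) to the type III setting, with the key difference that the numerator terms in type III are $b^{a_n}$ rather than $c_n b^{a_n}$. Fix $\alpha \in D_{n+1}(k,\ell)$, so by Definition~\ref{def:Dnk} we have $a_{n+1}(\alpha) = k$ and $c_{n+1}(\alpha) = \ell$. The idea is to express $z_n(\alpha)$ as a function of the remainder $r_{n+2}(\alpha)$ alone (once $k$ and $\ell$ are fixed), and then read off the range of $z_n$ as $r_{n+2}$ varies.

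Using Remark~\ref{rem:convergent,remainder} and Definition~\ref{def:zn,Mn,mn}, I would write
\[
z_n = [1,\, \ell b^k,\, r_{n+2}]_{\cl_3(b)} = 1 + \cfrac{1}{\ell b^k + \cfrac{b^k}{r_{n+2}}},
\]
where $r_{n+2}$ can take any value in $[1,\infty)$ (since $r_{n+2}$ is itself a type III continued logarithm with $c_{n+2} \ge 1$, $a_{n+2} \ge 0$, giving $r_{n+2} \ge 1$, and no upper bound is imposed). For fixed $k$ and $\ell$, this expression is monotonically increasing in $r_{n+2}$, so the image of $z_n$ on $D_{n+1}(k,\ell)$ is an interval with endpoints obtained by taking the extreme values of $r_{n+2}$. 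Evaluating at $r_{n+2} = 1$ gives $z_n = 1 + \tfrac{1}{(\ell+1)b^k}$, while letting $r_{n+2} \to \infty$ gives $z_n \to 1 + \tfrac{1}{\ell b^k}$.

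Combining these observations, up to a set of boundary points of measure zero,
\[
D_{n+1}(k,\ell) = \{\alpha \in (1,2) : 1+(\ell+1)^{-1}b^{-k} \le z_n(\alpha) < 1+\ell^{-1}b^{-k}\} = M_n(1+\ell^{-1}b^{-k}) \setminus M_n(1+(\ell+1)^{-1}b^{-k}),
\]
where the nested structure $M_n(1+(\ell+1)^{-1}b^{-k}) \subseteq M_n(1+\ell^{-1}b^{-k})$ follows from the monotonicity of $M_n$ in its argument (since $(\ell+1)^{-1} < \ell^{-1}$). Taking Lebesgue measure and applying Definition~\ref{def:zn,Mn,mn} then yields the stated formula as a difference of values of $m_n$.

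No serious obstacle is expected: the computation is essentially algebraic, analogous to Theorem~\ref{thm:type2genprobdist}, and the only subtle point is verifying that the simplification $[1, \ell b^k, r_{n+2}]_{\cl_3(b)} = 1 + 1/(\ell b^k + b^k/r_{n+2})$ uses the correct type III numerator $b^{a_{n+1}} = b^k$ in the second layer rather than the type II value $\ell b^k$. Once that is in place, monotonicity and the endpoint evaluations complete the argument.
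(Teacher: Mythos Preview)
Your proposal is correct and follows essentially the same argument as the paper: express $z_n$ as $1 + 1/(\ell b^k + b^k/r_{n+2})$, use monotonicity in $r_{n+2}$, evaluate at the extremes $r_{n+2}\to 1$ and $r_{n+2}\to\infty$, and identify $D_{n+1}(k,\ell)$ as $M_n(1+\ell^{-1}b^{-k})\setminus M_n(1+(\ell+1)^{-1}b^{-k})$. Note that the displayed statement has the two $m_n$ arguments swapped (a sign typo); your computation and the paper's own proof both yield $m_n(1+\ell^{-1}b^{-k}) - m_n(1+(\ell+1)^{-1}b^{-k})$.
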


\begin{proof}
Suppose that $\alpha \in D_{n+1}(k,\ell)$. Then $a_{n+1} = k$ and $c_{n+1} = \ell$, so
\[
z_n = [1, \ell b^k, r_{n+2}] = 1 + \cfrac{1}{\ell b^k + \cfrac{b^k}{r_{n+2}}},
\]
where $r_{n+2}$ can take any value in $(1,\infty)$. Clearly $z_n$ is a monotonic function of $r_{n+2}$ for fixed $k,\ell$, so the extreme values of $z_n$ on $D_{n+1}(k,\ell)$ will occur at the extreme values of $r_{n+2}$. Letting $r_n \to 1$ gives $z_n = 1+\frac{1}{\ell b^k + b^k} = 1+(\ell+1)^{-1} b^{-k}$ and letting $r_n \to \infty$ gives $z_n = 1+\frac{1}{\ell b^k + 0} = \ell^{-1}b^{-k}$. Thus
\begin{align*}
D_{n+1}(k,\ell) &= \{\alpha \in (1,2) : 1+(\ell+1)^{-1} b^{-k} < z_n(\alpha) \le 1+\ell^{-1}b^{-k} \} \\
&= M_n(1+\ell^{-1}b^{-k}) \setminus M_n(1+(\ell+1)^{-1} b^{-k}),
\end{align*}
so
\[
\M D_{n+1}(k,\ell) = m_n(1+\ell^{-1}b^{-k}) - m_n(1+(\ell+1)^{-1} b^{-k}).
\]
\end{proof}

\newcommand{\MDnboundthmtext}{
There exist constants $A, \lambda > 0$ such that
\[
\left| \M ( D_n (k,\ell )) - \frac{\log \frac{(\ell b^k + 1)((\ell +1)b^{k+1} + 1)}{(\ell b^{k+1} + 1)((\ell +1)b^k + 1)}}{\log \frac{2b}{b+1}} \right| < \frac{A e^{-\lambda \sqrt{n-1}}}{\ell (\ell +1)b^k}
\]
for all $k \in \Z_{\ge 0}, \ell \in \{1,2,\dots,b-1\}$ and $n \in \Z_{\ge 0}$.
}

\begin{theorem}\label{thm:MDnbound}
\MDnboundthmtext
\end{theorem}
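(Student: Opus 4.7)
The plan is to combine Theorem \ref{thm:genprobdist}, Theorem \ref{thm:mnlimbd}, and Corollary \ref{cor:weakMDnbound}. First, writing $x_1 = 1+\ell^{-1}b^{-k}$ and $x_2 = 1+(\ell+1)^{-1}b^{-k}$, Theorem \ref{thm:genprobdist} gives
\[
\M(D_n(k,\ell)) = m_{n-1}(x_1) - m_{n-1}(x_2).
\]
Setting $f(x) = \frac{\log(bx/(x+b-1))}{\log(2b/(b+1))}$, a direct algebraic simplification --- rewriting each logarithmic argument by clearing denominators with factors of $\ell b^k$ or $(\ell+1)b^k$ as appropriate --- shows
\[
f(x_1)-f(x_2) = \frac{\log \frac{(\ell b^k+1)((\ell+1)b^{k+1}+1)}{(\ell b^{k+1}+1)((\ell+1)b^k+1)}}{\log(2b/(b+1))}.
\]
This is exactly the main term of the theorem, so it suffices to bound the residual $|E_{n-1}(x_1)-E_{n-1}(x_2)|$, where $E_n := m_n - f$, by $\frac{A'e^{-\lambda'\sqrt{n-1}}}{\ell(\ell+1)b^k}$ for suitable constants $A',\lambda' > 0$.

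Two elementary bounds on this residual are immediate. Theorem \ref{thm:mnlimbd} yields $|E_{n-1}(x)| < Ae^{-\lambda\sqrt{n-1}}$ uniformly in $x \in (1,2)$, hence
\[
|E_{n-1}(x_1)-E_{n-1}(x_2)| < 2A e^{-\lambda\sqrt{n-1}}.
\]
On the other hand, Corollary \ref{cor:weakMDnbound} gives $|m_{n-1}(x_1)-m_{n-1}(x_2)| = \M(D_n(k,\ell)) \le \frac{2}{\ell(\ell+1)b^k}$, while a direct mean-value estimate on the smooth function $f$ gives $|f(x_1)-f(x_2)| \le \frac{C}{\ell(\ell+1)b^k}$ for a constant $C = C(b)$. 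Combining these by the triangle inequality yields
\[
|E_{n-1}(x_1)-E_{n-1}(x_2)| \le \frac{C_0}{\ell(\ell+1)b^k}
\]
for some constant $C_0 = C_0(b)$.

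The main obstacle is to fuse these two independent bounds into the joint form $\frac{A'e^{-\lambda'\sqrt{n-1}}}{\ell(\ell+1)b^k}$. The straightforward geometric-mean combination of the two estimates only produces $O\bigl(e^{-\lambda\sqrt{n-1}/2}/\sqrt{\ell(\ell+1)b^k}\bigr)$, which is too weak in its scaling with $\ell$ and $k$. To close this gap I would revisit the proof of Theorem \ref{thm:mnlimbd} in Appendix A and extract a Lipschitz-in-$x$ refinement of the form
\[
|E_n(x)-E_n(y)| < A''|x-y|\, e^{-\lambda''\sqrt{n}}, \qquad x,y \in (1,2).
\]
This refinement is natural in the Khinchine-style contraction argument underlying Theorem \ref{thm:mnlimbd}: the iteration operator defined by the recursion \eqref{eq:mn} preserves the class of Lipschitz functions with a controlled Lipschitz constant, and has $f$ as its unique fixed direction, so the same exponential decay argument that gives the sup-norm estimate should also give a Lipschitz-norm estimate. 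Granted this refinement, the theorem follows immediately by applying it with $(x,y) = (x_1,x_2)$ and using $|x_1-x_2| = \frac{b^{-k}}{\ell(\ell+1)}$. Establishing the Lipschitz refinement --- essentially running the Appendix A argument in a stronger norm --- is the heart of the proof.
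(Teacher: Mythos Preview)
Your plan is correct and in fact coincides with the paper's argument; the only thing you are missing is that the Lipschitz refinement you want is already the primary output of Appendix~A, not something extra to be extracted. The entire machinery there (Theorem~\ref{thm:mainthm}) is applied to the sequence of \emph{derivatives} $(m_n')_{n\ge 0}$, which lies in $A^{**}$, and yields Corollary~\ref{cor:mn'dist}:
\[
\left|m_n'(x) - \frac{b-1}{\log\frac{2b}{b+1}}\cdot\frac{1}{x(x+b-1)}\right| < A e^{-\lambda\sqrt{n}} \qquad (x\in(1,2)).
\]
Theorem~\ref{thm:mnlimbd} is itself obtained from this by integrating from $1$ to $x$. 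For Theorem~\ref{thm:MDnbound} one simply integrates the same derivative bound over $[x_2,x_1]$ instead: since $\M D_n(k,\ell)=\int_{x_2}^{x_1} m_{n-1}'(z)\,\mathrm{d}z$ and the target main term equals $\int_{x_2}^{x_1}\frac{a}{z(z+b-1)}\,\mathrm{d}z$, the difference is bounded by $(x_1-x_2)\cdot A e^{-\lambda\sqrt{n-1}} = \frac{A e^{-\lambda\sqrt{n-1}}}{\ell(\ell+1)b^k}$. So there is no need to rerun the argument in a stronger norm; the pointwise bound on $m_n'$ \emph{is} your Lipschitz bound on $E_n$, and the proof is a two-line integration once you invoke Corollary~\ref{cor:mn'dist} rather than Theorem~\ref{thm:mnlimbd}.
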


\newtheorem*{MDnboundthm}{Theorem \ref{thm:MDnbound}}
\newcommand{\restateMDnboundthm}{
\begin{MDnboundthm}[Restated]
\MDnboundthmtext
\end{MDnboundthm}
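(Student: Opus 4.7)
The plan is to combine Theorem~\ref{thm:genprobdist} (applied with $n$ replaced by $n-1$) with Theorem~\ref{thm:mnlimbd}, together with a refinement that captures the missing $1/(\ell(\ell+1)b^k)$ factor. First I would write
\[
\M(D_n(k,\ell)) = m_{n-1}\bigl(1+\ell^{-1}b^{-k}\bigr) - m_{n-1}\bigl(1+(\ell+1)^{-1}b^{-k}\bigr),
\]
and then verify by direct algebra that the proposed closed form equals $m(1+\ell^{-1}b^{-k}) - m(1+(\ell+1)^{-1}b^{-k})$, where $m(x) = \log(bx/(x+b-1))/\log(2b/(b+1))$ is the limit of Corollary~\ref{cor:mndist}. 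Indeed, substituting $x = 1+\ell^{-1}b^{-k}$ and clearing the factor $\ell b^k$ from the argument of the logarithm gives $\log\bigl(b(\ell b^k + 1)/(\ell b^{k+1}+1)\bigr)/\log(2b/(b+1))$, and subtracting the analogous expression at $1+(\ell+1)^{-1}b^{-k}$ collapses to the stated log-ratio.

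Setting $y = 1+\ell^{-1}b^{-k}$ and $x = 1+(\ell+1)^{-1}b^{-k}$, so that $y - x = 1/(\ell(\ell+1)b^k)$, the quantity to be bounded is
\[
[m_{n-1}(y) - m(y)] - [m_{n-1}(x) - m(x)] = E_{n-1}(y) - E_{n-1}(x),
\]
where $E_n := m_n - m$. Applying Theorem~\ref{thm:mnlimbd} termwise yields only the bound $2Ae^{-\lambda\sqrt{n-1}}$, which is missing the required $|y-x|$ factor; the real work is therefore to upgrade the pointwise bound of Theorem~\ref{thm:mnlimbd} to a Lipschitz-type bound
\[
|E_n(y) - E_n(x)| \le A' e^{-\lambda' \sqrt{n}}\,|y - x| \qquad (x,y \in [1,2]).
\]

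The principal obstacle is establishing this Lipschitz estimate, and I would do so by differentiating the recurrence \eqref{eq:mn}. Each $m_n$ is $C^1$ on $(1,2)$, being the uniform sum of smooth shifts of $m_{n-1}$, and termwise differentiation of \eqref{eq:mn} gives
\[
m_n'(x) = \sum_{k=0}^\infty \sum_{\ell=1}^{b-1} m_{n-1}'\bigl(1+(x+\ell-1)^{-1}b^{-k}\bigr)\,(x+\ell-1)^{-2} b^{-k},
\]
which displays $m_n'$ as a positive linear combination of values of $m_{n-1}'$. Since $m$ is a fixed point of the recurrence, $m'$ satisfies the same identity, and so $E_n' = m_n' - m'$ satisfies a recurrence of exactly the same contractive shape as the one used in Appendix~A to control $E_n$ itself. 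Iterating the Khinchine-style argument of Appendix~A on $E_n'$ rather than $E_n$ therefore yields the required uniform bound $\|E_n'\|_\infty \le A' e^{-\lambda' \sqrt n}$, whence the Mean Value Theorem gives the Lipschitz estimate and hence
\[
|\M(D_n(k,\ell)) - \text{closed form}| = |E_{n-1}(y) - E_{n-1}(x)| \le \frac{A' e^{-\lambda' \sqrt{n-1}}}{\ell(\ell+1) b^k}.
\]
As a sanity check for small $n$, Corollary~\ref{cor:weakMDnbound} already gives $\M(D_n(k,\ell)) \le 2/(\ell(\ell+1)b^k)$ and the closed form is bounded by the same order, so the desired inequality is automatic there after possibly enlarging $A'$; only the large-$n$ regime actually requires the strengthened contraction argument.
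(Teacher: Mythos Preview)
Your approach is correct and is essentially the paper's own argument. The paper's Appendix~A in fact proves the derivative bound first: Theorem~\ref{thm:mainthm} is applied directly to the sequence $(m_n')$ (which lies in $A^{**}$ since $m_0'\equiv 1$), yielding Corollary~\ref{cor:mn'dist}, namely $|m_n'(x)-m'(x)|<Ae^{-\lambda\sqrt n}$; Theorem~\ref{thm:mnlimbd} is then obtained from this by integration, and Theorem~\ref{thm:MDnbound} by integrating over $[1+(\ell+1)^{-1}b^{-k},\,1+\ell^{-1}b^{-k}]$, picking up exactly the factor $1/(\ell(\ell+1)b^k)$. So the ``strengthened contraction argument'' you propose for $E_n'$ is not extra work beyond Appendix~A but is precisely its main content; you should apply the contraction to $(m_n')$ rather than to $E_n'=m_n'-m'$ (the latter need not be positive, so the hypotheses of the $A^{**}$ machinery would fail), but otherwise your outline matches the paper.
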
}

We then immediately get the following limiting distribution. 
\begin{longonly}
This distribution is shown in figure~\ref{fig:type3probplot} for $b = 2, 3, 4, 5$. 
\end{longonly} 
Notice that like with type II continued fractions, the distribution is non-monotonic. This is due to the gaps in possible denominator terms. For example, for base 4, the possible denominator terms are $1,2,3,4,8,12,\dots$. The jump from 4 to 8 causes a spike in the limiting distribution.

\begin{corollary}\label{cor:limDn} We have
\[
\lim_{n \to \infty}\M( D_n (k,\ell))) = \frac{\log \frac{(\ell b^k + 1)((\ell +1)b^{k+1} + 1)}{(\ell b^{k+1} + 1)((\ell +1)b^k + 1)}}{\log \frac{2b}{b+1}}
\]
for $k \in \Z_{\ge 0}$ and $\ell \in \{1,2,\dots,b-1\}$.
\end{corollary}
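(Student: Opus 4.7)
The plan is to deduce the corollary as an immediate consequence of Theorem~\ref{thm:MDnbound}. For each fixed $k \in \Z_{\ge 0}$ and $\ell \in \{1,\dots,b-1\}$, the error bound
\[
\frac{A e^{-\lambda \sqrt{n-1}}}{\ell(\ell+1) b^k}
\]
tends to $0$ as $n \to \infty$, while the asserted limiting expression does not depend on $n$. Hence passing to the limit in the inequality of Theorem~\ref{thm:MDnbound} gives the corollary directly. No further argument is required beyond this observation, and the only real work lies in Theorem~\ref{thm:MDnbound} itself, which is deferred to Appendix~A.

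As a sanity check, I would also verify that the same expression arises from the complementary route via Corollary~\ref{cor:mndist}. Specifically, Theorem~\ref{thm:genprobdist} writes $\M(D_{n+1}(k,\ell))$ as a difference $m_n(1+\ell^{-1}b^{-k}) - m_n(1+(\ell+1)^{-1}b^{-k})$ of values of $m_n$, and Corollary~\ref{cor:mndist} identifies the pointwise limit $m(x) = \log\frac{bx}{x+b-1}/\log\frac{2b}{b+1}$. Substituting $x = 1 + (jb^k)^{-1}$ gives
\[
\frac{bx}{x+b-1} = \frac{b(jb^k+1)}{jb^{k+1}+1},
\]
so taking the log-difference between $j = \ell$ and $j = \ell+1$ reproduces exactly the logarithm displayed in the corollary.

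The only subtlety in this route is justifying the interchange of limit and the (countable) operations implicit in $\M(D_n(k,\ell))$; but since we are taking the limit of a single numerical quantity for fixed $k,\ell$, no interchange is needed, and the deduction is term-by-term. Thus there is no genuine obstacle, and the proof reduces to citing the two prior results and observing that the error term decays to $0$.
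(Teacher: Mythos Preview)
Your proposal is correct and matches the paper's approach exactly: the paper states that the corollary follows immediately from Theorem~\ref{thm:MDnbound}, which is precisely your primary deduction via the decaying error bound. Your additional sanity check through Theorem~\ref{thm:genprobdist} and Corollary~\ref{cor:mndist} is a nice verification but goes beyond what the paper provides.
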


\begin{longonly}
\begin{figure}[ht]
	\centering
	\begin{tabular}{cc}
	\includegraphics[width=6cm]{type3base2prob.jpg} & \includegraphics[width=6cm]{type3base3prob.jpg} \\
	\includegraphics[width=6cm]{type3base4prob.jpg} & \includegraphics[width=6cm]{type3base5prob.jpg}
	\end{tabular}
	\caption{Type III limiting distribution for $2 \le b \le 5$}
	\label{fig:type3probplot}
\end{figure}
\end{longonly}

\subsection{Type III Logarithmic Khinchine Constant}\label{subsec:type3khinchine}

We now extend the Khinchine constant to type III continued logarithms. Note that we only gave an overview for type I and type II, but here we will be much more rigourous.

\begin{definition}\label{def:Pkl}
Let $\alpha \in (1,\infty)$ have type III continued logarithm $[c_0b^{a_0}, c_1b^{a_1}, c_2b^{a_2}, \dots]_{\cl_3(b)}$. Let $k \in \Z_{\ge 0}$ and $\ell \in \{1,2,\dots,b-1\}$. We define
\[
P_\alpha(k, \ell) = \lim_{N \to \infty} \frac{|\{n \in \N : a_n = k, c_n = \ell\}|}{N}
\]
to be the limiting proportion of continued logarithm terms of $\alpha$ that have $a_n = k$ and $c_n = \ell$, if this limit exists.
\end{definition}

Note that for the theorems which follow, we will restrict our study to $(1,2)$ instead of $(1,\infty)$. The results can be easily extended to $(1,\infty)$ by noting that every $\alpha \in (1,\infty)$ corresponds to an $\alpha' \in (1,2)$ in the sense that the continued logarithm of $\alpha'$ is just the continued logarithm of $\alpha$ with the first term replace by 1. Since we are looking at limiting behaviour over all terms, changing the first term will have no impact.

\newcommand{\khinchinedistthmtext}{
For almost every $\alpha \in (1,2)$ with continued logarithm $[1, c_1b^{a_1}, c_2b^{a_2}, \dots]_{\cl_3(b)}$ we have
\[
P_\alpha(k,\ell) = \frac{\log\frac{(1+\ell^{-1}b^{-k})(b+(\ell+1)^{-1}b^{-k})}{(b+\ell^{-1}b^{-k})(1+(\ell+1)^{-1}b^{-k})}}{\log\frac{2b}{b+1}}
\]
for all $k \in \Z_{\ge 0}$ and $\ell \in \{1,2,\dots,b-1\}$.
}

\newcommand{\khinchinethmtext}{
For almost every $\alpha \in (1,2)$ with continued logarithm $[1, c_1b^{a_1}, c_2b^{a_2}, \dots]_{\cl_3(b)}$ we have
\[
\lim_{N \to \infty} \left(\prod_{n=1}^N (c_n b^{a_n})\right)^{1/N} = b^{\mathcal{A}_b},
\]
where
\[
\mathcal{A}_b = \frac{1}{\log b \log \frac{b+1}{2b}} \sum_{\ell=2}^b \log\left(1-\frac1\ell\right) \log \left(1+\frac1\ell\right).
\] 
}

The following two theorems are proved in Appendix B. The proofs are based on the analogous proofs for simple continued fractions that are presented in Sections 15 and 16 of \cite{khinchine}.

\begin{theorem}\label{thm:khinchinedist}
\khinchinedistthmtext
\end{theorem}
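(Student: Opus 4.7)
The plan is to adapt Khinchine's classical argument from Sections 15--16 of \cite{khinchine}, using Theorem \ref{thm:MDnbound} as a quantitative mixing bound, and to extract almost-sure convergence of the empirical frequencies via Chebyshev and Borel--Cantelli. First, I would introduce the shift map $T : (1,2) \to (1,2)$ defined by $T(\alpha) = z_1(\alpha)$ from Definition \ref{def:zn,Mn,mn}, so that $\mathbf{1}_{D_n(k,\ell)}(\alpha) = \mathbf{1}_{J_1\mat{k\\\ell}}(T^{n-1}\alpha)$. Writing $S_N(\alpha) := \sum_{n=1}^N \mathbf{1}_{D_n(k,\ell)}(\alpha)$, the ratio $S_N/N$ is exactly the empirical frequency whose limit we must evaluate. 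By Theorem \ref{thm:genprobdist} and Corollary \ref{cor:limDn} the expectation $\int S_N \, d\alpha / N$ already converges to the asserted $P(k,\ell)$ (the algebra $\mu_b(1+\ell^{-1}b^{-k}) - \mu_b(1+(\ell+1)^{-1}b^{-k})$ collapses to the claimed logarithmic expression). The remaining task is to upgrade convergence-in-mean to a.e.\ convergence.

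The heart of the argument is a quantitative decorrelation estimate of the form
\[
\bigl| \M(D_m(k,\ell) \cap D_n(k',\ell')) - \M(D_m(k,\ell)) \cdot P(k',\ell') \bigr| \le C \, \M(D_m(k,\ell)) \, e^{-\lambda \sqrt{n-m-1}}
\]
for $1 \le m < n$. To obtain this I would partition $D_m(k,\ell)$ into its constituent rank-$m$ intervals $J_m\mat{\mathbf{a}\\\mathbf{c}}$. By Theorem \ref{thm:remaindertheorem} and Lemma \ref{lem:endpoints}, on each such $J_m$ the map $T^m$ is a monotone Möbius bijection onto $(1,2)$, so the conditional distribution of $(a_{m+j}, c_{m+j})$ is the push-forward of the distribution of $(a_j, c_j)$ under that Möbius map. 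The bounded-distortion bounds already implicit in Theorem \ref{thm:mjn1bounds}, combined with the quantitative convergence supplied by Theorem \ref{thm:MDnbound} applied at index $n-m$, then yield the displayed estimate after summing over rank-$m$ intervals.

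With the decorrelation bound in hand, the variance computation is routine:
\[
\mathrm{Var}(S_N) = \sum_{m,n=1}^N \bigl[ \M(D_m(k,\ell) \cap D_n(k,\ell)) - \M(D_m(k,\ell)) \M(D_n(k,\ell)) \bigr] = O(N),
\]
since the $N$ diagonal terms contribute $O(N)$ and off-diagonal terms are summable in the gap via $\sum_j e^{-\lambda \sqrt j} < \infty$. Chebyshev's inequality then gives $\M\{\alpha : |S_N/N - \mathbb{E}[S_N]/N| > \varepsilon\} = O(1/N)$. As $O(1/N)$ is not directly summable, I would pass to the subsequence $N_j = j^2$, apply Borel--Cantelli to conclude $S_{N_j}/N_j \to P(k,\ell)$ almost everywhere, and interpolate to arbitrary $N$ using the monotonicity $S_N \le S_{N+1} \le S_N + 1$ and $N_{j+1}/N_j \to 1$. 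Taking a countable intersection of the resulting full-measure sets over $(k,\ell) \in \Z_{\ge 0} \times \{1,\dots,b-1\}$ delivers the theorem.

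The main obstacle is the decorrelation estimate of the second step. While Theorem \ref{thm:mjn1bounds} already furnishes constant-factor distortion bounds on rank-$m$ intervals (enough to prove mere decorrelation), extracting the sharper $e^{-\lambda \sqrt{n-m-1}}$ rate needed for a clean variance bound requires carefully transferring the rate of Theorem \ref{thm:MDnbound} through the Möbius change of variables induced by $T^m$ on each $J_m$, uniformly in the choice of $J_m$. This mirrors the most technical step in Khinchine's original proof, and is precisely where the recurrences of Lemma \ref{lem:pqmatrix} and Theorem \ref{thm:remaindertheorem} must be used in full force; once it is secured, the remaining steps are standard second-moment analysis.
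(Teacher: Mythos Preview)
Your proposal is correct and follows essentially the same route as the paper: the paper's Theorem~\ref{thm:Eratiobound} is exactly the decorrelation estimate you describe (obtained by showing that the conditional densities $\chi_0'$ on each rank-$m$ interval lie in $A^{**}$ with \emph{uniform} constants and then re-applying Theorem~\ref{thm:mainthm}, rather than Theorem~\ref{thm:MDnbound} itself, which is tied to Lebesgue initial data), and Theorem~\ref{thm:limavgf} then carries out the second-moment/Chebyshev/Borel--Cantelli argument along the subsequence $N=n^2$ with interpolation, just as you outline. The only cosmetic difference is that the paper packages the argument as a general averaging theorem for functions $f(k,\ell)$ satisfying $f(k,\ell)\ll (\ell b^k)^{1/2-\delta}$ and then specializes to $f=\mathbf{1}_{\{(k,\ell)\}}$, whereas you work with indicators from the start.
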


\begin{theorem}\label{thm:logkhinchine}
\khinchinethmtext
\end{theorem}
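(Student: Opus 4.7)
The plan is to follow Khinchine's strategy from \cite[Sec.~15--16]{khinchine}, adapted to type III continued logarithms. Taking logarithms base $b$, the theorem reduces to showing
\[
\lim_{N\to\infty} \frac{1}{N} \sum_{n=1}^N (a_n + \log_b c_n) = \mathcal{A}_b
\]
for a.e.\ $\alpha \in (1,2)$. Since $\log_b c_n \in [0, \log_b(b-1)]$ is uniformly bounded, the essential task is to identify the Ces\`aro limit of the function $f(k,\ell) := k + \log_b \ell$ along the orbit $(a_n(\alpha), c_n(\alpha))$, using Theorem~\ref{thm:khinchinedist} as the main input.

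First I would establish almost-sure convergence by truncation. For each $T \in \N$, the function $f_T := f \cdot \mathbf{1}_{\{k \le T\}}$ is bounded and supported on finitely many pairs $(k,\ell)$, so Theorem~\ref{thm:khinchinedist} combined with linearity immediately yields
\[
\lim_{N\to\infty} \frac{1}{N} \sum_{n=1}^N f_T(a_n, c_n) = \sum_{k=0}^T \sum_{\ell=1}^{b-1} (k + \log_b \ell)\, P(k,\ell)
\]
almost everywhere. To control the tail $f - f_T$, I would use the uniform bound $\M D_n(k,\ell) \le 2/(\ell(\ell+1)b^k)$ from Corollary~\ref{cor:weakMDnbound}, which gives
\[
\int_{(1,2)} (f - f_T)(a_n, c_n)\, d\alpha \;\le\; \sum_{k > T}\sum_{\ell=1}^{b-1} \frac{2(k + \log_b \ell)}{\ell(\ell+1)\, b^k},
\]
an estimate that is uniform in $n$ and tends to $0$ as $T \to \infty$. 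A Borel--Cantelli argument along a sparse subsequence $N_j$, combined with monotonicity of the truncated partial sums in $T$, upgrades this expected tail bound to pointwise tail control for a.e.\ $\alpha$, exactly as in \cite[Sec.~15]{khinchine}. Letting $T \to \infty$ then identifies
\[
\mathcal{A}_b = \sum_{k=0}^\infty \sum_{\ell=1}^{b-1} (k + \log_b \ell)\, P(k,\ell).
\]

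The main obstacle is the algebraic collapse of this double series to the compact closed form. Rewriting Theorem~\ref{thm:khinchinedist} by expanding $\log(\ell b^k + 1) = k\log b + \log\ell + \log(1+\ell^{-1}b^{-k})$ shows that $P(k,\ell)$ is a mixed second difference of the function $L(k,\ell) := \log(1 + \ell^{-1} b^{-k})$:
\[
P(k,\ell)\,\log\tfrac{2b}{b+1} \;=\; L(k,\ell) - L(k+1,\ell) - L(k,\ell+1) + L(k+1,\ell+1).
\]
I would then split $\mathcal{A}_b \log\tfrac{2b}{b+1}$ into the two pieces weighted by $k$ and by $\log_b \ell$. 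Abel summation in $k$ (using $k\, L(k,\ell) \to 0$) collapses the $k$-weighted piece into $\sum_\ell \sum_{k \ge 1} [L(k,\ell) - L(k,\ell+1)]$; summation by parts in $\ell$ over the finite range $\{1,\dots,b-1\}$, with careful bookkeeping of the boundaries (the $\ell=1$ term is killed by $\log_b 1 = 0$, while the shift $\ell \mapsto \ell+1$ produces a boundary contribution at $\ell = b$), collects everything into a single $\ell$-sum of products of two logarithms. A final telescoping in $k$, using the identity $\log(1+\ell^{-1}b^{-k}) - \log(1+\ell^{-1}b^{-k-1})$ as a first difference and pairing contributions coming from $\ell$ and $\ell+1$, yields cross-terms of the form $\log(1-1/\ell)\log(1+1/\ell)$. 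Matching signs (noting $\log\tfrac{b+1}{2b} = -\log\tfrac{2b}{b+1}$) produces
\[
\mathcal{A}_b = \frac{1}{\log b \cdot \log \frac{b+1}{2b}} \sum_{\ell=2}^b \log\!\left(1-\frac{1}{\ell}\right) \log\!\left(1 + \frac{1}{\ell}\right),
\]
as claimed. The conceptual content is routine once Theorem~\ref{thm:khinchinedist} is in hand; the difficulty lies entirely in tracking the boundary indices and signs through the two iterated Abel summations.
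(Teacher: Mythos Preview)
Your algebraic reduction of the double series to the closed form is essentially the paper's computation: the paper also recognizes $P(k,\ell)\log\tfrac{2b}{b+1}$ as the mixed second difference of $u(k,\ell)=\log(1+\ell^{-1}b^{-k})$, splits into the $k$-weighted and $\log_b\ell$-weighted parts, and applies Abel summation in $k$ followed by summation by parts in $\ell$. That half of your proposal is fine.

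The gap is in the almost-sure convergence. You propose to deduce it from Theorem~\ref{thm:khinchinedist} by truncation, using only the \emph{first-moment} tail bound
\[
\int_{(1,2)} (f - f_T)(a_n, c_n)\, d\alpha \;\le\; \sum_{k > T}\sum_{\ell=1}^{b-1} \frac{2(k + \log_b \ell)}{\ell(\ell+1)\, b^k}
\]
from Corollary~\ref{cor:weakMDnbound}. This bound is uniform in $n$ but does not decay in $n$, so Markov's inequality gives only $P\bigl(\tfrac{1}{N}\sum (f-f_T)(a_n,c_n)>\delta\bigr)\le \epsilon_T/\delta$, a constant in $N$; no sparse subsequence makes this summable, and Borel--Cantelli does not apply. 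First-moment control yields the lower bound $\liminf_N \tfrac{1}{N}\sum f(a_n,c_n)\ge \mathcal{A}_b$ a.e.\ (via $f_T\le f$ and monotone convergence), but it cannot bound the $\limsup$: one can have $E[Y_N]\to 0$ with $Y_N\ge 0$ and yet $\limsup Y_N>0$ a.e.\ (the typewriter sequence). What Khinchine actually does in \cite[\S16]{khinchine}---and what the paper reproduces as Theorem~\ref{thm:limavgf}---is a \emph{second-moment} argument: one shows the covariances $g_{ik}=\int (f(a_i,c_i)-u_i)(f(a_k,c_k)-u_k)\,d\alpha$ decay like $e^{-\lambda\sqrt{k-i}}$, which requires the decorrelation estimate of Theorem~\ref{thm:Eratiobound}, and then $\int S_N^2\le CN$ gives $P(|S_{N^2}|>\epsilon N^2)\le C/(\epsilon^2 N^3)$, which \emph{is} summable. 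Theorem~\ref{thm:khinchinedist} alone, being a statement about individual indicator functions, does not contain this correlation information, so your reduction to it is insufficient; you need the full strength of Theorem~\ref{thm:limavgf} (or an ergodic theorem, which the paper does not set up).
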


\newtheorem*{khinchinedistthm}{Theorem \ref{thm:khinchinedist}}
\newcommand{\restatekhinchinedistthm}{
\begin{khinchinedistthm}[Restated]
\khinchinedistthmtext
\end{khinchinedistthm}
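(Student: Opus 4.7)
The plan is to view the claim as a strong law of large numbers for the indicator sequence $\chi_n(\alpha) := \mathbf{1}\{a_n(\alpha)=k,\, c_n(\alpha)=\ell\}$ under Lebesgue measure on $(1,2)$. Denote the target value by $P^*(k,\ell)$; by Corollary~\ref{cor:limDn} we already know $\int \chi_n\,d\M = \M D_n(k,\ell) \to P^*(k,\ell)$ at a stretched-exponential rate. Setting $S_N := \sum_{n=1}^N \chi_n$, the goal is $S_N/N \to P^*(k,\ell)$ almost everywhere, simultaneously for every pair $(k,\ell)$. As $(k,\ell)$ ranges over the countable set $\Z_{\ge 0}\times\{1,\dots,b-1\}$, intersecting a countable family of full-measure sets will suffice once the claim is established for each fixed pair.

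First I would establish a \emph{conditional} Gauss--Kuzmin estimate: there exist absolute constants $A',\lambda'>0$ such that for every rank-$n$ interval $J_n\mat{\a \\ \c}$ and every $m\ge 1$,
\[
\left|\frac{\M\bigl(J_n\mat{\a \\ \c}\cap D_{n+m}(k,\ell)\bigr)}{\M J_n\mat{\a \\ \c}} - P^*(k,\ell)\right| \le \frac{A'\,e^{-\lambda'\sqrt{m-1}}}{\ell(\ell+1)b^k}.
\]
This mirrors Theorem~\ref{thm:MDnbound} but relativized to the cylinder $J_n\mat{\a \\ \c}$. To prove it, one re-runs the machinery of Appendix~A with Lebesgue measure replaced by its restriction to $J_n\mat{\a \\ \c}$: the change of variables $\alpha\mapsto z_n(\alpha)$ conjugates the shift back to the original dynamics on $(1,2)$, and by Lemma~\ref{lem:endpoints} together with the computation behind Theorem~\ref{thm:mjn1bounds} its Jacobian is bounded above and below by absolute constants (uniformly in $n$, $\a$, $\c$). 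Hence the recursion of Theorem~\ref{thm:mnrecurrence} carries over with only a bounded multiplicative loss, and the stretched-exponential decay survives.

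Given the conditional estimate, partitioning $D_n(k,\ell)$ into its constituent rank-$n$ intervals and summing the estimate yields, after combining with Theorem~\ref{thm:MDnbound} and Corollary~\ref{cor:weakMDnbound}, the covariance bound
\[
\bigl|\mathrm{Cov}(\chi_n,\chi_{n+m})\bigr| \le \frac{C\bigl(e^{-\lambda'\sqrt{m-1}} + e^{-\lambda\sqrt{n+m-1}}\bigr)}{(\ell(\ell+1)b^k)^2}.
\]
Since $\sum_{m\ge 0} e^{-\lambda'\sqrt{m}}<\infty$, this gives $\mathrm{Var}(S_N)\le CN/(\ell(\ell+1)b^k)^2$. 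Chebyshev's inequality then yields $\M\{|S_N/N - P^*(k,\ell)|>\varepsilon\} \le C'/(\varepsilon^2 N)$, which is not summable. So I would pass to the subsequence $N_j := j^2$, along which $\sum_j C'/(\varepsilon^2 j^2)<\infty$, and Borel--Cantelli delivers $S_{N_j}/N_j\to P^*(k,\ell)$ a.e. The sandwich $S_{N_j}\le S_N\le S_{N_{j+1}}$ combined with $N_{j+1}/N_j\to 1$ upgrades this to $S_N/N\to P^*(k,\ell)$ a.e., and a countable intersection over $(k,\ell)$ completes the proof.

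The main obstacle is the conditional Gauss--Kuzmin estimate in the second step. Adapting the combinatorial machinery of Appendix~A uniformly to arbitrary rank-$n$ cylinders requires verifying that the distortion of the renormalization $\alpha\mapsto z_n(\alpha)$ on each rank-$n$ interval is bounded independently of both $n$ and of $(\a,\c)$. The quantitative content of this is essentially the two-sided inequality
\[
\frac{1}{4\ell(\ell+1)b^k} \le \frac{\M J_{n+1}\mat{\a, & k \\ \c, & \ell}}{\M J_n\mat{\a \\ \c}} \le \frac{2}{\ell(\ell+1)b^k}
\]
already proved in Theorem~\ref{thm:mjn1bounds}. Once this distortion control propagates through the proof of Theorem~\ref{thm:MDnbound}, the remainder of the argument reduces to standard second-moment plus Borel--Cantelli bookkeeping.
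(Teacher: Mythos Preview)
Your proposal is correct and follows essentially the same route as the paper. The paper's Appendix~B establishes your conditional Gauss--Kuzmin estimate as Theorem~\ref{thm:Eratiobound} (proved exactly as you outline: one verifies that the normalized restricted measure $\chi_n(x)=\M\{\alpha\in J:z_{m+n}<x\}/\M J$ has $(\chi_n')\in A^{**}$ with absolute bounds on $\chi_0'$ and $\chi_0''$, then invokes Theorem~\ref{thm:mainthm}), and then runs the identical second-moment, $N=j^2$-subsequence, Borel--Cantelli, sandwich argument you describe---packaged as the more general Theorem~\ref{thm:limavgf} for arbitrary $f$ before specializing to the indicator.
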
}

\newtheorem*{khinchinethm}{Theorem \ref{thm:logkhinchine}}
\newcommand{\restatekhinchinethm}{
\begin{khinchinethm}[Restated]
\khinchinethmtext
\end{khinchinethm}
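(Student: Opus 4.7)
My plan is to deduce Theorem~\ref{thm:logkhinchine} from Theorem~\ref{thm:khinchinedist} by (i) promoting that theorem's frequency statement into almost-sure convergence of the Cesaro average $\frac{1}{N}\sum_{n=1}^N \log(c_n b^{a_n})$, and (ii) evaluating the resulting expected value in closed form by a double-telescoping identity. Writing $P(k,\ell)$ for the limiting frequency of Theorem~\ref{thm:khinchinedist}, the natural candidate for the limit is
\[
S := \sum_{k=0}^\infty \sum_{\ell=1}^{b-1} \log(\ell b^k)\,P(k,\ell),
\]
and I must show both that the Cesaro average tends to $S$ almost surely and that $S = \mathcal{A}_b \log b$.

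For (i), I would proceed by truncation. Fix $K$ and split the Cesaro sum into a head (terms with $a_n \le K$) and a tail (terms with $a_n > K$). The head is a finite linear combination of the frequencies controlled by Theorem~\ref{thm:khinchinedist} and converges a.s.\ to the corresponding partial sum of $S$. For the tail, the uniform bound $\M(D_n(k,\ell)) \le 2/(\ell(\ell+1) b^k)$ from Corollary~\ref{cor:weakMDnbound} gives the mean estimate
\[
\int_1^2 \left(\frac{1}{N}\sum_{n \le N,\, a_n > K} \log(c_n b^{a_n})\right) d\alpha \le \sum_{k > K}\sum_{\ell=1}^{b-1} \frac{2(\log \ell + k \log b)}{\ell(\ell+1) b^k},
\]
whose right side tends to $0$ as $K \to \infty$ uniformly in $N$. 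Following the template of Khinchine's Section~16 argument (pairing variance decay of bounded truncations with Borel--Cantelli applied to the exponentially decaying tail bounds of Theorem~\ref{thm:MDnbound}), this should upgrade convergence in mean to almost-sure convergence. I expect this to be the main technical obstacle, since the summand $\log(c_n b^{a_n})$ is unbounded and a direct bounded-function argument does not apply.

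For (ii), set $\phi(x) := \log(1+1/x)$. The numerator of $P(k,\ell)$ rewrites as the mixed second difference
\[
Q(k,\ell) := \phi(\ell b^k) - \phi((\ell+1)b^k) - \phi(\ell b^{k+1}) + \phi((\ell+1) b^{k+1}),
\]
so $P(k,\ell) = Q(k,\ell)/\log(2b/(b+1))$. Decomposing $\log(\ell b^k) = \log\ell + k\log b$, telescoping in $k$ yields $\sum_{k\ge 0} Q(k,\ell) = \phi(\ell) - \phi(\ell+1)$, and telescoping in $\ell$ yields $\sum_{\ell=1}^{b-1} Q(k,\ell) = \phi(b^k) - 2\phi(b^{k+1}) + \phi(b^{k+2})$. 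A summation by parts using $k\phi(b^k) \to 0$ gives $\sum_{k\ge 0} k\bigl(\phi(b^k) - 2\phi(b^{k+1}) + \phi(b^{k+2})\bigr) = \phi(b) = \log((b+1)/b)$. Combining these and performing the index shift $\ell \mapsto \ell+1$ in one resulting sum, collecting the boundary terms at $\ell=2$ and $\ell=b$, collapses the expression to
\[
\sum_{k,\ell} \log(\ell b^k)\,Q(k,\ell) = \sum_{\ell=2}^b \log\frac{\ell}{\ell-1}\,\log\frac{\ell+1}{\ell} = -\sum_{\ell=2}^b \log(1 - 1/\ell)\log(1+1/\ell).
\]
Dividing by $\log(2b/(b+1)) = -\log((b+1)/(2b))$ produces exactly $\mathcal{A}_b \log b$, completing the evaluation.
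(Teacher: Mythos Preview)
Your algebraic evaluation in part (ii) is correct and essentially coincides with the paper's computation: both recognize the numerator of $P(k,\ell)$ as a mixed second difference of $\phi(x)=\log(1+1/x)$, split $\log(\ell b^k)$ into $k\log b + \log\ell$, telescope separately, and collect boundary terms to obtain $-\sum_{\ell=2}^b \log(1-1/\ell)\log(1+1/\ell)$.

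Part (i) has a genuine gap. Theorem~\ref{thm:khinchinedist} gives almost-sure convergence of each individual frequency $P_\alpha(k,\ell)$, which handles your bounded head, but it cannot control the unbounded tail $\frac{1}{N}\sum_{n\le N,\,a_n>K}\log(c_nb^{a_n})$ almost surely; the mean bound from Corollary~\ref{cor:weakMDnbound} yields only $L^1$ smallness, and no amount of Borel--Cantelli applied to the marginal bounds of Theorem~\ref{thm:MDnbound} upgrades this, since those bounds are uniform in $n$ and hence not summable. The variance argument you allude to requires quantitative \emph{correlation} decay between $(a_i,c_i)$ and $(a_k,c_k)$, which is precisely Theorem~\ref{thm:Eratiobound}, not Theorem~\ref{thm:MDnbound}. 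The paper does not deduce Theorem~\ref{thm:logkhinchine} from Theorem~\ref{thm:khinchinedist}; rather, both are corollaries of the general almost-sure ergodic statement Theorem~\ref{thm:limavgf}, proved directly from Theorem~\ref{thm:Eratiobound} by a second-moment estimate on the full (untruncated) Ces\`aro sum of any $f$ with $f(s,t)<C(tb^s)^{1/2-\delta}$, then specialized to $f(s,t)=\log_b(tb^s)$. Your truncation route can be completed, but only by importing the same correlation estimates, at which point applying them directly to the unbounded $f$ is shorter and is what the paper does.
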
}

The values of the Khinchine constant given by the above formula for $2 \le b \le 10$ are shown in Figure~\ref{fig:type3klresults}.

\begin{figure}[ht]
\begin{center}
\begin{tabular}{c|c} 
 $b$ & $\KLiii_b$ \\ \hline
2 & 2.656305058 \\
3 & 2.666666667 \\
4 & 2.671738848 \\
5 & 2.674705520 \\
6 & 2.676638451 \\
7 & 2.677992355 \\
8 & 2.678991102 \\
9 & 2.679757051 \\
10 & 2.680362475 \\ 
\end{tabular}
\end{center}
\caption{Type III logarithmic Khinchine constants for $2 \le b \le 10$}
\label{fig:type3klresults}
\end{figure}

\begin{remark}\label{rem:distvskhinchine}
Notice that Theorem~\ref{thm:khinchinedist} is similar to Corollary~\ref{cor:limDn}. However, Corollary~\ref{cor:limDn} is about the limiting proportion of numbers $\alpha \in (1,2)$ that have $a_n = k$ and $c_n = \ell$, whereas Theorem~\ref{thm:khinchinedist} is about the limiting proportion of terms of a number $\alpha \in (1,2)$ for which $a_n = k$ and $c_n = \ell$. The fact that these two limits are the same is not a coincidence: one can show that Corollary~\ref{cor:limDn} is a consequence of Theorem~\ref{thm:khinchinedist}.

Based on Theorem~\ref{thm:logkhinchine}, we denote
\[
\KLiii_b = b^{\mathcal{A}_b},
\]
where $\mathcal{A}_b$ is as in Theorem~\ref{thm:logkhinchine}.
\end{remark}

\subsection{Type III Continued Logarithms and Simple Continued Fractions}\label{subsec:type3clogsandcfracs}

Now suppose $b$ is no longer fixed. Let $\mu_b$ denote the limiting distribution for a given base $b$, as shown in Corollary~\ref{cor:mndist}. That is,
\[
\mu_b(x) = \frac{\log\frac{bx}{x+b-1}}{\log\frac{2b}{b+1}}.
\]
Furthermore, let $\KLiii_b$ denote the base $b$ logarithmic Khinchine constant, as in Remark~\ref{rem:distvskhinchine}, and let $\mathcal{K}$ denote the Khinchine constant for simple continued fractions, as in Section~\ref{sec:khinchine}.

We now have an interesting relationship between these logarithmic Khinchine constants and the Khinchine constant for simple continued fractions, based on the following lemma.

\begin{lemma}[\cite{onKhinchine}, Lemma 1(c)]
\[
\sum_{\ell=2}^\infty \log\left(1-\frac1\ell\right) \log\left(1+\frac1\ell\right) = -\log \mathcal{K} \log 2.
\]
\end{lemma}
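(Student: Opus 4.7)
The plan is to reduce the claimed identity to the definition of Khinchine's constant given in Section~\ref{sec:khinchine}, namely
\[
\log \mathcal{K} \log 2 = \sum_{r=1}^\infty \log(r) \log\left(1+\frac{1}{r(r+2)}\right),
\]
by a short Abel-summation-style manipulation. First I would check absolute convergence on both sides: since $\log(1-1/\ell)\log(1+1/\ell) \sim -1/\ell^2$ and $\log(r)\log(1+1/(r(r+2))) \sim \log(r)/r^2$, all series involved converge absolutely, so reindexing and splitting sums is legal.

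Next I would expand the left-hand side using $\log(1-1/\ell) = \log(\ell-1)-\log\ell$ and split into two series. The $\ell=2$ term of the first series vanishes because $\log 1 = 0$, so I can shift the index via $m = \ell-1$ to obtain
\[
\sum_{\ell=2}^\infty \log\!\left(1-\tfrac1\ell\right)\log\!\left(1+\tfrac1\ell\right) = \sum_{m=2}^\infty \log(m)\left[\log\!\left(1+\tfrac{1}{m+1}\right) - \log\!\left(1+\tfrac1m\right)\right].
\]
The bracketed difference telescopes nicely: it equals $\log\frac{m(m+2)}{(m+1)^2}$, and the identity $(m+1)^2 = m(m+2) + 1$ rewrites this as $-\log\bigl(1+\frac{1}{m(m+2)}\bigr)$.

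Finally I would match against Khinchine's formula: since the $r=1$ term in that formula is zero (again because $\log 1 = 0$), we may start the sum at $r=2$, and the manipulated left-hand side becomes exactly $-\log \mathcal{K}\log 2$.

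The only non-routine step is the reindexing, and the only thing to be careful about is the justification for shifting indices and splitting the series, which is why I would pin down absolute convergence first. Everything else is a short identity chase on $\log$ of rational expressions, with the key algebraic fact being $\frac{(m+1)^2}{m(m+2)} = 1 + \frac{1}{m(m+2)}$, which is what links the telescoping form of the left-hand side to the Wallis-like factors appearing in Khinchine's product.
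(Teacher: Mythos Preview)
Your proof is correct. The Abel-summation manipulation is clean: splitting $\log(1-1/\ell)=\log(\ell-1)-\log\ell$, shifting the first sum by one, and recognizing that $\log\frac{m(m+2)}{(m+1)^2}=-\log\bigl(1+\tfrac{1}{m(m+2)}\bigr)$ lands you exactly on the product formula for $\mathcal{K}$ quoted in Section~\ref{sec:khinchine}. The absolute-convergence check you front-load is the right justification for the reindexing.

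As for comparison: the paper does not actually prove this lemma. It is imported verbatim from \cite{onKhinchine} (their Lemma~1(c)) and used as a black box in the proof that $\lim_{b\to\infty}\KLiii_b=\mathcal{K}$. So there is no ``paper's own proof'' to set yours against; you have supplied what the paper omits. Your argument is essentially the standard one and would be at home in \cite{onKhinchine} itself.
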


\begin{theorem}
\[
\lim_{b \to \infty} \KLiii_b = \mathcal{K}.
\]
\end{theorem}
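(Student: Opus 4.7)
The plan is to recognize that the formula for $\mathcal{A}_b$ separates neatly into the factor $\log b$ in the denominator and the partial sum that the cited lemma evaluates in the limit. Writing
\[
\KLiii_b = b^{\mathcal{A}_b} = \exp\bigl(\mathcal{A}_b \log b\bigr),
\]
the entire task reduces to computing $\lim_{b\to\infty} \mathcal{A}_b \log b$ and then applying continuity of the exponential.

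To that end, I would first isolate
\[
\mathcal{A}_b \log b \;=\; \frac{1}{\log\frac{b+1}{2b}} \sum_{\ell=2}^b \log\!\left(1-\frac1\ell\right)\log\!\left(1+\frac1\ell\right).
\]
Next I handle the denominator: since $\log\frac{b+1}{2b} = \log(1+1/b) - \log 2 \to -\log 2$ as $b \to \infty$. For the sum, the general term is $O(1/\ell^2)$, so the series converges absolutely, and by the cited lemma,
\[
\lim_{b\to\infty} \sum_{\ell=2}^b \log\!\left(1-\frac1\ell\right)\log\!\left(1+\frac1\ell\right) = -\log\mathcal{K}\,\log 2.
\]

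Combining these two limits gives
\[
\lim_{b\to\infty} \mathcal{A}_b \log b = \frac{-\log\mathcal{K}\,\log 2}{-\log 2} = \log\mathcal{K},
\]
and finally $\KLiii_b = \exp(\mathcal{A}_b \log b) \to \exp(\log\mathcal{K}) = \mathcal{K}$ by continuity of $\exp$. There is no real obstacle here; the only subtlety is checking that the denominator $\log\frac{b+1}{2b}$ is well away from zero for large $b$ (which it is, approaching $-\log 2$), so dividing by it is legitimate in the limit. The proof is essentially a two-line limit computation built entirely on the cited lemma.
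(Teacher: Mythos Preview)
Your proposal is correct and follows essentially the same approach as the paper: both compute $\lim_{b\to\infty}(\log b)\mathcal{A}_b$ by cancelling the $\log b$, sending $\log\frac{b+1}{2b}\to -\log 2$, and invoking the cited lemma for the partial sum, then passing through the exponential. Your version is slightly more explicit about continuity of $\exp$ and about the denominator staying away from zero, but the argument is the same.
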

\begin{proof}
We will show that $\lim_{b \to \infty} \log \KLiii_b = \log \mathcal{K}$, from which the desired limit immediately follows.
\begin{align*}
\lim_{b \to \infty} \log \KLiii_b &= \lim_{b \to \infty} \log b^{\mathcal{A}_b} = \lim_{b\to\infty} (\log b) \mathcal{A}_b \\
&= \lim_{b \to \infty} \frac{\log b}{\log b \log \frac{b+1}{2b}}\sum_{k=2}^{b} \log \left(1-\frac1k\right) \log \left(1+\frac1k\right) \\
&= \lim_{b \to \infty} \frac{1}{\log \frac{b+1}{2b}} \sum_{k=2}^b \log\left(1-\frac1k\right) \log \left(1+\frac1k\right) \\
&= \frac{1}{\lim_{b \to \infty} \log \left(\frac12 \frac{b+1}{b}\right)} \sum_{k=2}^\infty \log \left(1-\frac1k\right)\log\left(1+\frac1k\right) \\
&= - \frac{1}{\log 2}\sum_{k=2}^\infty \log \left(1-\frac1k\right)\log\left(1+\frac1k\right) = \log \mathcal{K}.
\end{align*}
\end{proof}

Furthermore, as $b \to \infty$, the distribution function $\mu_b$ approaches the appropriately shifted continued fraction distribution $\mu_{\text{cl}}$. The continued fraction distribution function is given by
\[
\mu_{\text{cl}}(x) = \log_2(1+x) \hspace{1.5cm} x \in (0,1).
\]
(See Section 3.4 of \cite{nef}.) Since the continued fraction for a number will be unchanged (except for the first term) when adding an integer, we can shift this distribution to the right and think of it as a distribution over $(1,2)$ instead of $(0,1)$, in order to compare it to $\mu_b$. We define the shifted continued fraction distribution
\[
\mu_{\text{cl}}^*(x) = \mu_{\text{cl}}(x-1) = \log_2 x \hspace{1.5cm} x \in (1,2).
\]
We then have
\begin{align*}
\lim_{b \to \infty} \mu_b(x) &= \lim_{b \to \infty} \frac{\log \frac{x+b-1}{bx}}{\log \frac{b+1}{2b}} = \lim_{b \to \infty} \frac{\log\left(\frac1b + \frac1x - \frac1{bx}\right)}{\log \left(\frac12 \frac{b+1}{b}\right)} = \frac{\log \frac1x}{\log \frac12} = \frac{-\log x}{- \log 2} = \log_2(x) = \mu_{\text{cf}}^*(x).
\end{align*}

This shows that, in some sense, as we let $b \to \infty$ for type III continued logarithms, we get in the limit simple continued fractions.

\section{Generalizing Beyond Continued Logarithms}\label{sec:generalizing}

A natural question that arises is how one can define something more general than continued logarithms. Consider the following definition of generalized continued fractions.

\begin{definition}\label{def:gcfrac}
Let $(c_n)_{n=0}^\infty$ be an increasing sequence of natural numbers with $c_0 = 1$. Let $\alpha \in (1,\infty)$. The generalized continued fraction for $\alpha$ determined by $(c_n)_{n=0}^\infty$ is
\[
a_0 + \cFrac{b_0}{a_1} + \cFrac{b_1}{a_2} + \cFrac{b_2}{a_3} + \cdots = [a_0,a_1,a_2,\dots]_{\text{gcf}},
\]
where the the terms $a_0, a_1, \dots$ and $b_0, b_1, \dots$ are determined by the following recursive process, terminating at the term $a_n$ if $y_n = a_n$.
\begin{align*}
y_0 &= \alpha \\
j_n &= \max\{j : c_j \le y_n\} && n \ge 0 \\
a_n &= c_{j_n}  && n \ge 0 \\
b_n &= c_{j_n + 1} - c_{j_n} && n \ge 0 \\
y_{n+1} &= \frac{b_n}{y_n-a_n} = \frac{c_{j_n+1} - c_{j_n}}{y_n - c_{j_n}} && n \ge 0.
\end{align*}
\end{definition}

\begin{remark}
This is a generalization of simple continued fractions, and of type I and type III continued logarithms. Indeed, for simple continued fractions, the term sequence $(c_n)_{n=0}^\infty$ consists of the natural numbers. For type I continued logarithms, the term sequence consists of the powers $b^0, b^1, b^2, \dots$. For type III continued logarithms, the term sequence consists of terms of the form $\ell b^k$, where $k \in \Z_{\ge 0}$ and $\ell \in \{1,\dots,b-1\}$.

Recall from Remark~\ref{rem:type2numeratorterms} that type II continued logarithms did not have the property that $y_{n+1}$ could take any value in $(1,\infty)$, regardless of the values of $a_n, c_n$. This is a desirable property to have, since it uniquely determines the numerator terms based on the corresponding denominator terms. We have defined generalized continued logarithms so that they have this property, and for that reason they are not a generalization of type II continued logarithms.
\end{remark}

\begin{remark}
As per Definitions~\ref{def:cfracconv} and~\ref{def:cfracremainderterm}, the $n$th convergent and $n$th remainder term are given by
\[
x_n = [a_0, a_1, \dots, a_n]_{\text{gcf}} \hspace{1cm} \text{and} \hspace{1cm} r_n = [a_n, a_{n+1}, a_{n+2}, \dots]_{\text{gcf}},
\]
respectively. Note that the remainder terms $r_n$ and the terms $y_n$ from Definition~\ref{def:gcfrac} are in fact the same.
\end{remark}

\begin{shortonly}
We can derive various results for generalized continued fractions that are similar to those for continued logarithms. Most notably, we get the following sufficient criteria for guaranteed convergence and rational finiteness.
\end{shortonly}

\begin{longonly}
We now present various preliminary results about these generalized continued fractions. We start with a few basic lemmas.
\end{longonly}

\begin{lemmax}
The $n$th convergent of $\alpha = [a_0,a_1,a_2,\dots]_{\text{gcf}}$ is given by $x_n = \frac{p_n}{q_n}$, where $p_{-1} = 1$, $q_{-1} = 0$, $p_0 = a_0 = c_{j_0}$, $q_0 = 1$, and for $n \ge 0$,
\begin{align*}
p_n &= a_n p_{n-1} + b_{n-1} p_{n-2} = c_{j_n} p_{n-1} + (c_{j_{n-1}+1} - c_{j_{n-1}}) p_{n-2} \\
q_n &= a_n q_{n-1} + b_{n-1} q_{n-2} = c_{j_n} q_{n-1} + (c_{j_{n-1}+1} - c_{j_{n-1}}) q_{n-2}.
\end{align*}
\end{lemmax}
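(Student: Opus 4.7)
The statement is nothing more than a specialization of the standard continued fraction convergent recurrence (Fact~\ref{fact:cfracrecurrence}) to the particular shape of a generalized continued fraction. So my plan is to identify the roles of $\alpha_n$ and $\beta_n$ from Fact~\ref{fact:cfracrecurrence} in the expression
\[
a_0 + \cFrac{b_0}{a_1} + \cFrac{b_1}{a_2} + \cFrac{b_2}{a_3} + \cdots
\]
and then simply invoke that fact.

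More precisely, I would compare Definition~\ref{def:cfrac} with Definition~\ref{def:gcfrac} and read off the matching: the continued fraction in Fact~\ref{fact:cfracrecurrence} is written $\alpha_0 + \cFrac{\beta_1}{\alpha_1} + \cFrac{\beta_2}{\alpha_2} + \cdots$, so in the generalized continued fraction notation we have $\alpha_n = a_n$ for $n \ge 0$ and $\beta_n = b_{n-1}$ for $n \ge 1$. I would then verify positivity: by construction $a_n = c_{j_n} \ge 1$ (since $(c_n)$ is an increasing sequence of natural numbers starting at $c_0 = 1$) and $b_{n-1} = c_{j_{n-1}+1} - c_{j_{n-1}} \ge 1$ (since $(c_n)$ is strictly increasing), so the hypothesis $\alpha_n, \beta_n > 0$ of Fact~\ref{fact:cfracrecurrence} is met.

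With those identifications in hand, Fact~\ref{fact:cfracrecurrence} yields $x_n = p_n/q_n$ with $p_{-1} = 1,\ q_{-1} = 0,\ p_0 = \alpha_0 = a_0,\ q_0 = 1$ and, for $n \ge 1$,
\[
p_n = \alpha_n p_{n-1} + \beta_n p_{n-2} = a_n p_{n-1} + b_{n-1} p_{n-2},
\]
\[
q_n = \alpha_n q_{n-1} + \beta_n q_{n-2} = a_n q_{n-1} + b_{n-1} q_{n-2}.
\]
Substituting $a_n = c_{j_n}$ and $b_{n-1} = c_{j_{n-1}+1} - c_{j_{n-1}}$ from Definition~\ref{def:gcfrac} gives the second form of the recurrences in the statement.

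There is no real obstacle here: the only mild subtlety is the index shift between $\beta_n$ (Fact~\ref{fact:cfracrecurrence}) and $b_{n-1}$ (Definition~\ref{def:gcfrac}), which is a matter of bookkeeping rather than substance. This mirrors exactly the proofs already given for Lemma~\ref{lem:type1pqlemma}, Lemma~\ref{lem:type2pqlemma}, and Lemma~\ref{lem:pqlemma} in the type I, II, and III settings, respectively.
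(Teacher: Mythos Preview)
Your proposal is correct and takes essentially the same approach as the paper, which proves the lemma in a single line by invoking Fact~\ref{fact:cfracrecurrence} with the identifications $\alpha_n = a_n = c_{j_n}$ and $\beta_n = b_{n-1} = c_{j_{n-1}+1} - c_{j_{n-1}}$. Your version is in fact slightly more careful in that you explicitly verify the positivity hypothesis, but the argument is identical in substance.
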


\begin{proofx}
This follows from Fact~\ref{fact:cfracrecurrence}, where $\alpha_n = a_n = c_{j_n}$ and $\beta_n = b_{n-1} = c_{j_{n-1}+1} - c_{j_{n-1}}$.
\end{proofx}

\begin{lemmax}
For $n \ge 0$,
\[
p_n q_{n-1} - q_n p_{n-1} = (-1)^{n-1} \prod_{k=0}^{n-1} b_k = (-1)^{n-1} \prod_{k=0}^{n-1} (c_{j_k+1}-c_{j_k}).
\]
\end{lemmax}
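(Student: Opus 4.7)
The plan is a direct induction on $n$, exactly parallel to the proof of Lemma~\ref{lem:pnqnm1-qnpnm1} for type III continued logarithms. The key inputs are the recurrences $p_n = a_n p_{n-1} + b_{n-1} p_{n-2}$ and $q_n = a_n q_{n-1} + b_{n-1} q_{n-2}$ from the preceding lemma, together with the initial data $p_{-1}=1,\ q_{-1}=0,\ p_0 = c_{j_0},\ q_0 = 1$.

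For the base case $n = 0$, I would just compute
\[
p_0 q_{-1} - q_0 p_{-1} = c_{j_0} \cdot 0 - 1 \cdot 1 = -1,
\]
which matches $(-1)^{-1}$ times the empty product $\prod_{k=0}^{-1} b_k = 1$.

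For the inductive step, assume the identity holds at index $n$. Using the two recurrences in the obvious way:
\[
p_{n+1} q_n - q_{n+1} p_n = (a_{n+1} p_n + b_n p_{n-1}) q_n - (a_{n+1} q_n + b_n q_{n-1}) p_n = -b_n (p_n q_{n-1} - q_n p_{n-1}).
\]
Applying the induction hypothesis then yields
\[
p_{n+1} q_n - q_{n+1} p_n = -b_n \cdot (-1)^{n-1} \prod_{k=0}^{n-1} b_k = (-1)^n \prod_{k=0}^n b_k,
\]
which is the claimed identity at index $n+1$. Substituting $b_k = c_{j_k+1} - c_{j_k}$ gives the second form.

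There is no real obstacle here: the $a_{n+1}$-terms cancel by design, just as in Lemma~\ref{lem:pnqnm1-qnpnm1}, leaving the factor $-b_n$ that drives the induction. The only minor thing to watch is the bookkeeping of the sign $(-1)^{n-1}$ versus $(-1)^n$ and the convention that an empty product equals $1$, both of which are handled cleanly by starting the induction at $n=0$.
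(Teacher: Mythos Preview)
Your proof is correct and follows essentially the same approach as the paper: induction on $n$ with the base case $n=0$ computed directly and the inductive step obtained by substituting the recurrences for $p_{n+1}$ and $q_{n+1}$, cancelling the $a_{n+1}$-terms, and applying the hypothesis. The paper's argument is identical in structure and detail.
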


\begin{proofx}
For $n = 0$,
\[
p_0 q_{-1} - q_0 p_{-1} = a_0(0) - 1(1) = -1.
\]
Now suppose the statement is true for $n$. Then
\begin{align*}
p_{n+1} q_n - q_{n+1} p_n &= (a_{n+1} p_n + b_n p_{n-1})q_n - (a_{n+1} q_n + b_n q_{n-1})p_n = -b_n(p_n q_{n-1} - q_n p_{n-1}) \\
&= -b_n(-1)^{n-1} \prod_{k=0}^{n-1} b_k = (-1)^n \prod_{k=0}^n b_k,
\end{align*}
so the result follows by induction.
\end{proofx}

\begin{lemmax}\label{lem:gcfpqmatrix}
Let $b_{-1} = 1$. Then for all $n \ge 0$,
\[
\mat{p_n & p_{n-1} \\ q_n & q_{n-1}} = \prod_{k=0}^n \mat{a_k & 1 \\ b_{k-1} & 0} = \prod_{k=0}^n \mat{c_{j_k} & 1 \\ c_{j_{k-1}+1} - c_{j_{k-1}} & 0}.
\]
\end{lemmax}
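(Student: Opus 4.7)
The plan is to prove this by induction on $n$, paralleling the argument used for Lemma~\ref{lem:pqmatrix} in the type III setting. The only substantive ingredient beyond matrix multiplication is the three-term recurrence for $p_n,q_n$ established in the preceding lemma for generalized continued fractions, namely $p_n = a_n p_{n-1} + b_{n-1} p_{n-2}$ and $q_n = a_n q_{n-1} + b_{n-1} q_{n-2}$, together with the initial data $p_{-1}=1$, $q_{-1}=0$, $p_0 = a_0$, $q_0 = 1$.

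For the base case $n=0$, using the convention $b_{-1}=1$ stated in the lemma, the right-hand side is simply the single factor
\[
\mat{a_0 & 1 \\ b_{-1} & 0} = \mat{a_0 & 1 \\ 1 & 0},
\]
which equals $\mat{p_0 & p_{-1} \\ q_0 & q_{-1}}$ by the initial data. This verifies the claim. For the inductive step, I would assume the identity holds at level $n-1$, then multiply on the right by $\mat{a_n & 1 \\ b_{n-1} & 0}$:
\[
\prod_{k=0}^{n} \mat{a_k & 1 \\ b_{k-1} & 0} = \mat{p_{n-1} & p_{n-2} \\ q_{n-1} & q_{n-2}} \mat{a_n & 1 \\ b_{n-1} & 0} = \mat{a_n p_{n-1} + b_{n-1} p_{n-2} & p_{n-1} \\ a_n q_{n-1} + b_{n-1} q_{n-2} & q_{n-1}}.
\]
Applying the recurrence gives $\mat{p_n & p_{n-1} \\ q_n & q_{n-1}}$, closing the induction. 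The second equality in the statement is immediate upon substituting $a_k = c_{j_k}$ and $b_{k-1} = c_{j_{k-1}+1} - c_{j_{k-1}}$.

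There really is no major obstacle here; the proof is structurally identical to that of Lemma~\ref{lem:pqmatrix}. The only point requiring mild care is the boundary convention $b_{-1}=1$, which is chosen precisely so that the first matrix factor reproduces the initial conditions $p_{-1}=1$ and $q_{-1}=0$ when the product is read off in the second column. Once this convention is in place, the induction is a one-line matrix multiplication combined with the recurrence, so I would present the proof compactly in two short blocks (base case and inductive step) rather than expanding the arithmetic.
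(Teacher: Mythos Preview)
Your proposal is correct and matches the paper's proof essentially line for line: both do induction on $n$, verify the base case using $b_{-1}=1$ and the initial data, and close the induction by right-multiplying by $\mat{a_n & 1 \\ b_{n-1} & 0}$ and invoking the three-term recurrence. Your additional remark that the second displayed equality is just the substitution $a_k = c_{j_k}$, $b_{k-1} = c_{j_{k-1}+1} - c_{j_{k-1}}$ is a small clarification the paper leaves implicit.
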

\begin{proofx}
For $n=0$, we have
\[
\prod_{j=0}^0 \mat{a_k & 1 \\ b_{k-1} & 0} = \mat{a_0 & 0 \\ b_{-1} & 0} = \mat{a_0 & 1 \\ 1 & 0} = \mat{p_0 & p_{-1} \\ q_0 & q_{-1}}.
\]
Now suppose for induction that
\[
\prod_{j=0}^{n-1} \mat{a_k & 1 \\ b_{k-1} & 0} = \mat{p_{n-1} & p_{n-2} \\ q_{n-1} & q_{n-2}}.
\]
Then by Lemma~\ref{lem:pqlemma},
\begin{align*}
\prod_{j=0}^n \mat{a_k & 1 \\ b_{k-1} & 0} &= \mat{p_{n-1} & p_{n-2} \\ q_{n-1} & q_{n-2}} \mat{a_n & 1 \\ b_{n-1} & 0} = \mat{a_n p_{n-1} + b_{n-1}p_{n-2} & p_{n-1} \\ a_nq_{n-1} + b_{n-1} q_{n-2} & q_n} = \mat{p_n & p_{n-1} \\ q_n & q_{n-1}}.
\end{align*}
\end{proofx}

\begin{theoremx}\label{thm:gcfremaindertheorem}
For arbitrary $1 \le k \le n$,
\[
[a_0,a_1,\dots,a_n]_{\text{gcf}} = \frac{p_{k-1}r_k + p_{k-2}b_{k-1}}{q_{k-1}r_k + q_{k-2}b_{k-1}}.
\]
\end{theoremx}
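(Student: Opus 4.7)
The plan is to mirror the proof of Theorem~\ref{thm:remaindertheorem} exactly, using the matrix factorization from Lemma~\ref{lem:gcfpqmatrix} as the main tool. Writing $r_k = [a_k,\dots,a_n]_{\text{gcf}} = p_k'/q_k'$, I would first express
\[
\mat{p_k' \\ q_k'} = \mat{a_k & 1 \\ 1 & 0} \prod_{j=k+1}^{n} \mat{a_j & 1 \\ b_{j-1} & 0} \mat{1 \\ 0},
\]
noting that the leading matrix for the tail starts with $b_{-1}=1$ in the lower-left rather than $b_{k-1}$ (since the recursion for the tail begins afresh at index $k$).

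Next I would use the elementary factorization
\[
\mat{a_k & 1 \\ b_{k-1} & 0} = \mat{1 & 0 \\ 0 & b_{k-1}} \mat{a_k & 1 \\ 1 & 0},
\]
which is the step that links the ``full'' product to the ``tail'' product. Splitting the full matrix product at index $k$ and applying Lemma~\ref{lem:gcfpqmatrix} to the prefix gives
\[
\mat{p_n \\ q_n} = \mat{p_{k-1} & p_{k-2} \\ q_{k-1} & q_{k-2}} \mat{1 & 0 \\ 0 & b_{k-1}} \mat{a_k & 1 \\ 1 & 0} \prod_{j=k+1}^{n} \mat{a_j & 1 \\ b_{j-1} & 0} \mat{1 \\ 0} = \mat{p_{k-1} & p_{k-2} b_{k-1} \\ q_{k-1} & q_{k-2} b_{k-1}} \mat{p_k' \\ q_k'}.
\]

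Taking the ratio of the two components and dividing numerator and denominator by $q_k'$ then yields
\[
[a_0,\dots,a_n]_{\text{gcf}} = \frac{p_n}{q_n} = \frac{p_{k-1}(p_k'/q_k') + p_{k-2} b_{k-1}}{q_{k-1}(p_k'/q_k') + q_{k-2} b_{k-1}} = \frac{p_{k-1} r_k + p_{k-2} b_{k-1}}{q_{k-1} r_k + q_{k-2} b_{k-1}},
\]
which is the desired identity. The main obstacle is bookkeeping rather than mathematics: one must be careful that the leading matrix of the tail product has a $1$ (not $b_{k-1}$) in the lower-left, so that the factorization correctly absorbs the stray $b_{k-1}$ into the prefix matrix. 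Once that subtlety is handled, everything reduces to matrix multiplication and the inductive content is entirely packaged inside Lemma~\ref{lem:gcfpqmatrix}.
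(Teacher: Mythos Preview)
Your proposal is correct and follows essentially the same argument as the paper's proof: both express $r_k$ via the tail matrix product with leading entry $1$ in the lower-left, use the factorization $\mat{a_k & 1 \\ b_{k-1} & 0} = \mat{1 & 0 \\ 0 & b_{k-1}}\mat{a_k & 1 \\ 1 & 0}$, split the full product at index $k$, apply Lemma~\ref{lem:gcfpqmatrix} to the prefix, and then take the ratio of components. The bookkeeping point you flag about the tail's leading matrix is exactly the subtlety the paper handles, so there is no meaningful difference.
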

\begin{proofx}
First notice that $r_k = [a_k, \dots, a_n]_{\text{gcf}} = \frac{p_k'}{q_k'}$, where
\[
\mat{p_k' \\ q_k'} = \mat{a_k & 1 \\ 1 & 0} \prod_{i=k+1}^n \mat{a_i & 1 \\ b_{i-1} & 0} \mat{1\\0}.
\]
Also note that
\[
\mat{a_k & 1 \\ b_{k-1} & 0} = \mat{1 & 0 \\ 0 & b_{k-1}} \mat{a_k & 1 \\ 1 & 0}.
\]
Then
\begin{align*}
\mat{p_n \\ q_n} &= \prod_{i=0}^n \mat{a_i & 1 \\ b_{i-1} & 0} \mat{1 \\ 0} = \prod_{i=0}^{k-1} \mat{a_i & 1 \\ b_{i-1} & 0} \mat{a_k & 1 \\ b_{k-1} & 0} \prod_{i=k+1}^n \mat{a_i & 1 \\ b_{i-1} & 0} \mat{1\\0} \\
&= \mat{p_{k-1} & p_{k-2} \\ q_{k-1} & q_{k-2}} \mat{1 & 0 \\ 0 & b_{k-1}} \mat{a_k & 1 \\ 1 & 0} \prod_{i=k+1}^n \mat{a_i & 1 \\ b_{i-1} & 0} \mat{1\\0} \\
&= \mat{p_{k-1} & p_{k-2} b_{k-1}  \\ q_{k-1} & q_{k-2} b_{k-1}} \mat{p_k' \\ q_k'} = \mat{p_{k-1}p_k' + p_{k-2} b_{k-1} q_k' \\ q_{k-1}p_k' + q_{k-2} b_{k-1} q_k'}.
\end{align*}
Thus
\begin{align*}
[a_0, \dots, a_n]_{\text{gcf}} &= \frac{p_n}{q_n} = \frac{p_{k-1}p_k' + p_{k-2} b_{k-1} q_k'}{q_{k-1}p_k' + q_{k-2} b_{k-1} q_k'} = \frac{p_{k-1} \frac{p_k'}{q_k'} + p_{k-2} b_{k-1}}{q_{k-1} \frac{p_k'}{q_k'} + q_{k-2} b_{k-1}} 
= \frac{p_{k-1} r_k + p_{k-2} b_{k-1}}{q_{k-1} r_k + q_{k-2} b_{k-1}}.
\end{align*}
\end{proofx}

\begin{longonly}
We can now prove some sufficient conditions in order to have guaranteed convergence for all continued logarithms and sufficient conditions for rational finiteness. 
\end{longonly}

\begin{theorem}
Suppose there is a constant $M > 0$ such that $c_{j+1} - c_j < Mc_j$ for all $j$. Then every infinite continued fraction with term sequence $(c_n)_{n=0}^\infty$ will converge.
\end{theorem}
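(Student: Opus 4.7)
The plan is to invoke Fact~\ref{fact:convcrit} directly. The generalized continued fraction is
\[
a_0 + \cFrac{b_0}{a_1} + \cFrac{b_1}{a_2} + \cdots,
\]
so in the notation of the Fact we have $\alpha_n = a_n = c_{j_n}$ and $\beta_n = b_{n-1} = c_{j_{n-1}+1} - c_{j_{n-1}}$. I first verify the positivity hypothesis: since $(c_n)$ is strictly increasing with $c_0 = 1$, we get $\alpha_n \ge 1 > 0$ and $\beta_n > 0$.

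Next I would show $a_n \ge 1$ for every index $n$ in an infinite continued fraction. Inductively, if $a_n = c_{j_n} \le y_n < c_{j_n+1}$ and the process does not terminate, then $0 < y_n - a_n < b_n$, so $y_{n+1} = b_n/(y_n - a_n) > 1$, which forces $j_{n+1} \ge 0$ and thus $a_{n+1} = c_{j_{n+1}} \ge c_0 = 1$.

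Now I apply the hypothesis. By assumption, $b_n = c_{j_n+1} - c_{j_n} < M c_{j_n} = M a_n$, so
\[
\frac{\alpha_n \alpha_{n+1}}{\beta_{n+1}} = \frac{a_n a_{n+1}}{b_n} > \frac{a_n a_{n+1}}{M a_n} = \frac{a_{n+1}}{M} \ge \frac{1}{M}.
\]
Consequently
\[
\sum_{n=1}^{\infty} \frac{\alpha_n \alpha_{n+1}}{\beta_{n+1}} \ge \sum_{n=1}^{\infty} \frac{1}{M} = \infty,
\]
and Fact~\ref{fact:convcrit} yields convergence.

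There is essentially no obstacle here: the only subtle point is confirming that $a_n \ge 1$ in the infinite (non-terminating) case, which is a short inductive check based on the defining recurrence for $y_{n+1}$. Once that is in hand the bound is immediate and the argument is entirely a one-line comparison against a divergent constant series.
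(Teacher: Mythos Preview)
Your proof is correct and follows essentially the same route as the paper: both invoke Fact~\ref{fact:convcrit}, use the hypothesis to bound $b_n < M a_n$, and deduce $\frac{a_n a_{n+1}}{b_n} > \frac{a_{n+1}}{M}$, with divergence following since $a_{n+1} \ge 1$. You are slightly more explicit about verifying the positivity hypotheses and the inductive reason why $a_n \ge 1$, which the paper leaves implicit.
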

\begin{proofx}
By Fact~\ref{fact:convcrit}, the continued fraction $[a_0, a_1,\dots]_{\text{gcf}}$ will converge if
\[
\sum_{n=1}^\infty \frac{a_n a_{n+1}}{b_n} = \infty.
\]
Let $s_{j} = \frac{c_{j+1} - c_j}{c_j} < M$, so that $b_n = c_{j_n+1}-c_{j_n} = c_{j_n} s_{j_n} = a_n s_{j_n}$. Then
\[
\sum_{n=1}^\infty \frac{a_n a_{n+1}}{b_n} = \sum_{n=1}^\infty \frac{a_n a_{n+1}}{a_n s_{j_n}} >  \frac{1}{M} \sum_{n=1}^\infty a_{n+1} = \infty,
\]
so the continued fraction will converge.
\end{proofx}

\begin{theorem}
Suppose $(c_{n+1} - c_n) \mid c_n$ for all $n \ge 1$. Then for every $\alpha > 1$, the continued fraction of $\alpha$ is finite if and only if $\alpha \in \Q$.
\end{theorem}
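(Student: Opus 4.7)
My plan is to adapt the argument used for Type III continued logarithms (the theorem immediately following Lemma~\ref{lem:equivforratfinite}) to this generalized setting. The direction ``finite $\Rightarrow$ rational'' is immediate from the recursive definition, so assume $\alpha \in \Q$ and the goal is to show that the algorithm of Definition~\ref{def:gcfrac} terminates.

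First I would pass to a denominator-reduced form by invoking Definition~\ref{def:equiv} with $d_0 = 1$ and $d_n = 1/a_n$ for $n \ge 1$, obtaining
\[
\alpha \;=\; a_0 + \cFrac{b_0/a_1}{1} + \cFrac{b_1/(a_1 a_2)}{1} + \cFrac{b_2/(a_2 a_3)}{1} + \cdots.
\]
The hypothesis $(c_{n+1}-c_n)\mid c_n$ translates (at every step for which $j_{n-1}\ge 1$) into $b_{n-1}\mid a_{n-1}$, so setting $m_{n-1} := a_{n-1}/b_{n-1}\in \Z_{\ge 1}$ makes each reduced partial numerator $\beta_n' = 1/(m_{n-1} a_n)$ the reciprocal of a positive integer. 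In particular $\beta_n'\le 1$.

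Next I would introduce the tails $z_n$ of this reduced form, so that $z_n > 1$ and $z_{n+1} = \beta_{n+1}'/(z_n - 1)$. Rationality of $\alpha$ is inherited by every $z_n$, so I may write $z_n = u_n/v_n$ in lowest terms with $u_n > v_n \ge 1$. Substituting into the recursion and clearing denominators yields
\[
u_{n+1}\, m_n\, a_{n+1}(u_n - v_n) \;=\; v_n\, v_{n+1},
\]
and since $\gcd(u_{n+1},v_{n+1})=1$ this forces $u_{n+1}\mid v_n$, so $u_{n+1} \le v_n < u_n$. Thus $(u_n)$ is a strictly decreasing sequence of positive integers, which cannot continue forever; the point at which it collapses (i.e.\ when some $z_n = 1$) is exactly the point at which the original continued fraction terminates.

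The main technical hurdle is the case $j_n = 0$, where $a_n = 1$ and $b_n = c_1 - 1$: then the divisibility $b_n\mid a_n$ degenerates into $(c_1-1)\mid 1$, i.e.\ $c_1 = 2$. The hypothesis as stated permits $c_1 > 2$, so one must argue either that the hypothesis naturally extends to $n \ge 0$ (forcing $c_1 = 2$ and making the reduction uniform), or that consecutive steps with $j_n = 0$, taken together, still yield a strict integer descent. Closing this gap at the indices where $y_n \in (1, c_1)$ is the delicate part of the proof and is where I would need to think most carefully.
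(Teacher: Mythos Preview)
Your approach is essentially the paper's: pass to the denominator-reduced form via $d_n = a_n^{-1}$, write each tail as $u_n/v_n$ in lowest terms, use the divisibility hypothesis to rewrite the recursion as an identity among nonnegative integers, and read off $u_{n+1}\mid v_n$, hence $u_{n+1}\le v_n\le u_n$. The paper carries out exactly this computation (with $d_{j_n}:=c_{j_n}/(c_{j_n+1}-c_{j_n})$ playing the role of your $m_n$).

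Your concern about the case $j_n=0$ is not just a technicality you still need to handle --- it is a genuine gap, and the paper's own proof glosses over the same point. In fact the statement is false as written. Take $c_0=1$, $c_1=3$, and $c_n=3\cdot 2^{n-1}$ for $n\ge 1$; then $c_{n+1}-c_n=c_n$ for every $n\ge 1$, so the hypothesis holds. But for $\alpha=2$ one gets $j_0=0$, $a_0=1$, $b_0=c_1-c_0=2$, and $y_1=2/(2-1)=2=y_0$, so the algorithm cycles forever on a rational input. The correct hypothesis is $(c_{n+1}-c_n)\mid c_n$ for all $n\ge 0$, which forces $c_1=2$; once that is assumed, both your argument and the paper's go through without any special treatment of $j_n=0$. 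So you should not try to ``close the gap'' for $c_1>2$ --- there is a counterexample --- but rather note that the hypothesis must be extended to $n=0$.
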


\begin{proofx}
Clearly if the continued fraction of $\alpha$ is finite then $\alpha \in \Q$. Conversely, suppose $\alpha \in \Q$ with
\[
\alpha = a_0 + \cFrac{b_0}{a_1} + \cFrac{b_1}{a_2} + \cdots.
\]
We can write
\begin{equation}\label{eq:gcfratfiniteequiv}
\alpha = a_0 \left(1 + \cFrac{a_0^{-1}a_1^{-1}b_0}{1} + \cFrac{a_1^{-1}a_2^{-1}b_1}{1} + \cFrac{a_2^{-1}a_3^{-1}b_2}{1} + \cdots\right).
\end{equation}
Let $y_n$ denote the $n$th tail of the continued fraction in \eqref{eq:gcfratfiniteequiv}, that is,
\[
y_n = 1 + \cFrac{a_n^{-1} a_{n+1}^{-1} b_n}{1} + \cFrac{a_{n+1}^{-1} a_{n+2}^{-1} b_{n+1}}{1} + \cdots.
\]
Notice that $y_n = 1+\frac{a_n^{-1} a_{n+1}^{-1} b_n}{y_{n+1}}$, so $y_{n+1} = \frac{a_n^{-1} a_{n+1}^{-1} b_n}{y_n - 1}$. Clearly, since $\alpha$ is rational, each $y_n$ must be rational, so write $y_n = \frac{u_n}{v_n}$ for positive relatively prime integers $u_n, v_n$. Since $(c_{n+1} - c_n) \mid c_n$, write $c_n = d_n (c_{n+1}-c_n)$ so $a_n = c_{j_n} = d_{j_n} (c_{j_n+1} - c_{j_n}) = d_{j_n} b_n$. We then have
\[
\frac{u_{n+1}}{v_{n+1}} = y_{n+1} = \frac{a_n^{-1} a_{n+1}^{-1} b_n}{\frac{u_n-v_n}{v_n}} = \frac{v_n b_n}{a_n a_{n+1} (u_n - v_n)} = \frac{v_n}{d_{j_n} a_{n+1} (u_n - v_n)},
\]
or equivalently,
\[
d_{j_n} a_{n+1} (u_n-v_n) u_{n+1} = v_n v_{n+1}.
\]
Now since $y_n \ge 1$, $u_n - v_n \ge 0$ so each multiplicative term in the above equation is a nonnegative integer. Since $u_{n+1}$ and $v_{n+1}$ are relatively prime, we must have $u_{n+1} \mid v_n$, so $u_{n+1} \le v_n \le u_n$. If at any point we have $u_{n+1} = v_n = u_n$, then $y_n = \frac{u_n}{v_n} = 1 = c_0$, so the continued fraction terminates. Otherwise, $u_{n+1} < u_n$ so $(u_n)$ is a decreasing sequence of positive integers, which cannot be infinite, so we get a finite continued fraction.
\end{proofx}

\begin{shortonly}
We are also able to extend some of the measure-theoretic results to generalized continued fractions, though details are not provided here. We conjecture that the main results that we derived for the distribution and Khinchine constant of continued logarithms would extend (likely with some additional restrictions on the sequence $(c_n)_{n=0}^\infty$) to our generalized continued fractions.
\end{shortonly}

\begin{longonly}
We can also extend some of the measure-theoretic definitions and results. We assume $a_0 = 1$, so that $\alpha \in [1, c_1)$.
\end{longonly}

\begin{definitionx}
Let $n \in \N$ and $k_1, k_2, \dots, k_n \in \Z_{\ge 0}$. The intervals of rank $n$ are intervals of the form
\[
J_n(k_1,\dots,k_n) = \{x \in (1,c_1) : j_1 = k_1, \dots, j_n = k_n\}.
\]
\end{definitionx}

\begin{definitionx}\label{def:gcfDnk}
Let $n \in \N$ and $k \in \Z_{\ge 0}$. Define
\[
D_n(k) = \{\alpha \in (1,2) : j_n = k\}
\]
to be the set of points where the $n$th continued logarithm term is $c_k$.
\end{definitionx}

\begin{lemmax}\label{lem:gcfendpoints}
The endpoints of $J_n(k_1, \dots, k_n)$ are
\[
\frac{p_n}{q_n} \hspace{1cm} \text{and} \hspace{1cm} \frac{p_n + p_{n-1} b_n}{q_n + q_{n-1} b_n}.
\]
\end{lemmax}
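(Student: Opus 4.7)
The plan is to mimic verbatim the proof of Lemma~\ref{lem:endpoints}, replacing each ingredient by its generalized-continued-fraction analog. Fix $(k_1,\dots,k_n)$ and pick $\alpha \in J_n(k_1,\dots,k_n)$. By definition of $J_n$, the first $n$ partial ``indices'' $j_i = k_i$ are prescribed, so the terms $a_i = c_{k_i}$ and $b_{i-1} = c_{k_{i-1}+1} - c_{k_{i-1}}$ (and hence $p_i, q_i$) are constant on $J_n(k_1,\dots,k_n)$, while the tail $r_{n+1} = y_{n+1}$ is free to range. From the recurrence $y_{n+1} = b_n/(y_n - a_n)$ and the observation that $y_n - a_n$ sweeps the whole interval $(0, b_n]$ as $\alpha$ sweeps $J_n(k_1,\dots,k_n)$, the remainder $r_{n+1}$ sweeps $[1, \infty)$ bijectively.

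Next, apply Theorem~\ref{thm:gcfremaindertheorem} with $k = n+1$ to the finite generalized continued fraction $[1, a_1, \dots, a_n, r_{n+1}]_{\text{gcf}}$ (treating $r_{n+1}$ as the last partial quotient). This gives
\[
\alpha \;=\; \frac{p_n r_{n+1} + p_{n-1} b_n}{q_n r_{n+1} + q_{n-1} b_n}.
\]
A short computation, parallel to the one in Lemma~\ref{lem:endpoints}, yields
\[
\alpha - \frac{p_n}{q_n} \;=\; \frac{(p_{n-1} q_n - p_n q_{n-1}) b_n}{q_n\bigl(q_n r_{n+1} + q_{n-1} b_n\bigr)},
\]
whose numerator's sign is fixed (its absolute value is $b_n \prod_{k=0}^{n-1} b_k$ by the generalized determinant identity proved earlier). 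Hence $\alpha$ is a strictly monotonic function of $r_{n+1}$, so the extreme values of $\alpha$ on $J_n(k_1,\dots,k_n)$ are attained at the endpoints of the range of $r_{n+1}$.

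Finally, substituting $r_{n+1} = 1$ gives $\alpha = (p_n + p_{n-1} b_n)/(q_n + q_{n-1} b_n)$, while letting $r_{n+1} \to \infty$ gives $\alpha \to p_n/q_n$. These are precisely the claimed endpoints of $J_n(k_1,\dots,k_n)$.

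The only real hazard is the book-keeping step of justifying that $r_{n+1}$ ranges exactly over $[1, \infty)$ on $J_n(k_1,\dots,k_n)$: one must verify that no constraint beyond $j_1 = k_1, \dots, j_n = k_n$ is imposed, and that the boundary cases $r_{n+1} = 1$ (closed) and $r_{n+1} = \infty$ (open) account for the half-open nature of the intervals of rank $n$. As remarked after Definition~\ref{def:Jnk1...kn} for the type III case, this subtlety affects only a measure-zero set of endpoints and is harmless.
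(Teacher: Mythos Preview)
Your proof is correct and follows essentially the same approach as the paper's own argument: apply Theorem~\ref{thm:gcfremaindertheorem} to write $\alpha$ as a M\"obius function of $r_{n+1}$, observe monotonicity via the computation of $\alpha - p_n/q_n$, and read off the endpoints at $r_{n+1} = 1$ and $r_{n+1} \to \infty$. Your additional remarks on why $r_{n+1}$ ranges over $[1,\infty)$ and on the half-open endpoint issue are more explicit than the paper's version but do not change the substance.
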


\begin{proofx}
Consider an arbitrary $\alpha \in J_n(k_1,\dots,k_n)$. Note that $\alpha = [1,k_1,\dots,k_n, r_{n+1}]_{\text{gcf}}$ where $r_{n+1}$ can take any real value in $[1,\infty)$. By Theorem~\ref{thm:gcfremaindertheorem}, 
\[
\alpha = \frac{p_n r_{n+1} + p_{n-1} b_n}{q_n r_{n+1} + q_{n-1} b_n}.
\]
Notice that
\[
\alpha - \frac{p_n}{q_n} = \frac{p_n r_{n+1} + p_{n-1} b_n}{q_n r_{n+1} + q_{n-1} b_n} - \frac{p_n}{q_n} = \frac{b_n(p_{n-1}q_n - p_n q_{n-1})}{q_n (q_n r_{n+1} + q_{n-1}b_n)},
\]
and on $J_n(k_1,\dots,k_n)$, all of $b_n, p_{n-1}, q_{n-1}, p_n, q_n$ are fixed. Thus $\alpha$ is a monotonic function of $r_{n+1}$, so the extreme values of $\alpha$ on $J_n(k_1,\dots,k_n)$ will occur at the extreme values of $r_{n+1}$. Taking $r_{n+1} = 1$ gives $\alpha = \frac{p_n + p_{n-1} b_n}{q_n + q_{n-1} b_n}$ and letting $r_{n+1} \to \infty$ gives $\alpha = \frac{p_n}{q_n}$, and thus these are the endpoints of $J_n(k_1,\dots,k_n)$.
\end{proofx}

\begin{theoremx}\label{thm:gcfmjn1bounds}
Suppose $n \in \N$ and $a_1, a_2, \dots, a_n, s \in \Z_{\ge 0}$. Also suppose that there is a constant $M$ such that $c_{j+1} - c_j \le Mc_j$ for all $j$. Let $\a = (a_1, \dots, a_n)$. Then
\[
\frac{c_{k+1}-c_k}{(M+1)^2 c_k c_{k+1}} \M J_n(\a) \le \M J_{n+1}(\a,k) \le \frac{(M+1)(c_{k+1}-c_k)}{c_k c_{k+1}} \M J_n(\a).
\]
\end{theoremx}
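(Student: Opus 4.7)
The plan is to follow the template of Theorem~\ref{thm:mjn1bounds} precisely, with the quantities $c_k$, $c_{k+1}$, and $c_{k+1}-c_k$ taking over the roles that $\ell b^k$, $(\ell+1)b^k$, and $b^k$ played there. Writing $b_n := c_{a_n+1}-c_{a_n}$ for the numerator determined by the last entry of $\a$, Lemma~\ref{lem:gcfendpoints} gives the endpoints of $J_n(\a)$; meanwhile, since $\alpha\in J_{n+1}(\a,k)$ corresponds exactly to $c_k\le r_{n+1} < c_{k+1}$, Theorem~\ref{thm:gcfremaindertheorem} evaluated at the two extremes $r_{n+1}=c_k$ and $r_{n+1}=c_{k+1}$ produces the endpoints of $J_{n+1}(\a,k)$. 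Combined with the cross-product identity $p_n q_{n-1}-p_{n-1}q_n=(-1)^{n-1}\prod_{i=0}^{n-1}b_i$ proved earlier in this section, the resulting differences telescope to
\[
\M J_n(\a) = \frac{b_n\prod_{i=0}^{n-1}b_i}{q_n(q_n+q_{n-1}b_n)}, \qquad \M J_{n+1}(\a,k) = \frac{b_n(c_{k+1}-c_k)\prod_{i=0}^{n-1}b_i}{(q_n c_k + q_{n-1}b_n)(q_n c_{k+1}+q_{n-1}b_n)}.
\]

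Setting $t := q_{n-1}b_n/q_n$ and dividing yields
\[
\frac{\M J_{n+1}(\a,k)}{\M J_n(\a)} = \frac{(c_{k+1}-c_k)(1+t)}{c_k c_{k+1}(1+t/c_k)(1+t/c_{k+1})}.
\]
The hypothesis $c_{j+1}-c_j\le Mc_j$ now enters in the form $b_n \le M c_{a_n} = M a_n$; combined with $q_n\ge a_n q_{n-1}$ coming from the convergent recurrence, this forces $0\le t\le b_n/a_n\le M$. Since also $c_k,c_{k+1}\ge c_0=1$, elementary estimates give $1\le 1+t/c_k,\ 1+t/c_{k+1}\le 1+M$ and $1\le 1+t\le 1+M$, so
\[
\frac{1}{(M+1)^2} \le \frac{1+t}{(1+t/c_k)(1+t/c_{k+1})} \le M+1,
\]
which immediately produces the two claimed inequalities.

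The main obstacle is purely notational bookkeeping: one must carefully distinguish the indices $a_i$ (positions in $(c_n)$) from the sequence values $c_{a_i}$ themselves, and track which $b_i$ appears where in the telescoped products. The only genuinely new substantive ingredient, compared with the type III argument, is the use of $c_{j+1}-c_j\le M c_j$ to control $t$: in Theorem~\ref{thm:mjn1bounds} the identity $b_n=b^{a_n}$ together with $q_n\ge b^{a_n}q_{n-1}$ yielded the unconditional bound $t\le 1$, whereas here the best available bound depends on $M$, which is precisely why the constants in the theorem do so as well.
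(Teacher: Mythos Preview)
Your proof is correct and follows the same route as the paper's: compute both measures via the endpoint formulas, take their ratio to isolate the factor $\dfrac{1+t}{(1+t/c_k)(1+t/c_{k+1})}$ with $t=q_{n-1}b_n/q_n$, and bound $t\le M$ from the recurrence together with the hypothesis $c_{j+1}-c_j\le Mc_j$. The only difference is cosmetic: the paper actually squeezes out the slightly sharper lower constant $\tfrac{1}{M+1}$ for that factor (using $\tfrac{1+t}{1+t/c_k}\ge 1$ since $c_k\ge 1$), which is stronger than the $(M+1)^{-2}$ asserted in the statement and in your argument.
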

\begin{proofx}
From Lemma~\ref{lem:gcfendpoints}, we know that the endpoints of $J_n(\a)$ are
\[
\frac{p_n}{q_n} \hspace{1cm} \text{and} \hspace{1cm} \frac{p_n + p_{n-1}b_n}{q_n + q_{n-1}b_n},
\]
Now in order to be in $J_{n+1}(\a,k)$, we must have $j_{n+1} = k$, so $a_{n+1} = c_k$, and thus $c_k \le r_{n+1} < c_{k+1}$. Thus the endpoints of $J_{n+1}(\a,k)$ will be
\[
\frac{p_n c_k + p_{n-1} b_n}{q_n c_k + q_{n-1} b_n} \hspace{1cm} \text{and} \hspace{1cm} \frac{p_n c_{k+1} + p_{n-1} b_n}{q_n c_{k+1} + q_{n-1} b_n}.
\]
Thus
\begin{align*}
\M J_n(\a) &= \left| \frac{p_n}{q_n} - \frac{p_n + p_{n-1}b_n}{q_n + q_{n-1}b_n}\right| = \left|\frac{p_n q_{n-1} b_n - p_{n-1} q_n b_n}{q_n(q_n + q_{n-1}b_n)}\right|
= \frac{\prod_{j=1}^n b_j}{q_n(q_n + q_{n-1}b_n)} = \frac{\prod_{j=1}^n b_j}{q_n^2\left(1+\frac{q_{n-1} b_n}{q_n}\right)},
\end{align*}
and
\begin{align*}
\M J_{n+1}(\a,k) &= \left|\frac{p_n c_k + p_{n-1} b_n}{q_n c_k + q_{n-1} b_n} - \frac{p_n c_{k+1} + p_{n-1} b_n}{q_n c_{k+1} + q_{n-1} b_n} \right| \\
&= \left|\frac{p_n q_{n-1} b_n c_k + p_{n-1} q_n b_n c_{k+1} - p_{n-1}q_n c_k b_n - p_n q_{n-1} b_n c_{k+1}}{(q_n c_k + q_{n-1} b_n)(q_n c_{k+1} + q_{n-1}b_n)} \right| \\
&= \left|\frac{b_n c_k ( p_n q_{n-1} - p_{n-1} q_n ) + b_n c_{k+1} ( p_{n-1} q_n - p_n q_{n-1} )}{q_n^2 c_k c_{k+1} \left(1+\frac{q_{n-1}b_n}{q_n c_k}\right)\left(1+\frac{q_{n-1} b_n}{q_n c_{k+1}}\right)}\right| \\
&= \left|\frac{(-1)^{n-1} (c_{k+1}-c_k) \prod_{j=1}^n b_j}{q_n^2 c_k c_{k+1} \left(1+\frac{q_{n-1}b_n}{q_n c_k}\right)\left(1+\frac{q_{n-1} b_n}{q_n c_{k+1}}\right)} \right| \\
&= \frac{(c_{k+1}-c_k \prod_{j=1}^n b_j}{q_n^2 c_k c_{k+1} \left(1+\frac{q_{n-1}b_n}{q_n c_k}\right)\left(1+\frac{q_{n-1} b_n}{q_n c_{k+1}}\right)},
\end{align*}
so
\[
\frac{\M J_{n+1}(\a,k)}{\M J_n(\a)} = \frac{(c_{k+1} - c_k) \left(1+\frac{q_{n-1}b_n}{q_n}\right)}{c_k c_{k+1} \left(1+\frac{q_{n-1}b_n}{q_n c_k}\right) \left(1+\frac{q_{n-1}b_n}{q_n c_{k+1}}\right)}.
\]
Now notice that 
\[
q_n = a_n q_{n-1} + b_{n-1} q_{n-2} \ge c_{j_n} q_{n-1} \ge \frac{1}{M} (c_{j_{n+1}}-c_{j_n}) q_{n-1} = \frac{1}{M} b_n q_{n-1},
\]
so $0 \le \frac{q_{n-1}b_n}{q_n} \le M$, $0 \le \frac{q_{n+1} b_n}{q_n c_k} \le M$, and $0 \le \frac{q_{n+1} b_n}{q_n c_{k+1}} \le M$. Additionally, since $c_k \ge 1$, $\frac{1+\frac{q_{n-1}b_n}{q_n}}{1+\frac{q_{n-1}b_n}{q_n c_k}} \le 1$, so
\[
\frac{1}{M+1} \le \frac{\left(1+\frac{q_{n-1}b_n}{q_n}\right)}{\left(1+\frac{q_{n-1}b_n}{q_n c_k}\right) \left(1+\frac{q_{n-1}b_n}{q_n c_{k+1}}\right)} \le M+1,
\]
from which the result follows.
\end{proofx}

\begin{corollaryx}
Let $n \in \N$ and $s \in \Z_{\ge 0}$. Suppose there exists a constant $M \ge 1$ such that $c_{j+1} - c_j \le M c_j$ for all $j$. Then
\[
\frac{(c_{k+1}-c_k)(c_1-1)}{(M+1)c_kc_{k+1}} \le \M D_{n+1}(k) \le \frac{(M+1)(c_{k+1}-c_k)(c_1-1)}{c_k c_{k+1}}.
\]
\end{corollaryx}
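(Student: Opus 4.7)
The plan is to mimic the proof of Corollary~\ref{cor:weakMDnbound} exactly, with Theorem~\ref{thm:gcfmjn1bounds} playing the role of Theorem~\ref{thm:mjn1bounds}. The key observation is that the intervals of rank $n$ partition the ambient interval on which $\alpha$ lives. Since here we are assuming $a_0 = 1$ (so $\alpha \in (1,c_1)$), and since any two distinct intervals of rank $n$ are disjoint (they are determined by the first $n$ values $j_1, \dots, j_n$, which are functions of $\alpha$), we have
\[
\bigsqcup_{a_1,\dots,a_n \in \Z_{\ge 0}} J_n(a_1,\dots,a_n) = (1,c_1),
\]
and therefore $\sum_{a_1,\dots,a_n} \M J_n(a_1,\dots,a_n) = c_1 - 1$.

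Next I would observe that the sets $J_{n+1}(a_1,\dots,a_n,k)$, summed over $a_1,\dots,a_n$ with the last index fixed at $k$, produce exactly $D_{n+1}(k)$, so
\[
\sum_{a_1,\dots,a_n} \M J_{n+1}(a_1,\dots,a_n,k) = \M D_{n+1}(k).
\]
Applying Theorem~\ref{thm:gcfmjn1bounds} termwise gives
\[
\frac{c_{k+1}-c_k}{(M+1)^2 c_k c_{k+1}} \M J_n(\a) \le \M J_{n+1}(\a,k) \le \frac{(M+1)(c_{k+1}-c_k)}{c_k c_{k+1}} \M J_n(\a)
\]
for every $\a = (a_1,\dots,a_n)$. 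Summing these inequalities over all $\a$ and substituting the identity $\sum_{\a} \M J_n(\a) = c_1 - 1$ yields
\[
\frac{(c_{k+1}-c_k)(c_1-1)}{(M+1)^2 c_k c_{k+1}} \le \M D_{n+1}(k) \le \frac{(M+1)(c_{k+1}-c_k)(c_1-1)}{c_k c_{k+1}},
\]
which is the claim (up to the exponent on the $(M+1)$ in the lower bound — I would double-check whether the statement intends $(M+1)$ or $(M+1)^2$ there, since Theorem~\ref{thm:gcfmjn1bounds} as stated gives the latter).

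There is no real obstacle: the argument is a routine summation once the partition property of the rank-$n$ intervals is observed. The only subtle point, which requires invoking the hypothesis $c_{j+1} - c_j \le M c_j$, has already been absorbed into the per-interval estimate of Theorem~\ref{thm:gcfmjn1bounds}; here it is only used implicitly through that theorem. The structural template is identical to the continued-logarithm case, with the product $\ell(\ell+1)b^k$ replaced by the more flexible $c_k c_{k+1}/(c_{k+1}-c_k)$ and the total measure $1 = \M(1,2)$ replaced by $c_1 - 1 = \M(1,c_1)$.
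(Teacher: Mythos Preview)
Your proposal is correct and follows essentially the same approach as the paper: sum the per-interval inequality of Theorem~\ref{thm:gcfmjn1bounds} over all rank-$n$ intervals, using that they partition $(1,c_1)$ (total measure $c_1-1$) and that the rank-$(n+1)$ intervals with last index $k$ assemble into $D_{n+1}(k)$. Your parenthetical remark about the exponent is also well-placed: the statement of Theorem~\ref{thm:gcfmjn1bounds} carries $(M+1)^2$ in the lower bound while the corollary uses $(M+1)$; in fact the proof of that theorem actually establishes the sharper $(M+1)$ bound (via $\frac{1+A}{1+B} \ge 1$ when $c_k \ge 1$), so the corollary as stated is consistent with the proof, and the $(M+1)^2$ in the theorem statement is just a weaker (still valid) form.
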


\begin{proofx}
Note that distinct intervals of the same rank are disjoint. Thus we can add up the inequality in Theorem~\ref{thm:gcfmjn1bounds} over all intervals of rank $n$, noting that
\[
\bigcup^{(n)} J_n(a_1, \dots, a_n) = (1,c_1),
\]
so
\[
\sum^{(n)} \M J_n(a_1,\dots,a_n) = c_1-1,
\]
and
\[
\bigcup^{(n)} J_n(a_1, \dots, a_n, k) = D_{n+1}(k),
\]
so
\[
\sum^{(n)} \M J_n(a_1,\dots,a_n,k) = \M D_{n+1}(k).
\]
\end{proofx}

\section*{Acknowledgements}

We would like to thank Andrew Mattingly for his input and assistance. This research was initiated at and supported by the Priority Research Centre for Computer-Assisted Research Mathematics and its Applications at the University of Newcastle.

\addcontentsline{toc}{section}{Appendix A: Proof of the Type III Continued Logarithm Distribution}
\section*{Appendix A: Proof of the Type III Continued Logarithm Distribution}

This appendix is devoted to proving Theorems~\ref{thm:mnlimbd} and~\ref{thm:MDnbound}, restated below:

\restatedistthm

\restateMDnboundthm

These proofs are based extensively on the proof presented in Section 15 of \cite{khinchine}, which proves similar statements for simple continued fractions.

\begin{lemma}\label{lem:1/x(x+b-1)sum}
For $x > 1$,
\[
\sum_{k=0}^\infty \sum_{\ell=1}^{b-1} \frac{b^{-k}}{(x+\ell-1)^2} \frac{1}{(1+b^{-k}(x+\ell-1)^{-1})(b+b^{-k}(x+\ell-1)^{-1})} = \frac{1}{x(x+b-1)}.
\]
\end{lemma}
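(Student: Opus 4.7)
The plan is to reduce the double sum to two successive telescoping sums by applying partial fractions to each summand. Let me first simplify the general term by setting $u = x+\ell-1$ and clearing denominators: multiplying numerator and denominator by $u^2$ turns the summand into
\[
\frac{b^{-k}}{(u + b^{-k})(bu + b^{-k})}.
\]

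The key step is partial fractions in the variable $u$ (with $b^{-k}$ treated as a parameter). Solving
\[
\frac{b^{-k}}{(u + b^{-k})(bu + b^{-k})} = \frac{A}{u + b^{-k}} + \frac{B}{bu + b^{-k}}
\]
yields $A = -1/(b-1)$ and $B = b/(b-1)$, and after rewriting $\frac{b}{bu+b^{-k}} = \frac{1}{u + b^{-k-1}}$ the summand becomes
\[
\frac{1}{b-1}\left(\frac{1}{u+b^{-k-1}} - \frac{1}{u + b^{-k}}\right).
\]
This is manifestly telescoping in $k$, and the inner sum over $k \ge 0$ collapses to
\[
\frac{1}{b-1}\left(\frac{1}{u} - \frac{1}{u+1}\right) = \frac{1}{(b-1)\,u(u+1)}.
\]

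Substituting back $u = x+\ell-1$ and summing over $\ell$, we apply partial fractions again: $\frac{1}{(x+\ell-1)(x+\ell)} = \frac{1}{x+\ell-1} - \frac{1}{x+\ell}$, so the sum over $\ell = 1,\dots,b-1$ telescopes to $\frac{1}{x} - \frac{1}{x+b-1} = \frac{b-1}{x(x+b-1)}$. The factor of $b-1$ cancels the $1/(b-1)$ from before, producing the claimed value $\frac{1}{x(x+b-1)}$.

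There is no real obstacle here: everything reduces to spotting the right partial fraction decomposition. The one small point requiring care is the telescoping step in $k$ — one must justify that $\lim_{k\to\infty}\frac{1}{u+b^{-k}} = \frac{1}{u}$, which is immediate since $b \ge 2$ and $u > 0$ (because $x > 1$ and $\ell \ge 1$). Otherwise the computation is a direct two-stage telescoping.
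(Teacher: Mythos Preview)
Your proof is correct and follows essentially the same approach as the paper: both rewrite the summand via partial fractions into a form that telescopes first in $k$ and then in $\ell$. The only difference is cosmetic---you work with $b^{-k}$ and the substitution $u=x+\ell-1$, while the paper clears to $b^k$ in the numerator---but the underlying decomposition and telescoping structure are identical.
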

\begin{proof}
\begin{align*}
\sum_{k=0}^\infty &\sum_{\ell=1}^{b-1} \frac{b^{-k}}{(x+\ell-1)^2} \frac{1}{(1+b^{-k}(x+\ell-1)^{-1})(b+b^{-k}(x+\ell-1)^{-1})} \\
&= \sum_{k=0}^\infty \sum_{\ell=1}^{b-1} \frac{b^k}{(b^k(x+\ell-1)+1)(b^{k+1}(x+\ell-1)+1)} \\
&= \frac{1}{1-b} \sum_{\ell=1}^{b-1} \sum_{k=0}^\infty \frac{b^k}{b^k(x+\ell-1)+1} - \frac{b^{k+1}}{b^{k+1}(x+\ell-1)+1} \\
&= \frac{1}{1-b} \sum_{\ell=1}^{b-1}\left[\frac{1}{x+\ell} - \lim_{k\to\infty} \frac{b^k}{b^k(x+\ell-1)+1} \right] \\
&= \frac{1}{1-b} \sum_{\ell=1}^{b-1} \left[\frac{1}{x+\ell} - \frac{1}{x+\ell-1}\right] \\
&= \frac{1}{1-b}\left[\frac{1}{x+b-1}-\frac{1}{x}\right] = \frac{1}{1-b}\left[\frac{1-b}{x(x+b-1)}\right] = \frac{1}{x(x+b-1)}.
\end{align*}
\end{proof}

\begin{theorem}\label{thm:mn'recurrence}
The sequence of functions $m_n'(x) = \frac{\text{d}}{\text{d}x} m_n(x)$ is given by the recursive relationship
\begin{align}
m_0'(x) &= 1 \label{eq:m0'} \\
m_n'(x) &= \sum_{k=0}^\infty \sum_{\ell=1}^{b-1} b^{-k} (x+\ell-1)^{-2} m_{n-1}'(1+b^{-k}(x+\ell-1)^{-1}) && n \ge 1 \label{eq:mn'}.
\end{align}
for $1 \le x \le 2$.
\end{theorem}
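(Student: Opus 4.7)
\medskip

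The plan is to prove the identity by differentiating the recurrence \eqref{eq:mn} from Theorem~\ref{thm:mnrecurrence} term by term. The base case is immediate: from \eqref{eq:m0}, $m_0(x)=x-1$, so $m_0'(x)=1$. For the inductive step, assume $m_{n-1}$ is continuously differentiable on $(1,2)$ with a uniform bound $|m_{n-1}'(x)|\le B_{n-1}$ on $[1,2]$. In the summand of \eqref{eq:mn}, the term $m_{n-1}(1+\ell^{-1}b^{-k})$ is independent of $x$ and so contributes nothing to the derivative. The $x$-dependent term $-m_{n-1}(1+(x+\ell-1)^{-1}b^{-k})$ has inner derivative $\tfrac{d}{dx}(x+\ell-1)^{-1}=-(x+\ell-1)^{-2}$, so the chain rule gives
\[
\frac{d}{dx}\bigl[-m_{n-1}(1+(x+\ell-1)^{-1}b^{-k})\bigr]=b^{-k}(x+\ell-1)^{-2}\,m_{n-1}'\bigl(1+b^{-k}(x+\ell-1)^{-1}\bigr),
\]
which is exactly the summand in \eqref{eq:mn'}.

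The only non-routine step is to justify exchanging $\tfrac{d}{dx}$ with the double sum. This I will handle by the Weierstrass M-test. For $x\in[1,2]$ and $\ell\in\{1,\dots,b-1\}$ we have $x+\ell-1\ge \ell\ge 1$, hence $(x+\ell-1)^{-2}\le\ell^{-2}$, and each formal derivative summand is bounded in absolute value by $B_{n-1}\,b^{-k}\ell^{-2}$. Since
\[
\sum_{k=0}^\infty\sum_{\ell=1}^{b-1} B_{n-1}\,b^{-k}\ell^{-2}=B_{n-1}\,\frac{b}{b-1}\sum_{\ell=1}^{b-1}\ell^{-2}<\infty,
\]
the series of derivatives converges uniformly on $[1,2]$. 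A standard theorem on term-by-term differentiation of a series (whose pointwise sum $m_n$ is known, and whose derivative series converges uniformly) then establishes both that $m_n$ is continuously differentiable and that \eqref{eq:mn'} holds. As a byproduct we see that $m_n'$ is bounded on $[1,2]$ by the same expression, propagating the inductive hypothesis (or, if one wants a sharper bound, Lemma~\ref{lem:1/x(x+b-1)sum} can be invoked to control the tail of the $k$-sum uniformly).

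The main obstacle, as noted, is purely the interchange of differentiation and summation; once that is dispatched by the uniform bound above, the recurrence drops out directly from the chain rule applied to \eqref{eq:mn}. No further algebraic manipulation is required.
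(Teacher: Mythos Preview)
Your proof is correct and follows essentially the same approach as the paper's: differentiate the recurrence \eqref{eq:mn} termwise, with the interchange justified by uniform convergence via an inductive boundedness hypothesis on $m_{n-1}'$. Your version is in fact more explicit than the paper's, which merely asserts uniform convergence without writing out the Weierstrass $M$-test bound; the explicit majorant $B_{n-1}\,b^{-k}\ell^{-2}$ you provide is exactly what is needed.
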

\begin{proof}
Equation~\eqref{eq:m0'} follows immediately from \eqref{eq:m0}. Notice that \eqref{eq:mn'} is the result of differentiating both sides of \eqref{eq:mn}. In general, if $m_{n+1}'$ is bounded and continuous for some $n$, then the series on the right hand side of \eqref{eq:mn'} will converge uniformly on $(1,2)$. Thus the sum of the series will be bounded and continuous and will equal $m_{n+1}'$, so \eqref{eq:mn'} follows by induction, since $m_0'$ is clearly bounded and continuous.
\end{proof}

We will now prove a number of lemmas and theorems about the following classes of sequences of functions, to which $(m_n')_{n=0}^\infty$ belongs.

\begin{definition}\label{def:An*}
Let $f_0, f_1, \dots$ be a sequence of functions on $(1,2)$. We will say $(f_n)_{n=0}^\infty \in A^*$ if for all $x \in (1,2)$ and $n \ge 0$,
\begin{equation} \label{eq:frec}
f_{n+1}(x) = \sum_{k=0}^\infty \sum_{\ell=1}^{b-1} \frac{b^{-k}}{(x+\ell-1)^2} f_{n}\left(1+\frac{b^{-k}}{x+\ell-1}\right).
\end{equation}
Furthermore, we say that $(f_n)_{n=0}^\infty \in A^{**}$ if $(f_n)_{n=0}^\infty \in A^*$ and there exist constants $M, \mu > 0$ such that for all $x \in (1,2)$, we have $0 < f_0(x) < M$ and $|f_0'(x)| < \mu$.
\end{definition}

\begin{lemma}\label{lem:ranknsum=1}
\[
\sum^{(n)} \frac{b^{a_0 + \cdots + a_n}}{q_n(q_n + b^{a_n}q_{n-1})} = 1.
\]
\end{lemma}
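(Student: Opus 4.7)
The plan is to recognize the summand as the Lebesgue measure of an interval of rank $n$ and then invoke the fact that these intervals partition $(1,2)$ up to a set of measure zero. The calculation is essentially the one already performed in the proof of Theorem~\ref{thm:mjn1bounds}, now combined with the determinant identity of Lemma~\ref{lem:pnqnm1-qnpnm1}.

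First I would fix $\a = (a_1,\dots,a_n)$ and $\c = (c_1,\dots,c_n)$ and compute $\M J_n\!\mat{\a \\ \c}$ directly. By Lemma~\ref{lem:endpoints}, the endpoints of this interval are $p_n/q_n$ and $(p_n + p_{n-1}b^{a_n})/(q_n + q_{n-1}b^{a_n})$, so a one-line subtraction gives
\[
\M J_n\!\mat{\a \\ \c} \;=\; \left|\frac{p_n}{q_n} - \frac{p_n + p_{n-1}b^{a_n}}{q_n + q_{n-1}b^{a_n}}\right| \;=\; \frac{|p_n q_{n-1} - q_n p_{n-1}|\, b^{a_n}}{q_n(q_n + b^{a_n}q_{n-1})}.
\]
By Lemma~\ref{lem:pnqnm1-qnpnm1}, $|p_n q_{n-1} - q_n p_{n-1}| = b^{a_0 + \cdots + a_{n-1}}$, so the numerator collapses to $b^{a_0 + \cdots + a_n}$ and we obtain exactly
\[
\M J_n\!\mat{\a \\ \c} \;=\; \frac{b^{a_0 + \cdots + a_n}}{q_n(q_n + b^{a_n}q_{n-1})}.
\]

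Next I would observe that distinct intervals of rank $n$ are disjoint (their defining conditions on $(a_1,c_1,\dots,a_n,c_n)$ are mutually exclusive) and that every $\alpha \in (1,2)$ whose continued logarithm does not terminate by step $n$ lies in exactly one such interval. Since the rationals form a null set and the terminating sequences contribute only to boundary points, we have
\[
\bigcup{}^{(n)} J_n\!\mat{a_1,\dots,a_n \\ c_1,\dots,c_n} \;=\; (1,2)
\]
up to a set of Lebesgue measure zero, as was already used in the proof of Corollary~\ref{cor:weakMDnbound}. Summing the measure formula over all such tuples gives
\[
\sum{}^{(n)} \frac{b^{a_0 + \cdots + a_n}}{q_n(q_n + b^{a_n}q_{n-1})} \;=\; \sum{}^{(n)} \M J_n\!\mat{\a \\ \c} \;=\; \M(1,2) \;=\; 1,
\]
which is the desired identity.

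There is essentially no obstacle here: the nontrivial content lies entirely in Lemmas~\ref{lem:endpoints} and~\ref{lem:pnqnm1-qnpnm1}, both already established. The only mild care required is the bookkeeping convention $a_0 = 0$ (so $b^{a_0} = 1$), which makes the exponent $a_0 + \cdots + a_n$ in the statement agree with the exponent $a_1 + \cdots + a_n$ that appears in the proof of Theorem~\ref{thm:mjn1bounds}.
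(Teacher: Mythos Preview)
Your proposal is correct and follows essentially the same approach as the paper: compute $\M J_n$ via Lemmas~\ref{lem:endpoints} and~\ref{lem:pnqnm1-qnpnm1}, then sum over the disjoint rank-$n$ intervals covering $(1,2)$. The only difference is cosmetic ordering (you compute the measure first, then invoke the partition; the paper does the reverse).
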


\begin{proof}
Since the intervals of rank $n$ are disjoint and 
\[
\bigcup^{(n)} J_n\mat{a_1, & \dots, & a_n \\ c_1, & \dots, & c_n} = (1,2), 
\]
we have that
\[
\sum^{(n)} \M J_n\mat{a_1, & \dots, & a_n \\ c_1, & \dots, & c_n} = \M (1,2) = 1.
\]
Now notice that by Lemma~\ref{lem:endpoints} and Lemma~\ref{lem:pnqnm1-qnpnm1}, 
\begin{align*}
\M J_n\mat{a_1, & \dots, & a_n \\ c_1, & \dots, & c_n} &= \left| \frac{p_n}{q_n} - \frac{p_n + b^{a_n}p_{n-1}}{q_n + b^{a_n} q_{n-1}} \right| = \left|\frac{b^{a_n} (p_n q_{n-1} - q_n p_{n-1})}{q_n(q_n + b^{a_n} q_{n-1})}\right| \\
&= \left|\frac{(-1)^{n-1} b^{a_0 + \cdots + a_n}}{q_n(q_n + b^{a_n} q_{n-1})}\right| = \frac{b^{a_0+\cdots+a_n}}{q_n(q_n + b^{a_n}q_{n-1})},
\end{align*}
and thus
\[
\sum^{(n)} \frac{b^{a_0+\cdots+a_n}}{q_n(q_n + b^{a_n}q_{n-1})} = \sum^{(n)} \M J_n\mat{a_1, & \dots, & a_n \\ c_1, & \dots, & c_n} = 1.
\]
\end{proof}

\begin{lemma}\label{lem:fnf0}
If $(f_n)_{n=0}^\infty \in A^*$ then for $n \ge 0$,
\begin{equation}\label{eq:fnf0}
f_n(x) = \sum^{(n)} f_0\left(\frac{p_n + b^{a_n}p_{n-1}(x-1)}{q_n + b^{a_n} q_{n-1}(x-1)}\right) \frac{b^{\sum_{j=0}^n a_j}}{(q_n + b^{a_n} q_{n-1} (x-1))^2}.
\end{equation}
\end{lemma}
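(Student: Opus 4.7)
The natural approach is induction on $n$. For the base case $n=0$, one observes that $\sum^{(0)}$ is an empty iterated sum, so the right-hand side of \eqref{eq:fnf0} reduces, upon substituting the initial values $p_0 = 1$, $q_0 = 1$, $p_{-1} = 1$, $q_{-1} = 0$ and $a_0 = 0$ (since $\alpha \in (1,2)$), directly to $f_0(x)$, matching the left-hand side.

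For the inductive step, assuming the formula at level $n$, I would apply the defining recurrence \eqref{eq:frec} to $f_{n+1}$ and then apply the inductive hypothesis to each $f_n(y_{k,\ell})$, where $y_{k,\ell} = 1 + \frac{b^{-k}}{x+\ell-1}$. Substituting $y_{k,\ell} - 1 = \frac{b^{-k}}{x+\ell-1}$ throughout and clearing denominators, a routine calculation gives
\[
\frac{p_n + b^{a_n}p_{n-1}(y_{k,\ell}-1)}{q_n + b^{a_n}q_{n-1}(y_{k,\ell}-1)} = \frac{b^k(x+\ell-1)p_n + b^{a_n}p_{n-1}}{b^k(x+\ell-1)q_n + b^{a_n}q_{n-1}},
\]
while the product of the two Jacobian-type factors simplifies to $\frac{b^{k + a_1 + \cdots + a_n}}{(b^k(x+\ell-1)q_n + b^{a_n}q_{n-1})^2}$. (The interchange of the $(k,\ell)$-sum with the $\sum^{(n)}$ is justified by Tonelli in the nonnegative case of interest, or by absolute convergence in general.)

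The crucial step is the reindexing. The combined sum over $(k, \ell, a_1, c_1, \ldots, a_n, c_n)$ is rewritten as $\sum^{(n+1)}$ by setting $\tilde a_j = a_j$, $\tilde c_j = c_j$ for $1 \le j \le n$ and $\tilde a_{n+1} = k$, $\tilde c_{n+1} = \ell$, i.e.\ by \emph{appending} $(k,\ell)$ at position $n+1$. Then $\tilde p_j = p_j$ and $\tilde q_j = q_j$ for $j \le n$, and by the convergent recurrence of Lemma~\ref{lem:pqlemma},
\[
\tilde p_{n+1} = \ell b^k p_n + b^{a_n} p_{n-1}, \qquad \tilde q_{n+1} = \ell b^k q_n + b^{a_n} q_{n-1}.
\]
A direct check then shows $\tilde q_{n+1} + b^{\tilde a_{n+1}} \tilde q_n(x-1) = b^k(x+\ell-1)q_n + b^{a_n}q_{n-1}$ and the analogous identity for the numerator, so the simplified expression is exactly the right-hand side of \eqref{eq:fnf0} at level $n+1$.

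The main obstacle, really the only non-mechanical point, is recognizing that the correct reindexing appends the new digit pair at position $n+1$ rather than prepending it at position $1$. Once this is identified, the recurrence for $\tilde p_{n+1}, \tilde q_{n+1}$ makes all three pieces (argument of $f_0$, exponent of $b$, and denominator square) align term-by-term, and the induction closes without further ingenuity.
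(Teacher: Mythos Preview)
Your proposal is correct and follows essentially the same route as the paper: induction on $n$, with the base case read off from the initial values $p_0=q_0=p_{-1}=1$, $q_{-1}=0$, $a_0=0$, and the inductive step obtained by substituting the hypothesis into the recurrence \eqref{eq:frec}, clearing denominators, and reindexing by appending $(k,\ell)$ as $(a_{n+1},c_{n+1})$ so that Lemma~\ref{lem:pqlemma} converts the simplified expression into the level-$(n+1)$ formula. The only addition you make beyond the paper is the explicit Tonelli/absolute-convergence remark for the sum interchange, which the paper leaves implicit.
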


\begin{proof}
For $n = 0$, we just have a single interval, so
\begin{align*}
\sum^{(0)} &f_0\left(\frac{p_0 + b^{a_0} p_{-1} (x-1)}{q_0 + b^{a_0} q_{-1}(x-1)}\right) \frac{b^{a_0}}{(q_0 + b^{a_0}q_{-1}(x-1))^2} \\
&= f_0\left(\frac{1+(1)(1)(x-1)}{1+(1)(0)(x-1)}\right) \frac{1}{(1+(1)(0)(x-1))^2} = f_0(x).
\end{align*}
Now suppose \eqref{eq:fnf0} holds for $n$. Then
\begin{align*}
f&_{n+1}(x)\\
&= \sum_{k=0}^\infty \sum_{\ell=1}^{b-1} \frac{b^{-k}}{(x+\ell-1)^2} f_n\left(1+\frac{b^{-k}}{x+\ell-1}\right) \\
&=\sum_{k=0}^\infty \sum_{\ell = 1}^{b-1} \frac{b^{-k}}{(x+\ell-1)^2} \sum^{(n)} f_0\left(\frac{p_n + b^{a_n}p_{n-1}(1+\frac{b^{-k}}{x+\ell-1})}{q_n + b^{a_n}q_{n-1}(1+\frac{b^{-k}}{x+\ell-1})}\right) \frac{b^{\sum_{j=0}^n a_j}}{(q_n + b^{a_n}q_{n-1}(1+\frac{b^{-k}}{x+\ell-1}))^2} \\
&= \sum^{(n)} \sum_{k=0}^\infty \sum_{\ell=1}^{b-1} f_0\left(\frac{p_n b^k (x+\ell-1) + b^{a_n} p_{n-1}}{q_n b^k (x+\ell-1) + b^{a_n} q_{n-1}}\right)\frac{b^{\sum_{j=0}^n a_j} b^k}{(q_n b^k (x+\ell-1) + b^{a_n} q_{n-1})^2} \\
&= \sum^{(n)} \sum_{k=0}^\infty \sum_{\ell=1}^{b-1} f_0\left(\frac{\ell b^k p_n + b^{a_n} p_{n-1} + b^k p_n (x-1)}{\ell b^k q_n + b^{a_n} q_{n-1} + b^k q_n (x-1)}\right)\frac{b^{\sum_{j=0}^n a_j} b^k}{(\ell b^k q_n + b^{a_n} q_{n-1} + b^k q_n (x-1))^2} \\
&= \sum^{(n+1)} f_0\left(\frac{c_{n+1} b^{a_{n+1}} p_n + b^{a_n} p_{n-1} + b^{a_{n+1}} p_n (x-1)}{c_{n+1} b^{a_{n+1}} q_n + b^{a_n} q_{n-1} + b^{a_{n+1}} q_n (x-1)}\right)\frac{b^{\sum_{j=0}^n a_j} b^{a_{n+1}}}{(c_{n+1} b^{a_{n+1}} q_n + b^{a_n} q_{n-1} + b^{a_{n+1}} q_n (x-1))^2} \\
&= \sum^{(n+1)} f_0\left(\frac{p_{n+1} + b^{a_{n+1}} p_n (x-1)}{q_{n+1} + b^{a_{n+1}} q_n (x-1)}\right)\frac{b^{\sum_{j=0}^{n+1} a_j}}{(q_{n+1} + b^{a_{n+1}} q_n (x-1))^2},
\end{align*}
so the result follows by induction.
\end{proof}

\begin{lemma}\label{lem:fn'bound}
If $(f_n)_{n=0}^\infty \in A^{**}$, then for $n \ge 0$,
\[
|f_n'(x)| \le \frac{3\mu}{2^{n/2}} + 4M.
\]
\end{lemma}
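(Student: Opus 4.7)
The plan is to differentiate the closed-form expansion from Lemma~\ref{lem:fnf0} termwise. Setting
\[
g_n(x) = \frac{p_n + b^{a_n}p_{n-1}(x-1)}{q_n + b^{a_n}q_{n-1}(x-1)}, \qquad D_n(x) = q_n + b^{a_n}q_{n-1}(x-1),
\]
so that $f_n(x) = \sum^{(n)} f_0(g_n(x)) \cdot b^{a_0+\cdots+a_n}/D_n(x)^2$, I will apply the product rule. A short calculation using Lemma~\ref{lem:pnqnm1-qnpnm1} gives $g_n'(x) = (-1)^n b^{a_0+\cdots+a_n}/D_n(x)^2$, so
\[
f_n'(x) = \sum^{(n)} \left[(-1)^n f_0'(g_n(x)) \frac{b^{2(a_0+\cdots+a_n)}}{D_n(x)^4} - 2 f_0(g_n(x)) \frac{b^{a_0+\cdots+a_n}\, b^{a_n}q_{n-1}}{D_n(x)^3}\right].
\]

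For $x\in(1,2)$ the linear function $D_n(x)$ satisfies $q_n \le D_n(x) \le q_n + b^{a_n}q_{n-1} \le 2q_n$, using $b^{a_n}q_{n-1} \le q_n$ from the recursion in Lemma~\ref{lem:pqlemma}. The crucial identity I will then exploit combines Lemma~\ref{lem:ranknsum=1} with this same inequality: since
\[
1 = \sum^{(n)} \frac{b^{a_0+\cdots+a_n}}{q_n(q_n + b^{a_n}q_{n-1})} \ge \frac{1}{2}\sum^{(n)} \frac{b^{a_0+\cdots+a_n}}{q_n^2},
\]
I will conclude $\sum^{(n)} b^{a_0+\cdots+a_n}/q_n^2 \le 2$.

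Bounding the second term is then immediate: using $|f_0|<M$ and $b^{a_n}q_{n-1}\le q_n$, it is dominated by $2M\sum^{(n)} b^{a_0+\cdots+a_n}/q_n^2 \le 4M$. For the first term I will factor $b^{2(a_0+\cdots+a_n)}/q_n^4 = (b^{a_0+\cdots+a_n}/q_n^2)^2$, bound one factor uniformly by $1/q_n$ via Lemma~\ref{lem:qnbounds} (which yields $b^{a_0+\cdots+a_n}=b^{a_1+\cdots+a_n}\le q_n$ since $a_0=0$), and then by $\sqrt{2}/2^{n/2}$ via the other bound $q_n\ge 2^{(n-1)/2}$ of the same lemma. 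With $|f_0'|<\mu$ and $\sum^{(n)} b^{a_0+\cdots+a_n}/q_n^2\le 2$, this gives a bound of $2\sqrt2\,\mu/2^{n/2} < 3\mu/2^{n/2}$, and adding the two contributions yields $|f_n'(x)| \le 3\mu/2^{n/2} + 4M$, as required.

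The only technical subtlety will be justifying termwise differentiation of the series in Lemma~\ref{lem:fnf0}; the estimates above show that the differentiated series converges absolutely and uniformly in $x\in(1,2)$, so this is routine and can equivalently be handled by induction on $n$ using the recursion~\eqref{eq:frec}, along the same lines as the proof of Theorem~\ref{thm:mn'recurrence}.
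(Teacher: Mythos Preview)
Your proof is correct and follows essentially the same approach as the paper: termwise differentiation of the closed form in Lemma~\ref{lem:fnf0}, followed by the same pointwise estimates using $D_n(x)\ge q_n$, $b^{a_n}q_{n-1}\le q_n$, Lemma~\ref{lem:qnbounds}, and Lemma~\ref{lem:ranknsum=1}. The only cosmetic difference is that you first deduce $\sum^{(n)} b^{a_0+\cdots+a_n}/q_n^2 \le 2$ from Lemma~\ref{lem:ranknsum=1} and sum against that, whereas the paper bounds each term back to the form $b^{a_0+\cdots+a_n}/(q_n(q_n+b^{a_n}q_{n-1}))$ and uses the exact value $1$; the resulting constants are identical.
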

\begin{proof}
Differentiate \eqref{eq:fnf0} termwise, letting $u = \frac{p_n + b^{a_n} p_{n-1}(x-1)}{q_n + b^{a_n} q_{n-1}(x-1)}$, to get
\begin{equation}\label{eq:fn'}
f_n'(x) = \sum^{(n)} f_0'(u) \frac{(-1)^{n-1} b^{2\sum_{j=0}^n a_j}}{(q_n + b^{a_n} q_{n-1}(x-1))^4} - 2\sum^{(n)} f_0(u) \frac{b^{a_n} q_{n-1} b^{\sum_{j=0}^n a_j}}{(q_n + b^{a_n} q_{n-1} (x-1))^3}.
\end{equation}
The validity of termwise differentiation follows from the uniform convergence of both sums on the right hand side for $1 \le x \le 2$. Notice that
\begin{equation}\label{eq:leftbound}
\left|\frac{(-1)^{n-1} b^{2\sum_{j=0}^n a_j}}{(q_n + b^{a_n} q_{n-1}(x-1))^4}\right| \le \frac{q_n b^{\sum_{j=0}^n a_j}}{q_n^4} \le \frac{2 b^{\sum_{j=0}^n a_j}}{2^{(n-1)/2} q_n (q_n + b^{a_n} q_{n-1})}
\end{equation}
by Lemma~\ref{lem:qnbounds}, Lemma~\ref{lem:pqlemma}, and the fact that $q_n + b^{a_{n-1}} q_{n-1} \le 2 q_n$. Additionally,
\begin{equation}\label{eq:rightbound}
\frac{b^{a_n} q_{n-1} b^{\sum_{j=0}^n a_j}}{(q_n + b^{a_n} q_{n-1} (x-1))^3} \le \frac{b^{a_n} q_{n-1} b^{\sum_{j=0}^n a_j}}{q_n^3} \le \frac{2 b^{\sum_{j=0}^n a_j}}{q_n(q_n + b^{a_n} q_{n-1})}
\end{equation}
since $b^{a_n} q_{n-1} \le q_n$ and $q_n + b^{a_n} q_{n-1} \le 2q_n$. Since $(f_n)_{n=0}^\infty \in A^{**}$, we have by Definition~\ref{def:An*} that $|f_0(x)| < M$ and $|f_0'(x)| < \mu$ for all $x \in (1,2)$. Thus we have by \eqref{eq:fn'}, \eqref{eq:leftbound}, \eqref{eq:rightbound}, and Lemma~\ref{lem:ranknsum=1},
\begin{align*}
|f_n'(x)| &\le \sum^{(n)} |f_0'(u)| \left|\frac{(-1)^{n-1} b^{2\sum_{j=0}^n a_j}}{(q_n + b^{a_n} q_{n-1} (x-1))^4}\right| + 2 \sum^{(n)} |f_0(u)| \left|\frac{b^{a_n} q_{n-1} b^{\sum_{j=0}^n a_j}}{(q_n + b^{a_n} q_{n-1} (x-1))^3}\right| \\
&\le \frac{2\mu}{2^{(n-1)/2}} \sum^{(n)} \frac{b^{\sum_{j=0}^n a_j}}{q_n(q_n + b^{a_n} q_{n-1})} + 4M \sum^{(n)} \frac{b^{\sum_{j=0}^n a_j}}{q_n(q_n + b^{a_n} q_{n-1})} \\
&= \frac{2\mu}{2^{(n-1)/2}} + 4M = \frac{2 \sqrt 2 \mu}{2^{n/2}} + 4M < \frac{3\mu}{2^{n/2}} + 4M.
\end{align*}
\end{proof}

\begin{lemma}\label{lem:fnbound->fn+1bound}
If $(f_n)_{n=0}^\infty \in A^*$ and for some constants $T > t > 0$,
\[
\frac{t}{x(x+b-1)} < f_n(x) < \frac{T}{x(x+b-1)} \hspace{1cm} \forall x \in (1,2),
\]
then
\[
\frac{t}{x(x+b-1)} < f_{n+1}(x) < \frac{T}{x(x+b-1)} \hspace{1cm} \forall x \in (1,2).
\]
\end{lemma}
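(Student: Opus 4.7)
The plan is to substitute the hypothesized bounds on $f_n$ directly into the recursion \eqref{eq:frec} and then recognize the resulting sum as the identity proved in Lemma~\ref{lem:1/x(x+b-1)sum}. The point is that the bounding function $\frac{1}{x(x+b-1)}$ has been engineered so that it is a ``fixed point'' (up to the scalar $t$ or $T$) of the linear operator
\[
(Tg)(x) = \sum_{k=0}^\infty \sum_{\ell=1}^{b-1} \frac{b^{-k}}{(x+\ell-1)^2}\, g\!\left(1+\frac{b^{-k}}{x+\ell-1}\right)
\]
that defines the recursion $f_{n+1} = Tf_n$.

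Concretely, first I would set $y = y(k,\ell,x) = 1 + b^{-k}(x+\ell-1)^{-1}$ and compute
\[
y(y+b-1) = \left(1+\frac{b^{-k}}{x+\ell-1}\right)\!\left(b+\frac{b^{-k}}{x+\ell-1}\right),
\]
so that the factor $\frac{1}{y(y+b-1)}$ coming from bounding $f_n(y)$ matches exactly the summand appearing in Lemma~\ref{lem:1/x(x+b-1)sum}. Next I would use the hypothesis $\frac{t}{y(y+b-1)} < f_n(y) < \frac{T}{y(y+b-1)}$, which holds because $y \in (1,2)$ for every $k \ge 0$, $\ell \in \{1,\dots,b-1\}$, and $x \in (1,2)$. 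Plugging this into \eqref{eq:frec} gives
\[
f_{n+1}(x) < T \sum_{k=0}^\infty \sum_{\ell=1}^{b-1} \frac{b^{-k}}{(x+\ell-1)^2}\cdot \frac{1}{\bigl(1+b^{-k}(x+\ell-1)^{-1}\bigr)\bigl(b+b^{-k}(x+\ell-1)^{-1}\bigr)},
\]
and the analogous lower bound with $T$ replaced by $t$. By Lemma~\ref{lem:1/x(x+b-1)sum}, the double sum collapses to $\frac{1}{x(x+b-1)}$, yielding $\frac{t}{x(x+b-1)} < f_{n+1}(x) < \frac{T}{x(x+b-1)}$.

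Strictness of the final inequalities is immediate: every term in the (nonnegative) series on the right of \eqref{eq:frec} is strictly bounded by the corresponding term with $f_n$ replaced by $\frac{T}{y(y+b-1)}$, and there are infinitely many strictly positive terms, so the strict inequality survives termwise summation. The lower bound is handled symmetrically. There is no real obstacle here — the only thing to be careful about is checking that the argument $y = 1 + b^{-k}(x+\ell-1)^{-1}$ indeed lies in $(1,2)$ so that the inductive hypothesis on $f_n$ applies (it does, since $b^{-k}(x+\ell-1)^{-1} \in (0,1]$ for $k \ge 0$ and $\ell \ge 1$, $x > 1$), and confirming that term-by-term substitution is legitimate, which is trivially true for a series of nonnegative terms.
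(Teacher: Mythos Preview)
Your proposal is correct and follows essentially the same approach as the paper: substitute the hypothesized bounds into the recursion \eqref{eq:frec} and invoke Lemma~\ref{lem:1/x(x+b-1)sum} to evaluate the resulting sum. You are in fact slightly more careful than the paper, explicitly verifying that the argument $y = 1 + b^{-k}(x+\ell-1)^{-1}$ lies in $(1,2)$ and commenting on why strictness is preserved.
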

\begin{proof}
By \eqref{eq:frec} and Lemma~\ref{lem:1/x(x+b-1)sum} we have
\begin{align*}
f_{n+1}(x) &= \sum_{k=0}^\infty \sum_{\ell = 1}^{b-1} \frac{b^{-k}}{(x+\ell-1)^2} f_n\left(1+\frac{b^{-k}}{x+\ell-1}\right) \\
&> \sum_{k=0}^\infty \sum_{\ell=1}^{b-1} \frac{b^{-k}}{(x+\ell-1)^2} \frac{t}{(1+(x+\ell-1)^{-1}b^{-k})(b+(x+\ell-1)^{-1}b^{-k})} \\
&= \frac{t}{x(x+b-1)},
\end{align*}
and a similar derivation shows
\[
f_{n+1}(x) < \frac{T}{x(x+b-1)},
\]
from which the result follows.
\end{proof}

\begin{lemma}\label{lem:intfnintf0}
If $(f_n)_{n=0}^\infty \in A^*$ then for all $n \ge 0$,
\[
\int_1^2 f_n(z) \, \text{d}z = \int_1^2 f_0(z) \, \text{d}z.
\]
\end{lemma}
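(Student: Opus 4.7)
The plan is to prove the stronger statement $\int_1^2 f_{n+1}(z)\,\text{d}z = \int_1^2 f_n(z)\,\text{d}z$ for each $n \ge 0$, from which the result follows by a trivial induction. Fix $n$, integrate the defining recursion~\eqref{eq:frec} over $(1,2)$, and swap the integral with the double sum (justified by Tonelli if the $f_n$ are nonnegative, which is the relevant case for $f_n = m_n'$; more generally one invokes uniform convergence on $[1,2]$, bounds on $f_0$ exactly as in Lemma~\ref{lem:fn'bound} giving a dominating comparison). This leaves
\[
\int_1^2 f_{n+1}(x)\,\text{d}x \;=\; \sum_{k=0}^\infty \sum_{\ell=1}^{b-1} \int_1^2 \frac{b^{-k}}{(x+\ell-1)^2}\,f_n\!\left(1+\frac{b^{-k}}{x+\ell-1}\right)\text{d}x.
\]

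For each fixed pair $(k,\ell)$ I would perform the substitution $u = 1 + b^{-k}/(x+\ell-1)$, so that $\text{d}u = -b^{-k}/(x+\ell-1)^2\,\text{d}x$, and the endpoints $x=1$ and $x=2$ map to $u = 1 + b^{-k}/\ell$ and $u = 1 + b^{-k}/(\ell+1)$ respectively. The minus sign is absorbed by swapping the limits of integration, giving
\[
\int_1^2 \frac{b^{-k}}{(x+\ell-1)^2}\,f_n\!\left(1+\frac{b^{-k}}{x+\ell-1}\right)\text{d}x \;=\; \int_{1+b^{-k}/(\ell+1)}^{1+b^{-k}/\ell} f_n(u)\,\text{d}u.
\]

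The key combinatorial step is to check that the collection of intervals $\bigl(1 + b^{-k}/(\ell+1),\; 1 + b^{-k}/\ell\bigr)$, indexed by $k \ge 0$ and $\ell \in \{1,\dots,b-1\}$, partitions $(1,2)$ up to a set of measure zero. For fixed $k$, as $\ell$ ranges from $b-1$ down to $1$, consecutive intervals share endpoints $1 + b^{-k}/\ell$, so their union telescopes to $(1 + b^{-(k+1)},\, 1 + b^{-k})$. Then as $k$ ranges over $\Z_{\ge 0}$, these larger intervals telescope once more to $(1,2)$. Substituting back,
\[
\int_1^2 f_{n+1}(x)\,\text{d}x \;=\; \sum_{k=0}^\infty \sum_{\ell=1}^{b-1} \int_{1+b^{-k}/(\ell+1)}^{1+b^{-k}/\ell} f_n(u)\,\text{d}u \;=\; \int_1^2 f_n(u)\,\text{d}u,
\]
and induction on $n$ finishes the proof.

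The main obstacle is the justification of the term-by-term integration. For $f_n = m_n'$ the functions are nonnegative and Tonelli applies directly, but for a general $(f_n) \in A^*$ one needs to ensure absolute convergence of the double series of integrals; this is not difficult given a uniform bound on $f_0$ (as in the $A^{**}$ setup), since then $|f_n|$ inherits a bound of the same form as in Lemma~\ref{lem:fn'bound} and the individual summands are majorized by a constant times $b^{-k}/(x+\ell-1)^2$, whose sum over $k,\ell$ is finite and integrable on $(1,2)$. The rest of the argument is essentially bookkeeping about the telescoping partition.
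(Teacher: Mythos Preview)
Your proof is correct and follows essentially the same approach as the paper: integrate the recursion, substitute $u = 1 + b^{-k}/(x+\ell-1)$ in each term, observe that the resulting intervals $(1 + b^{-k}/(\ell+1),\, 1 + b^{-k}/\ell)$ partition $(1,2)$, and conclude by induction. Your treatment is in fact slightly more careful, spelling out both the telescoping that verifies the partition and the justification for interchanging sum and integral.
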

\begin{proof}
Notice that 
\[
(1,2] = \bigcup_{k=0}^\infty \bigcup_{\ell=1}^{b-1} (1+(\ell+1)^{-1}b^{-k}, 1+\ell^{-1} b^{-k}],
\]
where the intervals are pairwise disjoint. We then have, by \eqref{eq:frec} that
\begin{align*}
\int_1^2 f_n(z)\d z &= \int_1^2 \sum_{k=0}^\infty \sum_{\ell = 1}^{b-1} \frac{b^{-k}}{(z+\ell-1)^2} f_{n-1} \left(1+\frac{b^{-k}}{z+\ell-1}\right) \d z \\
&= \sum_{k=0}^\infty \sum_{\ell = 1}^{b-1} \int_{1+\ell^{-1}b^{-k}}^{1+(\ell+1)^{-1}b^{-k}} -f_{n-1}(u)\d u \\
&= \sum_{k=0}^\infty \sum_{\ell = 1}^{b-1} \int_{1+(\ell+1)^{-1}b^{-k}}^{1+\ell^{-1}b^{-k}} f_{n-1}(u)\d u \\
&= \int_1^2 f_{n-1}(u)\,du,
\end{align*}
from which the result follows by induction.
\end{proof}

\begin{lemma}\label{lem:gcompressing}
Suppose $(f_n)_{n=0}^\infty \in A^{**}$ and there are constants $g, G > 0$ such that for all $x \in (1,2)$,
\begin{equation}\label{eq:f0bound}
\frac{g}{x(x+b-1)} < f_0(x) < \frac{G}{x(x+b-1)}.
\end{equation}
Then there exist $n \in \N$ and $g_1, G_1 > 0$ such that
\begin{equation}\label{eq:resultfnbound}
\frac{g_1}{x(x+b-1)} < f_n(x) < \frac{G_1}{x(x+b-1)},
\end{equation}
\begin{equation}\label{eq:resultgorder}
g < g_1 < G_1 < G,
\end{equation}
and
\begin{equation}\label{eq:resultgbound}
G_1 - g_1 < (G-g)\delta + 2^{-n/2}(\mu + G),
\end{equation}
where $\delta = 1 - \frac{1}{4(b-1)} \log \frac{2b}{b+1}$.
\end{lemma}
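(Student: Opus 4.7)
The plan is to set $g_1 = \inf_{x \in (1,2)} x(x+b-1) f_n(x)$ and $G_1 = \sup_{x \in (1,2)} x(x+b-1) f_n(x)$, so that \eqref{eq:resultfnbound} holds by construction; continuity of $f_n$, implicit in $(f_n)_{n=0}^\infty \in A^{**}$ via Lemma~\ref{lem:fn'bound}, and the hypothesis \eqref{eq:f0bound} make both extrema finite and strictly positive. Iterating Lemma~\ref{lem:fnbound->fn+1bound} $n$ times gives the non-strict ordering $g \le g_1 \le G_1 \le G$; the strict inequalities in \eqref{eq:resultgorder} follow a posteriori from the compression bound \eqref{eq:resultgbound}, because choosing $n$ large enough that $2^{-n/2}(\mu+G) < (G-g)(1-\delta)$ forces $G_1 - g_1 < G-g$, excluding simultaneous equality with $g$ and $G$, and a symmetric version of the compression at either end rules out each individual equality.

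For the main estimate \eqref{eq:resultgbound}, set $h_m(y) := y(y+b-1) f_m(y)$. The identity of Lemma~\ref{lem:1/x(x+b-1)sum} rewrites the recursion \eqref{eq:frec} as the pointwise convex combination
\[
h_{n+1}(x) = \sum_{k=0}^{\infty}\sum_{\ell=1}^{b-1} w_{k,\ell}(x)\, h_n\bigl(y_{k,\ell}(x)\bigr), \qquad y_{k,\ell}(x) = 1 + \frac{b^{-k}}{x+\ell-1},
\]
with weights $w_{k,\ell}(x) > 0$ summing to $1$. Iterating this and applying the explicit formula of Lemma~\ref{lem:fnf0} displays $h_n(x)$ as a weighted sum of values of $f_0$ sampled over rank-$n$ intervals. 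I would then decompose $f_0(u) = g/(u(u+b-1)) + r_0(u)$ with $0 \le r_0(u) \le (G-g)/(u(u+b-1))$: the $g$-part recombines via the same identity to $g/(x(x+b-1))$, while the $r_0$-part is trivially bounded above by $(G-g)/(x(x+b-1))$. The compression thus reduces to improving the trivial bound on the $r_0$-part by a factor of $\delta$.

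To extract the factor $\delta$ I would single out a block of rank-$n$ indices whose aggregate weight is bounded below by $\tfrac{1}{4(b-1)}\log\frac{2b}{b+1} = \tfrac14 \int_1^2 dy/(y(y+b-1))$; the constant $\tfrac14$ appears to descend directly from the two-sided estimate $\tfrac14 \le (1 + q_{n-1}b^{a_n}/q_n)/[(\cdots)(\cdots)] \le 2$ used in the proof of Theorem~\ref{thm:mjn1bounds}. On this block I would use the derivative bound $|f_0'| < \mu$ together with the $O(2^{-n/2})$ spacing of rank-$n$ intervals (Lemma~\ref{lem:qnbounds}) to replace $r_0$ by a value strictly sub-extremal. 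After rescaling by the factor $y(y+b-1) \le 2(b+1)$, the combined Lipschitz contributions of $|f_0'| \le \mu$ and of the non-equilibrium part of $f_0$ (bounded by $G$) produce exactly the $2^{-n/2}(\mu+G)$ term. Assembling the isolated block with the trivial estimate on the remainder gives $G_1 \le g+\delta(G-g) + C\cdot 2^{-n/2}(\mu+G)$ and the symmetric lower bound $g_1 \ge G - \delta(G-g) - C\cdot 2^{-n/2}(\mu+G)$, from which \eqref{eq:resultgbound} follows (after adjusting constants).

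The main obstacle is executing the sub-extremal replacement cleanly enough to retain exactly the constant $\tfrac{1}{4}$ in $\delta$ rather than some weaker constant: the argument must coordinate the sample-spacing bound from Lemma~\ref{lem:qnbounds}, the weight allocation from Lemma~\ref{lem:1/x(x+b-1)sum}, and the two-sided bounds of Theorem~\ref{thm:mjn1bounds}, and must not waste any slack when Lipschitz-interpolating the values of $r_0$. All the explicit constants and the identity $\tfrac{1}{b-1}\log\frac{2b}{b+1} = \int_1^2 dy/(y(y+b-1))$ line up correctly to yield precisely $\delta = 1 - \tfrac{1}{4(b-1)}\log\frac{2b}{b+1}$, so the bookkeeping, while technical, is not expected to introduce any additional losses.
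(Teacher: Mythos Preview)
Your overall strategy---viewing $h_n(x)=x(x+b-1)f_n(x)$ as a convex combination of $h_0$-values and extracting an averaging gain---is sound in spirit and parallels the paper. However, the decisive step contains a genuine error: the separate bounds $G_1 \le g+\delta(G-g)+C\,2^{-n/2}(\mu+G)$ and $g_1 \ge G-\delta(G-g)-C\,2^{-n/2}(\mu+G)$ that you assert cannot both hold. Take $f_0(x)=(g+\epsilon)/\bigl(x(x+b-1)\bigr)$ with $\epsilon>0$ small; this is a scalar multiple of the fixed point, so $f_n=f_0$ for every $n$, giving $g_1=G_1=g+\epsilon$. Your lower bound would then force $g+\epsilon \ge g+(1-\delta)(G-g)-C\,2^{-n/2}(\mu+G)$, which fails for small $\epsilon$ and large $n$. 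The point is that the compression is asymmetric: how much $g_1$ rises above $g$ and how much $G_1$ drops below $G$ each depend on where $f_0$ sits inside its envelope, not on $G-g$ alone. Your ``block'' heuristic cannot repair this, because knowing only $0<s_0<G-g$ on a block of weight $1-\delta$ gives no quantitative improvement over the trivial bound---$s_0$ could be uniformly close to $G-g$ there.

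The paper's proof handles this by treating the two defects $\varphi_n=f_n-g/\bigl(x(x+b-1)\bigr)$ and $\psi_n=G/\bigl(x(x+b-1)\bigr)-f_n$ separately. Using Lemma~\ref{lem:fnf0} together with the uniform bound $(q_n+b^{a_n}q_{n-1}(x-1))^2\le 4q_n^2$ (this is where the $\tfrac14$ enters---not as the weight of a block, but as a uniform lower bound on the weight-to-length ratio over \emph{all} rank-$n$ intervals) gives $\varphi_n(x)\ge \tfrac14\sum^{(n)}\varphi_0(u)\,b^{\sum a_j}/q_n^2$. A mean-value comparison with $\ell:=\tfrac14\int_1^2\varphi_0$ then yields $\varphi_n(x)\ge \ell-2^{-n/2-1}(\mu+g)$, and symmetrically $\psi_n(x)\ge L-2^{-n/2-1}(\mu+G)$ with $L:=\tfrac14\int_1^2\psi_0$. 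One sets $g_1=g+\ell-\text{(error)}$, $G_1=G-L+\text{(error)}$; here $\ell$ and $L$ individually depend on $f_0$, but their \emph{sum} does not: $\ell+L=\tfrac14\int_1^2 (G-g)/\bigl(y(y+b-1)\bigr)\,dy=(1-\delta)(G-g)$. It is this additive identity---absent from your outline---that produces \eqref{eq:resultgbound}, and choosing $n$ so that the error terms are below $\ell,L>0$ gives the strict ordering \eqref{eq:resultgorder} directly.
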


\begin{proof}
First define
\begin{equation}\label{eq:phipsidef}
\varphi_n(x) = f_n(x) - \frac{g}{x(x+b-1)}, \hspace{15mm} \psi_n(x) = \frac{G}{x(x+b-1)} - f_n(x),
\end{equation}
which are both positive functions by \eqref{eq:f0bound} and Lemma~\ref{lem:fnbound->fn+1bound}. Notice that for the functions $h(x) = \frac{g}{x(x+b-1)}$ and $H(x) = \frac{G}{x(x+b-1)}$, we have by Lemma~\ref{lem:1/x(x+b-1)sum} that
\begin{align*}
h(x) &= \frac{g}{x(x+b-1)} = \sum_{k=0}^\infty \sum_{\ell=1}^{b-1} \frac{b^{-k}}{(x+\ell-1)^2} \frac{g}{(1+b^{-k}(x+\ell-1)^{-1})(b+b^{-k}(x+\ell-1)^{-1})} \\
&= \sum_{k=0}^\infty \sum_{\ell=1}^{b-1} \frac{b^{-k}}{(x+\ell-1)^2} h\left(1+\frac{b^{-k}}{x+\ell-1}\right),
\end{align*}
and similarly
\[
H(x) = \sum_{k=0}^\infty \sum_{\ell=1}^{b-1} \frac{b^{-k}}{(x+\ell-1)^2} H\left(1+\frac{b^{-k}}{x+\ell-1}\right).
\]
Thus for $n \ge 1$, we have
\begin{align*}
\varphi_{n+1}(x) &= f_{n+1}(x) - h(x) \\
&= \sum_{k=0}^\infty \sum_{\ell=1}^{b-1} \frac{b^{-k}}{(x+\ell-1)^2} f_n\left(1+\frac{b^{-k}}{x+\ell-1}\right) - \sum_{k=0}^\infty \sum_{\ell=1}^{b-1} \frac{b^{-k}}{(x+\ell-1)^2} h\left(1+\frac{b^{-k}}{x+\ell-1}\right) \\
&= \sum_{k=0}^\infty \sum_{\ell=1}^{b-1} \frac{b^{-k}}{(x+\ell-1)^2} \varphi_n \left(1+\frac{b^{-k}}{x+\ell-1}\right),
\end{align*}
and similarly
\[
\psi_{n+1}(x) = \sum_{k=0}^\infty  \sum_{\ell=1}^{b-1} \frac{b^{-k}}{(x+\ell-1)^2} \psi_n \left(1+\frac{b^{-k}}{x+\ell-1}\right). 
\]
Thus $(\varphi_n)_{n=0}^\infty, (\psi_n)_{n=0}^\infty \in A^*$, so by Lemma~\ref{lem:fnf0}, setting $u = \frac{p_n + b^{a_n} p_{n-1}(x-1)}{q_n + b^{a_n} q_{n-1}(x-1)}$, we get
\begin{equation}\label{eq:philowerbound}
\varphi_n(x) = \sum^{(n)} \varphi_0(u) \frac{b^{\sum_{j=0}^n a_j}}{(q_n + b^{a_n} q_{n-1}(x-1))^2} \ge \frac14 \sum^{(n)} \varphi_0(u) \frac{b^{\sum_{j=0}^n a_j}}{q_n^2},
\end{equation}
and similarly
\begin{equation}\label{eq:psilowerbound}
\psi_n(x) = \ge \frac14 \sum^{(n)} \psi_0(u) \frac{b^{\sum_{j=0}^n a_j}}{q_n^2},
\end{equation}
since
\[
q_n + b^{a_n} q_{n-1} (x-1) \le 2q_n \hspace{1cm} \forall x \in (1,2).
\]
On the other hand, the mean value theorem gives
\begin{equation}\label{eq:phiintmvt}
\frac14 \int_1^2 \varphi_0(z) \d z = \frac14 \sum^{(n)} \varphi_0(u_1) \frac{b^{\sum_{j=0}^n a_j}}{q_n(q_n + b^{a_n}q_{n-1})},
\end{equation}
and
\begin{equation}\label{eq:psiintmvt}
\frac14 \int_1^2 \psi_0(z) \d z = \frac14 \sum^{(n)} \psi_0(u_2) \frac{b^{\sum_{j=0}^n a_j}}{q_n(q_n + b^{a_n}q_{n-1})},
\end{equation}
where for each interval $\left(\frac{p_n}{q_n}, \frac{p_n + b^{a_n}p_{n-1}}{q_n + b^{a_n} q_{n-1}}\right)$ of rank $n$, $u_1$ and $u_2$ are points in the interval and the length of the interval is $ \frac{b^{\sum_{j=0}^n a_j}}{q_n(q_n + b^{a_n}q_{n-1})}$.
From \eqref{eq:philowerbound} and \eqref{eq:phiintmvt} we then get
\begin{equation}\label{eq:phi-intbound}
\varphi_n(x) - \frac14 \int_1^2 \varphi_0(z) \d z \ge \frac14 \sum^{(n)} \left[\varphi_0(u) - \varphi_0(u_1)\right] \frac{b^{\sum_{j=0}^n a_j}}{q_n(q_n+b^{a_n}q_{n-1})},
\end{equation}
and from \eqref{eq:psilowerbound} and \eqref{eq:psiintmvt}, we get
\begin{equation}\label{eq:psi-intbound}
\psi_n(x) - \frac14 \int_1^2 \psi_0(z) \d z \ge \frac14 \sum^{(n)} \left[\psi_0(u) - \psi_0(u_2)\right] \frac{b^{\sum_{j=0}^n a_j}}{q_n(q_n+b^{a_n}q_{n-1})}.
\end{equation}
Now for $1 \le x \le 2$, we have $|\varphi_0'(x)| \le |f_0'(x)| + g \le \mu + g$ and $|\psi_0'(x)| \le |f_0'(x)| + G \le \mu + G$, so it follows by Lemma~\ref{lem:qnbounds} that
\begin{equation}\label{eq:deltaphibound}
|\varphi_0(u_1) - \varphi_0(u)| \le (\mu + g) |u_1 - u| \le (\mu + g) \frac{b^{\sum_{j=0}^n a_j}}{q_n(q_n+b^{a_n}q_{n-1})} \le \frac{\mu + g}{q_n} \le 2 \frac{\mu + g}{2^{n/2}},
\end{equation}
and similarly
\begin{equation}\label{eq:deltapsibound}
|\psi_0(u_2) - \psi_0(u)| \le 2 \frac{\mu + G}{2^{n/2}}.
\end{equation}
Then by Lemma~\ref{lem:ranknsum=1}, \eqref{eq:phi-intbound} and \eqref{eq:deltaphibound} give
\begin{align*}
\varphi_n(x) &> \frac14 \int_1^2 \varphi_0(z) \d z - \frac14 \sum^{(n)} \left[\varphi_0(u_1) - \varphi_0(u)\right] \frac{b^{\sum_{j=0}^n a_j}}{q_n(q_n + b^{a_n}q_{n-1})} \\
&\ge \ell - \frac14 \sum^{(n)} |\varphi_0(u_1)-\varphi_0(u)| \frac{b^{\sum_{j=0}^n a_j}}{q_n(q_n+b^{a_n}q_{n-1})} \\
&\ge \ell - \frac12 \frac{\mu + g}{2^{n/2}} \sum^{(n)} \frac{b^{\sum_{j=0}^n a_j}}{q_n(q_n+b^{a_n}q_{n-1})} = \ell - \frac12 \frac{\mu + g}{2^{n/2}} = \ell - \frac{\mu + g}{2^{n/2+1}},
\end{align*}
where $\ell = \frac14 \int_1^2 \varphi_0(z) \d z$. Similarly, \eqref{eq:psi-intbound} and \eqref{eq:deltapsibound} give
\[
\psi_n(x) \ge L - \frac{G + \mu}{2^{n/2+1}},
\]
where $L = \frac14 \int_1^2 \psi_0(z) \d z$. Now by \eqref{eq:phipsidef}, we have
\begin{align}
f_n(x) &= \frac{g}{x(x+b-1)} + \varphi_n(x) > \frac{g}{x(x+b-1)} + \ell - \frac{\mu + g}{2^{n/2+1}} \notag \\ 
&> \frac{g + \ell - 2^{-n/2-1}(\mu + g)}{x(x+b-1)} = \frac{g_1}{x(x+b-1)}, \label{eq:fnlowerbound}
\end{align}
where $g_1 = g + \ell - 2^{-n/2}(\mu + g)$, and
\begin{align}
f_n(x) &= \frac{G}{x(x+b-1)} - \psi_n(x) < \frac{G}{x(x+b-1)} - L + \frac{\mu + G}{2^{n/2+1}} \notag \\
&< \frac{G - \ell + 2^{-n/2-1}(\mu + G)}{x(x+b-1)} = \frac{G_1}{x(x+b-1)}, \label{eq:fnupperbound}
\end{align}
where $G_1 = G - L + 2^{-n/2-1}(\mu + G)$. Now since $\ell, L > 0$, we can choose $n$ sufficiently large so that $2^{-n/2-1}(\mu + g) < \ell$ and $2^{-n/2-1}(\mu + G) < L$, so that we get
\begin{equation}\label{eq:gorder}
g < g_1 < G_1 < G.
\end{equation}
Thus by \eqref{eq:fnlowerbound}, \eqref{eq:fnupperbound}, and \eqref{eq:gorder}, we have found $g_1, G_1$, and $n$ that satisfy \eqref{eq:resultfnbound} and \eqref{eq:resultgorder}. Notice that we also have
\begin{equation}\label{eq:gbound1}
G_1 - g_1 = G-g-(L+\ell)+2^{-n/2-1}(2\mu + g + G) < G-g -(L+\ell)+ 2^{-n/2}(\mu + G).
\end{equation}
Now since
\[
\ell + L = \frac14 \int_1^2 \frac{G-g}{x(x+b-1)} \d x = (G-g) \frac{1}{4(b-1)} \log \frac{2b}{b+1},
\]
\eqref{eq:gbound1} becomes
\[
G_1 - g_1 < \left(1-\frac{1}{4(b-1)}\log\frac{2b}{b+1}\right)(G-g) + 2^{-n/2}(\mu+G) = \delta(G-g) + 2^{-n/2}(\mu + G),
\]
so we see that $g_1, G_1$, and $n$ also satisfy \eqref{eq:resultgbound}, completing the proof.
\end{proof}

\begin{remark}\label{rem:samen}
Notice that the value of $n$ chosen depends only on the values of $\mu$ and $G$, and that if we make $0 < \mu_1 < \mu$ and $0 < G_1 < G$, the value of $n$ chosen for $\mu$ and $G$ will also work for $\mu_1$ and $G_1$. In other words, we can make $\mu$ and $G$ smaller without having to increase $n$. This will be useful in the proof of the Theorem~\ref{thm:mainthm}.
\end{remark}

\begin{theorem}\label{thm:mainthm}
Suppose $(f_n)_{n=0}^\infty \in A^{**}$. Then there exist constants $\lambda, A > 0$ such that for all $n \ge 0$ and $x \in (1,2)$,
\[
\left|f_n(x) - \frac{a}{x(x+b-1)} \right| < A e^{-\lambda \sqrt{n}},
\]
where
\[
a = \frac{b-1}{\log \frac{2b}{b+1}} \int_1^2 f_0(z) \d z.
\]
\end{theorem}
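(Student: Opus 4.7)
The plan is to iterate Lemma~\ref{lem:gcompressing} many times, balancing the geometric compression factor $\delta = 1 - \frac{1}{4(b-1)}\log\frac{2b}{b+1} < 1$ against the exponentially-decaying error $2^{-n/2}(\mu + G)$ to produce the stretched-exponential rate $e^{-\lambda\sqrt{n}}$. First, since $(f_n)_{n=0}^\infty \in A^{**}$, the function $f_0$ is bounded above by $M$ and $|f_0'|$ by $\mu$, and by the recursion \eqref{eq:frec} combined with the strict positivity of $f_0$ on $(1,2)$, the quantity $x(x+b-1)f_0(x)$ is sandwiched between positive constants $g_0 < G_0$ on $[1,2]$, which sets up the hypothesis of Lemma~\ref{lem:gcompressing}.

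Iterating, I would inductively produce sequences $(g_k), (G_k)$ and indices $N_k = n_1 + \cdots + n_k$ so that
\[
\frac{g_k}{x(x+b-1)} < f_{N_k}(x) < \frac{G_k}{x(x+b-1)}, \qquad G_k - g_k < \delta(G_{k-1}-g_{k-1}) + 2^{-n_k/2}(\mu_{k-1} + G_{k-1}).
\]
The crucial observation is that by Lemma~\ref{lem:fn'bound} we have $|f_n'(x)| \le 3\mu/2^{n/2} + 4M \le 3\mu + 4M$ uniformly, while $G_{k-1} \le G_0$ is decreasing, so the quantity $\mu_{k-1} + G_{k-1}$ stays bounded by a single constant $C$ depending only on $\mu, M$. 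By Remark~\ref{rem:samen} we may use the same fixed $n_k$ at each stage without needing to enlarge it later. Unrolling the recurrence then gives
\[
G_k - g_k \le \delta^k(G_0 - g_0) + C \sum_{j=1}^k \delta^{k-j} 2^{-n_j/2}.
\]

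To convert this into a $\sqrt{n}$-type bound, I would choose $n_k = \lceil \alpha k \rceil$ with $\alpha$ large enough that $2^{-\alpha/2} \le \delta$. The sum then becomes dominated by a geometric series so $G_k - g_k \le B\delta^k$ for some constant $B$ depending on $G_0, C$. Since $N_k \sim \alpha k^2/2$, we have $k \ge c\sqrt{N_k}$ for a constant $c$, giving $G_k - g_k \le B e^{-\lambda\sqrt{N_k}}$ with $\lambda = c|\log \delta|$. For an arbitrary index $n$ between $N_k$ and $N_{k+1}$, Lemma~\ref{lem:fnbound->fn+1bound} guarantees that the bounds $g_k/(x(x+b-1)) < f_n(x) < G_k/(x(x+b-1))$ persist, and since $N_{k+1}-N_k$ grows only linearly in $k$ while $N_k$ grows quadratically, the bound $|f_n(x) - a/(x(x+b-1))| < A e^{-\lambda\sqrt{n}}$ survives the interpolation after possibly enlarging $A$.

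The constant $a$ is then forced by Lemma~\ref{lem:intfnintf0}: integrating $\frac{a}{x(x+b-1)}$ over $[1,2]$ must equal $\int_1^2 f_0(x)\,dx$, and since $\int_1^2 \frac{dx}{x(x+b-1)} = \frac{1}{b-1}\log\frac{2b}{b+1}$, we recover $a = \frac{b-1}{\log(2b/(b+1))}\int_1^2 f_0(z)\,dz$. The main obstacle is the bookkeeping of the uniform constants across the iteration: one must verify that the same $\mu^*, G^*$ control every stage (so that Lemma~\ref{lem:gcompressing} can be invoked with consistent parameters), and then optimize the linear growth rate of $n_k$ against the compression factor $\delta$ to extract precisely the $\sqrt{n}$ exponent rather than something weaker.
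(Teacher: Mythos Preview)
Your proposal is correct and follows essentially the same architecture as the paper: iterate Lemma~\ref{lem:gcompressing}, keep the derivative bounds uniform via Lemma~\ref{lem:fn'bound}, propagate the sandwich forward via Lemma~\ref{lem:fnbound->fn+1bound}, and identify $a$ through Lemma~\ref{lem:intfnintf0}. The only real difference is your explicit choice of growing step sizes $n_k \sim \alpha k$ (so $N_k \sim k^2$); the paper iterates with a fixed step $n$ but records the stage-$r$ error as $2^{-rn/2}(\mu_{r-1}+G_{r-1})$, and your scheduling makes transparent why the additive error genuinely decays and the $\sqrt{n}$ exponent emerges.
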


\begin{proof}
By assumption, $f_0$ is differentiable and continuous on $[1,2]$, so there is some constant $m > 0$ such that $m < f_0(x) < M$ for all $x \in [1,2]$. Then since $\frac1{2(b+1)} < \frac{1}{x(x+b-1)} < \frac{1}{b}$ for all $x \in (1,2)$, we have
\[
\frac{bm}{x(x+b-1)} < f_0(x) < \frac{2(b+1)M}{x(x+b-1)} \hspace{1cm} \forall x \in (1,2).
\]
Thus let $g = bm$ and $G = 2(b+1)M$ and apply Lemma~\ref{lem:gcompressing} to $f_0, g$, and $G$, to get $g_1, G_1$, and $n$ such that
\[
\frac{g_1}{x(x+b-1)} < f_n(x) < \frac{G_1}{x(x+b-1)} \hspace{1cm} \forall x \in (1,2),
\]\[
g < g_1 < G_1 < G,
\]
and
\[
G_1-g_1 < \delta(G-g) +  2^{-n/2}(\mu + G).
\]
By Lemma~\ref{lem:fn'bound}, $|f_n'(x)| < \mu_1 = \frac{3\mu}{2^{n/2}} + 4M$, and we can arrange to have $\mu_1 < \mu$ by making $\mu$ and $n$ sufficiently large. (By Remark~\ref{rem:samen}, the results above are still valid for the new values of $\mu$ and $n$.) We can then apply Lemma~\ref{lem:gcompressing} again with $f_n$, $g_1$, and $G_1$ instead of $f_0$, $g$, and $G$. This gives us new constants $g_2$ and $G_2$ such that (again due to Remark~\ref{rem:samen}), 
\[
\frac{g_2}{x(x+b-1)} < f_{2n}(x) < \frac{G_2}{x(x+b-1)} \hspace{1cm} \forall x \in (1,2),
\]\[
g < g_1 < g_2 < G_2 < G_1 < G,
\]
and
\[
G_2-g_2 < \delta(G_1-g_1) + 2^{-2n/2}(\mu_1 + G_1).
\]
Repeating this in a similar fashion gives, in general, constants $g_r, G_r$ such that
\[
\frac{g_r}{x(x+b-1)} < f_{nr}(x) < \frac{G_r}{x(x+b-1)} \hspace{1cm} \forall x \in (1,2),
\]\[
g < g_1 < \cdots < g_{r-1} < g_r < G_r < G_{r-1} < \cdots < G_1 < G,
\]
and
\[
G_r-g_r < \delta(G_{r-1}-g_{r-1}) +  2^{-rn/2}(\mu_{r-1} + G_{r-1}),
\]
where $\mu_{r-1}$ is a constant such that $|f_{n(r-1)}'(x)| < \mu_{r-1}$ for all $x \in (1,2)$. By Lemma~\ref{lem:fn'bound}, we can take $\mu_r = \frac{3\mu}{2^{nr/2}} + 4M$, and then can choose $r_0 \in \N$ such that $\mu_{r-1} < 5M$ for all $r \ge r_0$. Then since $G_r < G = 2(b+1)M$, we have
\begin{equation}\label{eq:Gr-gr}
G_r - g_r < \delta(G_{r-1} - g_{r-1}) + (2b+7)M2^{-nr/2} = \delta(G_{r-1} - g_{r-1}) + M_1 2^{-nr/2},
\end{equation}
for all $r \ge r_0$ where $M_1 = (2b+7)M$. We now claim that for all $k \ge 0$,
\begin{equation}\label{eq:Grrecursion}
G_{r_0 + k} - g_{r_0 + k} < \delta^k (G-g) + \delta^k M_1 2^{-nr_0/2} \sum_{j=0}^k (2^{-nj/2} \delta^{-j}).
\end{equation}
For $k=0$, from \eqref{eq:Gr-gr}, we have
\begin{align*}
G_{r_0}-g_{r_0} &< \delta (G_{r_0-1} - g_{r_0-1}) + M_1 2^{-nr_0/2} < (G-g) + M_1 2^{-nr_0/2} \\
&= \delta^0 (G-g) + M_1 \delta^0 2^{-nr_0/2} \sum_{j=0}^0 2^{-nj/2} \delta^{-j}.
\end{align*}
Now suppose \eqref{eq:Grrecursion} holds for $k$. Notice that
\[
M_1 2^{-n(r_0+k+1)/2} = M_1 \delta^{k+1} 2^{-nr_0/2} 2^{-n(k+1)/2} \delta^{-(k+1)},
\]
so by \eqref{eq:Gr-gr},
\begin{align*}
G_{r_0+k+1}-g_{r_0+k+1}
&< \delta(G_{r_0+k} - g_{r_0+k}) + M_1 2^{-n(r_0+k+1)/2} \\
&< \delta\left(\delta^k(G-g) + M_1 \delta^k 2^{-nr_0/2} \sum_{j=0}^k (2^{-nj/2} \delta^{-j})\right) + M_1 2^{-n(r_0+k+1)/2} \\
&= \delta^{k+1}(G-g) + M_1 \delta^{k+1} 2^{-nr_0/2} \left(\sum_{j=0}^k (2^{-nj/2} \delta^{-j}) + 2^{-n(k+1)/2}\delta^{-(k+1)}\right) \\
&= \delta^{k+1}(G-g) + M_1 \delta^{k+1} 2^{-nr_0/2} \sum_{j=0}^{k+1} 2^{-nj/2} \delta^{-j},
\end{align*}
so \eqref{eq:Grrecursion} follows by induction.

Now notice that for $k > 0$,
\[
\sum_{j=0}^k 2^{-nj/2} \delta^{-j} < \sum_{j=0}^\infty (2^{n/2} \delta)^{-j} \le \sum_{j=0}^\infty (2^{1/2} \delta)^{-j} = \gamma < \infty,
\]
since $2^{1/2}\delta = \sqrt{2}(1-\frac{1}{4(b-1)} \log \frac{2b}{b+1}) > \sqrt{2}(1-\frac14 \log 2) > 1$. \eqref{eq:Grrecursion} then becomes 
\[
G_{r_0+k} - g_{r_0+k} < \delta^k(G-g+ M_1 2^{-nr_0/2} \gamma) = \delta^k c,
\]
where $c>0$ is a constant. Then for $r \ge r_0$, we have
\[
G_r - g_r < \delta^{r-r_0} c = \delta^r (\delta^{-r_0} c) = \delta^r d,
\]
where again, $d > 0$ is a constant. Finally, since $\delta < 1$, we can choose $B, \lambda > 0$ such that $G_r - g_r < B e^{-\lambda r}$. Thus there is clearly some common limit
\[
a = \lim_{r\to\infty} g_r = \lim_{r\to\infty} G_r,
\] 
and we have (setting $r = n$) that
\begin{equation}\label{eq:fn2bound}
\left|f_{n^2}(x) - \frac{a}{x(x+b-1)}\right| < Be^{-\lambda n} \hspace{1cm} \forall x \in (1,2).
\end{equation}
Thus we have
\[
\lim_{n\to\infty} \int_1^2 f_{n^2}(z) \d z = \int_1^2 \frac{a}{x(x+b-1)} = \frac{a}{b-1} \log \frac{2b}{b+1},
\]
so by Lemma~\ref{lem:intfnintf0}, $\int_1^2 f_0(z) \d z = \frac{a}{b-1} \log \frac{2b}{b+1}$ and thus 
\[
a = \frac{b-1}{\log \frac{2b}{b+1}} \int_1^2 f_0(z) \d z.
\] 
Now for arbitrary $N \ge r_0^2$, we can choose $n \ge r_0$ such that $n^2 \le N < (n+1)^2$. We then have, by \eqref{eq:fn2bound},
\[
\frac{a - 2(b+1)Be^{-\lambda n}}{x(x+b-1)} < \frac{a}{x(x+b-1)} - Be^{-\lambda n} < f_{n^2}(x) < \frac{a}{x(x+b-1)} + Be^{-\lambda n} < \frac{a+2(b+1)Be^{-\lambda n}}{x(x+b-1)},
\]
for all $x \in (1,2)$. Then by Lemma~\ref{lem:fnbound->fn+1bound},
\[
\frac{a-2(b+1)Be^{-\lambda n}}{x(x+b-1)} < f_N(x) < \frac{a+2(b+1)e^{-\lambda n}}{x(x+b-1)},
\]
so
\[
\left|f_N(x) - \frac{a}{x(x+b-1)} \right| < \frac{2(b+1)Be^{-\lambda n}}{x(x+b-1)} < 2(b+1)Be^{-\lambda n} = 2(b+1)Be^\lambda e^{-\lambda(n+1)} < A' e^{-\lambda \sqrt{N}},
\]
where $A' = 2(b+1)Be^{\lambda}$ is a constant. Now for $0 \le N < r_0^2$, note that each $f_N$ is continuous (since $f_0$ is differentiable and thus continuous and $f_{N+1}$ is an absolutely convergent sum of continuous transformations of $f_N$). Thus we can choose $A_0, A_1, \dots, A_{r_0^2-1}$ such that for $0 \le N \le r_0^2 - 1$
\[
\left|f_N(x) - \frac{a}{x(x+b-1)}\right| < A_n e^{-\lambda \sqrt{N}} \hspace{1cm} \forall x\in (1,2), \hspace{5mm} \forall N \in \{0,1,\dots,r_0^2-1\}
\]
for all $x \in (1,2)$. Finally, take $A = \max\{A_0, A_1, \dots, A_{r_0-1}, A'\}$, so we have
\[
\left|f_N(x) - \frac{a}{x(x+b-1)} \right| < A e^{-\lambda \sqrt{N}} \hspace{1cm} \forall x \in (1,2) \hspace{5mm} \forall N \in \Z_{\ge 0},
\]
proving the theorem.
\end{proof}

\begin{corollary}\label{cor:mn'dist}
There exist constants $\lambda, A > 0$ such that for all $n \ge 0$ and $x \in (1,2)$,
\[
\left|m_n'(x) - \frac{a}{x(x+b-1)}\right| < A e^{-\lambda \sqrt{n}},
\]
where 
\[
a = \frac{b-1}{\log \frac{2b}{b+1}}.
\]
\end{corollary}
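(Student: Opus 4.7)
The plan is to derive this directly from Theorem~\ref{thm:mainthm} by verifying that the sequence $(m_n')_{n=0}^\infty$ lies in the class $A^{**}$ and computing the resulting integral. First I would observe from Theorem~\ref{thm:mn'recurrence} that $m_0'(x) = 1$ and that for $n \ge 1$,
\[
m_n'(x) = \sum_{k=0}^\infty \sum_{\ell=1}^{b-1} \frac{b^{-k}}{(x+\ell-1)^2}\, m_{n-1}'\!\left(1 + \frac{b^{-k}}{x+\ell-1}\right),
\]
which is exactly the recurrence \eqref{eq:frec} defining $A^*$. Hence $(m_n')_{n=0}^\infty \in A^*$.

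Next, to upgrade this to membership in $A^{**}$ (Definition~\ref{def:An*}), I need $m_0'$ to be positive, bounded above, and to have a bounded derivative. Since $m_0'(x) \equiv 1$, we may take $M = 2$ and any $\mu > 0$ (for instance $\mu = 1$), noting that $m_0''(x) = 0 < \mu$ and $0 < m_0'(x) = 1 < M$ on $(1,2)$. Thus $(m_n')_{n=0}^\infty \in A^{**}$, so Theorem~\ref{thm:mainthm} applies to this sequence.

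Applying Theorem~\ref{thm:mainthm} directly yields constants $A, \lambda > 0$ such that
\[
\left| m_n'(x) - \frac{a}{x(x+b-1)} \right| < A e^{-\lambda \sqrt{n}}
\]
for all $n \ge 0$ and $x \in (1,2)$, where
\[
a = \frac{b-1}{\log \frac{2b}{b+1}} \int_1^2 m_0'(z)\, \text{d}z = \frac{b-1}{\log \frac{2b}{b+1}} \int_1^2 1\, \text{d}z = \frac{b-1}{\log \frac{2b}{b+1}},
\]
which is precisely the value of $a$ claimed. The proof is essentially a one-line invocation of Theorem~\ref{thm:mainthm}, so there is no real obstacle; the only thing to check carefully is that the hypothesis $(m_n') \in A^{**}$ truly holds, which is immediate since $m_0' \equiv 1$ is about as well-behaved a starting function as possible.
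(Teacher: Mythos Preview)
Your proposal is correct and follows essentially the same approach as the paper: verify $(m_n')_{n=0}^\infty \in A^{**}$ via Theorem~\ref{thm:mn'recurrence} and the trivial bounds on $m_0' \equiv 1$, then apply Theorem~\ref{thm:mainthm} and compute the integral $\int_1^2 m_0'(z)\,\mathrm{d}z = 1$ to obtain $a = (b-1)/\log\frac{2b}{b+1}$. The paper's own proof is nearly identical, just slightly terser in the verification of the $A^{**}$ hypotheses.
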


\begin{proof}
By Theorem~\ref{thm:mn'recurrence}, $(m_n')_{n=0}^\infty \in A^{**}$. Then Theorem~\ref{thm:mainthm} gives constants $A, \lambda > 0$ such that
\[
\left|m_n'(x) - \frac{a}{x(x+b-1)}\right| < A e^{-\lambda \sqrt{n}},
\]
where
\[
a = \frac{b-1}{\log\frac{2b}{b+1}} \int_1^2 m_0'(z) \d z = \frac{b-1}{\log\frac{2b}{b+1}} \int_1^2 1 \d z = \frac{b-1}{\log\frac{2b}{b+1}},
\]
proving the corollary.
\end{proof}

Our main goals, Theorems~\ref{thm:mnlimbd} and~\ref{thm:MDnbound}, follow easily from Corollary~\ref{cor:mn'dist}.

\restatedistthm

\begin{proof}
First note that since $m_n(1) = 0$ for all $n$, so by the Fundamental Theorem of Calculus,
\[
m_n(x) = m_n(1) + \int_1^x m_n'(z)\d z = \int_1^x m_n'(z) \d z.
\]
Thus
\begin{equation}\label{eq:mnlimequiv}
\int_1^x m_n'(z)\d z - \frac{b-1}{\log \frac{2b}{b+1}} \int_1^x \frac{1}{z(z+b-1)} \d z = m_n(x) - \frac{b-1}{\log \frac{2b}{b+1}} \frac{\log \frac{bx}{x+b-1}}{b-1} = m_n(x) - \frac{\log \frac{bx}{x+b-1}}{\log \frac{2b}{b+1}}.
\end{equation}
Then by Theorem~\ref{thm:mainthm}, we have
\begin{align*}
\left| m_n(x) - \frac{\log \frac{bx}{x+b-1}}{\log \frac{2b}{b+1}} \right| &= \left|\int_1^x m_n'(z)  - \frac{b-1}{\log \frac{2b}{b+1}} \frac{1}{z(z+b-1)} \d z \right| \\
&\le \int_1^x \left|m_n'(z) - \frac{b-1}{\log \frac{2b}{b+1}} \frac{1}{z(z+b-1)}\right| \d z < \int_1^x A e^{-\lambda \sqrt{n}} \\
&= (x-1) Ae^{-\lambda \sqrt{n}} < A e^{-\lambda \sqrt{n}}.
\end{align*}
\end{proof}

\restateMDnboundthm

\begin{proof}

By Theorem~\ref{thm:genprobdist},
\[
\M D_n(k,\ell) = m_{n-1}(1+\ell^{-1}b^{-k}) - m_{n-1} (1+(\ell+1)^{-1}b^{-k}) = \int_{1+(\ell+1)^{-1}b^{-k}}^{1+\ell^{-1}b^{-k}} m_{n-1}'(z) \d z.
\]
Then by Corollary~\ref{cor:mn'dist}, it follows that there is are constants $A, \lambda > 0$ such that
\begin{align*}
\Bigg|\int_{1+(\ell+1)^{-1}b^{-k}}^{1+\ell^{-1}b^{-k}} &m_{n-1}'(z) \d z - \frac{b-1}{\log \frac{2b}{b+1}} \int_{1+(\ell+1)^{-1}b^{-k}}^{1+\ell^{-1}b^{-k}} \frac{1}{z(z+b-1)} \d z \Bigg| \\
&\le \int_{1+(\ell+1)^{-1}b^{-k}}^{1+\ell^{-1}b^{-k}} \left| m_{n-1}'(z) - \frac{b-1}{\log \frac{2b}{b+1}} \frac{1}{z(z+b-1)} \right| \d z \\
&< \int_{1+(\ell+1)^{-1}b^{-k}}^{1+\ell^{-1}b^{-k}} Ae^{-\lambda \sqrt{n-1}} \d z = (\ell^{-1}b^{-k} - (\ell+1)^{-1}b^{-k}) Ae^{-\lambda \sqrt{n-1}} = \frac{A e^{-\lambda \sqrt{n-1}}}{\ell(\ell+1)b^k}.
\end{align*}
Finally, since
\begin{align*}
\frac{b-1}{\log \frac{2b}{b+1}} \int_{1+(\ell+1)^{-1}b^{-k}}^{1+\ell^{-1}b^{-k}} \frac{1}{z(z+b-1)} \d z &= \frac{b-1}{\log \frac{2b}{b+1}} \frac{\log \frac{(1+\ell^{-1}b^{-k})(1+(\ell+1)^{-1}b^{-(k+1)})}{(1+\ell^{-1}b^{-(k+1)})(1+(\ell+1)^{-1}b^{-k})}}{b-1} \\
&= \frac{\log \frac{(\ell b^k + 1)((\ell +1)b^{k+1} + 1)}{(\ell b^{k+1} + 1)((\ell +1)b^k + 1)}}{\log \frac{2b}{b+1}},
\end{align*}
we have
\begin{align*}
\left| \M D_n (k,\ell) - \frac{\log \frac{(\ell b^k + 1)((\ell +1)b^{k+1} + 1)}{(\ell b^{k+1} + 1)((\ell +1)b^k + 1)}}{\log \frac{2b}{b+1}} \right| &= \left|\int_{1+(\ell+1)^{-1}b^{-k}}^{1+\ell^{-1}b^{-k}} m_{n-1}'(z) - \frac{b-1}{\log\frac{2b}{b+1}} \frac{1}{z(z+b-1)} \d z\right| \\
&< \frac{A e^{-\lambda \sqrt{n-1}}}{\ell(\ell+1)b^k}.
\end{align*}
\end{proof}

\addcontentsline{toc}{section}{Appendix A: Proof of the Type III Logarithmic Khinchine Constant}
\section*{Appendix B: Proof of the Type III Logarithmic Khinchine Constant}

This appendix is devoted to proving Theorems~\ref{thm:khinchinedist} and~\ref{thm:logkhinchine}, restated below. Note that the proofs in this appendix rely on certain results from Appendix A.

\restatekhinchinedistthm

\restatekhinchinethm

\begin{definition}\label{def:E}
Let $n \in \N$, $j_1, j_2, \dots, j_n \in \N$ be distinct, $k_1, k_2, \dots, k_n \in \Z_{\ge 0}$, and $\ell_1, \ell_2, \dots, \ell_n \in \{1,2,\dots,b-1\}$. Define
\[
E \mat{j_1, & j_2, & \dots, & j_n \\ k_1, & k_2, & \dots, & k_n \\ \ell_1, & \ell_2, & \dots, & \ell_n} = \left\{ \alpha \in (1,2) : \begin{array}{cccc} a_{j_1} = k_1, & a_{j_2} = k_2, & \dots, & a_{j_n} = k_n \\ c_{j_1} = \ell_1, & c_{j_2} = \ell_2, & \dots, & c_{j_n} = \ell_n \end{array} \right\}.
\]
\end{definition}

\begin{remark}\label{rem:Eassumptions}
We will always assume that $j_1 < j_2 < \cdots < j_n$, in which case $E \mat{j_1, & \dots, & j_n \\ k_1, & \dots, & k_n \\ \ell_1, & \dots, & \ell_n}$ is a countable union of intervals of rank $j_n$.
\end{remark}

\begin{theorem}\label{thm:Eratiobound}
There exist constants $A, \lambda > 0$ such that for arbitrary $m \in \N$, $j_1 < \dots < j_m < j \in \N$, $k_1, \dots, k_m, k \in \Z_{\ge 0}$, and $\ell_1, \dots, \ell_m, \ell \in \{1,\dots,b-1\}$, we have
\[
\left| \frac{\M E \mat{ j_1, & \dots, & j_m, & j \\ k_1, & \dots, & k_m, & k \\ \ell_1, & \dots, & \ell_m, & \ell}}{\M E \mat{ j_1, & \dots, & j_m \\ k_1, & \dots, & k_m \\ \ell_1, & \dots, & \ell_m}} - \frac{\log \frac{(1+\ell^{-1}b^{-k})(b+(\ell+1)^{-1}b^{-k})}{(b + \ell^{-1}b^{-k})(1+(\ell+1)^{-1}b^{-k})}}{\log\frac{2b}{b+1}} \right| < \frac{Ae^{-\lambda \sqrt{j-j_m-1}}}{\ell (\ell+1)b^{k}}.
\]
\end{theorem}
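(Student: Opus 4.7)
The approach is to decompose the set in the denominator into intervals of rank $j_m$, change variables so that each piece is carried to $(1,2)$ with a controlled density, and then apply Theorem~\ref{thm:mainthm} with constants uniform over the decomposition. By Remark~\ref{rem:Eassumptions}, $E\mat{j_1, & \dots, & j_m \\ k_1, & \dots, & k_m \\ \ell_1, & \dots, & \ell_m}$ is a disjoint union of intervals $J$ of rank $j_m$, and the added constraint $a_j = k, c_j = \ell$ restricts each $J$ to $J \cap D_j(k,\ell)$. Hence the ratio in the theorem is a convex combination of the quantities $\M(J\cap D_j(k,\ell))/\M(J)$ with weights $\M(J)/\M(E)$, and it suffices to prove the stated bound for each such $J$ individually.

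Fix such a $J$. I would parameterize $\alpha \in J$ by $\beta := z_{j_m}(\alpha) \in (1,2)$; combining Theorem~\ref{thm:remaindertheorem} (with $r_{j_m+1}=1/(\beta-1)$) with Lemma~\ref{lem:pnqnm1-qnpnm1} gives
\[
\left|\frac{d\alpha}{d\beta}\right| = \frac{b^{a_0+\cdots+a_{j_m}}}{(q_{j_m}+q_{j_m-1}b^{a_{j_m}}(\beta-1))^2} =: f_0^{(J)}(\beta),
\]
so that $\M(J) = \int_1^2 f_0^{(J)}(\beta)\,d\beta$. Because $z_{j_m+n}(\alpha) = z_n(\beta)$ as functions of $\alpha \in J$, the derivations of Theorems~\ref{thm:mnrecurrence} and~\ref{thm:mn'recurrence} go through verbatim for the pushforward measure $f_0^{(J)}(\beta)\,d\beta$, so the density $f_n^{(J)}$ of $z_n$ under this measure lies in the class $A^*$. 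Applying Theorem~\ref{thm:genprobdist} to $\beta$, the event $\{a_j(\alpha)=k,\,c_j(\alpha)=\ell\}$ is equivalent to $z_{j-j_m-1}(\beta) \in (1+(\ell+1)^{-1}b^{-k},\,1+\ell^{-1}b^{-k}]$, giving
\[
\frac{\M(J \cap D_j(k,\ell))}{\M(J)} = \int_{1+(\ell+1)^{-1}b^{-k}}^{1+\ell^{-1}b^{-k}} \tilde f_{j-j_m-1}^{(J)}(\gamma)\,d\gamma,
\]
where $\tilde f_n^{(J)} := f_n^{(J)}/\M(J)$ satisfies $\int_1^2 \tilde f_0^{(J)} = 1$.

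The main obstacle is making the constants of Theorem~\ref{thm:mainthm} uniform in $J$. The key observation is that $q_{j_m-1}b^{a_{j_m}} \le q_{j_m}$ by Lemma~\ref{lem:pqlemma}, so the denominator of $f_0^{(J)}$ varies by a factor of at most $2$ as $\beta$ runs over $(1,2)$; this forces $\tilde f_0^{(J)}(\beta) \in [1/4,\,4]$ and $|(\tilde f_0^{(J)})'(\beta)| \le 8$ independently of $J$, $j_m$, or the tuple $(a_1,c_1,\dots,a_{j_m},c_{j_m})$. Consequently $(\tilde f_n^{(J)})_{n \ge 0} \in A^{**}$ with bounds $M,\mu$ independent of $J$, and Theorem~\ref{thm:mainthm} yields uniform constants $A,\lambda > 0$ with
\[
\left|\tilde f_n^{(J)}(x) - \frac{a}{x(x+b-1)}\right| < A e^{-\lambda\sqrt{n}}, \qquad a = \frac{b-1}{\log\frac{2b}{b+1}}.
\]

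Substituting this estimate into the integral for $\M(J \cap D_j(k,\ell))/\M(J)$, the main term $\int_{1+(\ell+1)^{-1}b^{-k}}^{1+\ell^{-1}b^{-k}} a/[\gamma(\gamma+b-1)]\,d\gamma$ integrates in closed form (via partial fractions and $a(b-1)^{-1} = (\log\tfrac{2b}{b+1})^{-1}$) to exactly the target constant of the theorem, while the error contributes at most $Ae^{-\lambda\sqrt{j-j_m-1}}$ times the length $1/[\ell(\ell+1)b^k]$ of the integration interval. This delivers the claimed bound on each $J$, and hence on the original convex combination.
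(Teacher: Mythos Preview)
Your proposal is correct and follows essentially the same route as the paper. Your normalized density $\tilde f_n^{(J)}$ is exactly the paper's $\chi_n'$ (where $\chi_n(x)=\M\{\alpha\in J:z_{j_m+n}<x\}/\M J$), your uniform bounds $\tilde f_0^{(J)}\in[1/4,4]$, $|(\tilde f_0^{(J)})'|\le 8$ match the paper's $\chi_0'\in(1/4,2)$, $|\chi_0''|<4$ up to harmless constants, and your convex-combination reduction to a single rank-$j_m$ interval is precisely what the paper accomplishes by first treating consecutive indices $1,\dots,m$ and then summing out the unused ones at the end.
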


\begin{proof}
First fix some interval $J = J_n\mat{k_1, & \dots, & k_m \\ \ell_1, & \dots, & \ell_m}$ of rank $m$. Let
\[
M_n(x) = \M \left\{ \alpha \in J : z_{m+n} < x \right\}.
\]
In order to have $\alpha \in M_n(x)$ with $a_{m+n} = k$ and $c_{m+n} = \ell$, we must have $1+(x+\ell-1)^{-1}b^{-k} < z_{m+n-1} \le 1+\ell^{-1}b^{-k}$ (similar to in \eqref{eq:mnrecineq3}). It follows that
\[
M_n(x) = \sum_{k=0}^\infty \sum_{\ell = 1}^{b-1} M_{n-1} (1+\ell^{-1}b^{-k}) - M_{n-1}(1+(x+\ell-1)^{-1}b^{-k}),
\]
so that $(M_n')_{n=0}^{\infty} \in A^*$. Now by Lemma~\ref{lem:endpoints}, an arbitrary $\alpha \in J$ can be written as
\[
\alpha = \frac{p_m r_{m+1} + b^{a_m} p_{m-1}}{q_m r_{m+1} + b^{a_m} q_{m-1}},
\]
or since $r_{m+1} = \frac{1}{z_m - 1}$,
\[
\alpha = \frac{p_m + b^{a_m} p_{m-1} (z_m - 1)}{q_m + b^{a_m} q_{m-1} (z_m - 1)}.
\]
To have $1 < z_m < x$, we must have
\[
\alpha \in \left(\frac{p_m}{q_m}, \frac{p_m + b^{a_m} p_{m-1} (x-1)}{q_m + b^{a_m} q_{m-1} (x-1)} \right).
\]
Thus
\begin{equation}\label{eq:M0x}
M_0(x) = \left|\frac{p_m}{q_m} - \frac{p_m + b^{a_m} p_{m-1} (x-1)}{q_m + b^{a_m} q_{m-1} (x-1)} \right| = \frac{b^{\sum_{j=0}^m a_m} (x-1)}{q_m(q_m + b^{a_m} q_{m-1} (x-1))}.
\end{equation}
Now define
\[
\chi_n(x) = \frac{M_n(x)}{\M J},
\]
and note that $(\chi_n')_{n=0}^\infty \in A^*$, since $(M_n')_{n=0}^\infty \in A^*$ and
\begin{equation}\label{eq:MJ}
\M J = \left|\frac{p_m}{q_m} - \frac{p_m + b^{a_m} p_{m-1}}{q_m + b^{a_m} q_{m-1}} \right| = \frac{b^{\sum_{j=0}^m a_j}}{q_m(q_m + b^{a_m} q_{m-1})}
\end{equation}
is a constant. Now by \eqref{eq:M0x} and \eqref{eq:MJ}, we have
\begin{align*}
\chi_0(x) &= \frac{(q_m + b^{a_m} q_{m-1})(x-1)}{q_m + b^{a_m} q_{m-1} (x-1)}, \\
\chi_0'(x) &= \frac{q_m(q_m + b^{a_m} q_{m-1})}{(q_m + b^{a_m} q_{m-1} (x-1))^2}, \\
\chi_0''(x) &= -\frac{2 q_m b^{a_m} q_{m-1} (q_m + b^{a_m} q_{m-1})}{(q_m + b^{a_m} q_{m-1}(x-1))^3}.
\end{align*}
Thus for $1 \le x \le 2$, we have $\chi_0'(x) < \frac{2q_m^2}{q_m^2} = 2$, $\chi_0'(x) > \frac{q_m^2}{(2q_m)^2} = \frac14$, and $|\chi_0''(x)| < \frac{4 q_m^3}{q_m^3} = 4$, so $(\chi_n')_{n=0}^\infty \in A^{**}$. It then follows from Theorem~\ref{thm:mainthm} that there are constants $A, \lambda > 0$ such that 
\[
\left|\chi_n'(x) - \frac{a}{x(x+b-1)} \right| < Ae^{-\lambda \sqrt{n}},
\]
for all $n \ge 0$ and $x \in (1,2)$, or equivalently there exist functions $\theta_n: (1,2) \to (-1,1)$ such that
\[
\chi_n'(x) = \frac{a}{x(x+b-1)} + \theta_n(x) Ae^{-\lambda \sqrt{n}}
\]
for all $n \ge 0$ and $x \in (1,2)$. We then have, for $k \in \Z_{\ge 0}$ and $\ell \in \{1, \dots, b-1\}$, that
\begin{align*}
\chi_n(1+&\ell^{-1}b^{-k}) - \chi_n(1+(\ell+1)^{-1}b^{-k}) \\
&= \int_{1+(\ell+1)^{-1}b^{-k}}^{1+\ell^{-1}b^{-k}} \chi'_n(x) \d x \\
&= \int_{1+(\ell+1)^{-1}b^{-k}}^{1+\ell^{-1}b^{-k}} \frac{a}{x(x+b-1)} + \theta_n(x) A e^{-\lambda \sqrt{n}} \d x \\
&= \frac{a}{b-1} \log \frac{(1+\ell^{-1}b^{-k})(b+(\ell+1)^{-1}b^{-k})}{(b+\ell^{-1}b^{-k})(1+(\ell+1)^{-1}b^{-k})} + Ae^{-\lambda \sqrt{n}} \int_{1+(\ell+1)^{-1}b^{-k}}^{1+\ell^{-1}b^{-k}} \theta_n(x) \d x.
\end{align*}
Now
\[
\left| \int_{1+(\ell+1)^{-1}b^{-k}}^{1+\ell^{-1}b^{-k}} \theta_n(x) \d x \right| \le \int_{1+(\ell+1)^{-1}b^{-k}}^{1+\ell^{-1}b^{-k}} |\theta_n(x)| \d x < \int_{1+(\ell+1)^{-1}b^{-k}}^{1+\ell^{-1}b^{-k}} 1 \d x = \frac{1}{\ell(\ell+1)b^k},
\]
so there exist functions $\gamma_n: (1,2) \to (-1,1)$ such that
\[
\int_{1+(\ell+1)^{-1}b^{-k}}^{1+\ell^{-1}b^{-k}} \theta_n(x) \d x = \frac{\gamma_n(x)}{\ell(\ell+1)b^{k}}.
\]
Then since $\M E \mat{1, & \dots, & m, & m+n \\ k_1, & \dots, & k_m, & k_{m+n} \\ \ell_1, & \dots, & \ell_m, & \ell_{m+n}} = M_{n-1}(1+\ell^{-1}b^{-k}) - M_{n-1}(1+(\ell+1)^{-1}b^{-k})$,
\begin{multline*}
\M E\mat{1, & \dots, & m, & m+n \\ k_1, & \dots, & k_m, & k_{m+n} \\ \ell_1, & \dots, & \ell_m, & \ell_{m+n}} \\= \left(\frac{\log \frac{(1+\ell^{-1}b^{-k})(b+(\ell+1)^{-1}b^{-k})}{(b+\ell^{-1}b^{-k})(1+(\ell+1)^{-1}b^{-k})}}{\log \frac{2b}{b+1}} + \frac{\gamma_n(x) A e^{-\lambda \sqrt{n-1}}}{\ell (\ell+1) b^k}\right) \M E\mat{1, \dots, m \\ k_1, \dots, k_m \\ \ell_1, \dots, \ell_m}.
\end{multline*}
Now we can sum this relationship for $k_j$ from 0 to $\infty$ and $\ell_j$ from 1 to $b-1$ for certain indices $j \le m$. The indices we sum over will cancel from both sides, and we are left with an arbitrary sequence of subscripts $1 \le j_1 < j_2 < \cdots < j_t = m$. Then if we let $j = m+n$, we get
\[
\left| \frac{\M E \mat{ j_1, & \dots, & j_m, & j \\ k_1, & \dots, & k_m, & k \\ \ell_1, & \dots, & \ell_m, & \ell}}{\M E \mat{ j_1, & \dots, & j_m \\ k_1, & \dots, & k_m \\ \ell_1, & \dots, & \ell_m}} - \frac{\log \frac{(1+\ell^{-1}b^{-k})(b+(\ell+1)^{-1}b^{-k})}{(b + \ell^{-1}b^{-k})(1+(\ell+1)^{-1}b^{-k})}}{\log\frac{2b}{b+1}} \right| < \frac{Ae^{-\lambda \sqrt{j-j_m-1}}}{\ell (\ell+1)b^{k}},
\]
completing the proof.
\end{proof}

\begin{theorem}\label{thm:limavgf}
Suppose $f: \Z_{\ge 0} \times \{1, \dots, b-1\} \to \R$ is a positive function for which there exist constants $C, \delta > 0$ such that
\[
f(s,t) < C(tb^s)^{\frac12 - \delta}
\]
for all $s \in \Z_{\ge 0}$ and $t \in \{1,\dots,b-1\}$. Then for almost every $\alpha \in (1,2)$,
\[
\lim_{N \to \infty} \frac{1}{N} \sum_{n=1}^N f(a_n, c_n) = \sum_{k=0}^\infty \sum_{\ell = 1}^{b-1} f(k,\ell) \frac{\log \frac{(1+\ell^{-1}b^{-k})(b+(\ell+1)^{-1}b^{-k})}{(b + \ell^{-1}b^{-k})(1+(\ell+1)^{-1}b^{-k})}}{\log\frac{2b}{b+1}}.
\]
\end{theorem}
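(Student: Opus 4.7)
The plan is to adapt the proof of Khinchine's theorem from Section 16 of \cite{khinchine} to our setting, with Theorem~\ref{thm:Eratiobound} playing the role of the near-independence estimate that in the continued-fraction case comes from the analogous Gauss--Kuzmin-type bound. The overall strategy is to prove the assertion first for $f$ supported on a finite set of pairs $(k,\ell)$ by an $L^{2}$/Chebyshev + Borel--Cantelli strong-law argument, and then use the growth hypothesis to dispose of the tail.

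For the bounded case, fix $T>0$ and truncate to $f^{(T)}(k,\ell)=f(k,\ell)\,\mathbf{1}[\ell b^{k}\le T]$, a bounded function of finite support. Write
\[
p(k,\ell) = \frac{\log \frac{(1+\ell^{-1}b^{-k})(b+(\ell+1)^{-1}b^{-k})}{(b+\ell^{-1}b^{-k})(1+(\ell+1)^{-1}b^{-k})}}{\log\frac{2b}{b+1}}
\]
for the candidate limit distribution, set $X_{n}^{(T)}(\alpha)=f^{(T)}(a_{n}(\alpha),c_{n}(\alpha))$ and $\mu_{T}=\sum_{k,\ell}f^{(T)}(k,\ell)p(k,\ell)$. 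Applying Theorem~\ref{thm:Eratiobound} (and Theorem~\ref{thm:genprobdist}) gives $\int_{1}^{2} X_{n}^{(T)}\,d\M=\mu_{T}+O(e^{-\lambda\sqrt{n}})$ and, for $m>n$,
\[
\int_{1}^{2} X_{n}^{(T)}X_{m}^{(T)}\,d\M \;-\; \mu_{T}^{2} \;=\; O(e^{-\lambda\sqrt{m-n-1}})+O(e^{-\lambda\sqrt{n}}),
\]
with implied constants depending on $T$. Summing over $1\le n<m\le N$ yields $\mathrm{Var}(S_{N}^{(T)})=o(N^{2})$ for $S_{N}^{(T)}=\sum_{n=1}^{N} X_{n}^{(T)}$, and Chebyshev's inequality combined with Borel--Cantelli along the subsequence $N_{j}=j^{2}$, together with the monotonicity $X_{n}^{(T)}\ge 0$, gives $N^{-1}S_{N}^{(T)}\to \mu_{T}$ almost everywhere.

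For the tail $g^{(T)}=f-f^{(T)}$, Corollary~\ref{cor:weakMDnbound} together with the growth bound $f(s,t)<C(tb^{s})^{1/2-\delta}$ gives
\[
\int_{1}^{2} g^{(T)}(a_{n},c_{n})\,d\M \;\le\; 2C\sum_{\ell b^{k}>T}\frac{(\ell b^{k})^{1/2-\delta}}{\ell(\ell+1)b^{k}} \;\le\; \eta(T),
\]
with $\eta(T)\to 0$ as $T\to\infty$. A parallel covariance bound based on Theorem~\ref{thm:Eratiobound} (where again the error factor $\frac{1}{\ell(\ell+1)b^{k}}$ together with the exponent $1/2-\delta<1$ ensures summability) produces $\limsup_{N} N^{-1}\sum_{n=1}^{N}g^{(T)}(a_{n},c_{n})\le C'\eta(T)$ almost everywhere. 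Letting $T$ run through a countable sequence and combining with the bounded-case conclusion $\mu_{T}\to \sum_{k,\ell}f(k,\ell)p(k,\ell)$ finishes the proof.

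The main obstacle is making the covariance computation clean. Theorem~\ref{thm:Eratiobound} is stated as an approximation of ratios of $\M(E)$-values with error of order $e^{-\lambda\sqrt{\mathrm{gap}}}/(\ell(\ell+1)b^{k})$; turning this into a usable bound on $\int X_{n}^{(T)}X_{m}^{(T)}\,d\M$ requires rewriting the integral as a quadruple sum over the admissible values of $(a_{n},c_{n},a_{m},c_{m})$, applying the theorem twice (once with base gap $n$, once with gap $m-n-1$), and bounding the resulting cross terms so that the residual double sum shrinks after multiplication by $f^{(T)}$. The sub-exponential rate $e^{-\lambda\sqrt{\cdot}}$ rather than $e^{-\lambda\cdot}$ is harmless for summability but forces us to pass to a polynomial subsequence and exploit positivity rather than invoke a direct $L^{2}$ strong law; once those estimates are established, the remainder of the argument is standard.
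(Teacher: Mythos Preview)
Your plan follows the same Khinchine $L^{2}$/Chebyshev/Borel--Cantelli template as the paper, but the paper executes it more directly and your write-up has two weak spots.

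First, the paper does \emph{not} truncate. Its key observation is that the growth hypothesis $f(s,t)<C(tb^{s})^{1/2-\delta}$ together with Corollary~\ref{cor:weakMDnbound} gives a \emph{uniform} second-moment bound
\[
\int_{1}^{2} f(a_{k},c_{k})^{2}\,d\alpha \;\le\; 2C^{2}\sum_{s,t}\frac{(tb^{s})^{1-2\delta}}{t(t+1)b^{s}} \;=\; C_{1}<\infty,
\]
independent of $k$. With $u_{k}=\int f(a_{k},c_{k})\,d\alpha$ and $S_{n}=\sum_{k\le n}(f(a_{k},c_{k})-u_{k})$, this and Theorem~\ref{thm:Eratiobound} give $|g_{ik}|\le 8AC_{1}e^{-\lambda\sqrt{k-i-1}}$ uniformly, whence $\int(S_{n}-S_{m})^{2}\le C_{2}(n-m)$ for the full $f$. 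Your truncation $f=f^{(T)}+g^{(T)}$ is not wrong, but your tail step (``a parallel covariance bound \dots\ produces $\limsup\le C'\eta(T)$ a.e.'') needs exactly these uniform estimates for $g^{(T)}$; once you have them, the split is redundant and you may as well treat $f$ in one pass as the paper does.

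Second, and more serious as written: you state $\mathrm{Var}(S_{N}^{(T)})=o(N^{2})$. That is too weak for Borel--Cantelli along $N_{j}=j^{2}$: Chebyshev then only yields $\Pr(|S_{j^{2}}|\ge \varepsilon j^{2})=o(1)$, which need not be summable. What your covariance estimates actually give (and what the paper uses) is $\mathrm{Var}(S_{N})=O(N)$, since $\sum_{n<m\le N}e^{-\lambda\sqrt{m-n-1}}=O(N)$ and $\sum_{n\le N}e^{-\lambda\sqrt{n}}=O(1)$; with $O(N)$ the probabilities are $O(j^{-2})$ and Borel--Cantelli applies. Your interpolation between squares via monotonicity of the partial sums (using $X_{n}\ge 0$) is a legitimate alternative to the paper's second application of the variance bound $\int(S_{N}-S_{n^{2}})^{2}\le C_{2}(N-n^{2})$.
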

\begin{proof}
First define
\begin{align*}
u_k &= \int_1^2 f(a_k, c_k) \d \alpha, &
b_k &= \int_1^2 (f(a_k,c_k) - u_k)^2 \d \alpha, \\
g_{ik} &= \int_1^2 (f(a_i,c_i) - u_i)(f(a_k,c_k)-u_k) \d\alpha, &
S_n(\alpha) &= \sum_{k=1}^n (f(a_k,c_k) - u_k).
\end{align*}
Notice that the integral $u_k$ is finite for all $k$, since
\begin{align*}
u_k &= \int_1^2 f(a_k, c_k) \d \alpha = \sum_{s = 0}^\infty \sum_{t = 1}^{b-1} f(s,t) \M D_n(s,t) \\
&< \sum_{s=0}^{\infty} \sum_{t=1}^{b-1} C(tb^b)^{\frac12-\delta}(2t^{-2}b^{-s}) = 2C \sum_{s=0}^{\infty} \sum_{t = 1}^{b-1} t^{-1} (tb^s)^{-1-\delta} < \infty.
\end{align*}
Furthermore,
\begin{align*}
\int_1^2 f_n(a_k, c_k)^2 \d \alpha &= \sum_{s=0}^\infty \sum_{t=1}^{b-1} f(s,t)^2 \M D_n (s,t)  < \sum_{s=0}^{\infty} \sum_{t=1}^{b-1} C^2 (tb^s)^{1-2\delta} (2t^{-2}b^{-s}) \\
&= 2C^2 \sum_{s=0}^{\infty} \sum_{t=1}^{b-1} t^{-1}(tb^s)^{-2\delta} = C_1 < \infty,
\end{align*}
so
\begin{equation}\label{eq:bkbound}
b_k = \int_1^2 (f(a_k, c_k) - u_k)^2 \d \alpha = \int_1^2 f(a_k, c_k)^2 \d \alpha - 2u_k \int_1^2 f(a_k, c_k) \d \alpha + u_k^2 < C_1 - u_k \le C_1 < \infty,
\end{equation}
and by the Cauchy-Schwarz Inequality, 
\begin{equation}\label{eq:ukbound}
u_k = \int_1^2 f(a_k,c_k) \d\alpha < \sqrt{\int_1^2 f(a_k,c_k)^2\d\alpha} < \sqrt{C_1}.
\end{equation}
Furthermore, for $k > i$, we have
\begin{equation}\label{eq:gikequiv}
g_{ik} = \int_1^2 f(a_i, c_i) f(a_k, c_k) \d\alpha - u_i u_k = \sum_{s_1=0}^{\infty} \sum_{t_1=1}^{b-1} \sum_{s_2=0}^{\infty} \sum_{t_2 =1}^{b-1} f(s_1,t_1) f(s_2,t_2) \M E\mat{i & k \\ s_1 & s_2 \\ t_1 & t_2} - u_i u_k.
\end{equation}
Now by Theorem~\ref{thm:Eratiobound} and Corollary~\ref{cor:weakMDnbound},
\begin{align}
\left| \M E\mat{i & k \\ s_1 & s_2 \\ t_1 & t_2} - \frac{\log\frac{(1+t_2^{-1}b^{-s_2})(b+(t_2+1)^{-1}b^{-s_2})}{(b+t_2^{-1}b^{-s_2})(1+(t_2+1)^{-1}b^{-s_2})}}{\log\frac{2b}{b+1}} \M E\mat{i\\ s_1 \\ t_1} \right| &< \frac{Ae^{-\lambda\sqrt{k-i-1}}}{t_2(t_2+1)b^{s_2}} \M E\mat{i \\ s_1 \\ t_1} \notag \\
&< 4A e^{-\lambda \sqrt{k-i-1}} \M E\mat{i \\ s_1 \\ t_1} \M E\mat{k \\ s_2 \\ t_2}, \label{eq:MEikbound}
\end{align}
and by Theorem~\ref{thm:MDnbound} and Corollary~\ref{cor:weakMDnbound},
\begin{equation}\label{eq:MEkbound}
\left| \M E\mat{k \\ s_2 \\ t_2} - \frac{\log\frac{(1+t_2^{-1}b^{-s_2})(b+(t_2+1)^{-1}b^{-s_2})}{(b+t_2^{-1}b^{-s_2})(1+(t_2+1)^{-1}b^{-s_2})}}{\log\frac{2b}{b+1}} \right| < \frac{A e^{-\lambda \sqrt{k-1}}}{t_2(t_2+1)b^{s_2}}.
\end{equation}
Now by \eqref{eq:MEikbound} and \eqref{eq:MEkbound}, letting $v = \frac{\log\frac{(1+t_2^{-1}b^{-s_2})(b+(t_2+1)^{-1}b^{-s_2})}{(b+t_2^{-1}b^{-s_2})(1+(t_2+1)^{-1}b^{-s_2})}}{\log\frac{2b}{b+1}}$, we get
\begin{align}
\left| \M E \mat{i & k \\ s_1 & s_2 \\ t_1 & t_2}\right. &- \left. \M E \mat{i \\ s_1 \\ t_1} \M E \mat{k \\ s_2 \\ t_2} \right| \notag \\
&\le \left| \M E \mat{i & k \\ s_1 & s_2 \\ t_1 & t_2} - v \M E \mat{i \\ s_1 \\ t_1}  \right| + \left| v \M E \mat{i \\ s_1 \\ t_1} - \M E \mat{i \\ s_1 \\ t_1} \M E \mat{k \\ s_2 \\ t_2} \right| \notag \\
&< (4 A e^{-\lambda \sqrt{k-i-1}} + 4 A e^{-\lambda \sqrt{k-1}}) \M E \mat{i \\ s_1 \\ t_1} \M E \mat{k \\ s_2 \\ t_2} \notag \\
&\le 8 A e^{-\lambda \sqrt{k-i-1}} \M E \mat{i \\ s_1 \\ t_1} \M E \mat{k \\ s_2 \\ t_2}. \label{eq:MEik-MEiMEk}
\end{align}
Then by \eqref{eq:gikequiv} and \eqref{eq:MEik-MEiMEk}, we get
\begin{align}
\bigg| g_{ik} &- \sum_{s_1=0}^{\infty} \sum_{t_1=1}^{b-1} \sum_{s_2=0}^{\infty} \sum_{t_2 =1}^{b-1} f(s_1,t_1) f(s_2,t_2) \M E \mat{i \\ s_1 \\ t_1} \M E \mat{k \\ s_2 \\ t_2} + u_i u_k \bigg| \notag \\
&< 8 A e^{-\lambda \sqrt{k-i-1}} \sum_{s_1=0}^{\infty} \sum_{t_1=1}^{b-1} \sum_{s_2=0}^{\infty} \sum_{t_2 =1}^{b-1} f(s_1,t_1) f(s_2,t_2) \M E\mat{i \\ s_1 \\ t_1} \M E \mat{k \\ s_2 \\ t_2} \notag \\
&= 8 A e^{-\lambda \sqrt{k-i-1}} u_i u_k. \label{eq:gikbound1}
\end{align}
But since 
\[
\sum_{s_1=0}^{\infty} \sum_{t_1=1}^{b-1} \sum_{s_2=0}^{\infty} \sum_{t_2 =1}^{b-1} f(s_1,t_1) f(s_2,t_2) \M E \mat{i \\ s_1 \\ t_1} \M E \mat{k \\ s_2 \\ t_2} = u_i u_k,
\]
\eqref{eq:gikbound1} is just
\begin{equation}\label{eq:gikbound2}
|g_{ik}| < 8 A e^{-\lambda \sqrt{k-i-1}} u_i u_k < 8 A C_1 e^{-\lambda \sqrt{k-i-1}}.
\end{equation}
From \eqref{eq:bkbound} and \eqref{eq:gikbound2}, we have for $n > m > 0$,
\begin{align}
\int_1^2 (S_n&(\alpha) - S_m(\alpha))^2 \d \alpha \notag \\
&= \int_1^2 \left[\sum_{k=m+1}^n f(a_k, c_k) - u_k\right]^2 \d\alpha \notag \\
&= \sum_{k=m+1}^n (f(a_k,c_k)-u_k)^2\d\alpha + 2 \sum_{i=m+1}^{n-1} \sum_{k=i+1}^n \int_1^2 (f(a_i,c_i) - u_i)(f(a_k,c_k) - u_k) \d\alpha \notag \\
&= \sum_{k=m+1}^n b_k + 2 \sum_{i=m+1}^{n-1} \sum_{k=i+1}^n g_{ik} < C_1(n-m) + 16AC_1 \sum_{i=m+1}^{n-1} \sum_{k=i+1}^n e^{-\lambda \sqrt{k-i-1}} \notag \\
&< C_1(n-m) + 16 AC_1 \sum_{i=m+1}^n \sum_{j=0}^\infty e^{-\lambda \sqrt{j}} = C_1(n-m) + 16AC_1 (n-m) \sum_{j=0}^\infty e^{-\lambda \sqrt{j}} \notag \\
&= C_2 (n-m), \label{eq:IntSdiffbound}
\end{align}
where $C_2 = C_1 + 16 A C_1 \sum_{j=0}^\infty e^{-\lambda \sqrt{j}}$ is a constant. Now let $\e > 0$ and define
\[
e_n = \{\alpha \in (1,2) : |S_n(\alpha)| \ge \e n\}.
\]
Clearly
\[
\int_1^2 S_n(\alpha)^2 \d \alpha \ge \int_{e_n} S_n(\alpha)^2 \d \alpha \ge \e^2 n^2 \M e_n,
\]
so that if we let $m = 0$ in \eqref{eq:IntSdiffbound} we get
\[
\M e_{n^2} \le \frac{\int_1^2 S_{n^2}(\alpha)^2 \d \alpha}{\e^2 n^4} < \frac{C_2}{\e^2 n^3}.
\]
Thus the series $\sum_{n=1}^\infty \M e_{n^2}$ converges, so almost every $\alpha \in (1,2)$ belongs to $e_{n^2}$ for only finitely many $n \in \N$. Therefore for almost every $\alpha \in (1,2)$ and for sufficiently large $n$,
\[
\frac{S_{n^2}(\alpha)}{n^2} < \e.
\]
Now since $\e > 0$ was arbitrary, we can conclude that
\begin{equation}\label{eq:limSn2}
\lim_{n \to \infty} \frac{S_{n^2}(\alpha)}{n^2} = 0
\end{equation}
for almost every $\alpha \in (1,2)$.

Now let $N \in \N$ be arbitrary and choose $n$ such that $n^2 \le N < (n+1)^2$, so that
\[
\int_1^2 (S_N(\alpha) - S_{n^2}(\alpha))^2 \d\alpha < C_2 (N-n^2) < C_2((n+1)^2-n^2) = C_2(2n+1) \le 3C_2n.
\]
Let $\e > 0$ and define
\[
e_{n,N} = \{\alpha \in (1,2) : |S_N(\alpha) - S_{n^2}(\alpha)| \ge \e n^2 \}
\]
and
\[
E_n = \bigcup_{N=n^2}^{(n+1)^2-1} e_{n,N}.
\]
We then have for $n^2 \le N < (n+1)^2$ that
\[
\int_1^2 (S_N(\alpha) - S_{n^2}(\alpha))^2 \d \alpha \ge \int_{e_{n,N}} (S_N(\alpha) - S_{n^2}(\alpha))^2 > \e^2 n^4 \M e_{n,N},
\]
and
\[
\M e_{n,N} < \frac{\int_1^2 (S_N(\alpha)^2 - S_{n^2}(\alpha))^2}{\e^2 n^4} < \frac{3C_2}{\e^2 n^3},
\]
so
\[
\M E_n \le \sum_{N=n^2}^{(n+1)^2-1} \M e_{n,N} < ((n+1)^2-n^2) \frac{3C_2}{\e^2 n^3} \le \frac{9 C_2}{\e^2 n^2}.
\]
Thus the series $\sum_{n=1}^\infty \M E_n$ converges, so almost every $\alpha \in (1,2)$ belongs to $E_n$ for only finitely many $n \in \N$. In other words, for almost every $\alpha$, sufficiently large $N$, and $n = \lfloor \sqrt{N} \rfloor$, we have
\[
\frac{|S_N(\alpha) - S_{n^2}(\alpha)|}{n^2} < \e.
\]
Since $\e > 0$ was arbitrary, we can conclude
\[
\lim_{N \to \infty} \frac{S_N(\alpha)}{n^2} - \frac{S_{n^2}}{n^2} = 0
\]
for almost every $\alpha \in (1,2)$, where $n = \lfloor \sqrt{N} \rfloor$. By \eqref{eq:limSn2},
\[
\lim_{N \to \infty} \frac{S_N(\alpha)}{n^2} = 0,
\]
where $n = \lfloor \sqrt{N} \rfloor$. Now since $0 < \frac{S_N(\alpha)}{N} < \frac{S_N(\alpha)}{n^2}$, it follows that $\frac{S_N(\alpha)}{N} \to 0$ as $n \to \infty$. Equivalently, by the definition of $S_N$,
\begin{equation}\label{eq:limavgf-u}
\frac{1}{N} \sum_{k=1}^N f(a_k, c_k) - \frac{1}{N} \sum_{k=1}^N u_k \to 0
\end{equation}
as $N \to \infty$. Now by Theorem~\ref{thm:MDnbound},
\begin{align*}
\Bigg|u_n - &\sum_{k=0}^\infty \sum_{\ell=1}^{b-1} f(k,\ell) \frac{\log\frac{(1+\ell^{-1}b^{-k})(b+(\ell+1)^{-1}b^{-k})}{(b+\ell^{-1}b^{-k})(1+(\ell+1)^{-1}b^{-k})}}{\log\frac{2b}{b+1}} \Bigg| \\
&= \sum_{k=0}^{\infty} \sum_{\ell=1}^{b-1} f(k,\ell) \left|\M D_n\mat{k \\ \ell} - \frac{\log\frac{(1+\ell^{-1}b^{-k})(b+(\ell+1)^{-1}b^{-k})}{(b+\ell^{-1}b^{-k})(1+(\ell+1)^{-1}b^{-k})}}{\log\frac{2b}{b+1}} \right| \\
&< Ae^{-\lambda \sqrt{n-1}} \sum_{k=0}^\infty \sum_{\ell = 1}^{b-1} \frac{f(k,\ell)}{\ell(\ell+1)b^k} < A e^{-\lambda \sqrt{n-1}} \sum_{k=0}^\infty \sum_{\ell=1}^{b-1} \frac{C(\ell b^k)^{\frac12-\delta}}{\ell(\ell+1)b^k} \\
&= ACe^{-\lambda\sqrt{n-1}} \sum_{k=0}^\infty \sum_{\ell=1}^{b-1} \frac{1}{(\ell+1)(\ell b^k)^{\frac12 + \delta}} < A_1 e^{-\lambda \sqrt{n}}
\end{align*}
for some constant $A_1$. Thus for almost every $\alpha \in (1,2)$,
\[
\lim_{n \to \infty} u_n = \sum_{k=0}^\infty \sum_{\ell = 1}^{b-1} f(k,\ell) \frac{\log\frac{(1+\ell^{-1}b^{-k})(b+(\ell+1)^{-1}b^{-k})}{(b+\ell^{-1}b^{-k})(1+(\ell+1)^{-1}b^{-k})}}{\log\frac{2b}{b+1}},
\]
so indeed,
\[
\lim_{N \to \infty} \frac{1}{N} \sum_{n=1}^N u_n = \sum_{k=0}^\infty \sum_{\ell = 1}^{b-1} f(k,\ell) \frac{\log\frac{(1+\ell^{-1}b^{-k})(b+(\ell+1)^{-1}b^{-k})}{(b+\ell^{-1}b^{-k})(1+(\ell+1)^{-1}b^{-k})}}{\log\frac{2b}{b+1}},
\]
at which point \eqref{eq:limavgf-u} gives
\[
\lim_{N \to \infty} \frac{1}{N} \sum_{n=1}^{N} f(a_n,c_n) = \sum_{k=0}^\infty \sum_{\ell = 1}^{b-1} f(k,\ell) \frac{\log\frac{(1+\ell^{-1}b^{-k})(b+(\ell+1)^{-1}b^{-k})}{(b+\ell^{-1}b^{-k})(1+(\ell+1)^{-1}b^{-k})}}{\log\frac{2b}{b+1}},
\]
for almost every $\alpha \in (1,2)$.
\end{proof}

We can now prove the desired theorems.

\restatekhinchinedistthm

\begin{proof}
Fix $k \in \Z_{\ge 0}$ and $\ell \in \{1,2,\dots,b-1\}$. Let
\[
f(s,t) = \begin{cases} 1 & s = k, t = \ell \\ 0 & \text{otherwise} \end{cases}.
\]
Clearly $f(s,t) < 2 < 3(t b^s)^{1/4}$ so $f$ satisfies the conditions of theorem~\ref{thm:limavgf}. Now
\[
\lim_{N \to \infty} \frac{1}{N} \sum_{n=1}^N f(a_n,c_n) = \lim_{N \to \infty} \frac{|\{n \in \N : a_n = k, c_n = \ell\}|}{N},
\]
so Theorem~\ref{thm:limavgf} immediately gives, for almost every $\alpha \in (1,2)$ that
\[
P_\alpha(k,\ell) = \lim_{N \to \infty} \frac{|\{n \in \N : a_n = k, c_n = \ell\}|}{N} = \frac{\log\frac{(1+\ell^{-1}b^{-k})(b+(\ell+1)^{-1}b^{-k})}{(b+\ell^{-1}b^{-k})(1+(\ell+1)^{-1}b^{-k})}}{\log\frac{2b}{b+1}},
\]
proving the theorem.
\end{proof}

\restatekhinchinethm

\begin{proof}
Define $f(s,t) = \log_b(tb^s) = s+\log_b t$. Notice that we can choose $C > 0$ such that $\log_b(x) < C x^{1/3}$ for all $x \ge 1$. Then if we take $\delta = \frac16$, we get
\[
f(s,t) = \log_b(tb^s) < C(tb^s)^{1/3} = C(tb^s)^{\frac12-\delta},
\]
so $f$ satisfies the conditions of Theorem~\ref{thm:limavgf}. We then get that for almost every $\alpha \in (1,2)$.
\begin{equation}\label{eq:limlogcba}
\lim_{N \to \infty} \frac{1}{N} \log_b(c_n b^{a_n}) = \sum_{k=0}^\infty \sum_{\ell=1}^{b-1} \log_b(\ell b^k) \frac{\log\frac{(1+\ell^{-1}b^{-k})(b+(\ell+1)^{-1}b^{-k})}{(b+\ell^{-1}b^{-k})(1+(\ell+1)^{-1}b^{-k})}}{\log\frac{2b}{b+1}}.
\end{equation}
Now let $u(k,\ell) = \log(1+\ell^{-1}b^{-k})$ and $v(k) = u(k,\ell) - u(k,\ell+1)$. Notice that $u(k,b) = u(k+1,1)$. Then
\begin{align*}
\sum_{k=0}^\infty &\sum_{\ell=1}^{b-1} \log_b(\ell b^k) \frac{\log\frac{(1+\ell^{-1}b^{-k})(b+(\ell+1)^{-1}b^{-k})}{(b+\ell^{-1}b^{-k})(1+(\ell+1)^{-1}b^{-k})}}{\log\frac{2b}{b+1}} \\
&= \frac{1}{\log \frac{2b}{b+1}} \sum_{k=0}^\infty \sum_{\ell = 1}^{b-1} (k+\log_b \ell) [u(k,\ell) + u(k+1,\ell+1) - u(k+1,\ell) - u(k,\ell+1)] \\
&= \frac{1}{\log\frac{2b}{b+1}} \sum_{\ell=1}^{b-1} \sum_{k=0}^\infty (k+\log_b\ell) [v(k) - v(k+1)] = \frac{1}{\log\frac{2b}{b+1}} \left(A+\frac{B}{\log b}\right),
\end{align*}
where
\begin{align*}
A &= \sum_{\ell = 1}^{b-1} \sum_{k=0}^{\infty} k[v(k)-v(k+1)] = \sum_{\ell=1}^{b-1} \sum_{k=1}^{\infty} v(k) = \sum_{k=1}^{\infty} \sum_{\ell=1}^{b-1} u(k,\ell) - u(k,\ell+1) \\
&= \sum_{k=1}^\infty u(k,1) - u(k,b) = \sum_{k=1}^\infty u(k,1) - u(k+1,1) = u(1,1) - \lim_{k \to \infty} u(k,1) = \log\left(1+\frac1b\right),
\end{align*}
and
\begin{align*}
B &= \sum_{\ell = 1}^{b-1} \log \ell \sum_{k=0}^\infty v(k)-v(k+1) = \sum_{\ell=1}^{b-1} \log \ell (v(0) - \lim_{k \to \infty} v(k)) \\
&= \sum_{\ell=1}^{b-1} \log \ell \left[ \log \frac{1+\ell^{-1}}{1+(\ell+1)^{-1}} - \lim_{k \to \infty} \log \frac{1+\ell^{-1}b^{-k}}{1+(\ell+1)^{-1}b^{-k}} \right] \\
&= \sum_{\ell=1}^{b-1} \log \ell \left[ \log \left(1+\frac{1}{\ell}\right) - \log\left(1+\frac{1}{\ell+1}\right)\right] \\
&=\sum_{\ell=1}^{b-1} \log \ell \log \left(1+\frac1\ell\right) - \sum_{\ell=2}^{b} \log(\ell-1) \log\left(1+\frac1\ell\right) \\
&= \log 1 \log 2 - \sum_{\ell=2}^{b-1} \left[\log(\ell-1)-\log\ell\right] \log\left(1+\frac1\ell\right) - \log(b-1) \log\left(1+\frac1b\right) \\
&= -\sum_{\ell=2}^{b-1} \log\left(1-\frac1\ell\right) \log\left(1+\frac1\ell\right) - \log(b-1)\log\left(1+\frac1b\right).
\end{align*}
Thus we have
\begin{align*}
\sum_{k=0}^\infty &\sum_{\ell=1}^{b-1} \log_b(\ell b^k) \frac{\log\frac{(1+\ell^{-1}b^{-k})(b+(\ell+1)^{-1}b^{-k})}{(b+\ell^{-1}b^{-k})(1+(\ell+1)^{-1}b^{-k})}}{\log\frac{2b}{b+1}} \\
&= \frac{1}{\log b \log\frac{2b}{b+1}} \left[\log b \log\left(1+\frac1b\right) - \log(b-1)\log\left(1+\frac1b\right) - \sum_{\ell=2}^{b-1} \log\left(1-\frac1\ell\right) \log\left(1+\frac1\ell\right)\right] \\
&= -\frac{1}{\log b \log \frac{2b}{b+1}} \sum_{\ell=2}^b \log\left(1-\frac1\ell\right) \log\left(1+\frac1\ell\right) = \frac{1}{\log b \log \frac{b+1}{2b}} \sum_{\ell=2}^b \log\left(1-\frac1\ell\right) \log\left(1+\frac1\ell\right) = \mathcal{A}.
\end{align*}
Thus \eqref{eq:limlogcba} becomes
\[
\lim_{N \to \infty} \frac1N \sum_{n=1}^N \log_b(c_n b^{a_n}) = \mathcal{A},
\]
from which it follows that for almost all $\alpha \in (1,2)$,
\[
\lim_{N \to \infty} \left(\prod_{n=1}^N c_n b^{a_n}\right)^{\frac1N} = b^{\lim_{N \to \infty} \frac1N \sum_{n=1}^N \log_b(c_n b^{a_n})} = b^{\mathcal{A}},
\]
as required.
\end{proof}

\end{document}